\newcommand{\Rd}{\ensuremath{{\mathbb{R}^d}}}
\newcommand{\brho}{\boldsymbol{\rho}}
\newcommand{\beff}{\boldsymbol{f}}
\newcommand{\bv}{\boldsymbol{v}}
\newcommand{\NN}{\mathbb{N}}
\newcommand{\C}{\mathcal{C}}
\newcommand{\bu}{\mathbf{u}}
\newcommand{\bm}{\mathbf{m}}
\newcommand{\bsigma}{\boldsymbol{\sigma}}
\def\P{{\mathcal P}}
\def\N{{\mathfrak{N}}}
\def\div{{\rm div}}
\newcommand{\M}{\mathcal{M}}
\newcommand{\Rn}{{\mathord{\mathbb R}^n}}
\newcommand{\R}{{\mathord{\mathbb R}}}
\newcommand{\supp}{{\mathop{\rm supp }}}
\newcommand{\id}{\boldsymbol{ \mathop{\rm id}}}
\newcommand{\la}{\left\langle}
\newcommand{\ra}{\right\rangle}
\newcommand{\G}{\mathcal{G}}
\newcommand{\bw}{\mathbf{w}}
\newcommand{\bp}{\mathbf{p}}
\newcommand{\bx}{\boldsymbol{x}}
\newcommand{\bmu}{{\boldsymbol{\mu}}}
\newcommand{\bnu}{{\boldsymbol{\nu}}}
\newcommand{\PP}{\mathfrak{P}}
\def\P{{\mathcal P}}
\def\epsilon{\varepsilon}
\DeclareMathOperator{\Tan}{Tan}
\newcommand{\be}{\begin{equation}}
\newcommand{\ee}{\end{equation}}
\newcommand{\bes}{\begin{equation*}}
\newcommand{\ees}{\end{equation*}}
\newtheorem{thm}{Theorem}[section]
\newtheorem{cor}[thm]{Corollary}
\newtheorem{prop}[thm]{Proposition}
\newtheorem{lem}[thm]{Lemma}
\newtheorem{as}[thm]{Assumption}
\theoremstyle{definition}
\newtheorem{defn}[thm]{Definition}
\theoremstyle{remark}
\newtheorem{rem}[thm]{Remark}
\numberwithin{equation}{section}
\newcommand{\dn}{{\DJ or\dj e }}
\title{Vector valued optimal transport: from dynamic to static formulations}
\author{Katy Craig, Nicol\'as {Garc\'ia Trillos}, \dn Nikoli\'c}
\date{\today}
\begin{document}

\subjclass[2020]{49Q22, 49J20, 94A08, 28A33, 49Q20, 60B10}
\keywords{Optimal transport on graphs, vector valued optimal transport, multispecies PDE} 

\begin{abstract}

Motivated by applications in classification of vector valued measures and multispecies PDE, we develop a theory that unifies   existing notions of vector valued optimal transport, from dynamic formulations (à la Benamou-Brenier) to  static formulations (à la Kantorovich). In our framework, vector valued measures are modeled as probability measures on   a product space $\Rd \times \G$, where $\G$ is a weighted graph over   a finite set of nodes and the graph geometry strongly influences the associated dynamic and static distances. We obtain sharp inequalities relating   four notions of vector valued optimal transport and prove that the distances are mutually bi-H\"older equivalent. We   discuss the theoretical and practical advantages of each metric and indicate potential applications in multispecies PDE and data analysis. In particular, one of the static formulations discussed in the paper is amenable to linearization, a technique that has been explored in recent years to accelerate the computation of pairwise optimal transport distances.

\end{abstract}

\maketitle

\section{Introduction}

The core idea of optimal transport is to quantify the difference between probability measures in terms of the effort required to rearrange one to look like another. When the notion of effort arises from the square of a distance on the domain of the measures, this induces a  {distance} between probability measures known as the \emph{Wasserstein metric}. Over the past twenty-five years, the Wasserstein metric has become a fundamental tool across mathematics, particularly in   {partial differential equations} and  {data analysis}, where its favorable geometric properties lend it to the study of stability, convergence, and long-time behavior of   evolutionary systems. Key attributes of the Wasserstein metric that have strengthened its impact are that it has equivalent characterizations both in terms of a \emph{dynamic} formulation, where the rearrangement of one measure to look like the other is expressed via an evolutionary differential equation, and a \emph{static} formulation, in terms of an (infinite dimensional) linear program.

Given that the original Wasserstein metric is limited to the space of probability measures or, more generally, finite Borel measures with equal total mass, there has been significant interest in developing generalized Wasserstein metrics that can extend to larger classes of measures. A breakthrough in this direction was the discovery of the Hellinger-Kantorovich (HK) metric \cite{liero2016optimal,liero2018optimal,kondratyev2016new,chizat2018interpolating,chizat2018unbalanced}, which quantifies the distance between finite Borel measures in terms of the effort to transform one into another by a combination of spatial transport and creation/destruction of mass. As in the Wasserstein case, the HK metric possesses both dynamic and static characterizations, which has led to a wealth of applications in reaction-diffusion equations and data analysis.

Inspired by these previous works, the present manuscript develops a generalization of the Wasserstein metric to the space of \emph{vector valued measures}. Our approach unifies existing results on vector valued optimal transport into a common mathematical framework and provides a sharp characterization of the relationship between dynamic and static formulations. 

The remainder of this introduction proceeds as follows. We begin by introducing our motivating applications and explain their connection to vector valued optimal transport (vvOT). We next describe our precise notion of vector valued measures and previous work on vvOT. With an aim toward unifying this existing theory, we present our three notions of vvOT distances and describe our main results, which show that, while these distances do not coincide, they are ordered, and, on bounded domains, they are bi-H\"older equivalent. We close by discussing the class of partial differential equations that have a gradient flow structure with respect to the dynamic vvOT metric and explaining to what extent the static vvOT metric may be \emph{linearized} in data analysis applications.

\subsection{Motivating applications} A natural motivation for vvOT distances arises from applications in data classification and partial differential equations. In data classification, a common task is to analyze the structural similarity of labeled datasets. As an illustration, consider a dataset comprised of images $\{x_k\}_{k=1}^N$, where each image is represented by a point in a high-dimensional Euclidean space,  $x_k \in \Rd$. (For example, a natural choice for $x_k$ is given by the linearized OT embedding of the image \cite{wang2013linear}.) Furthermore, suppose that each image $x_k$ is assigned a label $f(x_k) = l_i$, where the set of possible labels$ \{l_i\}_{i=1}^n$ is, typically, much smaller than the total number of images. Assume that there is both a norm on the space of images that quantifies their difference $\|x_k - x_l\|$, as well as a nonnegative matrix $\{q_{ij}\}_{i,j,=1}^n$ that quantifies the similarity between the different labels. (Large values of $q_{ij}$ indicate a high degree of similarity.) For example, in the Fashion MNIST dataset \cite{xiao2017fashion}, images are labelled in terms of the type of clothing $\{l_1,l_2,l_3\} = \{\text{coat}, \text{shirt}, \text{sandal}\}$, and certain labels $\{\text{coat}, \text{shirt} \}$ are   more similar than others $\{\text{coat}, \text{sandal} \}$; thus, in this case, $q_{12}>q_{13}$. A labeled dataset may then be represented as a vector valued measure $\bmu := [\mu_1, \mu_2, \dots \mu_n]^t$, where each component is a finite (positive) Borel measure $\mu_i \in \M(\Rd)$, representing the distribution (not necessarily normalized to one) of images with label $l_i$, 
\[ \mu_i = \sum_{k \in L_i} \delta_{x_k} , \quad L_i := \{ k: f(x_k) = l_i \} . \]

In this application, one would seek a distance $d$ on the space of vector valued measures to quantify the similarity of one labeled dataset to another, both in terms of the similarity of their underlying images  and the similarity of the labels assigned to those images. One application of such a distance would be  to compare the performance of different classification methods, that is, different methods for assigning labels to the same group of images. If  the labeled datasets $\bmu, \bnu$ arising from two different classification methods are nearly identical, except that one method classified a few shirts as coats, the performance of the methods is more similar and the value of $d(\bmu,\bnu)$ should be smaller than if  shirts were classified as sandals. In another context, having a distance on the space of vector valued measures would provide a way of comparing labeled datasets $\bmu, \bnu$ arising in two different experimental applications, such as labeled jet datasets arising at the Atlas vs CMS experiments at the Large Hadron Collider \cite{komiske2019metric,cai2020linearized}. For a discussion on the use of label similarities in classification problems, see, e.g., \cite{Silla2010,LabelSimilarity1}.

In addition to data classification, the need for a distance between vector valued measures is also motivated by applications in partial differential equations. In the classical Wasserstein case, the \emph{dynamic} characterization of the Wasserstein metric leads to a formal Riemannian structure, and thus of a notion of gradient, which makes possible the study of \emph{gradient flows} in the space of probability measures: given an energy $E: \mathcal{P}(\Rd) \to \R\cup \{+\infty\}$, its \emph{Wasserstein gradient flow} is (formally) the solution of the initial value problem
\[ \begin{cases} \partial_t \rho - \nabla \cdot  \left( \rho \nabla \frac{\partial E}{ \partial \rho}  \right) = 0 ,  \\ \left. \rho_t \right|_{t=0} = \rho_0,  \end{cases} \]
in the sense that the solution of this equation is the curve $t \mapsto \rho_t \in \P(\Rd)$ that makes the energy $E(\rho_t)$ decrease as quickly as possible, with respect to the Wasserstein structure. Such flows inherit intrinsic stability properties with respect to perturbations in the Wasserstein distance.

Motivated by this, given an energy $E: \M(\Rd)^n \to \R \cup \{+\infty\}$ on the space of vector valued measures, one seeks a notion of distance that likewise has a formal Riemannian structure and, hence, can again provide a link between partial differential equations (PDE) and minimizing the energy as quickly as possible. For example, in either generative modeling or sampling, one may encounter an energy 
\[ E(\brho) = \begin{cases} \mathcal{L}(\brho , \bmu) &\text{ if } \brho \in C ,\\ +\infty &\text{ otherwise,}\end{cases} \]
defined in terms of a loss function $\mathcal{L}$ and a constraint set $C$ that expresses the distance between   $\brho \in \C$ and an observed labeled dataset $\bmu$. In generative modeling, one seeks $\brho$ in a parametric class $C$ that minimizes $E$; in sampling, one seeks $\brho$ belonging to the class of vector valued empirical measures $C$ that minimizes $E$. In both cases, having PDE dynamics characterizing a gradient flow provides a natural approach to flow toward a candidate minimizer that is stable with respect to the corresponding distance on vector valued measures.

\subsection{Vector valued measures and previous work} 
Inspired by the preceding applications, we seek a distance $d$ on the following space of vector valued measures: given a closed, convex subset of Euclidean space $\Omega \subseteq \Rd$ and $n$ labels represented by a graph $\G$ with $n$ nodes, we consider the space of measures 
\[ \P(\Omega \times \G) := \left\{ \bmu = [\mu_1, \dots, \mu_n]^t : \mu_i \in \mathcal{M}(\Omega)  \ \forall i =1, \dots, n \ \text{ and } \ \sum_{i=1}^n \mu_i(\Omega) =1 \right\} . \]
The sum of the components of the vector valued measures is a key quantity in what follows, and we denote it by
\[ \bar{\mu} := \sum_{i=1}^n  \mu_i \in \P(\Omega) . \]
The normalization requirement that $\bar{\mu}$ is a probability measure reflects the fact that, given any $\bmu, \bnu \in \P(\Omega \times \G)$, it is possible to transform $\bmu$ into $\bnu$ via spatial transport in the domain $\Omega$ and mutation of mass between the different labels in $\G$. 

In recent years, several previous works have considered different notions of vvOT distance.  Chen, Georgiou, and Tannenbaum \cite{chen2018vector}, for example, introduced a notion of transport distance between $\bmu,\bnu \in \P(\Omega \times \G)$ by considering a \emph{dynamic} formulation via a   product metric structure that combined the classical Wasserstein metric on $\P(\Omega)$ with a graph Wasserstein metric on $\P(\G)$, taken with an upwinding interpolation that induces a Finslerian geometry  \cite{esposito2021nonlocal}. In subsequent work, Zhu et al. \cite{zhu2022vwcluster} used this metric to propose a method for clustering of collections of vector valued measures. 

A different formulation was presented by Bacon  \cite{bacon2020multi}, who introduced a \emph{static} formulation of vvOT on $\P(\Rd \times \G)$, by considering all transport plans $\Gamma_{i,j} \in \M(\Rd \times \Rd)$ that rearrange a part of $\mu_{i}$ to look like a part of $\nu_{j}$,
\[ \Pi(\bmu,\bnu) := \left\{\boldsymbol{\Gamma} = [\Gamma_{i,j}]_{i,j=1}^n , \Gamma_{i,j} \in \mathcal{M}(\Rd \times \Rd) : \sum_{i=1}^{n} d\Gamma_{ij}(x,y) = d\nu_{j}(y), \ \sum_{j=1}^{n} d\Gamma_{i,j}(x,y) =d\mu_{i}(x) \right\} . \]
For a given matrix of cost functions $c_{ij}(x,y)$, Bacon then defined the distance between $\bmu$ and $\bnu$ in terms of the minimal effort of transporting $\bmu$ to $\bnu$, 
\begin{equation} \label{BaconKantorovich}
d(\bmu,\bnu):=    \inf \left\{ \sum_{i,j=1}^{n} \iint_{\Rd \times \Rd} c_{ij}(x,y)d\Gamma_{ij}(x,y) : \boldsymbol{\Gamma} \in \Pi(\bmu,\bnu) \right\} .\end{equation}
Bacon likewise    developed appropriate conditions on the cost functions to ensure that, indeed, this is a distance. 
    
    Finally, many previous works have considered vvOT distances of 1-Wasserstein type, by interpreting $\P(\Omega \times \G)$ as a subset of the dual space of vector valued Lipschitz or bounded Lipschitz functions \cite{ciosmak2021optimal,ryu2018vector, todeschi2025unbalanced}. The motivation in these papers included applications to Full Waveform Inversion, in which signed measures are represented as vector valued measures with two components. The distances proposed in those papers, however, lack the formal Riemannian structure of the classical 2-Wasserstein metric and, hence, the connection to gradient flow PDEs and linearization.
 
 \subsection{Main results}
In the present work, we introduce three notions of vvOT distances --- one dynamic and two static -- that are natural adaptations of previous works on vvOT. While our main result shows that the dynamic and static distances do not coincide, in contrast to the classical Wasserstein setting, we succeed in proving sharp inequalities relating the distances with each other and with the dual bounded Lipschitz distance, thereby providing a framework for unifying the previous results on vvOT.

Inspired by Chen, Georgiou, and Tannenbaum \cite{chen2018vector}, we define our dynamic distance via a product space structure between the classical and graph Wasserstein metrics. Given an $n$-node graph $\G$, we suppose its edges are weighted according to a nonnegative matrix $[q_{ij}]_{i,j=1}^n$. Due to our motivating applications, in which $q_{ij} = q_{ji}$ represents the similarity between labels $l_i$ and $l_j$, we suppose $q_{ij}$ is symmetric, and we also suppose the associated weighted graph $\G$ is \emph{connected}.  In what follows, it can be seen that the weights along the diagonal $q_{ii}$ have no effect on the metric properties, so they may be chosen arbitrarily. 

In this setting, Maas \cite{maas2011gradient} defined a Wasserstein metric over the space of probability measures on the graph $\mathcal{P}(\G)$ via a \emph{dynamic} characterization, in which one considers all ways of flowing from $p \in \P(\G)$ to $\tilde{p} \in \P(\G)$ via solutions of a graph continuity equation and defines the notion of distance between $p$ and $\tilde{p}$ in terms of a least  {action} principle. In the past few years, multiple works have studied the geometry of optimal transport on graphs  \cite{maas2011gradient,erbar2012ricci, mielke2011gradient,mielke2013geodesic,chow2012fokker,gangbo2019geodesics,erbar2019geometry},  as well as its scaling limits for different graph ensembles \cite{esposito2021nonlocal, Gigli2013,Slepev2023,NGT2020,gladbach2020homogenisation,gladbach2020scaling,gladbach2023homogenisation}.  
 See section \ref{sec:GraphOT} for background on the graph Wasserstein metric.
 
To generalize to the vector valued case, where we also have a spatial coordinate, we consider the vector valued continuity equation, 
 \begin{align} \label{vvcty}   
& \partial_t \brho + \nabla \odot (\brho \odot \bu) + \nabla_\G \cdot(\check{\brho}  \bv) = 0  \ ,  \quad  (\check{\brho} \bv)_{ij}(x) = \theta(\rho_i(x), \rho_j(x)) v_{ij}(x),  
 \end{align}where $ \nabla \odot$ denotes component-wise divergence, $\odot$ denotes component-wise multiplication, and we impose no flux boundary conditions
$\brho_i(x) \bu_i(x) \cdot \vec{n} = 0 $ for all $x \in \partial \Omega$, $i = 1, \dots , n$. (See Definition \ref{def:ContEqMultiSpecies} for the precise notion of weak solution.) As in the original work by Maas, we consider a range of \emph{interpolation functions} $\theta$, via which we compute an average value of the density of $\rho_i$ and $\rho_j$ at location $x$ to induce a notion of   density along the edge connecting labels $l_i$ and $l_j$. Examples of valid interpolation functions include the arithmetic, geometric, and logarithmic averages.

\begin{as}[Interpolation function] \label{interpolationassumption} The function $\theta:[0,+\infty) \times [0,+\infty) \mapsto [0,+\infty)$ satisfies the following properties: 
\begin{enumerate}[(\text{A}1)]
    \item  (\it{Regularity}) $\theta$ is continuous on $[0,+\infty) \times [0,+\infty)$  and $C^\infty$ on $(0,+\infty) \times (0,+\infty)$. \label{thetaregularity}

    \item (\it{Symmetry}) $\theta(t,s)= \theta(s,t)$ for $t,s\geq 0.$ \label{thetasymmetry}

    \item  (\it{Positivity}) $\theta$ is positive on $(0,+\infty) \times (0,+\infty).$ \label{thetapositivity}

    \item (\it{Normalization}) $\theta(1,1) = 1$. \label{thetanormalization}

    \item (\it{Monotonicity}) $\theta(r, t)\leq \theta(s,t)$ for $t \geq 0$ and $0\leq r \leq s.$ \label{thetamonotonicity}

    \item  (\it{Positive homogenity}) $\theta(\lambda t,\lambda s)= \lambda \theta(t,s)$ for $\lambda > 0$ and $t,s\geq 0.$ \label{thetapositivehomogenity}

    \item(\it{Concavity}) $\theta$ is concave. \label{thetaconcavity}

 \end{enumerate}
\end{as}
\noindent Since $\theta$ is 1-homogeneous,   we use the abbreviation 
\[ \theta(\rho_i, \rho_j) := \theta\left( \frac{d \rho_i}{d \bar{\rho}}, \frac{ d \rho_j}{d \bar{\rho}} \right) d \bar{\rho} \in \mathcal{M}(\Omega); \]
note that, indeed, $\rho_i$ is absolutely continuous with respect to $\overline{\rho}$, and we use $\frac{d\rho_i}{d \overline{\rho}}$ to denote the corresponding Radon-Nikodym derivative.

  In the vector valued continuity equation (\ref{vvcty}), $\brho: [0,T] \to \P(\Omega \times \G)$ represents a vector valued measure evolving according to spatial velocity $\bu(x) = [\bu_i(x)]_{i=1}^n$, which describes how the mass of each component moves in the domain $\Omega$, and a graph velocity $\bv(x) = [ v_{ij}(x)]_{i,j=1}^n$, which describes how mass of component $i$ mutates to component $j$ at location $x \in \Omega$. In the special case that the graph velocity is symmetric, the evolution of the $i$th component $\rho_i$ is given by the following simplified PDE:
 \begin{align*}
 \partial_t  \rho_{i,t} + \nabla \cdot( u_{i,t} \rho_{i,t}) = \sum_{j=1}^n\theta ( \rho_{i,t},  \rho_{j,t} ) v_{ij,t}q_{ij} .
 \end{align*}
 Given $\bmu, \bnu \in \P(\Omega \times \G)$, we denote
 \begin{align} \label{dynamicconstraintdef} \mathcal{C}(\bmu,\bnu) := \{ (\brho, \bu,\bv):  (\brho, \bu,\bv) \text{ satisfies   vv continuity equation on $[0,1]$, }   \brho_0= \bmu , \ \brho_1 = \bnu  \}. 
 \end{align}

 With this notion of continuity equation in hand, we define the  dynamic distance  on vector valued measures in terms of the minimal action required to flow between $\bmu, \bnu \in \P(\Omega \times \G)$,
\begin{align}  \label{dynamic}
W_{\Omega\times \G}^{2}(\bmu, \bnu) \coloneqq \inf_{(\brho, \bu, \bv) \in \C(\bmu,\bnu)}  \int_0^1  \sum_{i=1}^n \int_\Omega | u_i|^2     \rho_i dt  +  \frac12 \sum_{i,j=1}^n \int_0^1 \int_\Omega |v_{ij} |^2 \theta ( \rho_i ,\rho_j ) q_{ij}    dt .
\end{align}
 This distance is very similar to the dynamic distance studied by Chen et al. \cite{chen2018vector}, with one key difference:  Chen et al. considered the \emph{upwinding} interpolation function $(\check{\rho}v)_{ij} = \rho_i (v_{i,j})_+ - \rho_j (v_{ij}))_i$ in the continuity equation, which likewise corresponds to $\frac12 \sum_{i,j=1}^n \int_\Omega (v_{ij} )_+^2   \rho_i  q_{ij}   dt$ for the second term in the action. We instead follow the choices of interpolation function studied by Maas \cite{maas2011gradient}, since, in this setting,  the graph Wasserstein metric induces a smooth Riemannian structure on the space of strictly positive probability measures on $\G$, which is a crucial ingredient in the proof of our main theorem relating dynamic and static metrics.

  Our first main result is that the dynamic metric \eqref{dynamic} is indeed a distance function on $\P_2(\Omega \times \G)$, the subset of $\P(\Omega \times \G)$ with finite second moments, and the infimum in its formulation is   achieved.
 \begin{thm} \label{maindynamictheorem}
 Suppose $\Omega\subseteq \Rd$ is closed and convex, $\G$ is connected and symmetric, and the interpolation function $\theta$ satisfies Assumption \ref{interpolationassumption}. Then $W_{\Omega \times \G}$ is a metric on $\P_2(\Omega \times \G)$ and the infimum in equation (\ref{dynamic}) is achieved for any  $\bmu,\bnu \in \P_2(\Omega \times \G)$.
 \end{thm}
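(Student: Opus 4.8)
The plan is to establish the metric axioms and existence of minimizers for $W_{\Omega\times\G}$ by adapting the standard Benamou--Brenier machinery (as in the classical and graph Wasserstein cases) to the vector valued continuity equation, handling the graph and spatial directions simultaneously. I would proceed in four steps.

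\emph{Step 1: reparametrization and lower semicontinuity of the action.} First I would record the usual time-rescaling invariance: if $(\brho,\bu,\bv)\in\C(\bmu,\bnu)$ on $[0,1]$, then reparametrizing time shows that the infimum in \eqref{dynamic} equals $\inf \bigl(\int_0^1 \sqrt{A(t)}\,dt\bigr)^2$ where $A(t)$ is the instantaneous action, and more usefully that one may always assume the action density is constant in $t$. Then, following the $\Phi$-convexity trick of Benamou--Brenier (and Dolbeault--Nazaret--Savaré / Maas), I would rewrite the action in terms of the momenta $\bm_i := \rho_i \bu_i$ and $\bw_{ij} := \theta(\rho_i,\rho_j)\,v_{ij}$, so that the integrand becomes $\sum_i \frac{|m_i|^2}{\rho_i} + \frac12\sum_{i,j} \frac{|w_{ij}|^2}{\theta(\rho_i,\rho_j)} q_{ij}$, a jointly convex, $1$-homogeneous, lower semicontinuous functional of $(\brho,\bm,\bw)$ (here concavity and $1$-homogeneity of $\theta$ in Assumption \ref{interpolationassumption} are exactly what makes $(\rho_i,\rho_j,w_{ij})\mapsto |w_{ij}|^2/\theta(\rho_i,\rho_j)$ convex and lsc). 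In these variables the continuity equation is \emph{linear}.

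\emph{Step 2: existence of minimizers via the direct method.} Take a minimizing sequence $(\brho^k,\bm^k,\bw^k)$ with uniformly bounded action. The total mass is fixed and $\Omega$ may be unbounded, so I would use the finite-second-moment hypothesis together with the action bound to get tightness of $\brho^k_t$ uniformly in $t$ (a standard moment estimate: the action controls the growth of $\int |x|^2 d\bar\rho_t$), extract a weak-$*$ limit $\brho$ in $C([0,1];\P_2(\Omega\times\G))$ (using an Ascoli-type argument in a suitable weak metric), and extract weak-$*$ limits of the momenta $\bm^k,\bw^k$ as measures on $[0,1]\times\Omega$, with the action bound giving the absolute-continuity constraints $|\bm_i|\ll \rho_i$, etc. Linearity of the continuity equation passes the constraint to the limit; lower semicontinuity from Step 1 gives that the limit is a minimizer. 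The endpoints $\brho_0=\bmu$, $\brho_1=\bnu$ survive because of the continuity in time.

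\emph{Step 3: metric axioms.} Symmetry is immediate from symmetry of $q$ and of $\theta$ (Assumption \ref{interpolationassumption}(A2)) by running a path backwards and negating velocities. Nonnegativity is clear. The triangle inequality follows by concatenating two paths on $[0,1/2]$ and $[1/2,1]$ and using the reparametrization from Step 1 to rescale; this is routine. Finite\-ness of $W_{\Omega\times\G}(\bmu,\bnu)$ for $\bmu,\bnu\in\P_2$ requires exhibiting one admissible path of finite action: here I would first mutate all mass in both measures toward a common "fully mixed at each point" configuration using the graph connectivity (so that all components become a fixed positive multiple of $\bar\mu$, resp.\ $\bar\nu$), which has finite action because $\G$ is connected and $\theta$ is positive on the positive quadrant (A3); then transport spatially using a classical $W_2$ geodesic between $\bar\mu$ and $\bar\nu$, which has finite action since both have finite second moments.

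\emph{Step 4: positivity ($W_{\Omega\times\G}(\bmu,\bnu)=0 \Rightarrow \bmu=\bnu$).} This is the step I expect to be the main obstacle. Testing the weak continuity equation against functions of the form $\phi(x)e_i$ (spatial test) and against $\psi$ supported on single nodes shows that zero action forces $\brho_t$ to be constant: the spatial part is controlled exactly as in classical OT (zero action $\Rightarrow$ $\int\phi\,d\bar\rho_t$ constant after summing over $i$, and componentwise once one also kills the graph flux), while the graph part needs that $\theta(\rho_i,\rho_j)=0$ on a set forces no flux there, which uses positivity of $\theta$ away from the axes plus a careful treatment of the boundary of the simplex where some $\rho_i$ vanish. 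The cleanest route is a duality/Kantorovich-type lower bound: show $W_{\Omega\times\G}(\bmu,\bnu) \ge c\,\mathrm{dist}(\bmu,\bnu)$ for some genuine distance (e.g.\ the dual bounded-Lipschitz distance on $\P(\Omega\times\G)$), by plugging linear-in-time test functions into the weak formulation and applying Cauchy--Schwarz against the action; since the excerpt advertises later comparison results with the dual bounded Lipschitz distance, I would invoke (or reprove in this simpler direction) such a bound, from which $W_{\Omega\times\G}(\bmu,\bnu)=0$ immediately yields $\bmu=\bnu$.
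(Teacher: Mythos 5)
Your proposal is correct, and Steps 1, 2, and 4 are essentially the paper's own argument: existence of minimizers is proved there by exactly your momentum-coordinate route (time rescaling in Lemma \ref{timerescaling}, convexity/lower semicontinuity of the momentum action via Lemma \ref{Klem} and Proposition \ref{SantambrogioGraphProp5.18}, compactness with a Gronwall second-moment estimate in Proposition \ref{compactnesspropertyfordirectmethod}, combined in Theorem \ref{existenceofminimizers}); symmetry and the triangle inequality are obtained by the same reparametrization-and-gluing argument (Proposition \ref{almostdynamicmetric}); and nondegeneracy is precisely the bounded-Lipschitz lower bound $d_{BL}\leq \max\{1,Q^{1/2}\}\,W_{\Omega\times\G}$ of Proposition \ref{BLlem}, obtained by testing the weak formulation and applying Cauchy--Schwarz, so your hesitation in Step 4 resolves exactly as you guessed.

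Where you genuinely diverge is finiteness. The paper does not construct a competitor directly: it deduces $W_{\Omega\times\G}(\bmu,\bnu)<+\infty$ from the comparison $W_{\Omega\times\G}\leq D_{\Rd\times\G}\leq W_{2,\mathcal{W}}$ of Theorem \ref{thm:Main} (via Corollary \ref{thm:EquivalenceTopologies}), whose proof requires the heavy lifted-space machinery (boundary-avoiding approximation of geodesics in $\P(\Rd\times\Delta^{n-1})$, Lisini's characterization of absolutely continuous curves, and the projection Proposition \ref{coveringspaceinduced}). Your alternative --- mutate pointwise in $x$ along $W_\G$-geodesics to a fully mixed state (finite action because $(\P(\G),W_\G)$ has finite diameter by Proposition \ref{basicfactWG}), then transport $\bar\mu$ to $\bar\nu$ spatially via Benamou--Brenier with all components sharing the same velocity field --- is valid and more elementary; indeed the paper uses precisely this pointwise-mutation construction, with the same diameter bound, inside Case 2 of the proof of Theorem \ref{thm:Main}. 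What the paper's route buys is that finiteness comes for free once the comparison theorem (wanted anyway as the main result) is in place; what your route buys is a self-contained proof of Theorem \ref{maindynamictheorem} that does not depend on Theorem \ref{thm:Main} at all. The only detail you should add to your construction is the measurability in $(x,t)$ of the selected graph geodesics and velocities (the paper handles this via the measurable selection in Lemma \ref{simplexgeodesicapproximationlem} and Proposition \ref{basicfactWG}(ii)), and the observation that convexity of $\Omega$ keeps the spatial displacement interpolation inside $\Omega$; neither is a conceptual obstacle.
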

 
 Next, we turn to static formulations of the vvOT distance. Our approach is inspired by analogy with  the  Hellinger-Kantorovich distance between finite Borel measures.  On one hand, this distance admits a dynamic formulation in terms of a least action principle along solutions of a continuity equation allowing transport and reaction. Given $\mu , \nu \in \mathcal{M}(\Omega)$, the HK distance is given by
 \begin{align*}
   H \hspace{-1mm} K^2(\mu,\nu) := \inf  \left\{ \int_0^1  \int_\Omega (|v_t|^2+ 4 \xi_t^2) \rho_t dt :  \partial_t \rho_t + \nabla \cdot (\rho_t  v_t) = 4 \rho_t \xi_t, \ \  \rho_0 = \mu, \rho_1 = \nu \right\}.
\end{align*} 
On the other hand, Liero, Mielke, and Savar\'e  \cite{liero2016optimal,liero2018optimal} show that HK   admits an equivalent static formulation in terms of suitable projections on the cone  $\mathfrak{C}_{\Omega}:= \Omega \times [0,+\infty)/(x_1,0) \sim (x_2, 0)$ over  $\Omega$  \cite{burago2001course}. Indeed, given the projection operator $\PP_{H \hspace{-1mm} K}: \mathcal{M}_{2}(\mathfrak{C}_{\Omega}) \rightarrow  \mathcal{M}(\Omega): \lambda \mapsto \pi_{\Omega} \#( r^2d \lambda(x,r))$ they show that \begin{equation*}  H \hspace{-1mm} K (\mu,\nu) = \inf \{ W_{\mathfrak{C}_{\Omega}}(\lambda_{\mu},\lambda_{\nu}) | \hspace{1mm} \PP_{H \hspace{-1mm} K}(\lambda_{\mu}) = \mu, \PP_{H \hspace{-1mm} K}(\lambda_{\nu}) = \nu \},\end{equation*}
where $W_{\mathfrak{C}_{\Omega}}$ is the standard 2-Wasserstein metric between probability measures over  $\mathfrak{C}_\Omega$, endowed with the cone metric.

In the HK setting of finite Borel measures,  the role of the cone $\mathfrak{C}_\Omega$ is to allow an extra dimension $r \in [0,+\infty)$ that represents the amount of mass at each location $x \in \Omega$. Seeking an analogy in the vvOT setting, we allow the role of the cone $\mathfrak{C}_\Omega$ to be played by the simplex $ \Delta^{n-1}$, leveraging the fact that there  is a natural bijection between $\Delta^{n-1}$  and probability measures on the graph $\P(\G)$,
\begin{align}\label{Idef}
\bp&:   \Delta^{n-1} \to \P(\G)  : [r_i]_{i=1}^{n-1} \mapsto \left[r_1, r_2, \dots, r_{n-1}, 1- \sum_{i=1}^{n-1} r_i \right] , \\
\bp^{-1}&: \P(\G) \to \Delta^{n-1}   : [p_i]_{i=1}^n \to [p_i]_{i=1}^{n-1} .
\end{align}
 See Figure \ref{firstfigure} for an illustration of the isometry.
Thus, we may consider the simplex $\Delta^{n-1}$ to be endowed with a distance $d_{\Delta^{n-1}}$ induced by the graph Wasserstein metric on $\P(\G)$. The simplex allows an extra dimension $r \in \Delta^{n-1}$ that represents how mass at location $x \in \Omega$ is distributed between each of the $n$ labels.
\begin{figure}[ht]
      \centering
        \includegraphics[width=0.7\textwidth]{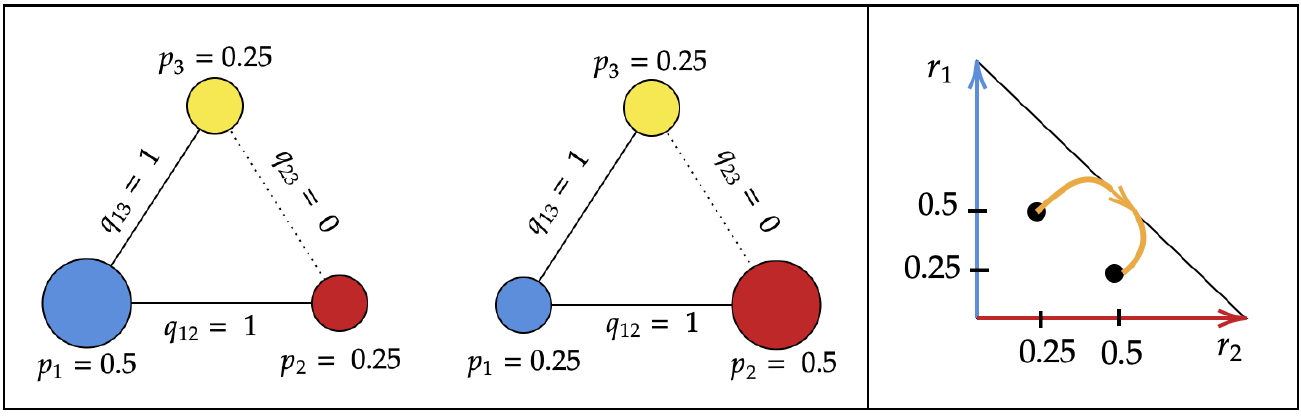}  
        \caption{An illustration of the isometry $\bp$ between the simplex $\Delta^2 \subseteq \R^2$ and probability measures on a three node weighted graph $\G$. Left: Two examples of probability measures, $\sum_{i=1}^3 p_i \delta_i \in \P(\G)$. Right: The corresponding  two points in the simplex. When the simplex is endowed with the distance coming from the Wasserstein metric on $\P(\G)$,  Gangbo, Li, and Mou \cite{gangbo2019geodesics} showed that there exist geodesics between points in the interior that touch the boundary.}
        \label{firstfigure}
   \end{figure}

In this way, following the analogy with the HK distance, we seek  a static distance defined by lifting vector valued measures to probability measures on the product space $\Rd \times \Delta^{n-1}$, via the  projection operator
 \begin{align} \label{projectiondef}
\PP: \P(\Rd \times \Delta^{n-1}) \rightarrow \P(\Rd \times \G) : \lambda \mapsto \pi_{\mathbb{R}^d} \#(\bp(r)  \lambda(x,r)) . 
 \end{align}
 This operator is clearly surjective, since for any $\bmu \in \P(\Rd \times \G)$, it has a  \emph{canonical lifting} given by 
\begin{align} \lambda_{\bmu}(x,r) := \sum_{j=1}^{n} \mu_j(x)\otimes \delta_{e_{j}}(r)  \in \P(\Rd \times \Delta^{n-1}),
\label{eqn:Introcanonical}
 \end{align}
that satisfies $\PP \lambda_\bmu = \bmu$, where $\{e_j\}_{j=1}^n$ denote the vertices of the simplex $\Delta^{n-1}$, so that $\bp(e_j) = \delta_j  \in \P(\G)$ is the probability measure concentrated on the $j$th node of the graph. See Figure \ref{secondfigure} for an illustration of a vector valued measure $\bmu \in \P(\Rd \times \G)$, its canonical lifting $\lambda_{\bmu} \in \P(\Rd \times \Delta^{n-1})$, and another lifting $\lambda \in \P(\Rd \times \Delta^{n-1})$ that projects down to $\bmu$.
\begin{figure}[ht]
        \centering
        \includegraphics[width=0.6\textwidth]{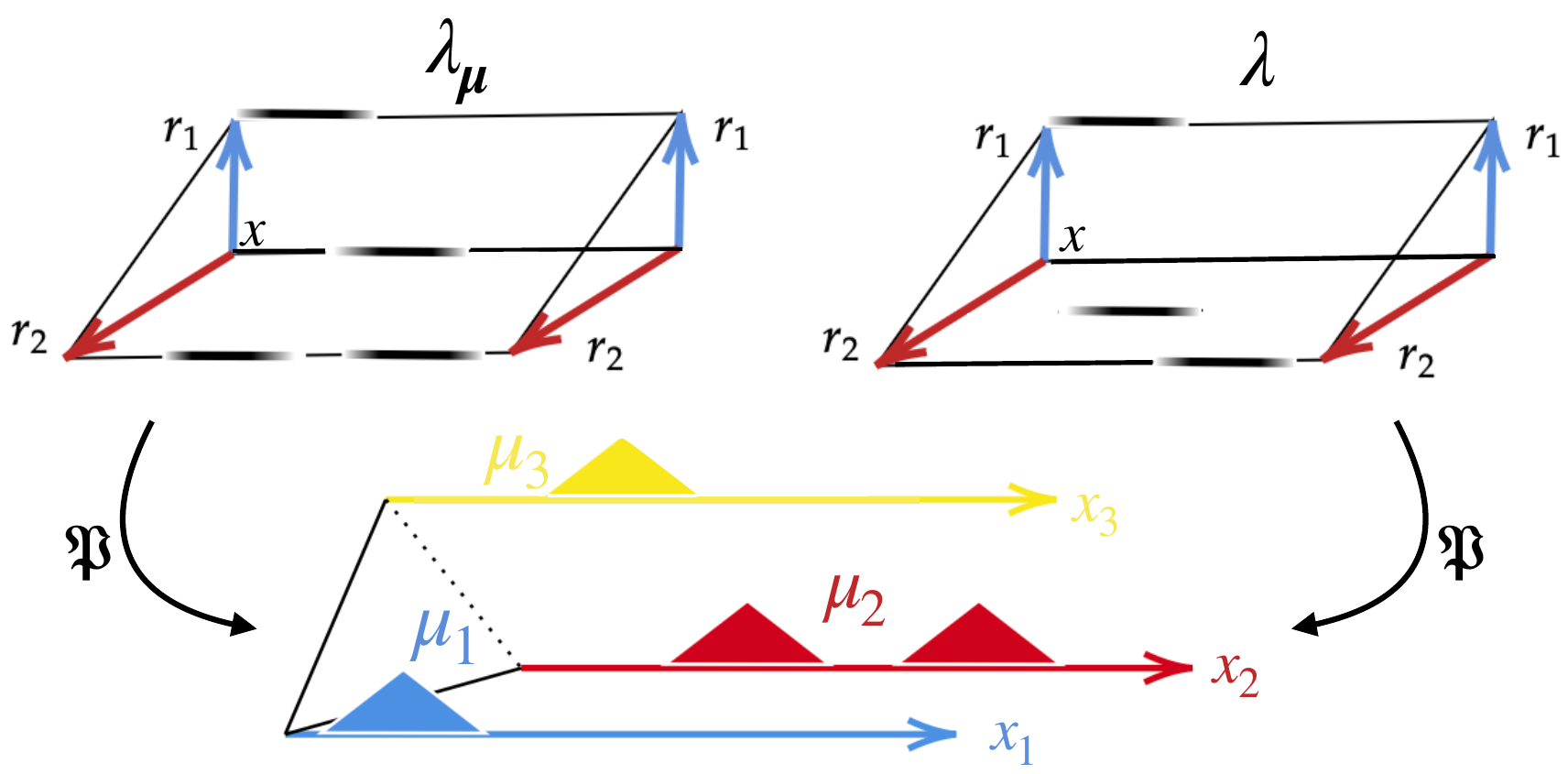} 
        \caption{An illustration of the way in which probability measures on $\R \times \Delta^{2}$ project down to vector valued measures $\P(\R \times \G)$, in the case of a three node graph. Bottom: a vector valued measure, chosen so that the distribution of yellow mass coincides  in physical space with the distribution of the first bump of red mass. Top left: the canonical lifting, where the color of the mass determines the corner of the simplex it is lifted to;  shading is used to indicate the distribution of the mass, which is peaked at the center of each bump. Top right: an alternative lifting, where the yellow bump and first red bump are lifted to a distribution along a line halfway between the edge corresponding to pure yellow mass and the edge corresponding to pure red mass.}
        \label{secondfigure}
\end{figure}

With these definitions in hand, we consider two static approaches for quantifying the difference between vector valued measures $\bmu,\bnu \in \P(\Rd \times \G)$,
\begin{align} \label{semimetricdef}
 D_{\Rd\times \mathcal{G}}  (\bmu,\bnu) := \inf \left\{W_{\Rd \times \Delta^{n-1}}(\lambda_1, \lambda_2) : \PP \lambda_1 = \bmu, \PP \lambda_2 = \bnu \right\},
 \end{align}
and 
\begin{align} \label{staticsingletypedistance}   
        W_{2, \mathcal{W}}(\bmu,\bnu) := W_{\Rd \times \Delta^{n-1}}(\lambda_{\bmu},\lambda_{\bnu}),
\end{align}
where $W_{\Rd \times \Delta^{n-1}}$ denotes the Wasserstein metric on $\P_2(\Rd \times \Delta^{n-1})$ induced by the ground distance $d^2_{\Rd \times \Delta^{n-1}} = d^2_\Rd + d^2_{\Delta^{n-1}}$.
(In these   static formulations, we may consider $\P(\Omega \times \G) \subseteq \P(\Rd \times \G)$, via the canonical injection, so that  they likewise apply to vector valued measures in $\P(\Omega \times \G)$. Only in the   case of $W_{\Omega \times \G}$ does the definition of the distance depend on the spatial domain $\Omega \subseteq \Rd$.)

$W_{2, \mathcal{W}}(\bmu,\bnu)$ can be easily seen to be a distance on $\P_2(\Rd \times \G)$, since $W_{\Rd \times \Delta^{n-1}}$ is a distance on $\P_2(\Rd \times \Delta^{n-1})$ and any lifting $\lambda$ of $\bmu \in \P_2(\Rd \times \Omega)$ satisfies  $  \lambda \in \P_2(\Rd \times \Delta^{n-1})$; see inequality \ref{secondmomentupstairs}. We use the notation $W_{2, \mathcal{W}}$ since  this static metric reduces to a Kantorovich-type metric on $\P(\G)$ studied by Erbar and Maas when   $\supp \mu_i = \supp \nu_i = \{x_0\}\subseteq \Rd$ for all $i =1, \dots, n$. In addition, we also show that $W_{2, \mathcal{W}}$  admits a Kantorovich formulation of the form studied by Bacon  \cite{bacon2020multi}, equation (\ref{BaconKantorovich}), for the cost function $d_\Rd^2(x, y) + W_\G^{2}(\delta_i,\delta_j) $. See Proposition \ref{W2WKantorovich} for both of these results.

On the other hand, in contrast to the $H\hspace{-1mm}K$ case, we prove  $D_{\Rd\times \mathcal{G}} (\bmu,\bnu)$ is merely a \emph{semimetric}. 
\begin{prop} \label{semimetricprop} 
Suppose  $\G$ is connected and symmetric and the interpolation function $\theta$ satisfies Assumption \ref{interpolationassumption}.  Then $D_{\Rd\times \mathcal{G}} (\bmu,\bnu)$ is a \emph{semimetric} over the space $\mathcal{P}_2(\R^d \times \G)$, that is, it is a nonnegative, symmetric, finitely valued function that vanishes only when $\bmu=\bnu$. \end{prop}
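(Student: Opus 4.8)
The plan is to verify the four defining properties of a semimetric for $D_{\Rd \times \G}$ in turn: finiteness, nonnegativity, symmetry, and the identity of indiscernibles $D_{\Rd \times \G}(\bmu,\bnu) = 0 \iff \bmu = \bnu$. Nonnegativity and symmetry are immediate: $W_{\Rd \times \Delta^{n-1}}$ is itself a nonnegative, symmetric quantity, and the constraint set in \eqref{semimetricdef} is symmetric in $(\bmu,\bnu)$. Finiteness follows by exhibiting one admissible competitor: take the canonical liftings $\lambda_\bmu, \lambda_\bnu$ from \eqref{eqn:Introcanonical}, which satisfy $\PP\lambda_\bmu = \bmu$, $\PP\lambda_\bnu = \bnu$, and observe that $W_{\Rd \times \Delta^{n-1}}(\lambda_\bmu, \lambda_\bnu) < \infty$ because both liftings have finite second moment (the $\Delta^{n-1}$-component lives in a compact set and the $\Rd$-component inherits finite second moments from $\bmu,\bnu \in \P_2$, as recorded in inequality \ref{secondmomentupstairs}); in fact this competitor shows $D_{\Rd \times \G} \le W_{2,\mathcal W}$, which is already known to be finite.

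The one direction of the indiscernibles property that is free is $\bmu = \bnu \implies D_{\Rd \times \G}(\bmu,\bnu) = 0$: simply take $\lambda_1 = \lambda_2 = \lambda_\bmu$ as the competitor, so the infimum is bounded by $W_{\Rd \times \Delta^{n-1}}(\lambda_\bmu,\lambda_\bmu) = 0$. The substantive direction is the converse: if $D_{\Rd \times \G}(\bmu,\bnu) = 0$ then $\bmu = \bnu$. Here I would first argue that the infimum in \eqref{semimetricdef} is attained — using that the constraint sets $\{\lambda : \PP\lambda = \bmu\}$ are nonempty, the functional $\lambda \mapsto W_{\Rd \times \Delta^{n-1}}(\lambda_1,\lambda_2)$ is lower semicontinuous under narrow convergence, and a tightness/compactness argument controls minimizing sequences (the $\Delta^{n-1}$ marginal is automatically tight by compactness, and the $\Rd$ marginals are controlled since $\PP$ preserves the $\Rd$-marginal in the appropriate sense). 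Given a minimizer $(\lambda_1,\lambda_2)$ with $W_{\Rd \times \Delta^{n-1}}(\lambda_1,\lambda_2) = 0$, the fact that $W_{\Rd \times \Delta^{n-1}}$ is a genuine metric on $\P_2(\Rd \times \Delta^{n-1})$ forces $\lambda_1 = \lambda_2$; applying $\PP$ to both sides yields $\bmu = \PP\lambda_1 = \PP\lambda_2 = \bnu$.

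I expect the main obstacle to be the attainment step, specifically establishing that minimizing sequences $(\lambda_1^k, \lambda_2^k)$ are tight in $\P(\Rd \times \Delta^{n-1})$. The $\Delta^{n-1}$-direction is harmless, but controlling mass escaping to infinity in the $\Rd$-direction requires understanding how $\PP$ interacts with the spatial marginal: one must check that $\PP\lambda = \bmu$ pins down the spatial marginal $\pi_{\Rd}\#\lambda$ to be exactly $\bar\mu$ (since $\PP\lambda = \pi_{\Rd}\#(\bp(r)\lambda(x,r))$ has total mass $\bar\mu$ in each fiber), so that a uniform second-moment bound on $\bmu$ and $\bnu$ transfers to a uniform second-moment bound on the competitors, hence tightness via Markov's inequality. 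An alternative that sidesteps attainment entirely is to argue directly with minimizing sequences: if $D_{\Rd\times\G}(\bmu,\bnu)=0$ there exist $\lambda_1^k, \lambda_2^k$ with $W_{\Rd\times\Delta^{n-1}}(\lambda_1^k,\lambda_2^k) \to 0$; extract narrowly convergent subsequences $\lambda_i^k \to \lambda_i^\infty$ (using tightness as above), pass to the limit using continuity of $\PP$ under narrow convergence to get $\PP\lambda_i^\infty = \bmu,\bnu$ respectively, and use lower semicontinuity of $W_{\Rd\times\Delta^{n-1}}$ to conclude $W_{\Rd\times\Delta^{n-1}}(\lambda_1^\infty,\lambda_2^\infty) = 0$, hence $\lambda_1^\infty = \lambda_2^\infty$ and $\bmu = \bnu$. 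The remaining care is confirming that $\PP$ is continuous (or at least sequentially continuous) with respect to narrow convergence, which follows since $(x,r) \mapsto \bp(r)$-weighted pushforward is composed of the continuous map $\bp$ and a pushforward under a continuous projection.
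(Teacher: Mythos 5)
Your proposal is correct, and the ``alternative'' you describe at the end --- working directly with minimizing sequences, using the second-moment bound from inequality \eqref{secondmomentupstairs} for tightness, narrow lower semicontinuity of $W_{\Rd\times\Delta^{n-1}}$, and narrow continuity of $\PP$ --- is exactly the paper's proof. The initial detour through attainment of the infimum is unnecessary (and the paper does not prove attainment), but since you sidestep it yourself, the argument as a whole matches the paper's.
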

\noindent In Proposition \ref{sharpnesscor} below, we give an example for which $D_{\Rd\times \mathcal{G}} (\bmu,\bnu)$ fails the triangle inequality.

Our main theorem shows that the three vvOT distances we have defined are linearly ordered.
\begin{thm} \label{thm:Main}
Suppose $\Omega \subseteq \Rd$ is closed and convex, $\G$ is connected and symmetric, and the interpolation function $\theta$ satisfies Assumption \ref{interpolationassumption}. Then for every $\bmu,\bnu \in \P_{2}(\Omega \times \G)$ we have
\begin{equation*}
    W_{\Omega \times \G}(\bmu,\bnu) \leq D_{\Rd \times \G}(\bmu,\bnu)  \leq   W_{2, \mathcal{W}}(\bmu,\bnu). 
\end{equation*}\end{thm}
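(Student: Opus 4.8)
\textbf{Proof proposal for Theorem \ref{thm:Main}.}

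The plan is to prove the two inequalities separately, moving from right to left, since the second inequality $D_{\Rd \times \G} \leq W_{2,\mathcal{W}}$ is essentially immediate from the definitions and the first inequality $W_{\Omega \times \G} \leq D_{\Rd \times \G}$ is the substantive content.

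For the inequality $D_{\Rd \times \G}(\bmu,\bnu) \leq W_{2,\mathcal{W}}(\bmu,\bnu)$, I would simply observe that the canonical liftings $\lambda_\bmu$ and $\lambda_\bnu$ defined in \eqref{eqn:Introcanonical} are admissible competitors in the infimum \eqref{semimetricdef}, since $\PP\lambda_\bmu = \bmu$ and $\PP\lambda_\bnu = \bnu$. Hence $D_{\Rd \times \G}(\bmu,\bnu) \leq W_{\Rd \times \Delta^{n-1}}(\lambda_\bmu,\lambda_\bnu) = W_{2,\mathcal{W}}(\bmu,\bnu)$. The only minor point to verify is that the infimum defining $D_{\Rd \times \G}$ is taken over a nonempty set and that $\lambda_\bmu, \lambda_\bnu \in \P_2(\Rd \times \Delta^{n-1})$, which follows from the finite-second-moment hypothesis on $\bmu,\bnu$ together with the compactness (hence boundedness) of $\Delta^{n-1}$ in the $d_{\Delta^{n-1}}$ metric; this is the content of inequality \eqref{secondmomentupstairs} cited in the text.

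For the inequality $W_{\Omega \times \G}(\bmu,\bnu) \leq D_{\Rd \times \G}(\bmu,\bnu)$, the strategy is to take an arbitrary pair of liftings $\lambda_1, \lambda_2 \in \P_2(\Rd \times \Delta^{n-1})$ with $\PP\lambda_1 = \bmu$, $\PP\lambda_2 = \bnu$, and to show that $W_{\Omega \times \G}^2(\bmu,\bnu)$ is bounded above by $W_{\Rd \times \Delta^{n-1}}^2(\lambda_1,\lambda_2)$; taking the infimum over such liftings then yields the claim. To do this, I would pass to the dynamic (Benamou--Brenier) description of $W_{\Rd \times \Delta^{n-1}}$: the Wasserstein geodesic from $\lambda_1$ to $\lambda_2$ on the product manifold $\Rd \times \Delta^{n-1}$ gives a curve $t \mapsto \lambda_t \in \P_2(\Rd \times \Delta^{n-1})$ solving a continuity equation with a spatial velocity field $\bu_t(x,r)$ on the $\Rd$-factor and a velocity field $\bw_t(x,r)$ tangent to the simplex, whose total kinetic energy equals $W_{\Rd \times \Delta^{n-1}}^2(\lambda_1,\lambda_2)$. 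The key step is then to push this curve forward under the projection $\PP$: define $\brho_t := \PP\lambda_t$, a curve in $\P(\Omega \times \G)$ connecting $\bmu$ to $\bnu$, and show that $(\brho_t, \bu, \bv)$ --- with $\bu$ and $\bv$ obtained by suitably averaging/disintegrating the velocities $\bu_t$ and $\bw_t$ over the fibers of $\PP$ --- satisfies the vector valued continuity equation \eqref{vvcty}. The identification of the simplex $\Delta^{n-1}$ with $\P(\G)$ via $\bp$ as a metric space (so that $d_{\Delta^{n-1}}$ is literally the graph Wasserstein distance $W_\G$) is what makes the simplex-direction velocity translate into a graph velocity $\bv$ with the correct $\theta(\rho_i,\rho_j)$ weighting in the continuity equation; here one uses that Maas's metric on $\P(\G)$ has precisely the dynamic characterization recalled in Section \ref{sec:GraphOT}. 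Finally, a Jensen-type convexity argument --- using that the map $(\rho, m) \mapsto |m|^2/\rho$ is jointly convex and that projection/disintegration is an averaging operation, together with the $1$-homogeneity and concavity of $\theta$ (Assumptions (A\ref{thetapositivehomogenity}) and (A\ref{thetaconcavity})) to compare the fiberwise simplex metric tensor to the graph metric tensor after projection --- shows that the action of $(\brho_t,\bu,\bv)$ does not exceed the kinetic energy of the upstairs curve, i.e. $W_{\Omega \times \G}^2(\bmu,\bnu) \leq \int_0^1 (\text{action}) \, dt \leq W_{\Rd \times \Delta^{n-1}}^2(\lambda_1,\lambda_2)$.

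The main obstacle I anticipate is the rigorous execution of this last comparison: one must carefully disintegrate $\lambda_t$ over its $\Rd$-marginal and over the fibers of $\PP$, show that the resulting averaged velocity fields $\bu, \bv$ are well-defined measurable fields producing a genuine weak solution of \eqref{vvcty} in the sense of Definition \ref{def:ContEqMultiSpecies}, and verify the energy inequality with the correct constants --- in particular matching the factor $\tfrac12$ and the weights $q_{ij}$ in \eqref{dynamic} against the intrinsic Riemannian metric on $\Delta^{n-1}$ coming from $\P(\G)$. The technical heart is that the metric tensor of Maas's distance on $\P(\G)$, when expressed on the simplex, involves exactly the quantities $\theta(p_i,p_j)q_{ij}$, so that tangent vectors to $\P(\G)$ decompose into edge-components $v_{ij}$ with norm-squared $\tfrac12\sum_{ij}v_{ij}^2\,\theta(p_i,p_j)q_{ij}$; convexity of $(\rho,m)\mapsto|m|^2/\rho$ under the averaging induced by $\PP$ then gives the needed inequality, but making the fiber-averaging and the concavity of $\theta$ interact correctly is delicate and will require care.
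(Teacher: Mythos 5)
Your treatment of the easy inequality $D_{\Rd \times \G} \leq W_{2,\mathcal{W}}$ matches the paper, and your projection step (fiber-averaging the velocities and using Jensen/convexity of $(\rho,m)\mapsto |m|^2/\rho$ together with the structure of Maas's metric) is exactly the content of the paper's Proposition \ref{coveringspaceinduced}. The gap is in the step you treat as given: that the $W_{\Rd \times \Delta^{n-1}}$-geodesic from $\lambda_1$ to $\lambda_2$ admits a Benamou--Brenier/Lisini dynamic representation on $\Rd \times \Delta^{n-1}$ whose solution you can then project. This is precisely what fails to be available off the shelf: the Riemannian metric induced on $\Delta^{n-1}$ by $W_\G$ degenerates at the boundary (where $\theta(p_i,p_j)$ can vanish), the space is not a complete smooth manifold, and by Gangbo--Li--Mou (Remark \ref{geodesicintersectingtheboundary}) geodesics between interior points can touch the boundary, so one cannot restrict to closed subsets of the interior either. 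Worse, the projection construction itself requires $\supp\lambda_t \subseteq \Rd \times (\Delta^{n-1})^\circ$: the graph velocity $\bv$ is defined by inverting $B(\bp(r))$ on mean-zero vectors and dividing by $\theta\bigl(\int p_i\,d\lambda_x, \int p_j\,d\lambda_x\bigr)$, both of which break down on the boundary --- and the relevant liftings (e.g.\ canonical lifts) are concentrated on the \emph{corners} of the simplex, so your scheme cannot even start there.

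The paper's proof supplies the missing machinery. It replaces the geodesic $\lambda_t$ by boundary-avoidant regularizations $\lambda_t^a$ (Proposition \ref{prop:RegularMeasures}, built from the simplex-level perturbation in Lemma \ref{simplexgeodesicapproximationlem} and Corollary \ref{cor:IneqConvex}), supported in $\Omega \times \Delta^{n-1}_{aC}$ with action at most $W^2_{\Rd\times\Delta^{n-1}}(\lambda_0,\lambda_1)+aC$; it extends the Riemannian metric to a complete one on $\R^d\times\R^{n-1}$ via a cutoff so that Lisini's theorem applies and yields the velocity field $\bw^a$; and, crucially, the regularization is designed so that the per-label masses of $\PP\lambda_0^a$ and $\PP\lambda_1^a$ agree with those of $\bmu$ and $\bnu$, which lets the mismatched endpoints be reconnected component-by-component by classical Wasserstein geodesics (no mutation) at a cost vanishing as $a\to 0$. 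Finally, the case where some $\mu_i$ or $\nu_i$ has zero total mass requires a separate approximation (the map $S^a$ pushing lifts toward the center of the simplex, with a fiberwise graph-geodesic estimate to show $W_{\Omega\times\G}(\bmu,\bmu^a)\to 0$). Without the boundary-avoidance, the endpoint correction, and the zero-mass case, your argument as written does not go through.
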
 
\noindent While the second inequality is an immediate consequence of the definitions of the static metric $D_{\Rd\times \mathcal{G}}$, our proof of the first inequality requires careful use of the Riemannian structure on the interior of $(\P(\G), W_\G) \cong (\Delta^{n-1}, d_{\Delta^{n-1}})$ and hence on the interior of $(\Rd \times \Delta^{n-1}, (d^2_\Rd \oplus d^2_{\Delta^{n-1}})^{1/2})$. Not only does this Riemannian structure degenerate at the boundary, but  Gangbo, Li, and Mou \cite{gangbo2019geodesics} show that there exist geodesics between points in the interior that touch the boundary; see Figure \ref{firstfigure} and Remark \ref{geodesicintersectingtheboundary}. This both raises the possibility that the ground space is \emph{branching} (see  Remark \ref{branchingremark}), and necessitates an approximation argument, in which we show geodesics may be approximated by curves that avoid the boundary; see Lemma \ref{prop:RegularMeasures}. With these tools in hand, the key idea of our argument is to use Lisini's characterization of absolutely continuous curves \cite{lisini2007characterization} to show that approximate geodesics on the lifted space satisfy a continuity equation with respect to the Riemannian structure and then show that the solutions of this equation project down to solutions of the vector valued continuity equation with lower action; see Proposition \ref{coveringspaceinduced}. 

As an immediate consequence of Theorem \ref{thm:Main}, we show  that all three metrics dominate the bounded Lipschitz distance  on $\P(\Omega \times \G)$ (see equation \ref{BLdef} for the definition), and when the domain $\Omega$ is bounded, all metrics are bi-H\"older equivalent, hence topologically equivalent.
  \begin{cor}[Relations between (semi)-metrics on $\P_2(\Omega \times \G)$]
\label{thm:EquivalenceTopologies}
Suppose  $\G$ is connected and symmetric, the interpolation function $\theta$ satisfies Assumption \ref{interpolationassumption}, and $\Omega \subset \Rd$ is closed and convex. Define $Q:= { \max_{i} \sum_{j=1}^n  q_{ij}}$. Then, for all $\bmu,\bnu \in \P_2(\Omega \times \G)$, 
\begin{align} \label{inequalityallmetrics}
 \min\{ 1, Q^{-1/2}\}  d_{BL} (\bmu,\bnu) \leq W_{\Omega \times \G}(\bmu,\bnu) \leq D_{\Rd \times \G}(\bmu,\bnu) \leq W_{2, \mathcal{W}}(\bmu,\bnu) .
\end{align}
Furthermore, if $\Omega$ is bounded, then for $C_{\Omega, \Delta^{n-1}} := \mathrm{diam}(\Omega \times \Delta^{n-1})$,
\begin{align*}
W_{2, \mathcal{W}} (\bmu, \bnu)   \leq  n^{1/4} C_{\Omega,\Delta^{n-1}} (1+ C_{\Omega,\Delta^{n-1}}^2)^{1/4}  \sqrt{d_{BL}(\bmu, \bnu)},
\end{align*}
and all metrics are topologically equivalent.
\end{cor}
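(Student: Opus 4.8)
The two rightmost inequalities in \eqref{inequalityallmetrics} are exactly Theorem \ref{thm:Main}, so the plan is to establish only three things: (i) the lower bound $\min\{1,Q^{-1/2}\}\,d_{BL}\le W_{\Omega\times\G}$; (ii) the H\"older upper bound when $\Omega$ is bounded; and (iii) topological equivalence on bounded $\Omega$, which will follow formally from (i), (ii), the already-proved chain, and the fact that $d_{BL}$ metrizes weak-$*$ convergence on $\P(\Omega\times\G)$.

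For (i) I would argue directly from the dynamic formulation, since $W_{\Omega\times\G}$ is the smallest of the three distances and so cannot be bounded below through the other two. Fix $\bmu,\bnu\in\P_2(\Omega\times\G)$, a competitor $(\brho,\bu,\bv)\in\C(\bmu,\bnu)$, and an admissible test field $[\phi_i]_{i=1}^n$ in the definition \eqref{BLdef} of $d_{BL}$, so each $\phi_i$ is $1$-Lipschitz on $\Omega$ and the increments $\phi_i(x)-\phi_j(x)$ are controlled by the edge weights. Testing the weak form of the continuity equation \eqref{vvcty} (Definition \ref{def:ContEqMultiSpecies}) with $[\phi_i]$ gives, for a.e.\ $t$,
\[
\frac{d}{dt}\sum_{i=1}^n\int_\Omega\phi_i\,d\rho_{i,t}=\sum_{i=1}^n\int_\Omega\nabla\phi_i\cdot u_{i,t}\,d\rho_{i,t}+\frac12\sum_{i,j=1}^n\int_\Omega(\phi_i-\phi_j)\,v_{ij,t}\,\theta(\rho_{i,t},\rho_{j,t})\,q_{ij}.
\]
Reading the right-hand side as the Riemannian pairing of the gradient of $[\phi_i]$ with $(\bu_t,\bv_t)$ in the metric underlying \eqref{dynamic}, Cauchy--Schwarz bounds it by the product of the square root of the action density at time $t$ with the norm of the gradient of $[\phi_i]$. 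I would bound the latter by $\max\{1,Q^{1/2}\}$ using $\sum_i\rho_{i,t}(\Omega)=1$ for the spatial part and, for the graph part, the elementary inequality $\theta(\rho_i,\rho_j)\le\rho_i+\rho_j$ as measures (a consequence of the monotonicity, normalization, and $1$-homogeneity of $\theta$ in Assumption \ref{interpolationassumption}), together with the symmetry of $[q_{ij}]$ and $\sum_j q_{ij}\le Q$. Integrating in $t$, using Cauchy--Schwarz in time and that the optimal curve may be taken of constant speed, gives $\big|\sum_i\int\phi_i\,d(\mu_i-\nu_i)\big|\le\max\{1,Q^{1/2}\}\,W_{\Omega\times\G}(\bmu,\bnu)$; the supremum over $[\phi_i]$ yields (i).

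For (ii) I would use that, with $\Omega$ bounded, $\Omega\times\Delta^{n-1}$ is compact with diameter $C_{\Omega,\Delta^{n-1}}$. For any coupling $\gamma$ of the canonical liftings, $d^2_{\Rd\times\Delta^{n-1}}\le C_{\Omega,\Delta^{n-1}}\,d_{\Rd\times\Delta^{n-1}}$ pointwise, so minimizing over $\gamma$ gives
\[
W_{2,\mathcal W}(\bmu,\bnu)^2=W_{\Rd\times\Delta^{n-1}}(\lambda_\bmu,\lambda_\bnu)^2\le C_{\Omega,\Delta^{n-1}}\,W_{1}(\lambda_\bmu,\lambda_\bnu),
\]
where $W_1$ denotes the $1$-Wasserstein distance on $\P(\Rd\times\Delta^{n-1})$. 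It then remains to bound $W_1(\lambda_\bmu,\lambda_\bnu)$ by $d_{BL}(\bmu,\bnu)$: by Kantorovich duality and the identity $\int\psi\,d\lambda_\bmu=\sum_j\int_\Omega\psi(\cdot,e_j)\,d\mu_j$, a $1$-Lipschitz potential $\psi$ on $\Rd\times\Delta^{n-1}$ restricts to slices $\phi_j:=\psi(\cdot,e_j)$ that are $1$-Lipschitz in space with increments controlled by $d_{\Delta^{n-1}}(e_i,e_j)$; subtracting a constant (harmless since the liftings have equal mass) makes $\|\psi\|_\infty\le C_{\Omega,\Delta^{n-1}}$, so a suitably rescaled $[\phi_j]$ becomes admissible in \eqref{BLdef}. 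Tracking the rescaling constants and comparing the $d_{\Delta^{n-1}}$-geometry of the vertices $e_1,\dots,e_n$ to the graph metric then produces the factor $n^{1/4}C_{\Omega,\Delta^{n-1}}(1+C_{\Omega,\Delta^{n-1}}^2)^{1/4}$; combining this sandwich of $d_{BL}$ and $\sqrt{d_{BL}}$ around every metric, and invoking that $d_{BL}$ metrizes weak-$*$ convergence, gives (iii) and the bi-H\"older claim.

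The hard part is (ii). One must convert an estimate that is naturally phrased with the simplex metric $d_{\Delta^{n-1}}$ and the lifted $1$-Wasserstein distance into the intrinsically graph-based quantity $d_{BL}$ on $\P(\Omega\times\G)$, and this forces care with (a) the McShane-type extension of a slicewise-consistent Lipschitz family back to a genuine Lipschitz potential on the product, (b) closing the $L^\infty$ gap between the $d_{BL}$ and $W_1$ dualities via the equal-mass normalization, and (c) keeping track of how every constant depends on the number of labels $n$ and on $\mathrm{diam}(\Omega\times\Delta^{n-1})$ --- which is exactly the origin of $n^{1/4}(1+C_{\Omega,\Delta^{n-1}}^2)^{1/4}$. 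By contrast (i) is routine once the dual norm on test fields is pinned down so that the constant is precisely $\max\{1,Q^{1/2}\}$ rather than the cruder $(1+Q)^{1/2}$, and (iii) is purely formal.
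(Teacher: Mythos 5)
Your proposal is correct and follows essentially the same route as the paper: the two right-hand inequalities are quoted from Theorem \ref{thm:Main}, your part (i) re-derives Proposition \ref{BLlem} (which the paper's proof simply cites) by testing the vector valued continuity equation and using $\theta(s,t)\le (s+t)/2$ together with $\sum_j q_{ij}\le Q$, and your part (ii) is exactly the paper's argument $W_{2,\mathcal{W}}^2\le C_{\Omega,\Delta^{n-1}}\,W_{1,\Rd\times\Delta^{n-1}}$ followed by Kantorovich--Rubinstein duality, normalization of the potential by a constant, restriction to the vertices $e_j$, and Cauchy--Schwarz over the $n$ components (which is where the factors $n^{1/4}$ and $(1+C_{\Omega,\Delta^{n-1}}^2)^{1/4}$ actually come from). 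The only caveats are cosmetic: no McShane-type extension is needed since one only restricts a $1$-Lipschitz potential to the slices $r=e_j$, and admissibility in \eqref{BLdef} imposes no constraint on the increments $\phi_i-\phi_j$ — the componentwise $W^{1,\infty}_2$ bounds suffice, exactly as in Proposition \ref{BLlem}.
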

 
 We conclude with   examples that show the preceding inequalities are sharp and  $D_{\Rd \times \G}$ fails to satisfy the triangle inequality.
\begin{prop}[Examples of sharpness and inequality] \label{sharpnesscor}
Suppose $\G$ is a two-node graph with  $q_{ij}\equiv 1$ and $\Omega = \R$. Then there exist $\theta$ satisfying assumption  \ref{interpolationassumption}  so that   
\begin{enumerate}[(i)]
\item for each pair of inequalities in (\ref{inequalityallmetrics}), the constants are sharp and there exist  measures for which strict inequality holds; \label{sharppart}
\item $D_{\Rd \times \G}$ fails to satisfy the triangle inequality. \label{triangleinequalitycor}
\end{enumerate}
\end{prop}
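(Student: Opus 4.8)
The plan is to work entirely in the two-node case, where the graph Wasserstein metric on $\P(\G)$ becomes an explicit one-dimensional metric on $\Delta^1 \cong [0,1]$, and then to choose $\theta$ so that this induced metric $d_{\Delta^1}$ has a prescribed, computable shape. Since $q_{ij} \equiv 1$ and there are only two nodes, Maas's graph continuity equation reduces to an ODE for a single density coordinate $p \in [0,1]$ with mobility $\theta(p, 1-p)$, so $d_{\Delta^1}(s,t) = \bigl| \int_s^t \theta(r, 1-r)^{-1/2}\, dr \bigr|$ (up to the normalizing constant fixed by (A4)). The first task is therefore to record this formula carefully and to identify which monotone, concave, $1$-homogeneous, symmetric $\theta$ are admissible; e.g. interpolating between the arithmetic mean $\theta(s,t) = (s+t)/2$ and something closer to the geometric mean $\sqrt{st}$ gives a family of metrics $d_{\Delta^1}$ whose behaviour near the endpoints $0,1$ we can tune. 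The key structural fact we will exploit is that $W_{2,\mathcal W}(\bmu,\bnu) = W_{\Rd\times\Delta^1}(\lambda_\bmu,\lambda_\bnu)$ forces the lifted mass to sit at the two \emph{corners} $e_1,e_2$ of the simplex, so it pays a fixed "corner-to-corner" price $d_{\Delta^1}(0,1)$ for every unit of mass that must change label; whereas $D_{\Rd\times\G}$ and $W_{\Omega\times\G}$ are free to route mass through the \emph{interior} of $\Delta^1$, where, because $d_{\Delta^1}$ is a metric coming from a concave mobility, intermediate label values can be cheaper to traverse.

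For part (\ref{sharppart}), the strategy is to treat each of the four inequalities in (\ref{inequalityallmetrics}) in turn, for each exhibiting a pair of measures achieving it (or approaching it) and, where a strict gap is asserted, a pair witnessing the gap. For the two outer inequalities $d_{BL}\le W_{\Omega\times\G}$ and $W_{2,\mathcal W}\le W_{\Rd\times\Delta^1}(\lambda_\bmu,\lambda_\bnu)$ the constant $\min\{1,Q^{-1/2}\}$ with $Q = 2$ should be attained in the limit by taking measures concentrated at a single spatial point that differ only in their label distribution, reducing everything to the one-dimensional metric $d_{\Delta^1}$ versus the flat metric on $[0,1]$; one checks that the worst ratio is realized by pure-corner configurations, which is exactly where the Lipschitz test functions separating labels are extremal. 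For the strict gap $W_{\Omega\times\G} < D_{\Rd\times\G}$, the idea is the converse of the branching phenomenon in Figure \ref{firstfigure}: take $\bmu,\bnu$ supported at two spatial locations $x_0 \ne x_1$ with label profiles such that the cheapest dynamic plan moves mass spatially while simultaneously mutating labels through the interior, which a single lifted transport plan on $\Rd\times\Delta^1$ (with the product metric $d_\R^2\oplus d_{\Delta^1}^2$) cannot replicate because it must commit each particle to a geodesic in $\Delta^1$ independent of its spatial motion — so, with a mean $\theta$ chosen concave enough that geodesics in $\Delta^1$ between near-corner points are strictly longer than a "split and recombine" dynamic route, we get $W_{\Omega\times\G} < D_{\Rd\times\G}$. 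For $D_{\Rd\times\G} < W_{2,\mathcal W}$, the gap is produced by choosing $\bmu,\bnu$ at a common spatial point with $\mu_i$ not concentrated at corners, i.e. genuinely mixed labels, so that a non-canonical lifting $\lambda$ placing mass in the interior of $\Delta^1$ beats the canonical (corner) lifting; concavity of $\theta$ again makes interior transport strictly cheaper. In each case the "sharpness of the constant" is verified by a matching lower bound, which for $W_{\Omega\times\G}\ge \min\{1,Q^{-1/2}\}d_{BL}$ comes from the proof of Corollary \ref{thm:EquivalenceTopologies}, and for the middle inequalities is trivial (equality of definitions) or follows from the explicit one-dimensional computation.

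For part (\ref{triangleinequalitycor}), the plan is to exploit the fact, emphasized in the discussion after Theorem \ref{thm:Main}, that $D_{\Rd\times\G}$ is defined by a \emph{double infimum over liftings} of a genuine metric, and such "projected" distances generically fail the triangle inequality when the optimal lifting of an intermediate measure $\bkappa$ used to witness $D(\bmu,\bkappa)$ differs from the one used to witness $D(\bkappa,\bnu)$. Concretely: pick $\bmu$ and $\bnu$ each concentrated near opposite corners at a single spatial location, and $\bkappa$ a genuinely mixed measure (e.g. the uniform label profile) at the same location. For the chosen concave $\theta$, both $D(\bmu,\bkappa)$ and $D(\bkappa,\bnu)$ can use liftings of $\bkappa$ that are partly interior, making each leg cheap — roughly $\tfrac12 d_{\Delta^1}(0,1)$ each or less — while $D(\bmu,\bnu)$, forced to reconcile a single pair of liftings, must pay close to the full corner-to-corner cost $d_{\Delta^1}(0,1)$, and — crucially — because $d_{\Delta^1}$ comes from a \emph{strictly} concave mobility, the one-dimensional triangle "equality" $d_{\Delta^1}(0,\tfrac12)+d_{\Delta^1}(\tfrac12,1) = d_{\Delta^1}(0,1)$ still holds on the base, but the \emph{Wasserstein} distances between the lifted measures do not add up, because the canonical lifting of $\bkappa$ is not a midpoint of any $W_{\Rd\times\Delta^1}$-geodesic from $\lambda_\bmu$ to $\lambda_\bnu$. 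Making this quantitative — exhibiting explicit $\theta$, $\bmu,\bkappa,\bnu$ and numbers $a,b,c$ with $D(\bmu,\bnu) = c > a + b = D(\bmu,\bkappa)+D(\bkappa,\bnu)$ — is the main obstacle, since it requires actually solving (or sharply bounding both ways) the lifted Wasserstein problems for a non-canonical competitor; I expect this to be the hard computational core of the whole proposition, and the cleanest route is to pick $\theta$ so that $d_{\Delta^1}$ has a simple closed form (e.g. $d_{\Delta^1}(s,t) \propto |\arcsin(2s-1) - \arcsin(2t-1)|$ for a suitable $\theta$) and to reduce each lifted Wasserstein distance to a one-variable optimization over the "interior depth" of the lifting of $\bkappa$.
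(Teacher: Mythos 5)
Your part (ii) plan has a genuine gap: the configuration you propose — $\bmu$, $\bnu$ near opposite corners and $\bkappa$ mixed, \emph{all at the same spatial location} — provably cannot violate the triangle inequality. On a single spatial fiber the infimum over liftings collapses: since $d^2_{[0,1]}$ is jointly convex along linear interpolations (this is exactly the convexity of $W_\G^2$ used in the paper's computation of $D_{\R\times\G}(\bmu^1,\bmu^3)$), Jensen's inequality shows that for any two vector valued measures concentrated at one point with label fractions $s$ and $t$ one has $D_{\R\times\G} = d_{[0,1]}(s,t)$, attained by Dirac liftings at the barycenters; in particular, with $\bmu,\bnu$ exactly at the corners each leg through $\bkappa$ costs exactly $\tfrac12 d_{[0,1]}(0,1)$ (not less, as your "roughly $\tfrac12$ each or less" hopes), and since $d_{[0,1]}$ is a metric the triangle inequality holds with equality at best. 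The failure must come from \emph{spatial} pinning of a lifting: the paper takes $\bmu^2=[\tfrac12\delta_{-a},\tfrac12\delta_a]^t$, whose only admissible lifting sits at the two simplex corners because each spatial atom carries a pure label, together with $\bmu^1=[\tfrac12\delta_0,\tfrac12\delta_0]^t$ and $\bmu^3=[b\delta_0,(1-b)\delta_0]^t$. One then computes $D(\bmu^1,\bmu^2)=a$, $D(\bmu^1,\bmu^3)=d_{[0,1]}(1/2,b)$, but $D(\bmu^2,\bmu^3)=\sqrt{a^2+\tfrac12 d^2_{[0,1]}(0,2b-1)}$, and the violation $D(\bmu^2,\bmu^3)>a+d_{[0,1]}(1/2,b)$ holds for small $a$ and $b$ near $1/2$ precisely because $\theta$ is chosen to \emph{vanish at the boundary} (e.g. the geometric mean), so that $1/\sqrt{\theta(r,1-r)}$ blows up near the corners and $d_{[0,1]}(0,c)>2\,d_{[0,1]}(1/2,(1+c)/2)$ for small $c$; mere concavity is not enough (the arithmetic mean yields equality everywhere, as your own formula for $d_{\Delta^1}$ shows).

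The same correction sharpens your part (i) heuristics. The mechanism you invoke for $W_{\Omega\times\G}<D_{\R\times\G}$ ("each particle must commit to a simplex geodesic independent of its spatial motion") is not the actual obstruction — lifted plans couple spatial and simplex motion freely through the product cost; the obstruction is again the projection constraint forcing corner liftings at spatially separated pure-label atoms, while the dynamic distance may transport mass spatially first and only then mutate at a balanced mixture ratio, paying $a+d_{[0,1]}(1/2,b)$ versus the static $\sqrt{a^2+\tfrac12 d^2_{[0,1]}(0,2b-1)}$. Your example for $D<W_{2,\mathcal W}$ (common spatial point, mixed labels, interior lifting beating the canonical corner lifting) is essentially the paper's, but again strictness requires the boundary-degenerate $\theta$, not concavity alone. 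With these quantitative replacements your outline for part (i) matches the paper; without the spatially separated pinned measure, part (ii) cannot be completed.
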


\subsection{Perspectives on gradient flows and linearization}

Returning to our original motivations in PDEs and data analysis, we now discuss the merits of the vvOT distances in each of these applications. Due to its definition in terms of a PDE constrained optimization problem, the dynamic metric $W_{\Omega \times \G}$ offers a clear connection to evolutionary PDEs.
For example, given an energy $E: \P(\Omega \times \G) \to \R \cup \{+\infty\}$ of the form
 \begin{align*}
E(\bmu) =   \int_{\mathbb{R}^{d}} f(\bmu(x))dx  +  \sum_{i=1}^{n} \int_{\mathbb{R}^{d}} V_i(x)d \mu_{i}(x) + \frac12  \sum_{i,j=1}^{n} \iint_{\mathbb{R}^{d} \times \mathbb{R}^{d}}  W_{ij}(x-y) d\mu_i(x)d\mu_{j}(y),
\end{align*}
 for $W_{ij} = W_{ji}$ even, a heuristic computation shows that the gradient flow of $E$ with respect to $W_{\Omega \times \G}$ satisfies the following system of PDEs:
\begin{align} \label{ourpde}
\partial_t \mu_i  &= \nabla \cdot \left (\mu_i  \left (\nabla \partial_i f(\bmu ) +\nabla V_i + \sum_{k=1}^n \nabla W_{ik}*\mu_k  \right)\right )\\
&\quad -  \sum_{j=1}^n \left( \partial_i f(\bmu )- \partial_j f(\bmu ) +V_i -V_j +\sum_{k=1}^n  (W_{ik}-W_{jk})*\mu_k \right)\theta\left( \mu_{i} ,\mu_j  \right) q_{ij}, \nonumber
\end{align}
for $i =1, \dots, n$. As in the Hellinger-Kantorovich case, we see the role of the energy both in the spatial transport and mutation terms. 

With regard to the motivating examples described earlier, a natural choice of  energy would be $E(\brho)= \mathcal{L}(\brho,\bmu) = \sum_{i=1}^n KL(\rho_i||\mu_i)$, where $\bmu$ is a target vector valued measure. On the other hand, PDEs of the above form have appeared in various contexts  as models of \emph{multispecies dynamics}. For example, for $f(\bmu) =   \sum_{i=1}^n a_i \mu_i \log(\mu_i) + \frac{1}{m} \left( \sum_{i=1}^{n} b_{i}\mu_{i}\right)^{m}$, $m \geq 2,$ and $V_i \equiv  0$,   in the limit as $q_{ij} \to 0$, we recover   
 systems of aggregation-diffusion equations \cite{doumic2024multispecies, carrillo2024well, giunta2022local}   and  multispecies cross-diffusion models \cite{carrillo2024interacting}, \begin{equation*}
    \partial_{t}\mu_{i} = a_{i}\Delta \mu_{i} + b_{i}\div(\mu_{i}\nabla P(\bmu))+ \div (\mu_{i} \sum_{j=1}^{n}\nabla W_{ij}\ast\mu_{j}), \quad P(\bmu) = \left( \sum_{i=1}^{n} b_{i}\mu_{i}\right)^{m-1} .
\end{equation*} 
Similar PDEs  have also arisen in   work on multispecies reaction diffusion systems  \cite{heinze2025discrete},
%
game theory applications \cite{conger2024coupled}, and in machine learning tasks such as neural architecture search \cite{NAS1, NAS2}.

\medskip 

With regard to the previously described motivation for comparing labeled datasets, the static distance $W_{2, \mathcal{W}}$, joined with linearization techniques introduced in \cite{wang2013linear} (see also \cite{LinearizationThorpe,Moosmller2022}), can be used to speed up the computation of pairwise distances between vector valued measures in image classification problems, and other machine learning applications. We briefly describe the idea of linearization in the context of vector valued measures and introduce one final, \textit{linearized} vector valued distance on $\mathcal{P}_2(\R^d \times \G)$.

Suppose that $\{ \bmu^1, \dots, \bmu^N \}$ is a collection of vector valued measures with finite second moments. This data set can be embedded into $\mathcal{P}_2(\R^d \times \Delta^{n-1})$ by considering the canonical lifts $\{ \lambda_{\bmu^1}, \dots, \lambda_{\bmu^N} \}$ defined   in equation \eqref{eqn:Introcanonical}. Then, after selecting a reference measure $\lambda_{\mathrm{ref}} \in \mathcal{P}_2(\R^d \times \Delta^{n-1})$, we can find a representation of the elements of the data set as points in a suitable vector space associated with the reference measure $\lambda_{\mathrm{ref}}$. This construction relies on the following assumption.


\begin{as}
	\label{Assumption:Uniqueness}
	We assume that $\lambda_{\mathrm{ref}}$ is such that, for every vector valued measure $\bmu$ with finite second moment, there exists a unique optimal transport plan $\Gamma_{\bmu}$ for the Wasserstein distance $W_{\Rd \times \Delta^{n-1}}$ between $\lambda_{\mathrm{ref}}$ and $\lambda_\bmu$. Moreover, we assume that this transport plan is induced by a map $T_{\bmu} : \R^d \times \Delta^{n-1} \rightarrow \R^d \times \Delta^{n-1}$, that is,  $\Gamma_\bmu = (\mathrm{Id} \times T_\bmu)_{\sharp} \lambda_{\mathrm{ref}}$.
\end{as}

Given  $\lambda_{\mathrm{ref}}$ satisfying Assumption \ref{Assumption:Uniqueness}, we associate to each $\bmu^i$  its corresponding $T_{\bmu^i} \in L^2(\lambda_{\mathrm{ref}})$. Taking advantage of the fact that $\R^d \times \Delta^{n-1}$ is a subset of $\R^d \times \R^{n-1}$, it is then reasonable to consider the following \textit{linearized} distance on vector valued measures:
\begin{equation}
d_{\mathrm{LOT}}(\bmu^i, \bmu^j):= \left( \int_{\R^d \times \Delta^{n-1}} \lVert T_{\bmu^i}(x,r) - T_{\bmu^j}(x,r) \rVert^2 d\lambda_{\mathrm{ref}}(x,r) \right)^{1/2},     
\label{def:LOT}
\end{equation}
where we use $\lVert  \cdot \rVert$ to denote the norm in $\R^d \times \R^{n-1} $. As discussed in \cite{LinearizationThorpe,Moosmller2022,wang2013linear}, the advantage of computing \eqref{def:LOT} for all $i,j=1, \dots, N$ is that it only requires solving the $N$ optimization problems needed to obtain the maps $T_{\bmu^i}, \dots, T_{\bmu^N}$, instead of requiring to solve $N^2$ optimization problems to obtain $W_{\R^d \times \Delta^{n-1}}(\lambda_{\bmu^i}, \lambda_{\bmu^j})$ for all $i,j=1, \dots, N$. Besides being useful for its improved computational scalability, linearization provides a concrete way to define methodologies such as PCA as well as other popular kernel methods in the setting of the space of vector valued measures. 


We conclude this section by stating the topological equivalence between the linearized OT distance and all other distances on the space of vector valued measures discussed in this paper, and by making a remark about Assumption \ref{Assumption:Uniqueness}.

\begin{prop}
	\label{prop:LOT}
	Suppose that $\Omega$ is a convex and compact subset of $\R^d$, $\G$ is connected and symmetric, and the interpolation function $\theta$ satisfies Assumption \ref{interpolationassumption}. Suppose further that Assumption \ref{Assumption:Uniqueness} holds. Then the metric $d_{\mathrm{LOT}}$ induces the same mode of convergence in the space $\mathcal{P}_2(\Omega \times \G)$ as  $W_{2, \mathcal{W}}$, $W_{\Omega \times \G}$, and $D_{\R^d \times \G}$.
\end{prop}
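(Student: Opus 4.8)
The plan is to reduce the statement to a single equivalence: that $d_{\mathrm{LOT}}$ and $W_{2,\mathcal{W}}$ induce the same convergence on $\mathcal{P}_2(\Omega\times\G)$. This suffices because Corollary \ref{thm:EquivalenceTopologies} already gives the topological equivalence of $W_{2,\mathcal{W}}$, $W_{\Omega\times\G}$, and $D_{\R^d\times\G}$ when $\Omega$ is compact. Two facts I would record first and use repeatedly: (a) since $\Omega$ is compact, every canonical lift $\lambda_\bmu$, $\bmu\in\mathcal{P}_2(\Omega\times\G)$, is supported in the fixed compact set $\Omega\times\Delta^{n-1}\subseteq\R^d\times\R^{n-1}$, so on this class of measures convergence in $W_{\R^d\times\Delta^{n-1}}$ --- hence convergence in $W_{2,\mathcal{W}}$, recalling $W_{2,\mathcal{W}}(\bmu,\bnu)=W_{\R^d\times\Delta^{n-1}}(\lambda_\bmu,\lambda_\bnu)$ --- coincides with weak convergence; (b) since the graph Wasserstein metric metrizes weak convergence on $\P(\G)$, the distance $d_{\Delta^{n-1}}$ and the Euclidean metric on the compact simplex $\Delta^{n-1}$ are topologically equivalent, so, combined with (a), convergence of canonical lifts in $W_{\R^d\times\Delta^{n-1}}$ is equivalent to convergence in the Wasserstein metric $\widetilde{W}$ on $\mathcal{P}_2(\R^d\times\Delta^{n-1})$ associated with the Euclidean ground metric $\|(x,r)-(x',r')\|^2=d_\Rd^2(x,x')+|r-r'|^2$, which is the ground metric implicit in $d_{\mathrm{LOT}}$.

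The implication $d_{\mathrm{LOT}}(\bmu^k,\bmu)\to 0 \Rightarrow W_{2,\mathcal{W}}(\bmu^k,\bmu)\to 0$ is then immediate from one inequality: $(T_\bmu,T_\bnu)_\sharp\lambda_{\mathrm{ref}}$ is a coupling of $\lambda_\bmu$ and $\lambda_\bnu$, since by Assumption \ref{Assumption:Uniqueness} its marginals are $(T_\bmu)_\sharp\lambda_{\mathrm{ref}}=\lambda_\bmu$ and $(T_\bnu)_\sharp\lambda_{\mathrm{ref}}=\lambda_\bnu$, whence
\begin{equation*}
\widetilde{W}(\lambda_\bmu,\lambda_\bnu)\le\Big(\int_{\R^d\times\Delta^{n-1}}\|T_\bmu(x,r)-T_\bnu(x,r)\|^2\,d\lambda_{\mathrm{ref}}(x,r)\Big)^{1/2}=d_{\mathrm{LOT}}(\bmu,\bnu);
\end{equation*}
now invoke (a)--(b) to pass from $\widetilde{W}$-convergence to $W_{2,\mathcal{W}}$-convergence.

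The substantive direction is the converse, $W_{2,\mathcal{W}}(\bmu^k,\bmu)\to 0 \Rightarrow d_{\mathrm{LOT}}(\bmu^k,\bmu)\to 0$, and here I would argue by stability of optimal transport. From $W_{2,\mathcal{W}}(\bmu^k,\bmu)\to 0$ we get $\lambda_{\bmu^k}\to\lambda_\bmu$ weakly by (a). The plans $\Gamma_{\bmu^k}=(\mathrm{Id}\times T_{\bmu^k})_\sharp\lambda_{\mathrm{ref}}$ form a tight family --- fixed first marginal $\lambda_{\mathrm{ref}}$, second marginals supported in the compact $\Omega\times\Delta^{n-1}$ --- so by the standard stability of optimal plans under weak convergence of the marginals, every weak subsequential limit of $\Gamma_{\bmu^k}$ is optimal between $\lambda_{\mathrm{ref}}$ and $\lambda_\bmu$; by the uniqueness in Assumption \ref{Assumption:Uniqueness} the only such limit is $\Gamma_\bmu=(\mathrm{Id}\times T_\bmu)_\sharp\lambda_{\mathrm{ref}}$, so the whole sequence satisfies $\Gamma_{\bmu^k}\to\Gamma_\bmu$ weakly. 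To promote this to $L^2(\lambda_{\mathrm{ref}})$-convergence of the maps, I would expand $\int\|T_{\bmu^k}-T_\bmu\|^2\,d\lambda_{\mathrm{ref}}=\int\|T_{\bmu^k}\|^2\,d\lambda_{\mathrm{ref}}-2\int\langle T_{\bmu^k},T_\bmu\rangle\,d\lambda_{\mathrm{ref}}+\int\|T_\bmu\|^2\,d\lambda_{\mathrm{ref}}$ and handle the two nonconstant terms: all maps take values in the compact set $\Omega\times\Delta^{n-1}$, so $\int\|T_{\bmu^k}\|^2\,d\lambda_{\mathrm{ref}}=\int\|y\|^2\,d\lambda_{\bmu^k}(y)\to\int\|y\|^2\,d\lambda_\bmu(y)=\int\|T_\bmu\|^2\,d\lambda_{\mathrm{ref}}$ by weak convergence against the bounded continuous function $\|\cdot\|^2$, while $\int\langle T_{\bmu^k}(z),T_\bmu(z)\rangle\,d\lambda_{\mathrm{ref}}(z)=\int\langle y,T_\bmu(z)\rangle\,d\Gamma_{\bmu^k}(z,y)\to\int\|T_\bmu\|^2\,d\lambda_{\mathrm{ref}}$ after approximating the (bounded, but only $\lambda_{\mathrm{ref}}$-measurable) map $T_\bmu$ by a bounded continuous one via Lusin's theorem --- the approximation error controlled uniformly in $k$ by the fixed first marginal $\lambda_{\mathrm{ref}}$ and the boundedness of $y\in\Omega\times\Delta^{n-1}$, and the main term converging by $\Gamma_{\bmu^k}\to\Gamma_\bmu$ weakly. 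Adding the pieces gives $d_{\mathrm{LOT}}(\bmu^k,\bmu)\to 0$.

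I expect the main obstacle to be precisely this last upgrade --- turning weak convergence of the optimal plans $\Gamma_{\bmu^k}$ into strong $L^2$ convergence of the optimal maps $T_{\bmu^k}$. This is where Assumption \ref{Assumption:Uniqueness} is indispensable (to identify every subsequential limit with the graph of $T_\bmu$) and where compactness of $\Omega\times\Delta^{n-1}$ supplies the uniform bounds on the maps; the argument parallels the stability results for linearized optimal transport in \cite{wang2013linear,LinearizationThorpe,Moosmller2022}.
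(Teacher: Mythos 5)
Your proposal is correct and follows essentially the same route as the paper: the coupling $(T_{\bmu}\times T_{\bnu})_{\sharp}\lambda_{\mathrm{ref}}$ yields $W_{2,\mathcal{W}}\le d_{\mathrm{LOT}}$ (up to the equivalence of the Euclidean and $d_{\Delta^{n-1}}$ ground metrics, which the paper handles via Proposition \ref{basicfactWG}), and the converse rests on stability of the optimal plans $\Gamma_{\bmu^k}$ combined with the uniqueness in Assumption \ref{Assumption:Uniqueness}. The only cosmetic difference is the final upgrade to strong $L^2(\lambda_{\mathrm{ref}})$ convergence of the maps: you expand $\lVert T_{\bmu^k}-T_{\bmu}\rVert_{L^2}^2$ and treat the cross term by a Lusin approximation, whereas the paper pairs convergence of the $L^2$ norms with weak $L^2$ convergence (identified through the same plan-stability argument) to conclude.
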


\begin{rem} \label{branchingremark}
	In the Euclidean setting, the analog of Assumption \ref{Assumption:Uniqueness} is satisfied when the reference is absolutely continuous with respect to the Lebesgue measure, due to Brenier's theorem; see \cite{Brenier}. A version of Brenier's theorem for smooth and complete Riemannian manifolds holds when the reference is assumed to be absolutely continuous with respect to the manifold's volume form; see \cite{McCann2001}. The crucial geometric property used in this extension of Brenier's theorem, as discussed in more generality in \cite{Gigli2012,Cavalletti2017}, is that a smooth complete Riemannian manifold is a \textit{non-branching} geodesic space. This geometric property, which essentially says that two geodesics cannot coincide for a positive length unless one of them is completely contained in the other, is not necessarily satisfied by the metric space $(\R^d \times \Delta^{n-1}, d_{\R^d \times \Delta^{n-1}})$. Indeed, given the geometric pathologies that $\Delta^{n-1}$ may have, as described in Remark \ref{geodesicintersectingtheboundary}, we can see that the space $\Delta^{n-1}$ endowed with the metric $d_{\Delta^{n-1}}$ may be a branching space. As a result, Brenier's theorem may not hold for the manifold $\R^d \times \Delta^{n-1}$.

	Despite this, we would suggest $\lambda_{\mathrm{ref}}$ to be chosen as 
	\[\lambda_{\mathrm{ref}} = \mathrm{Unif}(B_1(0)) \otimes \mathrm{Unif}(\Delta^{n-1}), \]
	i.e., the product measure of the uniform distribution over the unit ball in $\R^d$ and the standard uniform distribution over $\Delta^{n-1}$, even if we cannot a priori guarantee that Assumption \ref{Assumption:Uniqueness} holds for this choice. We believe, however, that, since the canonical lifts $\lambda_{\bmu}$ are all concentrated on the corners of the simplex $\Delta^{n-1}$, Assumption \ref{Assumption:Uniqueness} is strictly weaker than the statement of a Brenier-like theorem for the space $\R^d \times \Delta^{n-1}$ and we actually conjecture that Assumption \ref{Assumption:Uniqueness} is satisfied by the above choice of $\lambda_{\mathrm{ref}}$ if the graph weights $q_{ij}$ for $\G$ are chosen generically. This is left as an interesting problem to explore in the future.  
\end{rem}

%
%
%
%
%
%

 \subsection{Organization of paper}
 The  paper is organized as follows. In section \ref{graphsection}, we recall the theory of optimal transport on graphs and its induced geometry on the simplex, and we develop a technique for approximating arbitrary geodesics by curves that avoid the boundary of the simplex. In section \ref{sec:Distances}, we introduce our three main vvOT distances. Section \ref{subsection:Dynamicdistances} considers basic properties of the dynamic distance and, in Section \ref{subsection:Existenceofminimizers}, we prove that there exist solutions of the vector valued continuity equation that minimize the action, so that the infimum in the dynamic distance is attained. Section \ref{subsection:Liftedspaceandstaticsemimetrics} considers the static metrics. In Section \ref{sec:Comparison}, we turn to the proof of our main Theorem \ref{thm:Main}, in which we prove an inequality relating the three vvOT distances. Section \ref{sec:ConstructionRegularized} develops an approach for approximating geodesics in $\P(\Rd \times \Delta^{n-1})$ in a way that avoids the boundary of $\Rd \times \Delta^{n-1}$. In Section \ref{sec:UpstairsDownstairs}, we show that solutions of the continuity equation on the interior of $\Rd \times \Delta^{n-1}$ project down to solutions of the vector valued continuity equation with smaller action. In Section  \ref{sec:ProofMainTheorem}, we combine these results to prove our main Theorem \ref{thm:Main}. Then, in Section \ref{sec:TopoEquiv}, we prove Corollary \ref{thm:EquivalenceTopologies}, which ensures that, on a bounded domain $\Omega$, the metrics are topologically equivalent, and  Proposition \ref{prop:LOT}, which establishes the topological equivalence of all three metrics with the linearized distance on vector valued measures defined in \eqref{def:LOT}. Finally, in section \ref{sec:Examples}, we consider several examples in the case of a two node graph that show the preceding inequalities are sharp, proving Proposition \ref{sharpnesscor}.

\section{Optimal transport on graphs and induced geometry on simplex} \label{graphsection}

\subsection{Notation and Preliminaries}

Given $\Omega \subseteq \Rd$ convex and  closed set, let $\mathcal{M}(\Omega)$ denote the set of finite Borel measures on $\Omega$, and let $\mathcal{M}_s(\Omega)$ denote the set of finite signed Borel measures, that is, the set of signed Borel measures with finite total variation norm. Since $\G$ contains $n$ nodes, we may identify the collection of points $\{(x,i) : i \in \mathcal{G}\}$ with a vector $\bx = (x_1, \dots, x_n)  \in \Omega^n$ and we may identify  $\mathcal{M}(\Omega \times \mathcal{G}) \simeq \mathcal{M}(\Omega)^n$. Similarly, $\P(\Omega \times \G) \simeq \{ \bmu \in \mathcal{M}(\Omega)^n : \sum_{i=1}^n \mu_i(\Omega) = 1 \}$. Bold font is used to denote a vector valued measure $\bmu \in \mathcal{M}(\Omega)^n$ and subscripts $\mu_i \in \mathcal{M}(\Omega)$ are used to denote the components of the vector valued measure. We will write $\P_2(\Omega \times \G)$ for the subset of $\P(\Omega \times \G)$ with finite second moment, or equivalently, the set of $\bmu \in \P(\Omega \times \G)$ such that  $M_2(\mu_i) = \int_\Omega |x|^2 d \mu_i(x) < +\infty ,$ for all $i =1, \dots, n .$
 Likewise, we will write $\bmu \in \P_{ac} (\Omega \times \G)$ if each component $\mu_i$ is absolutely continuous with respect to Lebesgue measure, $\mu_i \ll \mathcal{L}^d |_\Omega$, and in this case, we will abuse notation and let $\mu_i$ denote both the measure and its density, $d \mu_i = \mu_i(x) d \mathcal{L}^d(x)$. Finally, we write $\mathcal{M}_s(\Omega)^n$ to denote vector valued finite, signed measures. 
Given a metric space $(X,d)$, a Borel measurable function $t: X \to X$, and a probability measure $\mu \in \mathcal{P}(X)$, the \emph{push forward of $\mu$ under $t$}, $t \# \mu \in \P(X)$ is given by 
$(t \# \mu)(A)  = \mu(t^{-1}(A)) \hspace{1mm} \text{  for all Borel measurable sets $A \subset X$}.$

 In general, we will consider both $\P(\Omega \times \G)$ and $\M(\Omega \times \G)$ to be endowed with the topology of component-wise narrow convergence. We will use similar notation to denote continuity properties of functions from $\R$ to $\P(\Omega \times \G)$. For example $\brho \in C([0,T]; \P(\Omega\times \G))$ if $t \mapsto \rho_{i,t}$ is narrowly continuous for all $i =1, \dots, n$. We use $\brho$ to denote a solution to vector valued continuity equation \ref{vvcty}, where $\rho_{i,t}$ represents evolution of $i$-th coordinate of $\brho$.  
 
 Given $\bmu,\bnu \in \P(\Rd \times \G),$ a classical metric on $\P(\Rd \times \G)$ is given by the \emph{bounded Lipschitz distance}   \cite[Section 11]{dudley2018real}, which is defined by
\begin{align} \label{BLdef}
d_{BL}(\bmu,\bnu) &:= \left( \sum_{i=1}^{ n} \|\mu_i - \nu_i\|_{BL}^{2} \right)^{\frac{1}{2}}  , \quad \|\mu_i - \nu_i\|_{BL} : =  \sup_{\substack{\eta \in C^\infty_c(\Rd), \\ \|\eta\|_{W^{1,\infty}_{2}(\Rd)} \leq 1}} \int \eta d(\mu_i- \nu_i) , \\
 \label{W1inftynormdef}
\|\eta \|_{W^{1,\infty}_{2}(\Rd)} &:= \left( \|\eta\|_{L^\infty(\Rd)}^{2} + \| \nabla \eta \|_{L^\infty(\Rd)}^{2} \right)^{1/2} .
\end{align}
We recall that $d_{BL}$ metrizes narrow convergence on $\P(\Rd\times \G)$ \cite[Remark 8.3.1]{bogachev2007measure}.
 
%
%

Given a complete metric space $(X,d)$ and a curve $x:(0,1) \to X$ is \emph{absolutely continuous}, denoted $x \in AC(0,1;X)$, if there exists $m \in L^1(0,1)$ so that 
\begin{align} \label{ACdef} d(x(t),x(s)) \leq \int_s^t m(r) dr , \quad \forall  \ 0<s \leq t <1 .
\end{align}
For any $x \in AC(0,1;X)$, the \emph{metric derivative}
\[ |x'|(t) := \lim_{s \to t} \frac{ d(x(s),x(t))}{|s-t|} \]
exists for Lebesgue a.e. $t \in (0,1)$, $t \mapsto |x'|(t)$ belongs to $L^1(0,1)$, and $m(t)=|x'|(t)$ is admissible in (\ref{ACdef}). Furthermore, for any $m \in L^1(0,1)$ satisfying (\ref{ACdef}), $|x'|(t) \leq m(t)$ for a.e. $t \in (0,1)$. Sometimes, to emphasize the role of the metric we will write either $|x'|_X(t)$ or $|x'|_d(t)$ to denote the metric derivative.

\subsection{Optimal transport on graphs}
\label{sec:GraphOT}
Let $\G$ denote a weighted graph with nodes $\{1, \dots, n \}$ and edge-weights $\{q_{ij}\}_{i,j =1}^n$.  
In what follows, we assume that the graph $\G$ is symmetric and connected, that is,  the matrix of edge weights $[q_{ij}]_{i,j =1}^n$ is symmetric  and  , between any nodes $i$ and $j$, there exists a sequence of edges $\{e_{kl}\}_{k,l}$ connecting them on which $q_{kl}>0$ for all $k, l$. 
 Let $\R^\G \simeq \R^n$ and $\R^{\G \times \G} \simeq \R^{n^2}$ denote the sets of scalar functions on the nodes and edges of the graph, respectively. Observe that any probability measure over $\G$ may be expressed as
\[ p = \sum_{i=1}^n p_i \delta_i   \quad \text{where} \quad \forall i=1,...,n ,  \quad p_i \geq 0 \  \quad \text{and} \quad \sum_{i=1}^n p_i = 1 .\]
In this way, there is a one-to-one correspondence between probability measures on the graph $p \in \P(\G)$ and points in the simplex $r  \in \Delta^{n-1}$ (see \cite{gangbo2019geodesics}), via the bijections defined in equation (\ref{Idef}).
Through this, we can consider $\P(\G)$ and $\Delta^{n-1}$ endowed with the usual Euclidean topologies, so that $\mathbf{p}^{-1}$ is an isomorphism. The (relative) interior of $\mathcal{P}(\G)$ is
\[ (\P(\G))^\circ = \{ p \in \P(\G): p_i >0  \ \forall i = 1, \dots, n \} . \] 

As originally introduced by Maas \cite{maas2011gradient}, one may define a notion of Wasserstein distance on $\P(\G)$ by considering the least amount of effort required to flow between two probability measures on $\G$ via a discrete analogue of the continuity equation, which we now describe. First, we consider the usual notions of graph gradient and divergence operators 
 \begin{align*}
    \nabla_{\G}:\R^{n} \to \R^{n\times n}(\R): \phi \mapsto [ \phi_j -\phi_i]_{i,j=1}^n , 
 \quad \nabla_\G \cdot :\R^{n\times n} \to \R^{n} : v \mapsto \Big[-\frac{1}{2}\sum_{j}(v_{ij}-v_{ji})q_{ij} \Big]_{i=1}^n. 
\end{align*} 
As the domain of the graph divergence is $\R^{n \times n}$, one must choose an appropriate notion of \emph{interpolation function}  $\theta: [0,+\infty)\times[0,+\infty) \to [0,+\infty)$ to map a probability measure on the graph $p \in \mathcal{P}(\G) \subseteq  \R^n$ to a function $[\theta(p_i,p_j)]_{i,j=1}^n$ on the edges, allowing one to define a notion of \emph{flux} at the graph level. This leads to the following notion of discrete continuity equation; see, e.g.,  \cite[equation 2.8]{erbar2012ricci}.
\begin{defn}[Solution of graph continuity equation] \label{graphctydef} Given an interpolation function $\theta: [0,+\infty)\times[0,+\infty) \to [0,+\infty)$, a pair $(p,v)$ is a \emph{solution of the graph continuity equation on the time interval $[0,T]$} if all the following hold:
\begin{enumerate}[(i)]
\item $p: [0,T] \to \P(\G) \subseteq \R^n$ is continuous;
\item $v: [0,T] \to \R^{n \times n}$ is measurable;
\item $t \mapsto \check{p}_t v_t$ is locally integrable on $(0,T)$, where ${\check{p}}_{ij} v_{ij} := \theta(p_i,p_j) v_{ij}$;
\item \label{continuity equation on graph}
$\partial_t p + \nabla_\G \cdot (  \check{p} v)  = 0,  $  in the sense of distributions.  \label{graphctyeqn} \end{enumerate}
\label{def:ContEqGraph}
In the case $T=1$, we let $\mathcal{C}_\G(p_0,p_1)$ denote the set of solutions $(p,v)$ to the graph continuity equation satisfying $\left. p_t \right|_{t=0} = p_0$ and $\left. p_t \right|_{t=1} = p_1$.
\end{defn}

In the present manuscript, we will consider interpolation functions $\theta$ that satisfy Assumption \ref{interpolationassumption}.
Key examples of interpolation functions satisfying Assumption \ref{interpolationassumption} are the \emph{arithmetic} $\theta(s,t) :=  (s+t)/{2},$ \emph{geometric} $\theta(s,t) :=  \sqrt{st}$ and \emph{logarithmic} $\theta(s,t) :=  \int_0^1 s^{1-\alpha} t^\alpha d\alpha$ means.

\begin{rem}[Comparison with arithmetic mean]\label{interpfunorder} 
For any $\theta$ satisfying Assumption \ref{interpolationassumption},  
    \begin{equation*}
        \theta(s,t) \leq \frac{s+t}{2}, \ \forall s,t \geq 0.
    \end{equation*}
\end{rem}

\begin{rem}[Vanishing at boundary]
Assumption \ref{interpolationassumption} differs slightly from the assumptions in previous work by Maas \cite{maas2011gradient} and  Erbar and Maas \cite{erbar2012ricci}, in that it does not require $\theta(s,t)$ to vanish if either $s$ or $t$ is zero. The results of Erbar and Maas that we rely upon  continue to hold under these weaker hypotheses on $\theta$, and we will further explain why our hypotheses are sufficient as we appeal to these results in our proofs.
\end{rem}
%
%
%

With a notion of discrete continuity equation in hand, it remains to define the \emph{least amount of effort} required to flow between two probability measures $p_0, p_1 \in \P(\G)$ in order to define a notion of Wasserstein metric on $\P(\G)$. Given a time-dependent discrete velocity field $v$,
the \emph{effort} required to realize the corresponding flow $p_t$, $t \in [0,T]$, is given by $ ( \int_0^1 \| v_t \|^2_{{\rm Tan}_{p_t} \P(\G)}dt)^{1/2}$, where for any $u, v \in \R^{n \times n }$,
\begin{align} \label{innerproductgraphdef}
 \la u,v \ra_{\Tan_p \P(\G) } &:= \frac12 \sum_{i,j=1}^n u_{ij} v_{ij} \theta(p_i,p_j) q_{ij}.
\end{align}
The graph Wasserstein distance between $p_0$ and $p_1$ is then defined in terms of a least action principle,
\begin{align} \label{wassersteindistanceongraphs}
W_\G^2(p_0, p_1 ) := \inf \left\{ \int_0^1 \|v\|_{\Tan_p \P(\G)}^2 dt : (p,v)  \in \C_{\G}(p_0,p_1)  \right\} . 
\end{align}

Throughout, we will use the following elementary facts about two convex functions $\alpha$ and $\beta$ that are induced by the choice of interpolation function $\theta$. For the reader's convenience, we provide a proof of the following lemma in  Appendix \ref{realappendix}. 
\begin{lem} \label{Klem} 
Define $\alpha, \beta : \R \times \R \times \mathbb{R}^d \to \mathbb{R} \cup \{+\infty\}$ by \begin{align}  
\alpha(t,s,x) &:= \begin{cases} \frac{\|x\|^2}{\theta(t,s)} &\text{ if }t,s \geq 0, \theta(t,s) \neq 0 , \\ 0 &\text{ if }t, s \geq 0 \text{ and }\|x\|= \theta(t,s) = 0 , \\ +\infty &\text{ otherwise.} \end{cases}  \label{definitionofpsi} \\
\beta(a,b,c) &:= \sup_{t,s \geq 0} \{ at + bs + \frac{\|c \|^2}{4}\theta(t,s) \} . \label{definitionofvarphi} 
\end{align}
Then,
\begin{enumerate}[(i)]
\item $\exists \ K \subseteq \R^{2+d}$ closed and convex so that $\{ a+\frac{\|c\|^{2}}{8}\leq 0 \} \cap \{ b+\frac{\|c\|^{2}}{8}\leq 0 \} \subseteq K$, $\beta(t,s,x) = 0$ for $(t,s,x) \in K$, and $\beta(t,s,x) = +\infty$ otherwise.
\item $\beta^{*}=\alpha$.
\end{enumerate}
In particular, both $\alpha$ and $\beta$ are proper, convex, and lower semicontinuous.
\end{lem}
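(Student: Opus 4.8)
The plan is to verify parts (i) and (ii) more or less directly from the definitions, using that $\theta$ is $1$-homogeneous, concave, continuous, and nonnegative. For part (ii), I would first note that $\beta$ as defined is the Legendre transform of the map
\[
\Theta(t,s,c) := \begin{cases} -\tfrac{\|c\|^2}{4}\,\theta(t,s) & t,s\geq 0,\\ +\infty & \text{otherwise,}\end{cases}
\]
in the variables $(t,s)$ paired with $(a,b)$, while the $c$-variable is carried along as a parameter. More precisely, thinking of $\alpha$ as a function of $(t,s)$ with $x$ as parameter: since $\theta$ is concave and $1$-homogeneous, $(t,s)\mapsto \|x\|^2/\theta(t,s)$ (extended by the convention in \eqref{definitionofpsi}) is convex, lower semicontinuous, proper, and itself positively $1$-homogeneous in $(t,s,x)$ jointly. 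A clean way to see the biconjugacy $\beta^\ast=\alpha$ is to recall the standard identity: for a concave, $1$-homogeneous, nonnegative $\theta$ on the nonnegative quadrant, one has the ``perspective'' duality
\[
\frac{\|x\|^2}{\theta(t,s)} \;=\; \sup_{(a,b,c)} \Big\{ at + bs + \langle c, x\rangle : \ a t' + b s' + \tfrac14\|c\|^2\theta(t',s') \le \tfrac{\|c\|^2}{?}\ \ \forall (t',s')\Big\},
\]
so I would instead argue by directly computing $\beta$ (part (i)) and then computing $\beta^\ast$ and checking it equals $\alpha$.

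For part (i), fix $(a,b,c)$ and consider $g(t,s) := at+bs+\tfrac{\|c\|^2}{4}\theta(t,s)$ on the closed quadrant. Because $\theta$ is $1$-homogeneous, $g$ is $1$-homogeneous, so $\sup g$ is either $0$ (attained at the origin) or $+\infty$; it equals $0$ exactly when $g(t,s)\le 0$ for all $t,s\ge 0$, i.e.\ when $at+bs+\tfrac{\|c\|^2}{4}\theta(t,s)\le 0$ on the quadrant. Call this set $K$. That $K$ is convex follows since $\theta$ is concave, so $(a,b,c)\mapsto at+bs+\tfrac{\|c\|^2}{4}\theta(t,s)$ is convex (note $\tfrac14\|c\|^2\theta(t,s)$ convex in $c$ for fixed $(t,s)\ge 0$ because it is a nonnegative multiple of $\|c\|^2$ composed with... wait — it is $\|c\|^2$ times a constant, hence convex in $c$; and it is linear in $(a,b)$), and $K$ is an intersection of sublevel-zero sets of these convex functions over $(t,s)$ in the quadrant, hence convex and closed. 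For the inclusion $\{a+\tfrac{\|c\|^2}{8}\le 0\}\cap\{b+\tfrac{\|c\|^2}{8}\le 0\}\subseteq K$, use Remark~\ref{interpfunorder}: $\theta(t,s)\le (t+s)/2$, so
\[
at+bs+\tfrac{\|c\|^2}{4}\theta(t,s) \;\le\; \big(a+\tfrac{\|c\|^2}{8}\big)t + \big(b+\tfrac{\|c\|^2}{8}\big)s \;\le\; 0
\]
whenever $a+\tfrac{\|c\|^2}{8}\le 0$ and $b+\tfrac{\|c\|^2}{8}\le 0$ and $t,s\ge 0$. This gives part (i), and in particular $\beta$ is the indicator (in the convex-analysis sense, valued in $\{0,+\infty\}$) of the closed convex set $K$, hence proper (nonempty since the origin's neighborhood in the stated inclusion is in $K$), convex, and lower semicontinuous.

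It remains to compute $\beta^\ast$ and identify it with $\alpha$, which is the main technical step. Since $\beta=\iota_K$, its conjugate is the support function $\beta^\ast(t,s,x)=\sup_{(a,b,c)\in K}\{at+bs+\langle c,x\rangle\}$. One checks $\beta^\ast(t,s,x)=+\infty$ if $t<0$ or $s<0$ (take $a\to-\infty$ or $b\to-\infty$ with $c=0$, both admissible as $(a,b,0)$ lies in $K$ when $a,b\le 0$). For $t,s\ge 0$: by $1$-homogeneity of $\theta$, the defining inequality of $K$ reads $at+bs \le -\tfrac{\|c\|^2}{4}\theta(t',s')$ scaled appropriately; the cleanest route is to parametrize and optimize in $c$ first for $(a,b)$ fixed on the boundary piece, reducing to a scalar optimization that yields $\|x\|^2/\theta(t,s)$ when $\theta(t,s)>0$, and $0$ in the degenerate case $\|x\|=\theta(t,s)=0$ (since then the supremum over admissible $(a,b,c)$ is $0$), matching \eqref{definitionofpsi}. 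Alternatively — and this is what I would actually write — invoke that $\alpha$ as defined is convex, proper, lower semicontinuous (a one-line check using concavity and $1$-homogeneity of $\theta$, or citing the known fact for ``reciprocal of a concave homogeneous function''), compute $\alpha^\ast$ directly from its definition, observe $\alpha^\ast=\beta$, and then conclude $\beta^\ast=\alpha^{\ast\ast}=\alpha$ by the Fenchel--Moreau theorem. The main obstacle is the careful handling of the boundary/degenerate cases $\theta(t,s)=0$ (which, under Assumption~\ref{interpolationassumption}(A\ref{thetapositivity}), happens only when $t=0$ or $s=0$) to make sure the conventions in \eqref{definitionofpsi} are exactly reproduced and that no spurious $+\infty$ or finite value slips in; everything else is routine convex duality.
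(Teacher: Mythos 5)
Your proposal is correct and, in the route you say you would actually write, it coincides with the paper's proof: part (i) via $1$-homogeneity forcing $\beta\in\{0,+\infty\}$ together with the bound $\theta(t,s)\le (t+s)/2$ from Remark \ref{interpfunorder}, and part (ii) by computing $\alpha^{*}=\beta$ (optimizing out $x$, with the $\theta=0$ case handled by the convention in \eqref{definitionofpsi}) and then invoking properness, convexity, and lower semicontinuity of $\alpha$ plus Fenchel--Moreau to get $\beta^{*}=\alpha^{**}=\alpha$. The only cosmetic difference is that you obtain closedness and convexity of $K$ as an intersection of sublevel sets rather than as $\mathrm{dom}(\beta)$ for the lsc function $\beta$, and you should simply delete the abandoned ``perspective duality'' display containing the unresolved constant.
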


We now collect some basic facts about $W_\G$  developed by Maas \cite{maas2011gradient} and Erbar and Maas \cite{erbar2012ricci}. See appendix \ref{realappendix} for the proofs and  references.
\begin{prop}[Properties of $W_G$] \label{basicfactWG} 
Suppose $\G$ is symmetric and connected and the interpolation function $\theta$ satisfies Assumption \ref{interpolationassumption}.
\begin{enumerate}[(i)]

\item \label{lem:ComparisonMetricsDiscrete}
 There exists  a constant $C_{\theta,\G}$  so that
\begin{equation*}
\frac{1}{\sqrt{2}} | p_1- p_0 | \leq W_\G(p_0, p_1) \leq C_{\theta, \G} \sqrt{ | p_1 - p_0 |}, \quad \forall p_0,p_1 \in \mathcal{P}(\mathcal{G}),
\end{equation*}
where $| \cdot |$ is the   Euclidean norm on $\P(\G) \subseteq \R^n$. In particular, $(\P(\G), W_\G)$ has finite diameter, and the topology induced by $W_\G$ on $\P(\G)$ is equivalent to the   Euclidean topology on $\P(\G)  \subseteq \R^n$, so   
$(\P(\G), W_\G)$ is a complete metric space.
\item \label{WGgeodesics} For any $p_0, p_1 \in \P(\G)$ there exists a constant speed geodesic $p \in AC(0,1; \P(\G))$ from $p_0$ to $p_1$. For any such constant speed geodesic, there exists a velocity field $v$ to that it solves the graph continuity equation and   satisfies
\begin{align*}   W_\G(p_t,p_s) = |t-s| W_\G(p_0, p_1) \ \text{ for all } s,t \in [0,1]\\ \| v_t \|_{\Tan_{p_t}\P(\G)} = W_\G(p_0,p_1) \ \text{ for a.e. } t \in [0,1] . \nonumber 
\end{align*}
\item The restriction of $W_\G$ to $(\P(\G))^\circ$ is a Riemannian distance induced by the following Riemannian structure: \label{riemannianpart}
\begin{enumerate}[(a)]
\item the tangent space of $p_0 \in (\P(\G))^\circ$ can be identified with the set 
\[ {\rm Tan}_{p_0} \P(\G) = \left\{ \nabla_\G \psi  : \psi \in \R^n , \ \sum_{i=1}^n \psi_i = 0 \right\} , \]
in the sense that, for any smooth curve $p:(-\epsilon,\epsilon) \to (\P(\G))^\circ$ with $\left. p_t \right|_{t =0} = p_0$,  $\exists ! \nabla_\G \psi_0 \in {\rm Tan}_{p_0} \P(\G) $ so that $(p, \nabla_\G \psi)$ solves the continuity equation on the graph at $t=0$;
\item the Riemannian metric on $\nabla_\G \psi_0, \nabla_\G \varphi_0 \in {\rm Tan}_{p_0} \P(\G)$ is given by $\la \nabla_\G \psi_0, \nabla_\G \varphi_0 \ra_{{\rm Tan}_{p_0} \P(\G)}$.

\end{enumerate}
\end{enumerate}
\end{prop}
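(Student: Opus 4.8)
All three statements are, modulo the weakening of the hypotheses on $\theta$, established by Maas \cite{maas2011gradient} and Erbar--Maas \cite{erbar2012ricci}; the only assumption of theirs we do not impose is that $\theta(s,t)$ vanish when $st=0$. The plan is therefore to recall their arguments and isolate each point at which that boundary-vanishing hypothesis is invoked, verifying that it can be replaced by the bounds and regularity retained in Assumption \ref{interpolationassumption} --- most importantly the comparison $\theta(s,t)\le (s+t)/2$ of Remark \ref{interpfunorder}, positivity on $(0,+\infty)^2$, $1$-homogeneity, and concavity. Since dropping the boundary-vanishing hypothesis only increases the conductances $\theta(p_i,p_j)q_{ij}$, it can only make the graph continuity equation easier to solve and the action smaller, so the delicate (upper, Hölder) direction is, if anything, simplified; the care is needed in the other, ``easy'' direction.

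First I would prove the lower bound in (i). Given $(p,v)\in\C_\G(p_0,p_1)$, writing $w_{ij}=\tfrac12(v_{ij}-v_{ji})$ and using the symmetry of $\theta$ and of $[q_{ij}]$, the continuity equation reads $\dot p_{i,t}=\sum_j\theta(p_{i,t},p_{j,t})w_{ij,t}q_{ij}$, and since the symmetric and antisymmetric parts of $v$ are orthogonal in the weighted inner product one has $\|v_t\|_{\Tan_{p_t}\P(\G)}^2\ge\tfrac12\sum_{ij}w_{ij,t}^2\,\theta(p_{i,t},p_{j,t})q_{ij}$. A weighted Cauchy--Schwarz estimate together with $\theta(p_i,p_j)\le(p_i+p_j)/2$ and $\sum_ip_{i,t}=1$ then bounds the Euclidean metric derivative $|\dot p_t|$ by $\sqrt2\,\|v_t\|_{\Tan_{p_t}\P(\G)}$; integrating in $t$ and taking the infimum over $\C_\G(p_0,p_1)$ gives $\tfrac1{\sqrt2}|p_1-p_0|\le W_\G(p_0,p_1)$. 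The upper bound $W_\G(p_0,p_1)\le C_{\theta,\G}\sqrt{|p_1-p_0|}$ is Maas's Hölder estimate: one joins $p_0$ to $p_1$ along the Euclidean segment $p_t=(1-t)p_0+tp_1$ (which lies in $\P(\G)$ by convexity) and solves the graph continuity equation along it for a gradient velocity $v_t=\nabla_\G\psi_t$, controlling $\int_0^1\|v_t\|_{\Tan_{p_t}\P(\G)}^2\,dt$ by $C_{\theta,\G}^2|p_1-p_0|$; inspecting that argument shows it uses only the convexity, homogeneity, and interior positivity of $\theta$, not its vanishing at the boundary. The finiteness of the diameter, the equivalence of the $W_\G$-topology with the Euclidean one, and completeness of $(\P(\G),W_\G)$ are then immediate from this two-sided comparison together with compactness of $\P(\G)\subseteq\R^n$.

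Next, part (ii) follows by the direct method in metric geometry as in \cite{erbar2012ricci}: using the convex, lower semicontinuous functions $\alpha,\beta$ of Lemma \ref{Klem} one shows that $(p,v)\mapsto\int_0^1\|v_t\|^2_{\Tan_{p_t}\P(\G)}\,dt$ is lower semicontinuous along sequences of solutions of the graph continuity equation with uniformly bounded action, that such sequences are precompact (the state component in $C([0,1];\P(\G))$ by equicontinuity, the flux component weakly), hence that the infimum in \eqref{wassersteindistanceongraphs} is attained; a standard arc-length reparametrization turns a minimizer into a constant-speed geodesic and yields the stated identities for $W_\G(p_t,p_s)$ and $\|v_t\|_{\Tan_{p_t}\P(\G)}$. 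For part (iii), on the open simplex $(\P(\G))^\circ$ the function $\theta$ is $C^\infty$ and strictly positive, so $g_p(\nabla_\G\psi,\nabla_\G\varphi):=\la\nabla_\G\psi,\nabla_\G\varphi\ra_{\Tan_p\P(\G)}$ defines a smooth Riemannian metric; connectedness of $\G$ makes $\psi\mapsto\nabla_\G\psi$ injective modulo additive constants, so $\{\nabla_\G\psi:\sum_i\psi_i=0\}$ has the correct dimension $n-1=\dim\P(\G)$ and each smooth curve in $(\P(\G))^\circ$ has a unique mean-zero potential solving the continuity equation. That this Riemannian distance agrees with the restriction of $W_\G$ is again Maas \cite{maas2011gradient}, the argument being local and hence insensitive to the boundary behavior of $\theta$.

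The step I expect to be the main obstacle is not a single computation but the bookkeeping underlying the paragraphs above: ensuring that every invocation of \cite{maas2011gradient,erbar2012ricci} still goes through under Assumption \ref{interpolationassumption}. The sharpest two points are the Hölder upper bound in (i) --- checking that Maas's explicit competitor has finite action when $\theta$ does not vanish on $\partial[0,+\infty)^2$ --- and, for (ii), the lower semicontinuity of the action, which is where the convex-analytic machinery of Lemma \ref{Klem}, and thus the concavity and $1$-homogeneity of $\theta$, is genuinely needed.
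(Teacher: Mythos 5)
Your overall strategy coincides with the paper's: all three parts are reduced to Maas \cite{maas2011gradient} and Erbar--Maas \cite{erbar2012ricci}, with the only new work being the verification that the boundary-vanishing hypothesis on $\theta$ can be dropped. However, in part (i) you replace the paper's citations by direct arguments, and the one you give for the upper (H\"older) bound does not work as stated. You claim that the affinely parametrized Euclidean segment $p_t=(1-t)p_0+tp_1$, together with a gradient velocity solving the graph continuity equation along it, has action $\int_0^1\|v_t\|^2_{\Tan_{p_t}\P(\G)}dt\le C_{\theta,\G}^2|p_1-p_0|$. This fails for admissible $\theta$: take the harmonic mean $\theta(s,t)=2st/(s+t)$, which satisfies every item of Assumption \ref{interpolationassumption}, a two-node graph with weight $q$, and $p_0=(1,0)$, $p_1=(0,1)$. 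Along the segment the continuity equation forces $|v_{12,t}|\,\theta(t,1-t)\,q=1$, so the action density is $1/\bigl(q\,\theta(t,1-t)\bigr)=1/\bigl(2qt(1-t)\bigr)$, which is not integrable; the competitor has infinite action even though $W_\G(p_0,p_1)<\infty$. The paper avoids this by citing \cite[Proposition 2.14]{erbar2012ricci}, i.e.\ $W_\G\le \sqrt{2}C_\theta k^{-1/2}W_{2,g}$ with $C_\theta=\int_0^1\theta(1-a,1+a)^{-1/2}\,da$ (finiteness of $C_\theta$ is exactly where homogeneity is used, via $\theta(1-a,1+a)\ge(1-a)\theta(1,1)$), followed by $W_{2,g}\le\sqrt{\mathrm{diam}_\G}\,\sqrt{W_{1,g}}$ and Kantorovich--Rubinstein duality to compare $W_{1,g}$ with $|p_1-p_0|$. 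If you insist on an explicit competitor you must reparametrize it so that it slows down near the boundary (as in the two-node geodesic ODE of \cite[Proposition 2.7]{maas2011gradient}) and estimate its length rather than the action of the affine parametrization; that reparametrization is precisely the nontrivial step you defer to ``inspection.''

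Two smaller points. For the lower bound, the weighted Cauchy--Schwarz argument you sketch naturally yields $|\dot p_t|\le\sqrt{2Q}\,\|v_t\|_{\Tan_{p_t}\P(\G)}$ with $Q=\max_i\sum_j q_{ij}$ (compare the constant $\max\{1,Q^{1/2}\}$ appearing in Proposition \ref{BLlem}), not the weight-free $\sqrt{2}$; the stated constant $1/\sqrt{2}$ is obtained in the paper from the total-variation bound of \cite[Proposition 2.12]{erbar2012ricci} together with the $\ell^1$--$\ell^2$ comparison, so your sketch does not deliver the constant as claimed. For part (ii), the paper does not redo the direct method: it cites \cite[Theorem 3.2]{erbar2012ricci}, which produces a constant-speed geodesic together with a momentum field $m$, and the only new content is the construction of the velocity $v_{ij}=m_{ij}/\check p_{ij}$ on the set where $\check p_{ij}\neq0$ (and $v_{ij}=0$ elsewhere), using finiteness of the action functional $\alpha$ of Lemma \ref{Klem} to ensure $m_{ij}=0$ wherever $\check p_{ij}=0$. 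Your compactness-plus-lower-semicontinuity outline is a legitimate alternative, but it too would need this momentum-to-velocity step, which you omit. Part (iii) matches the paper.
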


\begin{rem}[pathologies of geodesics] \label{geodesicintersectingtheboundary} For $\theta$ the arithmetic interpolation function, and $\G$ a tree graph on three nodes, there is an example of a constant speed geodesic connecting two interior points $p_0,p_1 \in (\P(\G))^\circ$ that intersects the boundary of $(\P(\G))^\circ$; see \cite[Proposition 3.11.]{gangbo2019geodesics}.
\end{rem}

We close this section with two elementary lemmas that show how curves in $\P(\G)$ can be regularized to avoid the boundary. These constructions are inspired by \cite[Corollary 2.8]{erbar2012ricci}; the proofs are in appendix \ref{realappendix}.

\begin{lem}
\label{lem:MollificationSimplex}
Suppose $\G$ is symmetric and connected and the interpolation function $\theta$ satisfies Assumption \ref{interpolationassumption}.
Let $  (p, v)$ and $ (z, u)  $ be   solutions to the graph continuity equation on the time interval $[0,1]$
. Suppose  that $ z_t \in (\P(\G))^\circ$ for all $t \in [0,1]$, and for  $a\in (0,1)$, define
\[ p_{t}^{a}:= (1-a) p_t + a z_t, \quad t \in [0,1]. \]
Then   there exists $v^{a} $ so   $ (p^{a}, v^{a})$ solves the graph continuity equation on the time interval $[0,1]$ and 
\begin{equation}
\|v_{t}^{a}\|_{\Tan_{p_{t}^{a}} \P(\G)}^2  \leq (1-a)  \|v_{t}\|_{\Tan_{p_{t}} \P(\G)}^2 + a\|u_{t}\|_{\Tan_{z_{t}} \P(\G)}^2 , \quad \forall t \in [0,1] , a \in (0,1) . 
   \label{eqn:LengthsConvexComb}
\end{equation}
\end{lem}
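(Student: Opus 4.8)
The plan is to exploit the linearity of the graph continuity equation together with the convexity properties of the functionals $\alpha$ and $\beta$ from Lemma \ref{Klem}. First I would record that, since $(p,v)$ and $(z,u)$ each solve the graph continuity equation, so does the convex combination $(p^a, v^a)$ provided we define the velocity $v^a$ appropriately. The natural choice is the flux-weighted average: since the flux associated with $(p,v)$ is $\check p\, v$, with $(\check p v)_{ij} = \theta(p_i,p_j)v_{ij}$, and similarly $(\check z u)_{ij} = \theta(z_i,z_j)u_{ij}$, and the continuity equation $\partial_t p + \nabla_\G\cdot(\check p v) = 0$ is linear in $p$ and in the flux, the pair $(p^a, v^a)$ with flux $\check{p^a} v^a := (1-a)\check p v + a \check z u$ solves $\partial_t p^a + \nabla_\G\cdot(\check{p^a} v^a) = 0$. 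We then \emph{define} $v^a_{ij} := \big((1-a)\theta(p_i,p_j)v_{ij} + a\theta(z_i,z_j)u_{ij}\big)/\theta(p^a_i, p^a_j)$ wherever $\theta(p^a_i,p^a_j) \neq 0$ and $v^a_{ij} := 0$ otherwise; since $z_t \in (\P(\G))^\circ$ and $\theta$ is positive on $(0,\infty)^2$ (A\ref{thetapositivity}), positively $1$-homogeneous (A\ref{thetapositivehomogenity}), and $p^a_t = (1-a)p_t + a z_t$ has all coordinates strictly positive, in fact $\theta(p^a_i, p^a_j) > 0$ for all $t\in[0,1]$, so the formula for $v^a$ is well-defined everywhere, and local integrability of $t\mapsto \check{p^a}_t v^a_t$ follows from that of $\check p v$ and $\check z u$. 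This handles the measurability and integrability requirements (ii), (iii) of Definition \ref{graphctydef}, while (i) is immediate since $t\mapsto p^a_t$ is a convex combination of continuous curves valued in the convex set $\P(\G)$.

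Next, for the key inequality \eqref{eqn:LengthsConvexComb}, I would argue pointwise in $t$ and edgewise in $(i,j)$. Fix $t$ and write $P := \theta(p_i,p_j)$, $Z := \theta(z_i,z_j)$, $P^a := \theta(p^a_i, p^a_j)$; by concavity of $\theta$ (A\ref{thetaconcavity}) applied to the convex combination of the arguments, $P^a \geq (1-a)P + aZ$. Now
\[
\|v^a_t\|^2_{\Tan_{p^a_t}\P(\G)} = \frac12\sum_{i,j} |v^a_{ij}|^2 \theta(p^a_i,p^a_j) q_{ij} = \frac12 \sum_{i,j} \frac{\big((1-a)Pv_{ij} + a Z u_{ij}\big)^2}{P^a}\, q_{ij},
\]
so it suffices to show, for each edge,
\[
\frac{\big((1-a)P v_{ij} + a Z u_{ij}\big)^2}{P^a} \;\leq\; (1-a) P\, v_{ij}^2 + a Z\, u_{ij}^2.
\]
This is precisely the joint convexity of the map $(P, w)\mapsto w^2/P$ (the perspective function, up to a harmless change of variables $w = Pv$), combined with $P^a \geq (1-a)P + aZ$ and monotonicity of $w^2/P$ in $P$; equivalently it follows from applying the convexity of $\alpha$ from Lemma \ref{Klem}(i), evaluated at the convex combination $(1-a)(P, 0, Pv_{ij}e) + a(Z, 0, Z u_{ij} e)$ for a unit vector $e$. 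Either way it is the one-dimensional Cauchy--Schwarz/perspective-function estimate, which is elementary. Summing over $i,j$ with the nonnegative weights $q_{ij}$ and recalling \eqref{innerproductgraphdef} yields \eqref{eqn:LengthsConvexComb}.

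\textbf{Main obstacle.} The only genuinely delicate point is ensuring that $v^a$ is a legitimate velocity field — i.e.\ that the division by $\theta(p^a_i,p^a_j)$ never causes problems and that the resulting $v^a$ is measurable with $\check{p^a}v^a$ locally integrable. This is where the hypothesis $z_t\in(\P(\G))^\circ$ for all $t$ is essential: it forces every coordinate of $p^a_t$ to be bounded below by $a\min_i z_{i,t} > 0$, hence (using positivity and continuity of $\theta$ on the compact set of relevant arguments, together with $1$-homogeneity) $\theta(p^a_i,p^a_j)$ is bounded away from zero uniformly on $[0,1]$ once we also note continuity of $z$. Measurability of $v^a$ is then clear from measurability of $v$, $u$ and continuity of $p,z$. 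I would also double-check the degenerate cases in the edgewise inequality where $P = 0$ or $Z = 0$: if $Z=0$ then $z_i$ or $z_j$ vanishes, contradicting $z_t\in(\P(\G))^\circ$, so $Z>0$ always; if $P = 0$ then the term $(1-a)Pv_{ij}$ on the left need not vanish a priori, but $\theta(p_i,p_j)v_{ij}^2 = P v_{ij}^2$ appears on the right and the convention in $\alpha$ (Lemma \ref{Klem}) handles exactly this, so the estimate degrades to $(aZu_{ij})^2/P^a \le aZu_{ij}^2$, valid since $P^a \ge aZ$. Thus no case is actually problematic, and the lemma follows.
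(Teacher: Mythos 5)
Your proposal is correct and follows essentially the same route as the paper: the same flux-averaged velocity $v^a_{ij} = \big((1-a)\theta(p_i,p_j)v_{ij}+a\,\theta(z_i,z_j)u_{ij}\big)/\theta(p^a_i,p^a_j)$ (well-defined since $p^a_t$ is interior), linearity of the continuity equation in the flux, and the edgewise kinetic-energy estimate, which the paper obtains by one application of the joint convexity of $\alpha$ from Lemma \ref{Klem} and which you unpack into concavity of $\theta$ plus the perspective-function (Cauchy--Schwarz) inequality --- the same mathematical content. One minor caveat: your parenthetical ``equivalently via $\alpha$ at $(1-a)(P,0,Pv_{ij}e)+a(Z,0,Zu_{ij}e)$'' misuses $\alpha$'s arguments (one should evaluate $\alpha$ at the densities and flux, i.e.\ $\alpha(p_i,p_j,\theta(p_i,p_j)v_{ij})$, not $\alpha(P,0,\cdot)$), but this aside is inessential since your main argument is complete as written.
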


As an immediate corollary, the following result shows that any solution of the graph continuity equation can be perturbed to lie in the interior $(\P(\G))^\circ$, in a manner that only increases the kinetic energy of the velocity by an arbitrarily small amount; again, we defer the proof to appendix \ref{realappendix}.
\begin{cor}
\label{cor:IneqConvex}
Suppose $\G$ is symmetric and connected and the interpolation function $\theta$ satisfies Assumption \ref{interpolationassumption}.
Let $  (p, v)$ be a solution of the graph continuity equation on the time interval $[0,1]$. Given   $ {m}_0 , {m}_1 $ in $(\P(\G))^\circ$  and  $a\in(0,1)$,  define 
\[ z_t := (1-t){m}_0 + t {m}_1 \quad \text{ and } \quad  p_{t}^{a}:= (1-a)p_t + a z_t .  \]
Then there exists $ v^{a} $ so   $    (p^{a}, v^{a}) $ solves the graph continuity equation on the time interval $[0,1]$ and
\[ p^a_{i,t} \geq C_{m_0,m_1, \G}  \text{ and }\|v_{t}^{a}\|_{\Tan_{p_{t }^{a}} \P(\G)}^2  \leq (1-a)   \|v_{t}\|_{\Tan_{p_{t}} \P(\G)}^2    + a C_{ {m}_0,  {m}_1, \G}, \quad \forall t \in [0,1], \ i \in \G , \ a \in (0,1), \]
where   $C_{ {m}_0,  {m}_1,\G}>0$    depends on  $\min_i  m_{i,0} >0$, $ \min_i m_{i,1}>0$, and  the graph $\G$.
\end{cor}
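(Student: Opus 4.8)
The plan is to reduce Corollary~\ref{cor:IneqConvex} to Lemma~\ref{lem:MollificationSimplex} by choosing the interior curve $(z,u)$ appropriately and then verifying the two claimed pointwise bounds. First I would observe that the straight-line interpolation $z_t := (1-t)m_0 + t m_1$ takes values in $(\P(\G))^\circ$ for all $t \in [0,1]$: since $m_0, m_1$ are in the relative interior, $m_{i,0}, m_{i,1} > 0$ for all $i$, hence $z_{i,t} \geq (1-t)\min_i m_{i,0} + t \min_i m_{i,1} > 0$, and $\sum_i z_{i,t} = 1$ because the simplex is convex. Thus $z$ is an admissible curve, and in particular $z_{i,t} \geq \min\{\min_i m_{i,0}, \min_i m_{i,1}\} =: c_0 > 0$ uniformly in $t$.

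Next I would produce a velocity field $u$ so that $(z,u)$ solves the graph continuity equation: by Proposition~\ref{basicfactWG}\eqref{riemannianpart}(a), since $z$ is a smooth (in fact affine) curve in the interior, at each time $t$ there is a unique $u_t = \nabla_\G \psi_t \in \Tan_{z_t}\P(\G)$ solving $\partial_t z_t + \nabla_\G \cdot(\check z_t u_t) = 0$; measurability of $t \mapsto u_t$ follows from the smooth dependence of the Riemannian structure on $z_t$ away from the boundary, and $\|u_t\|_{\Tan_{z_t}\P(\G)}^2$ is continuous in $t$, hence bounded on $[0,1]$ by some constant $C_{m_0,m_1,\G}$ depending only on $c_0$ (which controls how degenerate $\theta(z_{i,t}, z_{j,t})$ can be, via positive homogeneity and positivity of $\theta$) and on the graph weights. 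I would also note $z$ is a solution on $[0,1]$ with the stated endpoint constraints implicit in the construction.

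Now I would simply invoke Lemma~\ref{lem:MollificationSimplex} with this choice of $(z,u)$: it yields $v^a$ so that $(p^a, v^a)$ solves the graph continuity equation on $[0,1]$ and satisfies the pointwise energy bound
\[ \|v_t^a\|_{\Tan_{p_t^a}\P(\G)}^2 \leq (1-a)\|v_t\|_{\Tan_{p_t}\P(\G)}^2 + a\|u_t\|_{\Tan_{z_t}\P(\G)}^2 \leq (1-a)\|v_t\|_{\Tan_{p_t}\P(\G)}^2 + a C_{m_0,m_1,\G}. \]
For the lower bound on the components, $p_{i,t}^a = (1-a)p_{i,t} + a z_{i,t} \geq a z_{i,t} \geq a c_0 > 0$ since $p_{i,t} \geq 0$; to match the statement exactly one absorbs the factor $a$ (or simply notes the bound $p_{i,t}^a \geq C_{m_0,m_1,\G}$ holds with a constant depending on $a$, as written). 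This completes the argument.

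\textbf{Main obstacle.} The only genuinely nontrivial point is the uniform-in-$t$ bound on $\|u_t\|_{\Tan_{z_t}\P(\G)}^2$, i.e., controlling the kinetic energy of the affine curve $z_t$ purely in terms of $c_0$ and $\G$. This requires knowing that the Riemannian metric \eqref{innerproductgraphdef}, or rather the cometric used to solve for $\psi_t$ given $\partial_t z_t$, is uniformly nondegenerate on the compact set $\{p : p_i \geq c_0 \ \forall i\} \subseteq (\P(\G))^\circ$; this follows from connectedness of $\G$ (so the weighted graph Laplacian-type operator $\psi \mapsto \nabla_\G\cdot(\check z \nabla_\G \psi)$ has a spectral gap on mean-zero functions) together with positivity and $1$-homogeneity of $\theta$ (so $\theta(z_i, z_j) \geq c_0 \,\theta(1,1)$-type lower bounds hold where $q_{ij} > 0$), and the fact that $|\partial_t z_t| = |m_1 - m_0|$ is bounded. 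Everything else is a direct bookkeeping consequence of Lemma~\ref{lem:MollificationSimplex}.
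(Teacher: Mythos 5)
Your proposal is correct and follows essentially the same route as the paper: both arguments reduce the corollary to Lemma \ref{lem:MollificationSimplex} with $z_t$ the linear interpolation between $m_0$ and $m_1$, and both hinge on a uniform-in-$t$ energy bound for a velocity $u$ driving $z$, obtained from connectedness of $\G$ (a discrete Poincar\'e/spectral-gap inequality) combined with the lower bound $\theta(z_{i,t},z_{j,t}) \geq \min\{\min_i m_{i,0},\, \min_i m_{i,1}\}$ that follows from monotonicity, normalization, and homogeneity of $\theta$. The only cosmetic difference is in how $u$ is produced: the paper fixes a single time-independent potential $\varphi$ solving the unweighted graph Poisson equation $\Delta_\G \varphi = m_1 - m_0$ and sets $u_{ij,t} = -\nabla_\G \varphi_{ij}/\theta(z_{i,t},z_{j,t})$, whereas you solve the $z_t$-weighted problem $B(z_t)\psi_t = m_1 - m_0$ at each time; both give a constant of the stated form, and your observation that the componentwise bound really carries the factor $a$ (i.e.\ $p^a_{i,t} \geq a\,C_{m_0,m_1,\G}$, consistent with $\Delta^{n-1}_{aC}$ in Lemma \ref{simplexgeodesicapproximationlem}) is accurate.
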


\subsection{Induced geometry on simplex} \label{Deltametricsection}
Due to the bijection $\bp$ between $\Delta^{n-1}$ and $\mathcal{P}(\G)$ defined in \eqref{Idef}, the graph Wasserstein distance $W_\G$ induces  a distance on $\Delta^{n-1}$ according to
\begin{equation*}
   d_{\Delta^{n-1}}(r, \tilde r):= W_\G(\bp(r), \bp(\tilde r)). 
\end{equation*}
As an immediate consequence of Proposition \ref{basicfactWG}, we deduce several basic properties.
\begin{lem}
\label{lem:EquivalenceTopologies}
Suppose $\G$ is symmetric and connected and the interpolation function $\theta$ satisfies Assumption \ref{interpolationassumption}.
 $(\Delta^{n-1},d_{\Delta^{n-1}})$ had finite diameter and a topology is equivalent to the Euclidean topology. In particular, it is a complete and separable metric space.
\end{lem}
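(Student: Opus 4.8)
The plan is to transfer all topological and metric properties directly from $(\P(\G), W_\G)$ to $(\Delta^{n-1}, d_{\Delta^{n-1}})$ using the bijection $\bp$ and the fact that, by construction, $\bp$ is an isometry between these two spaces. Indeed, $d_{\Delta^{n-1}}(r, \tilde r) := W_\G(\bp(r), \bp(\tilde r))$ says precisely that $\bp : (\Delta^{n-1}, d_{\Delta^{n-1}}) \to (\P(\G), W_\G)$ is a surjective isometry, and isometries preserve completeness, separability, finite diameter, and (being in particular homeomorphisms) the topology. So the content of the lemma reduces to two observations: first, the metric facts about $(\P(\G), W_\G)$ from Proposition \ref{basicfactWG}(\ref{lem:ComparisonMetricsDiscrete}) transfer across $\bp$; second, the topology induced by $d_{\Delta^{n-1}}$ coincides with the Euclidean topology on $\Delta^{n-1} \subseteq \R^{n-1}$.

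First I would record that, since $\bp$ and $\bp^{-1}$ are explicitly given by \eqref{Idef} as affine (hence continuous) maps between $\Delta^{n-1} \subseteq \R^{n-1}$ and $\P(\G) \subseteq \R^n$ with their Euclidean topologies, $\bp$ is a homeomorphism for the Euclidean topologies; moreover there are constants $c_1, c_2 > 0$ with $c_1 |r - \tilde r| \le |\bp(r) - \bp(\tilde r)| \le c_2 |r - \tilde r|$ for all $r, \tilde r \in \Delta^{n-1}$, where on the left $|\cdot|$ is the Euclidean norm on $\R^{n-1}$ and on the right the Euclidean norm on $\R^n$ (this is immediate because $\bp$ is affine with both it and its inverse Lipschitz). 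Combining this with the two-sided bound in Proposition \ref{basicfactWG}(\ref{lem:ComparisonMetricsDiscrete}), namely $\tfrac{1}{\sqrt 2} |p_1 - p_0| \le W_\G(p_0, p_1) \le C_{\theta, \G} \sqrt{|p_1 - p_0|}$, we obtain for all $r, \tilde r \in \Delta^{n-1}$
\begin{equation*}
\frac{c_1}{\sqrt 2}\, |r - \tilde r| \;\le\; d_{\Delta^{n-1}}(r, \tilde r) \;\le\; C_{\theta, \G}\sqrt{c_2}\,\sqrt{|r - \tilde r|}.
\end{equation*}
The left inequality shows the identity map $(\Delta^{n-1}, |\cdot|) \to (\Delta^{n-1}, d_{\Delta^{n-1}})$ is Lipschitz, hence continuous; the right inequality shows the identity map in the other direction is $\tfrac12$-Hölder, hence continuous. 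Therefore $d_{\Delta^{n-1}}$ metrizes the Euclidean topology on $\Delta^{n-1}$.

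Finally, for the remaining assertions: finite diameter of $(\Delta^{n-1}, d_{\Delta^{n-1}})$ follows from finite diameter of $(\P(\G), W_\G)$ and the isometry, or directly from the Hölder bound above together with boundedness of $\Delta^{n-1}$ in $\R^{n-1}$. Separability is immediate since $\Delta^{n-1}$ with the Euclidean topology is separable and $d_{\Delta^{n-1}}$ metrizes that topology. Completeness follows because $\Delta^{n-1}$ is a closed bounded subset of $\R^{n-1}$, hence compact, hence complete in the Euclidean metric, and therefore complete in any equivalent metric; alternatively it follows by transporting completeness of $(\P(\G), W_\G)$ through the isometry $\bp$. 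I do not expect any genuine obstacle here: the only thing to be slightly careful about is making the elementary bi-Lipschitz estimate on the affine bijection $\bp$ explicit, so that the chain of inequalities linking $d_{\Delta^{n-1}}$ to $|\cdot|$ is clean; everything else is a formal consequence of Proposition \ref{basicfactWG} and standard facts about metrizable topologies.
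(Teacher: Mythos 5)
Your proof is correct and follows essentially the same route as the paper, which simply cites Proposition \ref{basicfactWG} together with the bi-Lipschitz property of the affine bijection $\bp$; you have merely spelled out the resulting bi-Hölder comparison between $d_{\Delta^{n-1}}$ and the Euclidean metric and its standard consequences (topological equivalence, finite diameter, separability, completeness) in full detail.
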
 
\begin{proof}
The result follows from Proposition \ref{basicfactWG} and the fact that $\bp$ is bi-Lipschitz.
\end{proof}

We now describe how, through the isometry $\bp$, $(\Delta^{n-1})^\circ$ inherits the Riemannian structure of $(\P(\G))^\circ$.
For $p \in \P(\G)$, let $B(p)$ denote the $p$-weighted graph Laplacian,
\begin{align} \label{Bpdef}
B: \P(\G) \to M_{n \times n},  \ \ B(p)\phi :=  -\nabla_\G \cdot ( \check{p} \nabla_\G \phi) ,
 \ \  B(p)_{ij} := \begin{cases} \sum_{k \neq i} \theta(p_i,p_k) q_{ik}   &\text{ if } i = j , \\ -\theta(p_i,p_j) q_{ij} &\text{ otherwise.} \end{cases} \end{align}
Then, as shown in \cite[Lemma 3.17]{maas2011gradient}, for all $p \in (\P(\G))^\circ$, 
\begin{align*}
{\rm Ker}B(p) &= \{ c \textbf{1} \in \R^n: c \in \R \} ,  \quad {\rm Ran }  B(p) =   \left\{ \psi \in \R^{n} : \sum_{i=1}^n \psi_i = 0 \right\} .
\end{align*}
 The left inverse of $B(p)$, denoted $ B(p)^\dagger :{\rm Ran }  B(p)  \to  \R^n   / {\rm Ker}B(p)$, is a bijection, and choosing representatives of each equivalence class, we may identify  $\R^n   / {\rm Ker}B(p) \cong  \left\{ \psi \in \R^{n} : \sum_{i=1}^n \psi_i = 0 \right\} $.
Likewise, since $\bp(r)$ is linear, its  Jacobian determinant is independent of $r$ and given by 
\begin{align*} 
\Xi&:  \mathbb{R}^{n-1} \to {\rm Ran }  B(p) :   [f_i]_{i=1}^{n-1} \to \left[f_1, f_2, \dots, f_{n-1}, - \sum_{i=1}^{n-1} f_i \right] . \end{align*}

As described in Proposition \ref{basicfactWG}(\ref{riemannianpart}), for any $p_0 \in (\P(\G))^\circ$, smooth curves $p: (-\epsilon, \epsilon) \to (\mathcal{P}(\G))^\circ$ with $\left. p \right|_{t=0} = p_0$ are related to elements of the tangent space $\nabla_\G \psi_0 \in {\rm Tan}_{p_0} \P(\G)$ via
\[ \dot{p}_0 = B(p_0) \psi_0 \iff B(p_0)^\dagger \dot{p}_0 = \psi_0 . \]
 Thus, for any $r_0 \in  (\Delta^{n-1})^\circ$, smooth curves $r:(-\epsilon, \epsilon) \to  (\Delta^{n-1})^\circ$ with $\left. r \right|_{t=0} =r_0$ are in one to one correspondence with smooth curves $\bp(r): (-\epsilon, \epsilon) \to (\P(\G))^\circ$, and the elements of the tangent space ${\rm Tan}_{r_0}(\Delta^{n-1}) \cong \R^{n-1}$ are related to elements of the tangent space ${\rm Tan}_{p_0} \P(\G)$ by 
 \[ \Xi \dot{r}_0 =  \left.  \frac{d}{dt} \bp(r_t) \right|_{t=0} =  B(\bp(r_0)) \psi_0 \iff B(\bp(r_0))^\dagger \Xi \dot{r}_0 = \psi_0 . \]
 Thus, the Riemannian structure on $(\P(\G))^\circ$ induces a   Riemannian metric for  $r \in (\Delta^{n-1} )^\circ$,
 \begin{align} \label{metricDeltatoEuclidean} \la s, \tilde{s} \ra_{{\rm Tan}_r \Delta^{n-1}} =  \la   \nabla_\G B(\bp(r))^\dagger \Xi  {s}, \nabla_\G B(\bp(r))^\dagger \Xi \tilde{s} \ra_{{\rm Tan}_{\bp(r)}\P(\G)} = \la   s,  \Xi^t B(\bp(r))^\dagger \Xi \tilde{s} \ra_{\R^{n-1}} .
 \end{align}
 

 The gradient with respect to this Riemannian structure for $r \in (\Delta^{n-1})^\circ$ is
 \begin{align} \label{gradientdeltaformula} \nabla_{\Delta^{n-1}} h(r) = \Xi^{-1} B(\mathbf{p}(r)) (\Xi^{-1})^t \nabla_{\R^{n-1}} h(r)  . 
 \end{align}
 
We close this section with the following lemma regarding the structure of geodesics in $\Delta^{n-1}$ and a regularization whereby geodesics may be approximated by curves that avoid the boundary of the simplex. For $c>0$,   define
\begin{align} \label{strictinteriordef}\Delta^{n-1}_{c}:= \left\{ r \in \Delta^{n-1} \text{ s.t. } r_j \geq c \quad \forall j=1, \dots, n-1 \text{ and } \sum_{j=1}^{n-1} r_j \leq 1 - c  \right\}. 
\end{align}
The lemma is a direct consequence of Corollary \ref{cor:IneqConvex} and standard metric geometry arguments, so we defer the proof to appendix \ref{realappendix}.
\begin{lem}[Geodesics in $\Delta^{n-1}$]   
Suppose $\G$ is symmetric and connected and the interpolation function $\theta$ satisfies Assumption \ref{interpolationassumption}.
 \label{simplexgeodesicapproximationlem}
\begin{enumerate}[(i)]
\item $\forall \ r_0, r_1 \in \Delta^{n-1}$, there exists a constant speed geodesic $\gamma_{r_0, r_1}\in AC(0,1;  \Delta^{n-1})$ from $r_0$ to $r_1$. \label{existenceofconstantspeedgeodesicDelta}
\item The map $ \gamma: \Delta^{n-1} \times \Delta^{n-1} \to AC(0,1; \Delta^{n-1}) : (r_0, r_1) \mapsto \gamma_{r_0, r_1}  $
can be chosen to be Borel measurable. \label{measurabilityofgeodesicmap}
\item For any   geodesic $\gamma_{r_0, r_1}$,  $a\in(0,1)$, and $s_0, s_1 \in (\Delta^{n-1})^\circ$, there exists $\gamma_{r_0,r_1}^a \in AC(0,1; \Delta^{n-1}) $, defined by 
  \begin{align} \label{gammaa}  \gamma_{r_0, r_1,t}^a := (1-a) \gamma_{r_0,r_1,t} + a((1-t) s_0 + t s_1) 
  \end{align}
that satisfies
\begin{align}\label{eq:AwayBoundary}
&\gamma_{r_0, r_1,t}^{a} \in \Delta^{n-1}_{a C } \quad \text{ and } \quad
\lim_{a\rightarrow 0}  \gamma_{r_0, r_1,t}^{a} =  \gamma_{r_0, r_1,t}  , \text{ for all } t \in [0,1], \\
    \label{eq:AuxRegularized1}
  &  \left| ({\gamma}^{a}_{r_0, r_1})' \right|^2_{\Delta^{n-1}}(t)  \leq   (1-a) d_{\Delta^{n-1}}^2(r_0,  r_1)    + aC  , \text{ for almost every }t \in [0,1],
\end{align}
where $C= C_{ {s}_0,  {s}_1,\G}>0$ only depends on the minimal distance of $s_0$ and $s_1$ to the boundary of the simplex, and the graph $\G$.
\end{enumerate}
\end{lem}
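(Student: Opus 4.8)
The plan is to prove the three parts in order, leaning on Corollary \ref{cor:IneqConvex} and standard facts about geodesic metric spaces transported through the bi-Lipschitz isometry $\bp$. For part \eqref{existenceofconstantspeedgeodesicDelta}, I would argue that $(\Delta^{n-1}, d_{\Delta^{n-1}})$ is a complete, locally compact length space: by Proposition \ref{basicfactWG}(\ref{WGgeodesics}), $(\P(\G), W_\G)$ admits constant-speed geodesics between any two points, and since $\bp$ is a bijection with $d_{\Delta^{n-1}}(r,\tilde r) = W_\G(\bp(r),\bp(\tilde r))$ by definition, pushing a constant-speed geodesic in $\P(\G)$ forward through $\bp^{-1}$ yields a constant-speed geodesic in $\Delta^{n-1}$. (Alternatively, invoke the Hopf--Rinow--Cohn-Vossen theorem using Lemma \ref{lem:EquivalenceTopologies}, which gives completeness and that the topology is the Euclidean one, hence local compactness of the bounded-diameter space $\Delta^{n-1}$.)

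For part \eqref{measurabilityofgeodesicmap}, the goal is a Borel-measurable selection $(r_0,r_1) \mapsto \gamma_{r_0,r_1}$. I would equip $AC(0,1;\Delta^{n-1})$ (or rather $C([0,1];\Delta^{n-1})$, in which constant-speed geodesics live) with the uniform topology, which makes it a Polish space since $\Delta^{n-1}$ is Polish. The set-valued map sending $(r_0,r_1)$ to its (nonempty, by part (i), and closed) set of constant-speed geodesics has a closed graph in $(\Delta^{n-1})^2 \times C([0,1];\Delta^{n-1})$: if $(r_0^k,r_1^k,\gamma^k) \to (r_0,r_1,\gamma)$ uniformly with each $\gamma^k$ a constant-speed geodesic, then $\gamma$ is too, by continuity of $d_{\Delta^{n-1}}$ and the identity $d_{\Delta^{n-1}}(\gamma(s),\gamma(t)) = |s-t| d_{\Delta^{n-1}}(\gamma(0),\gamma(1))$ passing to the limit. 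A closed-graph set-valued map with nonempty closed values into a Polish space admits a Borel-measurable selection by the Kuratowski--Ryll-Nardzewski selection theorem, which gives the claim.

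For part \eqref{measurabilityofgeodesicmap} I should be careful about which curve space the selection lands in, but since constant-speed geodesics in a space of finite diameter are automatically $1$-Lipschitz-reparametrized and hence lie in $AC(0,1;\Delta^{n-1})$, the selection is valued there as required. For part \eqref{existenceofconstantspeedgeodesicDelta} again I should double-check the curve $\gamma^a_{r_0,r_1}$ in \eqref{gammaa} is well-defined: $(1-t)s_0 + ts_1$ stays in the convex set $\Delta^{n-1}$ (indeed in $(\Delta^{n-1})^\circ$ since $s_0,s_1$ are interior and the simplex is convex), and a convex combination with $\gamma_{r_0,r_1,t}$ again lands in $\Delta^{n-1}$; the lower bound $\gamma^a_{r_0,r_1,t} \in \Delta^{n-1}_{aC}$ follows because each barycentric coordinate of $(1-t)s_0+ts_1$ is bounded below by $\min\{\min_i (s_0)_i, \min_i(s_1)_i\}$ (taking this as the definition of $C$'s dependence), so each coordinate of $\gamma^a$ is at least $a$ times this quantity. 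Convergence $\gamma^a_{r_0,r_1,t} \to \gamma_{r_0,r_1,t}$ as $a \to 0$ is immediate from the explicit formula \eqref{gammaa}.

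The substantive estimate is \eqref{eq:AuxRegularized1}, and this is where I would work through Corollary \ref{cor:IneqConvex} via the isometry. Translating through $\bp$: the geodesic $\gamma_{r_0,r_1}$ corresponds to a constant-speed geodesic $p_t := \bp(\gamma_{r_0,r_1,t})$ in $\P(\G)$, which by Proposition \ref{basicfactWG}(\ref{WGgeodesics}) solves the graph continuity equation with a velocity $v$ satisfying $\|v_t\|_{\Tan_{p_t}\P(\G)}^2 = W_\G(p_0,p_1)^2 = d_{\Delta^{n-1}}^2(r_0,r_1)$ for a.e.\ $t$. The affine curve $(1-t)s_0 + ts_1$ corresponds to $z_t := \bp((1-t)s_0 + ts_1)$, which is the affine interpolation in $\P(\G)$ between the interior measures $\bp(s_0)$ and $\bp(s_1)$; applying Corollary \ref{cor:IneqConvex} with $m_0 := \bp(s_0)$, $m_1 := \bp(s_1)$ produces $v^a$ so that $(p^a, v^a) := ((1-a)p + az, v^a)$ solves the graph continuity equation with $\|v^a_t\|_{\Tan_{p^a_t}\P(\G)}^2 \le (1-a)\|v_t\|_{\Tan_{p_t}\P(\G)}^2 + aC_{m_0,m_1,\G}$ and $p^a_{i,t} \ge C_{m_0,m_1,\G}$ for all $t,i$. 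Since $\bp$ is linear, $\bp(\gamma^a_{r_0,r_1,t}) = (1-a)p_t + az_t = p^a_t$, so $\gamma^a_{r_0,r_1}$ is exactly the curve whose lift solves the graph continuity equation with velocity $v^a$; because $\|v^a_t\|_{\Tan_{p^a_t}\P(\G)}$ controls the metric derivative of $p^a$ in $W_\G$ — and hence, transported back through the isometry $\bp$, the metric derivative of $\gamma^a_{r_0,r_1}$ in $d_{\Delta^{n-1}}$ — we get $|(\gamma^a_{r_0,r_1})'|^2_{\Delta^{n-1}}(t) \le \|v^a_t\|^2_{\Tan_{p^a_t}\P(\G)} \le (1-a)d_{\Delta^{n-1}}^2(r_0,r_1) + aC$ for a.e.\ $t$, which is \eqref{eq:AuxRegularized1}. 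The only mildly delicate point is justifying that a velocity field solving the graph continuity equation bounds the metric derivative pointwise a.e.; this is exactly the inequality $|p'|_{W_\G}(t) \le \|v_t\|_{\Tan_{p_t}\P(\G)}$ built into the least-action characterization \eqref{wassersteindistanceongraphs} (one integrates over subintervals and uses the definition of $W_\G$), and I expect this bookkeeping — rather than any deep idea — to be the main thing to be careful about.
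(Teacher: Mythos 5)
Your proposal is correct and follows essentially the same route as the paper: part (i) via the isometry $\bp$ with Proposition \ref{basicfactWG}(\ref{WGgeodesics}), part (iii) by lifting the geodesic to $(\P(\G),W_\G)$, applying Corollary \ref{cor:IneqConvex} with $m_0=\bp(s_0)$, $m_1=\bp(s_1)$, and bounding the metric derivative by the tangent-space norm of the velocity (the paper cites \cite[Proposition 3.3]{erbar2012ricci} for exactly the bookkeeping you describe). For part (ii) the paper simply cites standard measurable-selection references, and your closed-graph/Kuratowski--Ryll-Nardzewski sketch is the standard argument behind them (note that weak measurability of the set-valued map follows here from the closed graph together with the compactness, via Arzel\`a--Ascoli, of the family of constant-speed geodesics).
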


%

%
%

\subsection{Induced geometry on product space}
The Riemannian metric induced on $(\Delta^{n-1})^\circ$ by the Wasserstein metric on the graph, as recalled in the previous section, leads to a natural Riemannian structure on $\Rd \times (\Delta^{n-1})^\circ$. Specifically, for $x \in \Rd$ and $r \in (\Delta^{n-1})^\circ$, we take the tangent space to be
  \begin{equation*}
    \Tan_{(x,r)} \Rd \times \Delta^{n-1}     = \Tan_{x}\Rd \oplus \Tan_{r}\Delta^{n-1} \cong \Rd \oplus \R^{n-1} ,
\end{equation*}
with the  Riemannian metric  
 \begin{align} \label{Ninnerproduct} \la (y,\phi), (z,\psi) \ra_{\Tan_{(x,r)}\Rd \times \Delta^{n-1}} = \langle y, z \rangle_{\Rd}+  \la \psi, \phi \ra_{{\rm Tan}_r\Delta^{n-1}},
 \end{align} 
 where $\langle y, z \rangle_{\Rd}$ is the usual Euclidean inner product on $\R^{d}$. 

This Riemannian structure induces the product space distance on $\Rd \times \Delta^{n-1}$,
 \[ d^2_{\Rd \times \Delta^{n-1}}((x,r), (\tilde{x},\tilde{r})):= |x - \tilde{x}|^2 + d_{\Delta^{n-1}}^2(r,\tilde{r}) . \]
 Due to the inherited geometric and topological properties from $(\P(\G), W_\G)$ —see Proposition \ref{basicfactWG}, and Lemma \ref{lem:EquivalenceTopologies}—$(\Rd \times \Delta^{n-1}, d_{\Rd \times \Delta^{n-1}})$ is a complete, separable metric space, with topology equivalent to the usual Euclidean topology. 
As a consequence, for any $h: \Rd \times \Delta^{n-1} \to \R$ continuously differentiable, its gradient at $(x, r) \in \Rd \times (\Delta^{n-1})^\circ$  is given by 
\begin{align}
    \nabla_{\Rd \times \Delta^{n-1}} h(x,r) &= \left( \nabla_{x} h(x,r),\nabla_{\Delta^{n-1}} h(x,r) \right)  .
\end{align}

Using $d_{\Rd \times \Delta^{n-1}}$ as the ground distance, one can define a notion of Wasserstein distance on   $\P(\Rd \times \Delta^{n-1})$ via a classical Kantorovich formulation \cite[equation 6.0.2]{ambrosiogiglisavare}:  given $\lambda, \tilde{\lambda} \in \P(\Rd \times \Delta^{n-1})$, we define
\begin{align}
\begin{split}
    W_{\Rd \times \Delta^{n-1}}^{2}(\lambda,\tilde{\lambda}) := \min_{\substack{ \Gamma \in \P((\Rd \times \Delta^{n-1}) \times (\Rd \times \Delta^{n-1})) \\  \pi_1 \# \Gamma = \lambda, \ \pi_2 \# \Gamma = \tilde{\lambda} }}   \int   d^2_{\Rd \times \Delta^{n-1}}((x,r), (\tilde{x},\tilde{r}))  \ d\Gamma ((x,r),(\tilde{x},\tilde{r})),
    \end{split}
    \label{eqn:Kantorovich}
\end{align}  
noticing that the above may be equal to $+\infty$ if the measures don't have finite second moments. Here, $\pi^i: (\Rd \times \Delta^{n-1}) \times (\Rd \times \Delta^{n-1})$ denote the standard projections, so that $\pi^1(x,r,\tilde{x},\tilde{r})= (x,r)$ and $\pi^2((x,r,\tilde{x},\tilde{r})) = (\tilde{x},\tilde{r})$.

We likewise have a notion of continuity equation on $\Rd \times (\Delta^{n-1})^\circ$.

\begin{defn}[Weak solution of continuity equation on $\Rd \times (\Delta^{n-1})^\circ$] \label{def:ContEqN} A pair $(\lambda, \bw)$ is a weak solution of the   continuity equation on $\Rd \times (\Delta^{n-1})^\circ$, for the time interval $[0,T]$, if 
\begin{enumerate}[(i)]
\item $\lambda \in C([0,T]; \P(\Rd \times \Delta^{n-1}))$, $\supp (\lambda) \subseteq \Rd \times (\Delta^{n-1})^\circ$; \label{continuityNinterior}
\item $\bw(x,r,t) = (w_1(x,r,t), w_2(x,r,t))$ for $(w_1,w_2):[0,T] \times \Rd \times \Delta^{n-1} \to \Rd \times \R^{n-1}$ measurable and  $ \int_0^T \int_{\Rd \times \Delta^{n-1}} \| \bw(x,r,t) \|_{\Tan_{(x,r)}(\Rd \times \Delta^{n-1})} d \lambda_{t}(x,r) dt <+\infty$. \label{continuityNgrowth}
\item The continuity equation  holds in the sense of distributions: for all $\eta \in C^\infty_c(\Rd \times \Delta^{n-1} \times [0,T])$,
  \begin{align} \label{ctyupstairs}
&\int_0^T \int_{\Rd \times \Delta^{n-1}} \partial_t \eta(x,r,t) d \lambda_t(x,r) dt \\
&\quad  = - \int_0^T \int_{\Rd \times \Delta^{n-1}}  \la \nabla_{\Rd \times \Delta^{n-1}} \eta(x,r,t) , \bw(x,r,t) \ra_{\Tan_{(x,r)}(\Rd \times \Delta^{n-1})}  d\lambda_{t}  (x,r ) dt    .\nonumber
 \end{align}
\end{enumerate}
\end{defn}
\begin{rem}[Alternative form of continuity equation on $\Rd \times (\Delta^{n-1})^\circ$] \label{altformlifted}
An immediate consequence of the previous definition is that, if $(\lambda,\bw)$ is a solution of the continuity equation on  $\Rd \times (\Delta^{n-1})^\circ$, then  for any $\eta \in C^\infty_c(\Rd \times \Delta^{n-1})$, the function $t \mapsto \int_{\Rd \times \Delta^{n-1}} \eta d \lambda_t$ is absolutely continuous and we have, in $D(0,T)'$,
 \begin{align} \label{dualCTYN}
\frac{d}{dt} \int_{\Rd \times \Delta^{n-1}} \eta(x,r) d \lambda_t(x,r) = \int_{\Rd \times \Delta^{n-1}} \la \nabla_{\Rd \times \Delta^{n-1}} \eta(x,r) , \bw(x,r) \ra_{\Rd \times \Delta^{n-1}} d\lambda_t  (x,r) \ .
 \end{align}
 \end{rem}

\section{Vector valued optimal transport}
\label{sec:Distances}

We now build on the definition of the 2-Wasserstein metric on $\P(\G)$, recalled in the previous section, to define vector valued optimal transport distances between $\bmu, \bnu \in \mathcal{P}(\Omega \times \G)$. By leveraging the geometric structure of the graph, we develop a framework that unites dynamic and static approaches.

 \subsection{Dynamic distance} \label{subsection:Dynamicdistances}

 As described in the introduction, the first notion of vector valued Wasserstein distance that we consider is via a dynamic formulation,
 inspired by   work by Chen, Georgiou, and Tannenbaum \cite{chen2018vector}, but using the graph geometry introduced by Maas \cite{maas2011gradient}.
 We begin by defining our notion of solution to the vector valued continuity equation, as in equation (\ref{vvcty}).
 
\begin{defn}[Vector valued continuity equation] A triple $(\brho, \bu, \bv)$ is a  solution of the vector valued continuity equation on the time interval $[t_0,t_1]$ if 
\begin{enumerate}[(i)]
\item $\brho \in C([t_0,t_1]; \P(\Omega\times \G))$, where $\mathcal{P}(\Omega \times \G)$ is endowed with the topology of narrow convergence;
\item $\bu:    \Rd \times [t_0,t_1]  \to (\mathbb{R}^{d})^n    $ is   measureable and $ \int_{t_0}^{t_1} \int_\Omega |u_{i,t}(x)| d \rho_{i,t}(x) dt <+\infty$ for all $i = 1, \dots, n$;
\item $\bv:   \Rd \times [t_0,t_1]\to M_{n \times n}(\R)$ is   measurable and $\int_{t_0}^{t_1} \int_\Omega |v_{ij,t}(x)| \theta \left( \frac{ d \rho_{i,t}}{d \bar{\rho}_t }(x),  \frac{ d \rho_{j,t}}{d \bar{\rho}_t}(x) \right)  d \bar{\rho}_t(x) dt < +\infty$ for all $i,j = 1, \dots, n$, where $\bar{\rho} :=  \sum_{k=1}^n \rho_k \nonumber$;
\item for all $\eta \in C^\infty_c( \Rd \times [t_0,t_1])$ and $i =1 , \dots, n$,
  \begin{align} \label{dualvvcty2}
&\int_{t_0}^{t_1} \int_{\Rd} \partial_t \eta(x,t) d \rho_{i,t}(x) dt  \\
 &\quad= - \int_{t_0}^{t_1} \int_{\Rd}  \nabla \eta(x,t) \cdot u_{i,t}(x)   d\rho_{i,t}  (x ) dt  \nonumber \\
&\quad \quad\quad  - \frac12 \sum_{j = 1}^n \int_{t_0}^{t_1} \int_{\Rd} \eta(x,t) \theta \left( \frac{ d \rho_{i,t}}{d \bar{\rho}_t }(x),  \frac{ d \rho_{j,t}}{d \bar{\rho}_t}(x) \right) (v_{ij,t}(x) - v_{ji,t}(x)) q_{ij}  d \bar{\rho}_t(x) dt. \nonumber 
& 
 \end{align}
 
\end{enumerate}
 \label{def:ContEqMultiSpecies}
\end{defn}

\begin{rem}[Alternative form of vector valued continuity equation] \label{altcty}
As in Remark \ref{altformlifted}, if  $(\brho, \bu, \bv)$ solves the vector valued continuity equation, then, for all $\eta \in C^\infty_c(\Rd)$, the function $t \mapsto \int \eta d \rho_{i,t}$ is absolutely continuous and we have, in $\mathcal{D}(0,T)'  $,
   \begin{align} \label{dualvvcty}
 \frac{d}{dt} \int_{\Rd} \eta(x) d \rho_{i,t}(x)   &= \int_{\Rd}  \nabla \eta(x) \cdot u_{i,t}(x)   d\rho_{i,t}  (x)   \\
&\quad  \quad  + \frac12 \sum_{j = 1}^n \int_{\Rd} \eta(x) \theta \left( \frac{ d \rho_{i,t}}{d \bar{\rho}_t }(x),  \frac{ d \rho_{j,t}}{d \bar{\rho}_t}(x) \right) (v_{ij,t}(x) - v_{ji,t}(x)) q_{ij}  d \bar{\rho}(x). \nonumber 
 \end{align}
 \end{rem}

With this notion of continuity equation in hand, our  dynamic vector valued distance $W_{\Omega \times \G}$, can now be defined via a least action principle, as shown in equation (\ref{dynamic}),

\begin{align*} 
W^2_{\Omega\times \G}(\bmu, \bnu) := \inf \left\{  \int_0^1 \| (\bu_t,\bv_t)\|^2_{{\brho_t}} dt  :  (\brho,\bu,\bv) \in \mathcal{C}(\bmu,\bnu)  \right\} ,
\end{align*}
where $\mathcal{C}(\bmu,\bnu)$ denotes solutions of the vector valued continuity equation on $[0,1]$ that flow from $\bmu$ to $\bnu$, see equation (\ref{dynamicconstraintdef}), and for   $(\brho,\bu,\bv) \in \mathcal{C}(\bmu,\bnu)$, we abbreviate the action by
\begin{align*}  
 \| (\bu ,\bv )\|^2_{\brho} := \sum_{i=1}^n \int_\Omega | u_i|^2   d \rho_i  +  \frac12 \sum_{i,j=1}^n \int_\Omega |v_{ij} |^2 \theta \left(\frac{d\rho_i}{d\bar{\rho}},\frac{d\rho_j}{d\bar{\rho}} \right) q_{ij} d\bar{\rho}.
 \end{align*}
 

\begin{rem}[Restriction to gradients of potentials] \label{gradsofpot}
%
In fact, the definition of $W_{\Omega \times \G}$  is unchanged if we restrict the vector fields $(\bu,\bv) $ in our continuity equation to be given by (limits of) gradients of potentials, where the notion of gradient is the one associated to the geometric structure of  $\Omega \times \G$. Given $\bmu, \bnu \in \P(\Omega \times \G)$, suppose $(\brho,\bu,\bv) \in C(\bmu,\bnu)$. In the spatial domain, the same argument as in Ambrosio, Gigli, and Savar\'e \cite[Proposition 8.4.5]{ambrosiogiglisavare} shows there exists $\tilde{\bu}$  satisfying
 \[ \tilde{u}_i(x,t) \in \overline{ \{ \nabla \varphi : \varphi \in C^\infty_c(\Rd) \}}^{L^2(\rho_{i,t})} , \quad \text{ for almost every $t \in [0,1]$ and for all }i=1, \dots, n, \]
 for which $(\brho,\tilde{\bu}, \bv) \in \mathcal{C}(\bmu,\bnu)$ and $   \int_0^1 \| (\tilde{\bu}_t,\bv_t)\|^2_{{\brho_t}} dt \leq   \int_0^1 \| (\bu_t,\bv_t)\|^2_{{\brho_t}} dt$.
 
  On the graph, we see that, for each pair $(i,j)$, we may add the same function $f_{t}$ to both $v_{ij,t}$ and $v_{ji,t}$,  and the mutation term is unchanged. Since $\theta$ and $q_{ij}$ are   symmetric and
\[  \min_{f}  \frac12 \int_\Omega \left( |v_{ij}+ f|^2 +  |v_{ji}+ f|^2\right) \theta \left( \frac{d \rho_i}{d \bar{\rho}} , \frac{d \rho_j}{d \bar{\rho}} \right)q d \bar{\rho}\]
is minimized by $f = -(v_{ij} + v_{ji})/2$, we see that, replacing each $v_{ij}$ and $v_{ji}$ by $v_{ij} +f$ and $v_{ji}+f$, there exists $\tilde{\bv}$ satisfying 
\[ \tilde{v}_{ij,t} = -\tilde{v}_{ji,t}, \text{ for almost every $t \in [0,1]$ and for all $i,j, = 1, \dots, n ,$} \]
for which $( \brho, \tilde{\bu},\tilde{\bv}) \in \mathcal{C}(\bmu,\bnu)$ and $   \int_0^1 \| (\tilde{\bu}_t,\tilde{\bv}_t)\|^2_{{\brho_t}} dt \leq   \int_0^1 \| (\bu_t,\bv_t)\|^2_{{\brho_t}} dt$.  
\end{rem}

\begin{rem}[Two node case] \label{twonoderemark}

For a graph $\G$ with   two nodes, suppose   $v_{12}(x) = - v_{21}(x)$ and $q_{ij}  \equiv  q \geq 0$. Then, for $\eta \in C_c^\infty(\Rd)$,  the mutation term in the vector valued continuity equation becomes
\begin{align} \label{simplergraphdivergencetwonodes}
& \frac12 \sum_{j = 1}^2 \int_{\Omega} \eta  \theta \left( \frac{ d \rho_1}{d \bar{\rho} } ,  \frac{ d \rho_j}{d \bar{\rho}}  \right) (v_{1j}  - v_{j1} )) q_{1j}  d \bar{\rho} =   \int_{\Omega} \eta  \theta \left( \frac{ d \rho_1}{d \bar{\rho} } ,  \frac{ d \rho_2}{d\bar{\rho}}  \right)  v_{12}   q d\bar{\rho}   ,  \nonumber
\end{align}
and the energy becomes
\begin{align*} 
\| (\bu ,\bv )\|_{\brho}^{2} &= \int_\Omega |u_1|^{2}  d {\rho}_1  +  \int_\Omega |u_2|^{2}  d {\rho}_2  +  \int_\Omega v_{12}^{2}\theta \left( \frac{d\rho_1}{d \bar{\rho}} ,\frac{d\rho_2}{d\bar{\rho}} \right) q  d\bar{\rho}.
\end{align*}

\end{rem}

In our first lemma, we show that we may reparametrize solutions of the vector valued continuity equation in time.   The proof is analogous to \cite[Thm 8.1.3]{ambrosiogiglisavare}, and we defer it to appendix \ref{realappendix}.

\begin{lem}[Time Rescaling and Gluing] Suppose $\G$ is symmetric and connected,   $\theta$ satisfies Assumption \ref{interpolationassumption}, and $\Omega \subseteq \Rd$ is closed.  \label{timerescaling}
 \begin{enumerate}[(i)]
 \item   Let $\tau:[0,T] \rightarrow [s_0,s_1]$ be a strictly increasing absolutely continuous function with absolutely continuous inverse $\tau^{-1}.$ Then $(\brho,\bu,\bv)$ is a distributional solution of vector valued continuity equation \ref{dualvvcty2} on $[s_0,s_1]$ if and only if \begin{equation*}
        \tilde{\brho}_t:= \brho_{\tau(t)}, \hspace{1mm} \tilde{\bu}_{t} := \tau'(t) \bu_{\tau(t)}, \hspace{1mm}  \tilde{\bv}_{t} := \tau'(t) \bv_{\tau(t)},
    \end{equation*} is a   solution of vector valued continuity equation \ref{dualvvcty2} on $[0,T].$
    
    \item   Let $\bmu,\bnu,\bsigma \in \mathcal{P}(\Omega \times \G),$ and
    let $(\brho^{1},\bu^{1},\bv^{1}) \in \mathcal{C}(\bmu,\bsigma),$ and $(\brho^{2},\bu^{2},\bv^{2})\in \mathcal{C}(\bsigma,\bnu)$. Denote the measure $\brho$ and the velocity fields $\bu$, $\bv$   by 
\begin{align*}
(\brho_{t}, \bu_{t},\bv_{t} ) &= \begin{cases} (\brho_{2t}^{1},2\bu_{2t}^{1}, 2\bv_{2t}^{1}) &\text{ if $t\in [0,\frac{1}{2}]$,} \\
(\brho_{2t-1}^{2},2{\bu}_{2t-1}^{2},2 \bv_{2t-1}^{2}) &\text{ if $t\in (\frac{1}{2},1],$} \end{cases}
\end{align*} Then, $(\brho,\bu,\bv) \in \mathcal{C}(\bmu,\bnu)$ and \begin{equation*}
    \int_0^1 \| (\bu_t,\bv_t)\|_{{\brho_t}}dt = \int_0^1 \|(\bu_{t}^{1},\bv_{t}^{1})\|_{{\brho_{t}^{1}}}dt + \int_0^1 \|(\bu_{t}^{2},\bv_{t}^{2})\|_{{\brho_{t}^{2}}}dt.
\end{equation*}
\end{enumerate}
\end{lem}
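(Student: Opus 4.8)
The two parts are standard manipulations of distributional continuity equations, and the plan is to adapt the classical arguments from Ambrosio, Gigli, and Savar\'e \cite[Thm.~8.1.3]{ambrosiogiglisavare} to our vector valued setting, tracking carefully how the mutation term transforms.

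For part (i), the plan is to work one component at a time, using the alternative form of the vector valued continuity equation in Remark \ref{altcty}. Fix $\eta \in C^\infty_c(\Rd)$ and let $g_i(s) := \int_\Rd \eta \, d\rho_{i,s}$ for $s \in [s_0, s_1]$; by Remark \ref{altcty}, $g_i$ is absolutely continuous with $g_i'(s)$ given by the right-hand side of \eqref{dualvvcty}. Setting $\tilde{g}_i(t) := g_i(\tau(t)) = \int_\Rd \eta \, d\tilde{\rho}_{i,t}$ and using that the composition of an absolutely continuous function with a strictly increasing absolutely continuous change of variables (with absolutely continuous inverse) is absolutely continuous, the chain rule gives $\tilde{g}_i'(t) = \tau'(t) g_i'(\tau(t))$ for a.e. $t$. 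It then remains to verify that $\tau'(t) g_i'(\tau(t))$ equals the right-hand side of \eqref{dualvvcty} computed with $(\tilde{\brho}, \tilde{\bu}, \tilde{\bv})$ in place of $(\brho,\bu,\bv)$: the spatial term picks up $\tau'(t)$ from the definition $\tilde{u}_{i,t} = \tau'(t) u_{i,\tau(t)}$, and the mutation term picks up $\tau'(t)$ from $\tilde{v}_{ij,t} = \tau'(t) v_{ij,\tau(t)}$ while $\theta\big(\tfrac{d\tilde\rho_{i,t}}{d\bar{\tilde\rho}_t}, \tfrac{d\tilde\rho_{j,t}}{d\bar{\tilde\rho}_t}\big) d\bar{\tilde\rho}_t = \theta\big(\tfrac{d\rho_{i,\tau(t)}}{d\bar\rho_{\tau(t)}}, \tfrac{d\rho_{j,\tau(t)}}{d\bar\rho_{\tau(t)}}\big) d\bar\rho_{\tau(t)}$ is simply evaluated at the reparametrized time. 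The integrability conditions (ii)--(iii) in Definition \ref{def:ContEqMultiSpecies} transform correctly under the substitution $s = \tau(t)$, $ds = \tau'(t)\,dt$. The converse direction is identical, applying the forward direction to $\tau^{-1}$. The continuity $\tilde\brho \in C([0,T]; \P(\Omega\times\G))$ is immediate since $\tau$ is continuous.

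For part (ii), I would first apply part (i) to each of $(\brho^1,\bu^1,\bv^1)$ and $(\brho^2,\bu^2,\bv^2)$ with the linear rescalings $\tau_1(t) = 2t$ on $[0,\tfrac12]$ and $\tau_2(t) = 2t-1$ on $[\tfrac12,1]$, which explains the factors of $2$ appearing in $\bu$ and $\bv$. This produces two solutions on $[0,\tfrac12]$ and $[\tfrac12,1]$ respectively, and the content of the claim is that concatenating them yields a distributional solution on all of $[0,1]$. To verify this, fix $\eta \in C^\infty_c(\Rd \times [0,1])$ and split the time integral in \eqref{dualvvcty2} at $t = \tfrac12$; on each half the equation holds because the rescaled pieces are solutions there, and the boundary terms at $t = \tfrac12$ produced by integration by parts in $t$ cancel precisely because $\brho^1_1 = \bsigma = \brho^2_0$, so $t \mapsto \int_\Rd \eta(\cdot, \tfrac12)\,d\rho_{i,t}$ matches from the left and the right (here I use the absolute continuity, hence continuity, of $t \mapsto \int \eta\,d\rho_{i,t}$ from Remark \ref{altcty} together with the endpoint conditions in $\C(\cdot,\cdot)$). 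The continuity of $\brho$ at $t = \tfrac12$ follows likewise from $\brho^1_1 = \brho^2_0$ and narrow continuity of each piece. The energy identity is then a direct computation: on $[0,\tfrac12]$ the substitution $s = 2t$ gives $\int_0^{1/2} \|(\bu_t,\bv_t)\|_{\brho_t}\,dt = \int_0^{1/2} \|(2\bu^1_{2t}, 2\bv^1_{2t})\|_{\brho^1_{2t}}\,dt$, and since $\|(\cdot,\cdot)\|_{\brho}$ is $1$-homogeneous in $(\bu,\bv)$ (it is the square root of a quadratic form in $(\bu,\bv)$), the factor of $2$ comes out and cancels the Jacobian $\tfrac12$ from $ds = 2\,dt$, yielding $\int_0^1 \|(\bu^1_s,\bv^1_s)\|_{\brho^1_s}\,ds$; similarly on $[\tfrac12,1]$.

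\textbf{Main obstacle.} I expect no serious obstacle here --- these are routine bookkeeping arguments. The one point requiring a little care is the cancellation of boundary terms at $t = \tfrac12$ in part (ii): one must phrase the distributional identity \eqref{dualvvcty2} correctly for test functions $\eta$ that need not vanish at $t = \tfrac12$ (they only vanish at $t = 0, 1$), which is handled by the absolute-continuity statement of Remark \ref{altcty} that upgrades the weak formulation to the pointwise-in-time ODE \eqref{dualvvcty}. A secondary technical point is checking that the change of variables $s = \tau(t)$ preserves the $L^1$ integrability conditions in Definition \ref{def:ContEqMultiSpecies}, which follows from the assumed absolute continuity of both $\tau$ and $\tau^{-1}$.
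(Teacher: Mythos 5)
Your proposal is correct and follows essentially the same route as the paper: part (i) is the same change-of-variables argument (the paper transforms space-time test functions via $\varphi(x,s)=\tilde{\varphi}(x,\tau^{-1}(s))$ and changes variables in the space-time integral rather than invoking the a.e.-in-time form of Remark \ref{altcty}, but the computation is identical), and part (ii) is the same splitting of the weak formulation at $t=\tfrac12$ combined with part (i). If anything, you are more explicit than the paper about the two points it leaves implicit, namely the cancellation of the matching terms at $t=\tfrac12$ via $\brho^1_1=\bsigma=\brho^2_0$ (upgraded through the absolute continuity of $t\mapsto\int\eta\,d\rho_{i,t}$) and the $1$-homogeneity computation yielding the stated energy identity.
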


In our next lemma, we collect two basic properties of solutions of the vector valued continuity equation,   estimating the (distributional) regularity in time and identifying the equation satisfied by the sum of the measures $\bar{\rho} = \sum_{i=1}^n \rho_i$. Both properties follow immediately from the definition; see appendix \ref{realappendix} for the proof.
\begin{lem}[Properties of vector valued continuity equation] \label{energyestimate}
   Suppose $\G$ is symmetric and connected, $\theta$ satisfies Assumption \ref{interpolationassumption}, and $\Omega \subseteq \Rd$ is closed.   Consider $\bmu,\bnu \in \P(\Omega \times \G)$ and $(\brho,\bu,\bv)\in \C(\bmu,\bnu)$. Then, for all $0\leq s_{0}\leq s_{1}\leq 1$ and $\eta \in C^\infty_c(\Rd),$ 
\begin{enumerate}[(i)]
\item if   $v_{ij,t}(x) = -v_{ji,t}(x)$ for all $x \in \Omega$ and $t\in[0,1]$, then for all $i =1 , \dots, n$, \\
$     \left| \int_\Rd \eta d\rho_{i,s_{1}}  - \int_\Rd \eta d\rho_{i,s_{0}}  \right|   \leq      \int_{s_{0}}^{s_{1}}\int_{\Rd}|\nabla \eta| |u_{i,\tau}|d\rho_{i,\tau} d\tau   +  \sum_{j=1}^{n}  \int_{s_{0}}^{s_{1}}\int_{\Rd}|\eta| \theta\left( \frac{d \rho_{i,\tau}}{d \bar{\rho}_{\tau}} , \frac{ d \rho_{j,\tau} }{ d \bar{\rho}_{\tau}}  \right) |v_{ij,\tau}| q_{ij}d\bar{\rho}_{\tau} d\tau$; 
 \label{energyestimatepart}

 \item $
\frac{d}{dt} \int_\Rd \eta d \bar{\rho}_t 
= \sum_{i=1}^n \int_\Rd \nabla \eta \cdot u_{i,t} d \rho_{i,t} .
$ \label{rhobarequation}

    \end{enumerate}
\end{lem}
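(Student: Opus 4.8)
The plan is to derive both statements directly from the alternative distributional form of the vector valued continuity equation, namely equation~\eqref{dualvvcty} in Remark~\ref{altcty}. The starting point is that, for $(\brho,\bu,\bv)\in\C(\bmu,\bnu)$ and any fixed $\eta\in C^\infty_c(\Rd)$, Remark~\ref{altcty} already guarantees that $t\mapsto\int_\Rd\eta\,d\rho_{i,t}$ is absolutely continuous on $[0,1]$, with almost everywhere derivative equal to the right-hand side of~\eqref{dualvvcty}. The integrability conditions (ii)--(iii) of Definition~\ref{def:ContEqMultiSpecies}, together with $\|\eta\|_{L^\infty}<\infty$ and $\|\nabla\eta\|_{L^\infty}<\infty$, ensure that this right-hand side lies in $L^1(0,1)$, so the fundamental theorem of calculus for absolutely continuous functions applies on every subinterval $[s_0,s_1]\subseteq[0,1]$.

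For part~(\ref{energyestimatepart}), I would use the hypothesis $v_{ij,t}=-v_{ji,t}$ to replace $v_{ij,t}-v_{ji,t}$ by $2v_{ij,t}$, absorbing the prefactor $\tfrac12$ in~\eqref{dualvvcty}, so that
\[
\frac{d}{dt}\int_\Rd\eta\,d\rho_{i,t}=\int_\Rd\nabla\eta\cdot u_{i,t}\,d\rho_{i,t}+\sum_{j=1}^n\int_\Rd\eta\,\theta\!\left(\tfrac{d\rho_{i,t}}{d\bar{\rho}_t},\tfrac{d\rho_{j,t}}{d\bar{\rho}_t}\right)v_{ij,t}\,q_{ij}\,d\bar{\rho}_t\quad\text{for a.e. }t .
\]
Integrating over $[s_0,s_1]$, passing the absolute value inside the $t$-integral via the triangle inequality, and using $|\nabla\eta\cdot u_{i,t}|\le|\nabla\eta|\,|u_{i,t}|$ together with $\theta\ge 0$ and $q_{ij}\ge 0$ to bound each mutation summand by $|\eta|\,\theta(\cdots)\,|v_{ij,t}|\,q_{ij}$, gives exactly the asserted inequality.

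For part~(\ref{rhobarequation}), I would instead sum~\eqref{dualvvcty} over $i=1,\dots,n$. Since a finite sum of absolutely continuous functions is absolutely continuous, the left-hand side becomes $\frac{d}{dt}\int_\Rd\eta\,d\bar{\rho}_t$, the transport terms add up to $\sum_{i=1}^n\int_\Rd\nabla\eta\cdot u_{i,t}\,d\rho_{i,t}$, and the mutation terms add up to $\tfrac12\sum_{i,j=1}^n\int_\Rd\eta\,\theta(\tfrac{d\rho_{i,t}}{d\bar{\rho}_t},\tfrac{d\rho_{j,t}}{d\bar{\rho}_t})(v_{ij,t}-v_{ji,t})\,q_{ij}\,d\bar{\rho}_t$, which vanishes: relabelling $i\leftrightarrow j$ and invoking the symmetry of $\theta$ and of $q_{ij}$ sends the summand to its own negative, so the double sum is zero (note that no antisymmetry hypothesis on $\bv$ is needed here). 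The argument is entirely routine and presents no real obstacle; the only point worth emphasizing is the passage from the space--time distributional formulation of Definition~\ref{def:ContEqMultiSpecies} to the fact that $t\mapsto\int_\Rd\eta\,d\rho_{i,t}$ is a genuine absolutely continuous function with the stated derivative, which is precisely the content of Remark~\ref{altcty} (alternatively, one tests~\eqref{dualvvcty2} against product functions $\eta(x)\chi(t)$ and lets $\chi$ approximate $\mathbbm{1}_{[s_0,s_1]}$).
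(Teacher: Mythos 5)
Your proof is correct and follows essentially the same route as the paper: both parts are read off from the distributional form \eqref{dualvvcty} of Remark \ref{altcty}, with the antisymmetry of $\bv$ plus the triangle inequality giving (i) and the $i\leftrightarrow j$ relabelling with symmetry of $\theta$ and $q_{ij}$ giving (ii). The only cosmetic difference is that the paper handles the subinterval $[s_0,s_1]$ in part (i) via the time-rescaling Lemma \ref{timerescaling} before applying \eqref{dualvvcty}, whereas you invoke the absolute continuity of $t\mapsto\int_\Rd\eta\,d\rho_{i,t}$ and the fundamental theorem of calculus directly on $[s_0,s_1]$, which is equally valid.
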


With these lemmas in hand, our first result compares the dynamic distance $W_{\Omega \times \G}(\bmu,\bnu)$ to the bounded Lipschitz distance $d_{BL}(\bmu,\bnu)$ on $\P(\Omega \times \G)$ and the bounded Lipschitz norm $\|\bar{\mu}- \bar{\nu}\|_{BL}$ on $\P(\Rd)$, as defined in equation (\ref{BLdef}). 
\begin{prop} \label{BLlem}
Suppose $\G$ is symmetric and connected, $\theta$ satisfies Assumption \ref{interpolationassumption}, and $\Omega \subseteq \Rd$ is closed.  Given $\bmu, \bnu \in \P(\Omega \times \G)$, 
\begin{enumerate}[(i)]
\item $d_{BL}(\bmu,\bnu) \leq \max \{ 1, Q^{1/2} \}  W_{\Omega \times \G}(\bmu,\bnu)$, \quad for $Q:= { \max_{i} \sum_{j=1}^n q_{ij}}$; \label{BLvsdynamic}
\item $\|\bar{\mu} - \bar{\nu}\|_{BL} \leq W_{\Omega \times \G}(\bmu,\bnu)$. \label{BLbarrhovsdynamic}
\end{enumerate}
\end{prop}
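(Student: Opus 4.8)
The plan is to test a solution of the vector valued continuity equation against an admissible test function and estimate the resulting terms by a short chain of Cauchy--Schwarz inequalities, in the same spirit in which the Benamou--Brenier formula dominates the $1$-Wasserstein, hence the bounded Lipschitz, distance. If $W_{\Omega\times\G}(\bmu,\bnu)=+\infty$ both inequalities are trivial, so I would fix an arbitrary competitor $(\brho,\bu,\bv)\in\C(\bmu,\bnu)$ of finite action and aim to bound each left-hand side by the appropriate multiple of $\big(\int_0^1\|(\bu_t,\bv_t)\|^2_{\brho_t}\,dt\big)^{1/2}$, then pass to the infimum. For part (i) I would first invoke Remark \ref{gradsofpot} to replace $\bv$ by an antisymmetric field ($v_{ij,t}=-v_{ji,t}$, so in particular $v_{ii,t}\equiv0$) without increasing the action; this is what permits the use of Lemma \ref{energyestimate}(i). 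Part (ii) needs no such reduction.

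For part (ii), I would test against $\eta\in C^\infty_c(\Rd)$ with $\|\eta\|_{W^{1,\infty}_2(\Rd)}\le1$. Since $\bar{\rho}_0=\bar{\mu}$ and $\bar{\rho}_1=\bar{\nu}$, Lemma \ref{energyestimate}(ii) — in which the mutation term has cancelled, $v_{ij}-v_{ji}$ being antisymmetric in $(i,j)$ while the weights $\theta q_{ij}$ are symmetric — gives, upon integrating in time, $\int\eta\,d(\bar{\mu}-\bar{\nu})=\int_0^1\sum_i\int\nabla\eta\cdot u_{i,t}\,d\rho_{i,t}\,dt$. Bounding $|\nabla\eta|\le\|\nabla\eta\|_{L^\infty}\le1$, applying Cauchy--Schwarz in space against each $\rho_{i,t}$, summing in $i$ using $\sum_i\rho_{i,t}(\Omega)=\bar{\rho}_t(\Omega)=1$, and finally applying Cauchy--Schwarz in time over $[0,1]$, one obtains $\big|\int\eta\,d(\bar{\mu}-\bar{\nu})\big|\le\big(\int_0^1\sum_i\int|u_{i,t}|^2\,d\rho_{i,t}\,dt\big)^{1/2}\le\big(\int_0^1\|(\bu_t,\bv_t)\|^2_{\brho_t}\,dt\big)^{1/2}$, and the claim follows by taking the supremum over $\eta$ and the infimum over competitors.

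For part (i), I would fix $i$ and test the $i$-th component against $\eta\in C^\infty_c(\Rd)$ with $\|\eta\|_{W^{1,\infty}_2(\Rd)}\le1$. Lemma \ref{energyestimate}(i) bounds $\big|\int\eta\,d(\mu_i-\nu_i)\big|$ by a transport term plus a mutation term. The transport term is estimated exactly as in part (ii), yielding $\|\nabla\eta\|_{L^\infty}\,A_i^{1/2}$ with $A_i:=\int_0^1\int_\Omega|u_{i,t}|^2\,d\rho_{i,t}\,dt$. For the mutation term the crucial input is the sharp estimate — from Remark \ref{interpfunorder} together with $\bar{\rho}_t(\Omega)=1$ — that, for $i\neq j$, $\int_\Omega\theta\big(\tfrac{d\rho_{i,t}}{d\bar{\rho}_t},\tfrac{d\rho_{j,t}}{d\bar{\rho}_t}\big)\,d\bar{\rho}_t\le\tfrac12\big(\rho_{i,t}(\Omega)+\rho_{j,t}(\Omega)\big)\le\tfrac12$; splitting $\theta=\theta^{1/2}\cdot\theta^{1/2}$ and applying Cauchy--Schwarz against $d\bar{\rho}_t$, then in time, then in $j$ (using $\sum_j q_{ij}\le Q$), and using that $v_{ii,t}\equiv0$ after antisymmetrization, bounds the mutation term by $\|\eta\|_{L^\infty}\,Q^{1/2}\,\mathcal{B}_i^{1/2}$, where $\mathcal{B}_i:=\tfrac12\sum_j q_{ij}\int_0^1\int_\Omega|v_{ij,t}|^2\theta\big(\tfrac{d\rho_{i,t}}{d\bar{\rho}_t},\tfrac{d\rho_{j,t}}{d\bar{\rho}_t}\big)\,d\bar{\rho}_t\,dt$, so that $\sum_i(A_i+\mathcal{B}_i)$ equals the action of the competitor. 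Combining the transport and mutation estimates by Cauchy--Schwarz in $\R^2$ and using $\|\nabla\eta\|_{L^\infty}^2+Q\|\eta\|_{L^\infty}^2\le\max\{1,Q\}\,\|\eta\|^2_{W^{1,\infty}_2(\Rd)}\le\max\{1,Q\}$ gives $\big|\int\eta\,d(\mu_i-\nu_i)\big|\le\max\{1,Q\}^{1/2}(A_i+\mathcal{B}_i)^{1/2}$; taking the supremum over $\eta$, summing over $i$, and passing to the infimum over competitors yields $d_{BL}(\bmu,\bnu)\le\max\{1,Q^{1/2}\}\,W_{\Omega\times\G}(\bmu,\bnu)$.

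The one place where care is genuinely required is the bookkeeping of the constants, so that the clean factor $\max\{1,Q^{1/2}\}$ comes out rather than something larger: this hinges on the $\tfrac12$ in $\int_\Omega\theta(\cdots)\,d\bar{\rho}_t\le\tfrac12$ for $i\ne j$ (Remark \ref{interpfunorder} plus the normalization $\bar{\rho}_t(\Omega)=1$) pairing exactly with the $\tfrac12$ already sitting in front of the mutation part of the action. The reduction to antisymmetric $\bv$, the cancellation of the mutation term for $\bar{\rho}$, and the repeated Cauchy--Schwarz are otherwise routine; closedness of $\Omega$ (but not boundedness or convexity) is all that is used.
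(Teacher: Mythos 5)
Your proposal is correct and follows essentially the same route as the paper: antisymmetrize $\bv$ via Remark \ref{gradsofpot}, test with Lemma \ref{energyestimate}, and extract the constant from Remark \ref{interpfunorder} together with the normalization $\bar{\rho}_t(\Omega)=1$, finishing with Cauchy--Schwarz and the infimum over competitors. The only difference is cosmetic bookkeeping — you use the integrated bound $\int_\Omega \theta\,d\bar{\rho}_t\le\tfrac12$ with a $\theta^{1/2}\cdot\theta^{1/2}$ splitting and Cauchy--Schwarz in $j$, whereas the paper uses a pointwise weighted Jensen inequality with the quantity $Q_i=\sum_j\theta\,q_{ij}\le\tfrac12\sum_j q_{ij}$ — and both yield the same constant $\max\{1,Q^{1/2}\}$.
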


\begin{proof}
We begin with part (\ref{BLvsdynamic}). Fix $(\brho,\bu,\bv) \in \mathcal{C}(\bmu,\bnu)$. Since we seek to bound $W_{\Omega \times \G}(\bmu,\bnu)$ from below, by Remark \ref{gradsofpot}, we may assume without loss of generality that $v_{ij}(x) = -v_{ji}(x)$ for all $x \in \Omega$. Applying  Lemma \ref{energyestimate} (\ref{energyestimatepart}), with $s_0=0$, $s_1=1$,   taking the supremum over all $\eta\in C^\infty_c(\Rd)$ such that $\|\eta\|_{W^{1,\infty}_2(\Rd)} \leq 1$, and applying the Cauchy-Schwarz inequality and Jensen's inequality, we obtain 
\begin{align*}
&\| \mu_i - \nu_i \|_{BL} \\
&\quad \leq  \sup_{\|\eta\|_{W^{1,\infty}_2(\Rd)} \leq 1} \|\nabla\eta \|_\infty \int_{0}^{1}\int_{\Rd}   | u_{i,\tau}| d\rho_{i,\tau}   d \tau + \|  \eta\|_\infty \sum_{j=1}^{n}   \int_{0}^{1}\int_{\Rd}  \theta\left( \frac{d \rho_{i,\tau}}{d \bar{\rho}_{\tau}} , \frac{ d \rho_{j,\tau} }{ d \bar{\rho}_{\tau}}  \right) |v_{ij,\tau} |q_{ij}  d\bar{\rho}_{\tau} d\tau \\
&\quad \leq  \sup_{\|\eta\|_{W^{1,\infty}_2(\Rd)} \leq 1} \sqrt{ \|\nabla\eta \|_\infty^2 +  \|  \eta\|_\infty^2} \\
&\quad \quad\quad \quad\quad \quad\quad \quad \cdot \left(\int_{0}^{1}  \left(\int_{\Rd}   | u_{i,\tau}| d\rho_{i,\tau}   \right)^2 d \tau  +   \int_{0}^{1}\int_{\Rd}\left( \sum_{j=1}^{n}  \theta\left( \frac{d \rho_{i,\tau}}{d \bar{\rho}_{\tau}} , \frac{ d \rho_{j,\tau} }{ d \bar{\rho}_{\tau}}  \right) |v_{ij,\tau} |q_{ij} \right)^{2}  d\bar{\rho}_{\tau} d\tau \right)^{1/2} 
\end{align*}
Recall that, via the definition of $d \bar{\rho}_\tau = \sum_{i=1}^n d \rho_{i, \tau}$ and Remark \ref{interpfunorder}, we have $ \sum_{i=1}^{n} \frac{d\rho_{i,\tau}}{d\bar{\rho}_{\tau}} = 1 $ and
\begin{align}  
Q_i:= \sum_{j=1}^{n} \theta\left( \frac{d \rho_{i,\tau}}{d \bar{\rho}_{\tau}} , \frac{ d \rho_{j,\tau} }{ d \bar{\rho}_{\tau}} \right) q_{ij} &\leq \frac{1}{2} \sum_{j=1}^{n} \left( \frac{d \rho_{i,\tau}}{d \bar{\rho}_{\tau}} + \frac{ d \rho_{j,\tau} }{ d \bar{\rho}_{\tau}} \right)  q_{ij}  \leq  \frac{1}{2} \sum_{j=1}^n q_{ij} .
\end{align}
If $Q_i >0$, then
\begin{align*}
\left( \sum_{j=1}^{n}   \theta\left( \frac{d \rho_{i,\tau}}{d \bar{\rho}_{\tau}} , \frac{ d \rho_{j,\tau} }{ d \bar{\rho}_{\tau}}  \right) |v_{ij,\tau}| q_{ij} \right)^2 &= \left( \sum_{j=1}^{n} \frac{\theta\left( \frac{d \rho_{i,\tau}}{d \bar{\rho}_{\tau}} , \frac{ d \rho_{j,\tau} }{ d \bar{\rho}_{\tau}}  \right) q_{ij}}{Q_{i}}   Q_{i} |v_{ij,\tau}|   \right)^{2}    \leq  \sum_{j=1}^n Q_{i}    |v_{ij,\tau}|^{2}\theta\left( \frac{d \rho_{i,\tau}}{d \bar{\rho}_{\tau}} , \frac{ d \rho_{j,\tau} }{ d \bar{\rho}_{\tau}}  \right) q_{ij} ,
\end{align*}
and, likewise, if $Q_i=0$, then $\theta\left( \frac{d \rho_{i,\tau}}{d \bar{\rho}_{\tau}} , \frac{ d \rho_{j,\tau} }{ d \bar{\rho}_{\tau}} \right) q_{ij} = 0$ for all $j =1, \dots, n$, so the quantity on the left hand side is still bounded by the quantity on the right hand side.
 
 Therefore, applying the previous bounds and  Jensen's inequality for the probability measure $d \bar{\rho}_\tau d \tau$, we obtain
\begin{align*}
d_{BL}^{2}(\bmu,\bnu) & = \sum_{i=1}^n \| \mu_{i} - \nu_{i} \|_{BL}^{2}  \\
&\leq  \sum_{i=1}^{n}\int_{0}^{1}  \left( \int_{\Rd}    |u_{i,\tau}|   \frac{d \rho_{i,\tau}}{d \bar{\rho}_\tau} d \bar{\rho}_\tau \right)^2 d \tau   + \sum_{i=1}^{n} \int_{0}^{1}\int_{\Rd} \left( \sum_{j=1}^{n} \theta\left( \frac{d \rho_{i,\tau}}{d \bar{\rho}_{\tau}} , \frac{ d \rho_{j,\tau} }{ d \bar{\rho}_{\tau}}  \right) |v_{ij,\tau}| q_{ij} \right)^{2}  d\bar{\rho}_{\tau} d\tau\\
&\leq \sum_{i=1}^{n} \int_{0}^{1}\int_{\Rd}   |u_{i,\tau}|^{2}d\rho_{i,\tau} d\tau + \frac12 \sum_{i,j=1 }^{n} 2Q_{i} \int_{0}^{1}\int_{\Rd}    |v_{ij,\tau}|^{2}\theta\left( \frac{d \rho_{i,\tau}}{d \bar{\rho}_{\tau}} , \frac{ d \rho_{j,\tau} }{ d \bar{\rho}_{\tau}}  \right) q_{ij}   d\bar{\rho}_{\tau} d\tau 
\end{align*}
Since $(\brho,\bu,\bv) \in \mathcal{C}(\bmu,\bnu)$ was arbitrary and $Q_i \leq  \frac{1}{2} \sum_{j=1}^n q_{ij} $, this completes the proof of part (\ref{BLvsdynamic}).

We conclude by showing part (\ref{BLbarrhovsdynamic}). Fix $(\brho,\bu,\bv) \in \mathcal{C}(\bmu,\bnu)$ arbitrary. For any $\eta$ such that $\| \eta \|_{W^{1,\infty}_2}\leq 1,$ applying Lemma \ref{energyestimate} (\ref{rhobarequation}), H\"older's inequality, and Jensen's inequality, \begin{align*}
    \int_{\Rd} \eta d(\bar{\mu}  -\bar{\nu})
    &= \sum_{i=1}^{n} \int_{0}^{1} \int_{\Rd} \nabla \eta \cdot u_{i,\tau} d\rho_{i,\tau} d\tau  \leq \sum_{i=1}^{n} \int_{0}^{1} \int_{\Rd}  |u_{i,\tau}| d\rho_{i,\tau} d\tau  
    \leq \left( \int_{0}^{1}  \int_{\Rd} \left( \sum_{i=1}^{n} |u_{i,\tau}|\frac{d\rho_{i,\tau}}{d\bar{\rho}_{\tau}} \right)^{2}d\bar{\rho}_{\tau} d\tau \right)^{\frac{1}{2}} .
\end{align*}
Thus, by convexity, since $\sum_{i=1}^n \frac{d \rho_{i, \tau}}{d \bar{\rho}_\tau} = 1$, \begin{equation*}
    \| \bar{\mu} - \bar{\nu}  \|_{BL,1}^{2} \leq \sum_{i=1}^{n} \int_{0}^{1} \int_{\Rd} |u_{i,\tau}|^{2}d\rho_{i,\tau} d\tau \leq \int_{0}^{1} \| (\bu,\bv)\|_{\brho_{\tau}}^{2}d\tau ,
\end{equation*}
which gives the result.

\end{proof}

We now prove that  $W_{\Omega \times \G}$ is nondegenerate, symmetric, and satisfies the triangle inequality, so that, once we   exclude the possibility $W_{\Omega \times \G}(\bmu,\bnu)=+\infty$, we will have that $W_{\Omega \times \G}$ is indeed a metric. Our proof of the triangle inequality follows the strategy of \cite[Lemma 1.1.4]{ambrosiogiglisavare} and \cite[Theorem 5.4]{dolbeault2009new}. The fact that $W_{\Omega \times \G}(\bmu,\bnu)<+\infty$ will be  an immediate consequence of our main theorem, Theorem \ref{thm:Main}, in  which we will show that it is dominated by a static Wasserstein-type metric on $\P_2(\Omega \times \G)$.

\begin{prop} \label{almostdynamicmetric}
   Suppose $\G$ is symmetric and complete, $\theta$ satisfies Assumption \ref{interpolationassumption}, and $\Omega \subseteq \Rd$ is closed. Then $W_{\Omega \times \G}$ is nondegenerate, symmetric, and satisfies the triangle inequality on $\P(\Omega \times \G)$. \end{prop}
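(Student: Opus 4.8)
The plan is to verify the three metric axioms separately, using as inputs the time-rescaling and gluing lemma (Lemma~\ref{timerescaling}) and the lower bound on $W_{\Omega\times\G}$ by $d_{BL}$ from Proposition~\ref{BLlem}.

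\textbf{Nondegeneracy and symmetry.} For $\bmu=\bnu$ the stationary triple $\brho_t\equiv\bmu$, $\bu\equiv 0$, $\bv\equiv 0$ lies in $\mathcal{C}(\bmu,\bmu)$ and has zero action, so $W_{\Omega\times\G}(\bmu,\bmu)=0$. Conversely, if $W_{\Omega\times\G}(\bmu,\bnu)=0$, then Proposition~\ref{BLlem}(\ref{BLvsdynamic}) gives $d_{BL}(\bmu,\bnu)=0$, and since $d_{BL}$ is a metric on $\P(\Omega\times\G)$ we get $\bmu=\bnu$. For symmetry, given $(\brho,\bu,\bv)\in\mathcal{C}(\bmu,\bnu)$ I would set $\tilde\brho_t:=\brho_{1-t}$, $\tilde\bu_t:=-\bu_{1-t}$, $\tilde\bv_t:=-\bv_{1-t}$; testing against $\eta(x,t)$ and changing variables $t\mapsto 1-t$ in the distributional identity of Definition~\ref{def:ContEqMultiSpecies} shows $(\tilde\brho,\tilde\bu,\tilde\bv)\in\mathcal{C}(\bnu,\bmu)$, while the action is unchanged because $\|(\bu,\bv)\|^2_{\brho}$ is quadratic, hence even, in $(\bu,\bv)$. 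Taking infima yields $W_{\Omega\times\G}(\bnu,\bmu)\le W_{\Omega\times\G}(\bmu,\bnu)$, and the reverse inequality follows by the same argument.

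\textbf{Reduction to the length functional.} Before proving the triangle inequality I would establish the reformulation
\[ W_{\Omega\times\G}(\bmu,\bnu) = \inf\Big\{ \int_0^1 \|(\bu_t,\bv_t)\|_{\brho_t}\, dt : (\brho,\bu,\bv)\in\mathcal{C}(\bmu,\bnu) \Big\}. \]
The inequality ``$\ge$'' is immediate from Cauchy--Schwarz, $\int_0^1 \|(\bu_t,\bv_t)\|_{\brho_t}\,dt \le \big(\int_0^1 \|(\bu_t,\bv_t)\|_{\brho_t}^2\,dt\big)^{1/2}$. For ``$\le$'', given $(\brho,\bu,\bv)\in\mathcal{C}(\bmu,\bnu)$ with finite length $L:=\int_0^1 \|(\bu_t,\bv_t)\|_{\brho_t}\,dt$ (the case $L=0$ being trivial), I would reparametrize by a regularization of arc length: for $\delta>0$ set $s_\delta(t):=(L+\delta)^{-1}\big(\delta t+\int_0^t \|(\bu_r,\bv_r)\|_{\brho_r}\,dr\big)$, a strictly increasing absolutely continuous bijection of $[0,1]$ whose inverse $\tau_\delta$ is Lipschitz since $s_\delta'\ge\delta/(L+\delta)$. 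Applying Lemma~\ref{timerescaling}(i) with $\tau_\delta$ produces $(\tilde\brho,\tilde\bu,\tilde\bv)\in\mathcal{C}(\bmu,\bnu)$ whose squared action, after the change of variables $u=\tau_\delta(t)$, equals $\int_0^1 \|(\bu_u,\bv_u)\|_{\brho_u}^2/s_\delta'(u)\,du \le (L+\delta)L$. Letting $\delta\to 0$ shows $W_{\Omega\times\G}(\bmu,\bnu)^2\le L^2$, and taking the infimum over $(\brho,\bu,\bv)$ gives the claim. This reparametrization is the only delicate point in the whole proof: one must contend with the metric speed $\|(\bu_t,\bv_t)\|_{\brho_t}$ possibly vanishing on a set of positive measure, which is precisely why the construction is phrased via the regularized $s_\delta$ and a limit rather than an exact arc-length parametrization.

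\textbf{Triangle inequality.} Fix $\bmu,\bsigma,\bnu\in\P(\Omega\times\G)$; we may assume $W_{\Omega\times\G}(\bmu,\bsigma)$ and $W_{\Omega\times\G}(\bsigma,\bnu)$ are both finite, as otherwise the inequality is trivial. Given $\varepsilon>0$, the reformulation furnishes $(\brho^1,\bu^1,\bv^1)\in\mathcal{C}(\bmu,\bsigma)$ and $(\brho^2,\bu^2,\bv^2)\in\mathcal{C}(\bsigma,\bnu)$ with $\int_0^1 \|(\bu^1_t,\bv^1_t)\|_{\brho^1_t}\,dt\le W_{\Omega\times\G}(\bmu,\bsigma)+\varepsilon$ and $\int_0^1 \|(\bu^2_t,\bv^2_t)\|_{\brho^2_t}\,dt\le W_{\Omega\times\G}(\bsigma,\bnu)+\varepsilon$. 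Gluing them via Lemma~\ref{timerescaling}(ii) produces $(\brho,\bu,\bv)\in\mathcal{C}(\bmu,\bnu)$ with $\int_0^1 \|(\bu_t,\bv_t)\|_{\brho_t}\,dt = \int_0^1 \|(\bu^1_t,\bv^1_t)\|_{\brho^1_t}\,dt + \int_0^1 \|(\bu^2_t,\bv^2_t)\|_{\brho^2_t}\,dt$, so the reformulation gives $W_{\Omega\times\G}(\bmu,\bnu)\le W_{\Omega\times\G}(\bmu,\bsigma)+W_{\Omega\times\G}(\bsigma,\bnu)+2\varepsilon$. Letting $\varepsilon\to 0$ finishes the proof. (Finiteness of $W_{\Omega\times\G}$, which would upgrade it to a genuine metric, is not needed here and is obtained later from Theorem~\ref{thm:Main}.)
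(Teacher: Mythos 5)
Your proposal is correct and follows essentially the same route as the paper: nondegeneracy via the $d_{BL}$ lower bound of Proposition~\ref{BLlem}, symmetry by time reversal, and the triangle inequality by first proving the length reformulation $W_{\Omega\times\G}=\inf\int_0^1\|(\bu_t,\bv_t)\|_{\brho_t}\,dt$ (Cauchy--Schwarz in one direction, a regularized arc-length reparametrization via Lemma~\ref{timerescaling}(i) in the other) and then gluing near-optimal curves with Lemma~\ref{timerescaling}(ii). The only differences are cosmetic: you regularize with $s_\delta(t)=(L+\delta)^{-1}(\delta t+\int_0^t\|(\bu_r,\bv_r)\|_{\brho_r}dr)$ instead of the paper's $s_\epsilon(t)=\int_0^t(\epsilon+\|(\bu_r,\bv_r)\|^2_{\brho_r})^{1/2}dr$ (both yield the same limit), and you correctly write the time-reversed velocities with a minus sign, which the paper omits but which does not affect the action.
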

\begin{proof}  
         Assume $W_{\Omega \times \G}(\bmu,\bnu)=0.$   Then,  Proposition \ref{BLlem}(\ref{BLvsdynamic}) ensures $d_{BL}(\bmu,\bnu)=0$, so $\bmu = \bnu$. Conversely, if $\bmu=\bnu$, it is clear that $\brho_t \equiv \bmu$ satisfies $(\brho,0,0) \in \mathcal{C}(\bmu,\bnu)$ with $\|(\bu,\bv)\|_{\brho} = 0$, so $W_{\Omega \times \G} = 0$.
         To see $W_{\Omega \times \G}(\bmu,\bnu)= W_{\Omega \times \G}(\bnu,\bmu),$ note that for any   $(\brho,\bu,\bv) \in \mathcal{C}(\bmu,\bnu)$, defining $\tilde{\brho}_t = \brho_{1-t}$, $\tilde{\bu}_t := \bu_{1-t}$, and $\tilde{\bv}_t = \bv_{1-t}$, we have $(\tilde{\brho},\tilde{\bu},\tilde{\bv}) \in \mathcal{C}(\bnu,\bmu)$ and $\|(\bu,\bv)\|_{\brho} = \|(\tilde{\bu},\tilde{\bv})\|_{\tilde{\brho}}$. 

It remains to show the triangle inequality.  First, we claim  that, when $W_{\Omega \times \G}(\bmu,\bnu)<+\infty$, \begin{equation} \label{lengthspace}
        W_{\Omega \times \G}(\bmu,\bnu) = \inf_{(\brho, \bu,\bv) \in \mathcal{C}(\bmu,\bnu)} \int_{0}^{1} \| (\bu_t,\bv_t)\|_{{\brho}_t} dt.  
    \end{equation} 
    The fact that the left hand side is greater than or equal to the right hand side is an immediate consequence of   H\"older's inequality.
     For the other inequality, fix $(\brho,\bu,\bv) \in \mathcal{C}(\bmu,\bnu)$ with $ \int_0^1 \|(\bu_t,\bv_t)\|^2_{\brho_t} dr<+\infty$, and for $\epsilon >0$  and $t \in [0,1]$,  define \begin{equation*}
        s_{\epsilon}(t):= \int_{0}^{t} (\epsilon + \| (\bu_r,\bv_r)\|_{{\brho_r}}^{2})^{\frac{1}{2}} dr.
    \end{equation*} Note that $s_{\epsilon}'(t)= (\epsilon + \| (\bu_t,\bv_t)\|_{{\brho_t}}^{2})^{\frac{1}{2}} \geq \sqrt{\epsilon} > 0$ for a.e. $ t\in [0,1]$. Letting $S_{\epsilon}:= s_{\epsilon}(1),$     the inverse map $t_{\epsilon}:[0,S_{\epsilon}] \longrightarrow [0,1]$ of $s_\epsilon(t)$ is also absolutely continuous and, by the inverse function theorem  \begin{equation*}
        t_{\epsilon}' \circ s_{\epsilon}(t) = (\epsilon + \| (\bu_t,\bv_t)\|_{{\brho_t}}^{2})^{-\frac{1}{2}}, \hspace{2mm} \mbox{a.e.} \hspace{1mm} \mbox{in} \hspace{1mm} (0,1).
    \end{equation*} 
    By Lemma \ref{timerescaling}, defining $\tilde{\brho}^{\epsilon}_s = \brho_{t_{\epsilon}(s)},$ and $(\tilde{\bu}^{\epsilon}_s,\tilde{\bv}^{\epsilon}_s) = t_{\epsilon}'(s)(\bu_{t_\epsilon(s)},\bv_{t_\epsilon(s)}) ,$  we have that $(\tilde{\brho}^{\epsilon},\tilde{\bu}^{\epsilon},\tilde{\bv}^{\epsilon})$ is a solution of the vector valued continuity equation on $[0,S_\epsilon]$ and $r \mapsto (\tilde{\brho}^\epsilon_{S_\epsilon r}, S_\epsilon \tilde{\bu}^\epsilon_{S_\epsilon r},S_\epsilon \tilde{\bv}^\epsilon_{ S_\epsilon r}  )$ is a solution on $[0,1]$. Thus,
      \begin{align*}
        W_{\Omega \times \G}^2(\bmu,\bnu) &\leq  \int_{0}^{1} S_\epsilon^2 \|(\tilde{\bu}_{S_\epsilon r}^{\epsilon},\tilde{\bv}_{S_\epsilon r}^{\epsilon} )\|_{\tilde{\brho}^\epsilon_{S_\epsilon r}}^2 dr  = S_{\epsilon} \int_{0}^{S_{\epsilon}} \|(\tilde{\bu}_s^{\epsilon},\tilde{\bv}_s^{\epsilon} )\|_{\tilde{\brho}_s}^2 ds = S_{\epsilon} \int_{0}^{S_{\epsilon}} t'_\epsilon(s)^2 \|( {\bu}_{t_\epsilon(s)}^{\epsilon}, {\bv}_{t_\epsilon(s)}^{\epsilon} )\|_{ {\brho}_{t_\epsilon(s)}}^2 ds  \\
        & = S_{\epsilon} \int_{0}^{1} \frac{\|\bu_t,\bv_t \|_{\brho_t}^{2}}{\epsilon + \|\bu_t,\bv_t \|_{\brho_t}^{2}} (\epsilon + \|\bu_t,\bv_t \|_{\brho_t}^{2})^{\frac{1}{2}}dt \leq S_{\epsilon} \int_{0}^{1}   (\epsilon + \|\bu_t,\bv_t \|_{\brho_t}^{2})^{\frac{1}{2}}dt  = S_{\epsilon} s_\epsilon(1) = S_\epsilon^2.
    \end{align*} Hence, $       W_{\Omega\times \G}(\bmu,\bnu) \leq S_{\epsilon} = \int_{0}^{1} \sqrt{\epsilon + \|\bu_t,\bv_t \|_{\brho_t}^{2}}dt$. Sending $\epsilon \to 0$, via the dominated convergence theorem, we conclude $W_{\Omega \times \G}(\bmu,\bnu) \leq \int_{0}^{1} \|\bu_t,\bv_t \|_{\brho_t}dt,$ which shows  equation (\ref{lengthspace}).

     We now use this to prove the triangle inequality. Fix $\bmu,\bnu, \bsigma \in \P_2(\Omega \times \G)$. We aim to show
     \[ W_{\Omega \times \G}(\bmu,\bnu) \leq W_{\Omega \times \G}(\bmu, \bsigma) + W_{\Omega \times \G}(\bsigma,\bnu). \]
     Without loss of generality, we may suppose that both of the terms on the right hand side are finite. Fix $\epsilon >0$ arbitrary. Then, by equation (\ref{lengthspace}),  there are $(\tilde{\brho},\tilde{\bu},\tilde{\bv}) \in \mathcal{C}(\bmu,\bsigma),$ and $(\tilde{\tilde{\brho}},\tilde{\tilde{\bu}},\tilde{\tilde{\bv}})\in \mathcal{C}(\bsigma,\bnu),$ such that \begin{align*}
        &\int_{0}^{1} \|\tilde{\bu}_t,\tilde{\bv}_t \|_{\tilde{\brho}_t} dt \leq   W_{\Omega \times \G} (\bmu,\bsigma) +\epsilon,\\
        &\int_{0}^{1} \|\tilde{\tilde{\bu}}_t,\tilde{\tilde{\bv}}_t \|_{\tilde{\tilde{\brho}}_t} dt \leq   W_{\Omega \times \G} (\bsigma,\bnu) +\epsilon
    \end{align*} Then,  after applying the gluing result from  Lemma \ref{timerescaling} we conclude, \begin{equation*}
         W_{\Omega\times \G}(\bmu,\bnu) \leq W_{\Omega\times \G}(\bmu,\bsigma) +  W_{\Omega\times \G}(\bsigma,\bnu) +2 \epsilon.
    \end{equation*} Since $\epsilon> 0$ was arbitrary, this gives the result. 
\end{proof}

\subsection{Existence of minimizers for dynamic distance} \label{subsection:Existenceofminimizers}
The goal of the present section is to show that, in the PDE constrained optimization problem by which we define the dynamic distance, equation (\ref{dynamic}), minimizers   exist whenever the distance is finite. Our strategy   generalizes the argument used in the classical case of the 2-Wasserstein metric on $\P_2(\Rd)$; see, for example, \cite[Chapter 5]{santambrogio2015optimal}. In particular, we begin in Proposition \ref{compactnesspropertyfordirectmethod} by proving that sublevel sets of the optimization problem are relatively compact in the bounded Lipschitz distance. Then, in Proposition \ref{SantambrogioGraphProp5.18}, we consider a version of the action $\|(\bu,\bv)\|_{\brho}^2$ in momentum coordinates,  which is convex and lower semicontinuous with respect to narrow convergence. Finally, in Theorem \ref{existenceofminimizers}, we combine these ingredients to prove that, whenever $W_{\Omega \times \G}(\bmu,\bnu)<+\infty$, there exists $(\brho, \bu,\bv) \in \mathcal{C}(\bmu,\bnu)$ that minimizes $ \int_0^1 \|(\bu_t,\bv_t)\|^2_{\brho_t} dt$. Since our main Theorem \ref{thm:Main} will ensure that $W_{\Omega \times \G}(\bmu,\bnu)<+\infty$ always holds, this then implies that minimizers always exist.
\begin{prop} \label{compactnesspropertyfordirectmethod}
Suppose $\G$ is symmetric and connected, $\theta$ satisfies Assumption \ref{interpolationassumption}, and $\Omega \subseteq \Rd$ is closed. Given $\bmu \in \P_2(\Omega \times \G)$,  $\bnu \in \P(\Omega \times \G)$, and   $\{(\brho^k,\bu^k,\bv^k)\}_{k \in \mathbb{N}} \subseteq \mathcal{C}(\bmu,\bnu)$, suppose that there exists $C>0$ so that 
\begin{align} \label{objfnbound} \sup_{k \in \mathbb{N}}  \int_0^1 \| (\bu^k_t,\bv^k_t)\|^2_{{\brho^k_t}} dt \leq  C  .
\end{align}
Then, there exists  $\brho \in C([0,1]; \P(\Omega\times \G))$, $\bm \in \M_s([0,1] \times\Omega )^n$, and $\bsigma  \in \M_s([0,1] \times \Omega)^{n \times n}$ so that,
 for all $i,j=1, \dots, n$ and  $\eta \in C^\infty_c([0,1]\times \Rd)$, we have
\begin{align}\label{mctyeqn}
 \int_0^1 \int_\Rd \partial_t \eta d \rho_i dt +  \int_{[0,1] \times\Rd} \nabla \eta  dm_i+ \frac{1}{2} \sum_{j = 1}^n  \int_{[0,1] \times\Rd} \eta d(\sigma_{ij} - \sigma_{ji})= 0 
\end{align}
and, up to subsequence, 
\begin{align}
&\rho_{i,t}^k \xrightarrow{k \to +\infty} \rho_{i,t} \text{ in the duality with $C_b(\Rd)$, for a.e. $t \in [0,1]$;} \label{rhoconv} \\
&u_{i,t}^k d\rho_{i,t}^k dt \xrightarrow{k \to +\infty}  m_{i} \text{ in the duality with } C_b([0,1] \times \Rd); \label{mconv} \\
&v_{ij,t}^k \theta \left( \frac{ d \rho^k_{i,t}}{d \bar{\rho}^k_t},\frac{ d \rho^k_{j,t} }{ d \bar{\rho}^k_t}  \right) q_{ij} d\bar{\rho}^k_t dt \xrightarrow{k \to +\infty}  \sigma_{ij} \text{ in the duality with } C_b([0,1] \times \Rd). \label{sigmaconv} \end{align}
\end{prop}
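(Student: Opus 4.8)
The plan is to prove this by the direct method, adapting the classical compactness argument for the Benamou-Brenier formulation of the $2$-Wasserstein distance on $\P_2(\Rd)$ (see, e.g., \cite[Chapter 5]{santambrogio2015optimal}, \cite[Chapter 8]{ambrosiogiglisavare}) to the vector valued setting, and carrying the spatial velocities $\bu^k$ together with the graph velocities $\bv^k$ throughout.

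First I would record the a priori bounds and the tightness they produce. Write $m_i^k := u_{i,t}^k\,d\rho_{i,t}^k\,dt$ and $\sigma_{ij}^k := \theta\!\big(\tfrac{d\rho_{i,t}^k}{d\bar\rho_t^k},\tfrac{d\rho_{j,t}^k}{d\bar\rho_t^k}\big)\,v_{ij,t}^k\,q_{ij}\,d\bar\rho_t^k\,dt$, and recall that $\bar\rho_t^k(\Omega)=1$, hence $\rho_{i,t}^k(\Omega)\le 1$, for every $t$. By the Cauchy--Schwarz inequality, the bound $\rho_{i,t}^k(\Omega)\le1$, the inequality $\theta(s,t)\le (s+t)/2$ of Remark \ref{interpfunorder}, and the energy bound \eqref{objfnbound}, the total variations $|m_i^k|([0,1]\times\Omega)$ and $|\sigma_{ij}^k|([0,1]\times\Omega)$ are bounded uniformly in $k$. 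Moreover, by Lemma \ref{energyestimate}(\ref{rhobarequation}) the sum $\bar\rho^k$ is a solution of the scalar continuity equation with momentum $\bar m^k:=\sum_i m_i^k$, and Jensen's inequality (using $\sum_i\tfrac{d\rho_{i,t}^k}{d\bar\rho_t^k}=1$) bounds the kinetic energy of $\bar m^k$ relative to $\bar\rho^k$ by \eqref{objfnbound}; the standard correspondence between continuity equations and absolutely continuous curves in $(\P_2(\Rd),W_2)$, where $W_2$ is the classical $2$-Wasserstein distance (\cite[Theorem 8.3.1]{ambrosiogiglisavare}), then gives $W_2(\bar\mu,\bar\rho_t^k)\le C^{1/2}$ for all $t$, so that $\sup_{k\in\NN,\,t\in[0,1]}M_2(\bar\rho_t^k)\le\big(M_2(\bar\mu)^{1/2}+C^{1/2}\big)^2=:M$. (This moment estimate is unnecessary when $\Omega$ is bounded.) In particular $\{\rho_{i,t}^k:k\in\NN,\,t\in[0,1],\,i\}$ is tight; and estimating the mass outside a ball $B_R$ by Cauchy--Schwarz together with $\bar\rho_t^k(\Rd\setminus B_R)\le M/R^2$ shows the total-variation-bounded sequences $\{m_i^k\}_k$ and $\{\sigma_{ij}^k\}_k$ are tight on $[0,1]\times\Rd$.

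Next I would extract the limits. For $\brho^k$: integrating the alternative form \eqref{dualvvcty} of the vector valued continuity equation over $[s_0,s_1]$, testing against $\eta\in C^\infty_c(\Rd)$ with $\|\eta\|_{W^{1,\infty}_2(\Rd)}\le1$, and using Cauchy--Schwarz in time and in space gives a uniform modulus of continuity $\|\rho_{i,s_1}^k-\rho_{i,s_0}^k\|_{BL}\le C'|s_1-s_0|^{1/2}$, with $C'$ depending only on $C$, $n$, and the weights $q_{ij}$. Since $d_{BL}$ metrizes narrow convergence and, by the uniform moment bound, the curves $t\mapsto\rho_{i,t}^k$ take values in a fixed $d_{BL}$-compact subset of $\P(\Omega)$, the Arzel\`a--Ascoli theorem yields, along a subsequence, $\rho_i^k\to\rho_i$ uniformly in $t$ in the narrow topology, with $\brho:=(\rho_1,\dots,\rho_n)\in C([0,1];\P(\Omega\times\G))$, which is \eqref{rhoconv}. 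Passing to a further subsequence, Prokhorov's theorem applied to the positive and negative parts of the tight, bounded signed measures $m_i^k$ and $\sigma_{ij}^k$ gives $m_i^k\to m_i$ and $\sigma_{ij}^k\to\sigma_{ij}$ in duality with $C_b([0,1]\times\Rd)$, with $m_i,\sigma_{ij}$ supported in $[0,1]\times\Omega$; these are \eqref{mconv}--\eqref{sigmaconv}. Finally, I would pass to the limit in the weak formulation \eqref{dualvvcty2} of the vector valued continuity equation: the $\partial_t\eta$ term converges by \eqref{rhoconv} and dominated convergence in $t$, the $\nabla\eta$ term by \eqref{mconv}, and the mutation term by \eqref{sigmaconv}, which yields the measure--momentum continuity equation \eqref{mctyeqn}.

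The one step that requires genuine care, rather than routine bookkeeping, is the uniform second-moment estimate on $\bar\rho_t^k$ underlying the tightness of all the relevant sequences when $\Omega$ is unbounded; once it is in place (or trivial, for bounded $\Omega$), the rest is a vector valued transcription of the classical Benamou-Brenier compactness argument, with the mutation term controlled throughout by the elementary inequality $\theta(s,t)\le(s+t)/2$.
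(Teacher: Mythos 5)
Your overall architecture is the same as the paper's: a uniform $d_{BL}$-modulus of continuity in time plus pointwise-in-time tightness feeding into Arzel\`a--Ascoli for $\brho^k$, uniform total variation bounds and tightness (via Cauchy--Schwarz and $\theta(s,t)\le (s+t)/2$) for the momenta $m_i^k,\sigma_{ij}^k$, weak-$*$ compactness, and passage to the limit in \eqref{dualvvcty2} to obtain \eqref{mctyeqn}. Those parts are fine and essentially reproduce the paper's steps (the paper routes the equicontinuity through Lemma \ref{timerescaling} and Proposition \ref{BLlem}, you derive it directly from \eqref{dualvvcty}; the content is the same).

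The step that does not go through as written is precisely the one you identify as the crux: the uniform second-moment bound on $\bar{\rho}^k_t$. You invoke the characterization of absolutely continuous curves in $(\P_2(\Rd),W_2)$, \cite[Theorem 8.3.1]{ambrosiogiglisavare}, to get $W_2(\bar{\mu},\bar{\rho}^k_t)\le C^{1/2}$. But the ``continuity equation $\Rightarrow$ absolutely continuous'' direction of that theorem is stated for narrowly continuous curves \emph{taking values in} $\P_2(\Rd)$, and here that membership is exactly what is in question: only $\bar{\rho}^k_0=\bar{\mu}$ is known to lie in $\P_2(\Rd)$, and indeed the proposition only assumes $\bnu\in\P(\Omega\times\G)$, not $\P_2$. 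So using the theorem to deduce the moment bound is circular, and a bootstrap (``apply it as long as the second moment stays finite'') does not obviously close, since finiteness of $M_2(\bar{\rho}^k_t)$ at later times is not a consequence of finiteness at earlier ones. The conclusion is true, but it needs the hands-on argument the paper gives: test the equation for $\bar{\rho}^k$ from Lemma \ref{energyestimate}(\ref{rhobarequation}) with $|x|^2\omega_R(x)$ for a cutoff $\omega_R$, use Young's inequality and the energy bound \eqref{objfnbound}, let $R\to+\infty$ by monotone convergence, and apply Gronwall to get $\sup_{t,k}M_2(\bar{\rho}^k_t)<+\infty$. With that estimate inserted in place of the appeal to \cite[Theorem 8.3.1]{ambrosiogiglisavare}, the rest of your argument (Chebyshev for tightness of $\{\rho^k_{i,t}\}$ and of the momenta, Prokhorov, Arzel\`a--Ascoli, and the limit in the weak formulation) is correct and matches the paper. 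As you note, for bounded $\Omega$ the issue disappears entirely.
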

\begin{proof}

We begin by applying Arzel\'a-Ascoli to obtain convergence of $(\brho^k)_{k \in \mathbb{N}}$. First, we show that $(\brho^k)_{k \in \mathbb{N}}$ is   uniformly 1/2-H\"older continuous with respect to the bounded Lipschitz distance. Fix $0 \leq t_0 \leq t_1 \leq 1$.  By Lemma \ref{timerescaling}, for $\tau(t) = t_0 + t(t_1-t_0)$, $(\brho_{k,\tau(t)},\tau'(t)\bu_{k,\tau(t)},\tau'(t)\bv_{k,\tau(t)})$ is a solution of the vector valued continuity equation on $[0,1]$. Thus, Proposition \ref{BLlem} ensures that, there exists $C_\G $ depending on the edge weights $q_{ij}$ so that, for  all $ 0 \leq t_0 \leq t_1 \leq 1$,
\begin{align*}
 d_{BL}(\brho_{k,t_0},\brho_{k,t_1}) &\leq C_\G W_{\Omega \times \G}(\brho_{k,t_0},\brho_{k,t_1})  \leq \tau'(t) C_\G  \left( \int_0^1 \| (\bu_{\tau(t)}, \bv_{\tau(t)}) \|_{\brho_{\tau(t)}}^2 dr \right)^{1/2}  \leq (t_1-t_0) C_\G   C^{1/2}  .
\end{align*}
Thus, $(\brho^k)_{k \in \mathbb{N}}$ is equicontinuous with respect to the bounded Lipschitz distance.

Next, we show that there exists $C'$ so that, for all $k \in \mathbb{N}$ and $t \in [0,1]$, 
\begin{align} \label{Sdef}\brho^k_t \in S, \quad S:=\{ \bmu \in \P(\Omega \times \G) : \int |x| d \mu_i \leq C' , \ \forall i =1, \dots , n \}.
\end{align}
 This will then ensure that $\{\brho^k_t\}_{k \in \mathbb{N}}$ is  relatively compact with respect to the narrow topology for each $t \in [0,1]$ \cite[Remark 5.1.5]{ambrosiogiglisavare}, hence the bounded Lipschitz distance.

Define
\begin{align} \label{Cprimedef}
C' :=(3C + M_2(\bar{\mu})) \left( 1+ 3e^{3} \right).
\end{align}
Since $\bmu \in \P_2(\Omega \times \G)$, we see that $C'<+\infty$. We will now prove (\ref{Sdef}) by showing that, for all $k \in \mathbb{N}$, $t \in [0,1]$, and $i =1,\dots, n$, we have $\int |x| d \rho^k_{i,t} \leq C'$. Since this estimate is uniform in $k \in \mathbb{N}$, for simplicity of notation, we will suppress dependence on $k$.

Fix a smooth, radially decreasing cutoff function $\omega_R(x) = \omega(|x/R|)$,  satisfying $\omega(y) \equiv 1$ for $|y| \leq 1$, $\omega(y) \equiv 0$ for $|y| \geq 2$, and $|\omega'(y)| \leq 2$ for all $y \in \R$. Applying  Lemma \ref{energyestimate}(\ref{rhobarequation}) to the test function $\eta(x) = |x|^2 \omega_R(x)$ and integrating in time, for all $s \in [0,1]$ and $R \geq 1$, the Cauchy-Schwarz inequality and the facts   $|\omega_R(x)| \leq 1$ and $|\nabla \omega_R(x)| \leq 2R^{-1} 1_{B_{2R}(0)}(x)$ ensure 
\begin{align*}
&  \int_\Rd |x|^2 \omega_R(x) d \bar{\rho}_s (x) -  \int_\Rd |x|^2 \omega_R(x) d \bar{\mu} (x) \\
& = 2 \sum_{i=1}^n \int_0^s \int_\Rd x \cdot u_{i,t}(x) \omega_R(x) d \rho_{i,t}(x) dt + \sum_{i=1}^n \int_0^s \int_\Rd |x|^2 \nabla \omega_R(x) \cdot u_{i,t}(x) d \rho_{i,t}(x) dt  \\
&\leq  \sum_{i=1}^n \int_0^s \int_\Rd |x|^2 d \rho_{i,t}(x) dt +  \sum_{i=1}^n \int_0^s |u_{i,t}(x)|^2 d \rho_{i,t}(x) dt + 2\sum_{i=1}^n \int_0^s \int_{B_{2R}(0)} \frac{ |x|}{R}  |x| | u_{i,t}(x)| d \rho_{i,t}(x) dt\\
&\leq \int_0^s \int_\Rd |x|^2 d \bar{\rho}_t(x) dt +  \sum_{i=1}^n \int_0^s \int_\Rd |u_{i,t}(x)|^2 d \rho_{i,t}(x) dt \\
&\quad + 2 \sum_{i=1}^n   \int_0^s \int_\Rd |u_{i,t}(x)|^2 d \rho_{i,t}(x) dt + 2 \sum_{i=1}^n \int_0^s \int_\Rd |x|^2 d \rho_{i,t}(x) dt  \\
&\leq 3 \int_0^s \int_\Rd |x|^2 d \bar{\rho}_t(x) dt + 3 C .
\end{align*}

By the Monotone Convergence Theorem, we may send $R \to +\infty$  to conclude that
\begin{align*}
 M_2(\bar{\rho}_s) \leq 3 \int_0^s M_2(\bar{\rho}_t) dt + 3 C + M_2(\bar{\mu}) , \quad  \forall \ s \in [0,1] .
\end{align*}
By Gronwall's inequality, this ensures
\begin{align*}
M_2(\bar{\rho}_s) \leq (3C + M_2(\bar{\mu})) \left( 1+ 3e^{3} \right)  , \ \forall s \in [0,1] .
\end{align*}
Recalling that the above estimate applies to $\{\brho^k_{s}\}_{s \in [0,1], k \in \mathbb{N}}$, uniformly in $k \in \mathbb{N}$ shows (\ref{Sdef})

Therefore, we have shown that $(\brho^k)_{k \in \mathbb{N}}$ is uniformly 1/2-H\"older and pointwise  in time relatively compact, with respect to the bounded Lipschitz distance. Thus,  Arzel\'a-Ascoli ensures that there exists $\brho:[0,T]: \to \P(\Omega \times \G)$ such that $\rho_i:[0,T] \to \mathcal{M}(\Omega)$ is narrowly continuous for all $i = 1, \dots, n$, and $\rho_{i,t}^k \xrightarrow{k \to +\infty} \rho_{i,t}$ narrowly, for all $t \in [0,1]$. This shows equation (\ref{rhoconv}). As an immediate consequence, for any $\eta\in C_b([0,1]\times\Rd)$, by the dominated convergence theorem
\begin{align*}
\lim_{k \to +\infty} \int_0^1 \int_\Rd \eta(x,t) d \rho^k_{i,t} dt = \int_0^1 \int_\Rd \eta(x,t) d \rho_{i,t} dt ,
\end{align*}
so $\rho^k_{i,t} \otimes dt \to \rho_{i,t} \otimes dt$ narrowly in $\M([0,1] \times \Rd)$.

Now, we consider (\ref{mconv}-\ref{sigmaconv}). By hypothesis (\ref{objfnbound}) and H\"older's inequality, for any $S \subseteq [0,1] \times \Omega$,
\begin{align} \label{firstpuppy}
&\iint_{S} |u_{i,k}| d \rho^k_{i,t} dt \leq \left(\int_0^1 \int_\Omega |u_{i,k}|^2 d \rho^k_{i,t} dt \right)^{1/2}  \left(\iint_{S}  d \rho^k_{i,t} dt \right)^{1/2}    \leq \sqrt{C}   \left(\iint_{S}  d \rho^k_{i,t} dt \right)^{1/2}   ,
\end{align}
and, combining H\"older's inequality with Remark \ref{interpfunorder},
\begin{align}
&\iint_{S} |v^k_{ij,t} | \theta \left(\frac{d\rho^k_{i,t}}{d\bar{\rho}^k_t},\frac{d\rho^k_{j,t}}{d\bar{\rho}^k_t} \right) q_{ij} d\bar{\rho}^k_t dt \nonumber \\
&\quad  \leq \left( \int_0^1 \int_\Omega |v^k_{ij,t} |^2 \theta \left(\frac{d\rho^k_{i,t}}{d\bar{\rho}^k_t}, \frac{d\rho^k_{j,t}}{d\bar{\rho}^k_t} \right) q_{ij} d\bar{\rho}^k_t dt \right)^{1/2}  \nonumber
 \left( \iint_{S}  \theta \left(\frac{d\rho^k_{i,t}}{d\bar{\rho}^k_t},\frac{d\rho^k_{j,t}}{d\bar{\rho}^k_t} \right) q_{ij} d\bar{\rho}^k_t dt \right)^{1/2} \nonumber \\
 &\quad \leq \sqrt{C}   q_{ij}  \left( \iint_{S}  d\bar{\rho}^k_t dt \right)^{1/2} . \label{secondpuppy}
\end{align}

For $S = [0,1] \times \Omega$, using the fact that $\bar{\rho}^k_t = \sum_{i=1}^n \rho^k_{i,t} \in \P(\Rd)$ for all $t \in [0,1]$, we see that 
\begin{align} \label{twosequences} \{ u_{i,k} \ d\rho_{i,t}^k dt \}_{k \in \mathbb{N}} \text{ and } \left\{ v_{ij,t}^k \  \theta \left(\frac{d\rho^k_{i,t}}{d\bar{\rho}^k_t},\frac{d\rho^k_{j,t}}{d\bar{\rho}^k_t} \right) q_{ij} d\bar{\rho}^k_t dt  \right\}_{k \in \mathbb{N}} \end{align}
 are sequences of signed Borel measures that are uniformly bounded in total variation norm.

Next, note that, by Prokhorov's theorem, the convergence $\rho^k_{i,t} \otimes dt \to \rho_{i,t} \otimes dt$ narrowly in $\M([0,1] \times \Rd)$ ensures that  $\{\bar{\rho}^k_{t} \otimes dt \}_{k \in \mathbb{N}}$ is tight, so for all $\epsilon >0$, there exists $K_\epsilon \subseteq [0,1]\times \Omega$ so that $\iint_{K_\epsilon^c} d \bar{\rho}^k_t dt < \epsilon$. Taking $S = K_\epsilon^c$ in the above estimates, we likewise see that  (\ref{twosequences}) are   tight. Thus, up to a subsequence, both converge  in the duality with $C_b([0,1]\times \Rd)$ \cite[Theorem 8.6.2]{bogachev2007measure}. Letting $m_i \in \mathcal{M}_s([0,1] \times \Rd)$ and $\sigma_{i,j} \in \mathcal{M}_s([0,1] \times \Rd)$ denote the  corresponding limits, for $i, j =1, \dots n$, shows (\ref{mconv}-\ref{sigmaconv}).

Finally, since $(\brho^k,\bu^k,\bv^k) \in \mathcal{C}(\bmu,\bnu)$ for all $k \in \mathbb{N}$, for any $\eta \in C^\infty_c([0,1] \times \Rd)$ and $i= 1, \dots, n$, the weak formulation of the vector valued continuity equation (\ref{dualvvcty2}) is satisfied.
 Sending $k \to +\infty$ in this equation gives (\ref{mctyeqn}).

\end{proof}

We now study a version of the energy $\|(\bu,\bv)\|_{\brho}^2$ in momentum coordinates, inspired by the approach to proving existence of minimizers of the dynamic formulation of the classical 2-Wasserstein metric; see, e.g., \cite[Proposition 5.18]{santambrogio2015optimal}. As we will see in   Theorem \ref{existenceofminimizers} on existence of minimizers, finiteness of this new energy in momentum coordinates will allow us to conclude that the   solution of the continuity equation from the previous proposition, equation (\ref{mctyeqn}), in fact satisfies $(\brho,\bu,\bv) \in \mathcal{C}(\bmu,\bnu)$ for some choice of $\bu$ and $\bv$.

\begin{prop} \label{SantambrogioGraphProp5.18} For  $K \subseteq \mathbb{R}^{2+d}$, as in Lemma \ref{Klem}, and a closed subset $X \subseteq \R^m$ for $m \in \mathbb{N}$, define  the functional
$  \mathcal{B}_\mathcal{G}: \mathcal{M}(X) \times \mathcal{M}(X) \times \mathcal{M}_s(X)^d  \to \mathbb{R} \cup \{+\infty\}$ by
\[\mathcal{B}_\mathcal{G}(\rho_{1},\rho_{2},\bsigma) = \sup \left\{ \int_{X} a(x)d\rho_{1}(x) + \int_{X} b(x)d\rho_{2}(x) + \int_{X} c(x) \cdot d\bsigma(x) : (a,b,c)\in C_{b}(X;K) \right\} . \]
Then,  $\mathcal{B}_\mathcal{G}$ is convex and lower semi-continuous with respect to narrow convergence. Moreover, the following properties hold:
    \begin{enumerate}[(i)]
       \item $\mathcal{B}_\mathcal{G}(\rho_{1},\rho_{2},\bsigma) = \sup \{ \int a(x)d\rho_{1}(x) + \int b(x)d\rho_{2}(x) + \int c(x) \cdot  d\bsigma(x) : (a,b,c)\in L^{\infty}(X;K)\}$; \label{CtoLinfty}

       \item If $\rho_{1},\rho_{2},\bsigma \ll \lambda,$ for $\lambda \in \mathcal{M}(X)$, then  \label{absctslambda}
        $\mathcal{B}_\mathcal{G}(\rho_{1},\rho_{2},\bsigma) = \int_{X} \alpha (\frac{d\rho_{1}}{d\lambda},\frac{d\rho_{2}}{d\lambda},\frac{d\bsigma}{d\lambda})d\lambda,$ where $ \alpha$ is as in equation (\ref{definitionofpsi});

        \item $\mathcal{B}_\mathcal{G}(\rho_{1},\rho_{2},\bsigma)<+\infty $ and $\rho_1, \rho_2 \ll \tilde{\rho}$ $\implies 
         \bsigma \ll \theta(\frac{d\rho_{1}}{d\tilde{\rho}},\frac{d\rho_{2}}{d\tilde{\rho}})\tilde{\rho}$  \label{finitedistance}  

        \item Assume 
        $\bsigma \ll \theta(\frac{d\rho_{1}}{d\tilde{\rho}},\frac{d\rho_{2}}{d\tilde{\rho}})\tilde{\rho}$ and $\rho_1, \rho_2 \ll \tilde{\rho}  $. Then $\mathcal{B}_\mathcal{G}(\rho_{1},\rho_{2},\bsigma) = \int_{X} |v|^{2}\theta(\frac{d\rho_{1}}{d\tilde{\rho}},\frac{d\rho_{2}}{d\tilde{\rho}}) d\tilde{\rho},$ where $\bsigma = v \theta(\frac{d\rho_{1}}{d\tilde{\rho}},\frac{d\rho_{2}}{d\tilde{\rho}}) \tilde{\rho}.$ \label{integralformulation}
    \end{enumerate}
\end{prop}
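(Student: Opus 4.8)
The plan is to treat $\mathcal{B}_\mathcal{G}$ as a classical integral functional written in momentum coordinates, following the scheme of \cite[Proposition 5.18]{santambrogio2015optimal}. Convexity and narrow lower semicontinuity are immediate: for each fixed $(a,b,c) \in C_b(X;K)$ the map $(\rho_1,\rho_2,\bsigma) \mapsto \int a\,d\rho_1 + \int b\,d\rho_2 + \int c\cdot d\bsigma$ is affine and narrowly continuous (since $a,b,c$ are bounded and continuous), so the pointwise supremum $\mathcal{B}_\mathcal{G}$ is convex and narrowly lower semicontinuous. For (\ref{CtoLinfty}), the inequality ``$\leq$'' is trivial; for ``$\geq$'', given $(a,b,c) \in L^\infty(X;K)$ with values in a ball $\overline{B_R}$, I would combine Lusin's theorem for the finite measure $|\rho_1|+|\rho_2|+|\bsigma|$ with Tietze's extension theorem to produce bounded continuous approximants converging in measure, then compose with the $1$-Lipschitz nearest-point projection onto the bounded closed convex set $K \cap \overline{B_R}$ to keep them valued in $K$; dominated convergence then gives the claim.

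Part (\ref{absctslambda}) is the heart of the argument and amounts to interchanging a supremum and an integral. Writing $f_i = d\rho_i/d\lambda$, $g = d\bsigma/d\lambda$ and using (\ref{CtoLinfty}), the key point is that by Lemma \ref{Klem}(ii) and the fact that $\beta$ equals $0$ on $K$ and $+\infty$ elsewhere, the pointwise supremum satisfies $\sup_{(a,b,c)\in K}\big(af_1(x)+bf_2(x)+c\cdot g(x)\big) = \beta^{*}(f_1(x),f_2(x),g(x)) = \alpha(f_1(x),f_2(x),g(x))$. The inequality ``$\leq$'' then follows by integrating this pointwise bound (recalling $\alpha\geq 0$). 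For ``$\geq$'', for each $R,N>0$ and $\varepsilon>0$ I would apply a measurable selection theorem to the Carath\'eodory integrand $(x,(a,b,c)) \mapsto af_1(x)+bf_2(x)+c\cdot g(x)$ over $X \times (K\cap\overline{B_R})$ to obtain $(a,b,c)\in L^\infty(X;K\cap\overline{B_R})$ with $af_1+bf_2+c\cdot g \geq \min\{\alpha(f_1,f_2,g),N\}-\varepsilon$ on $\{|f_1|+|f_2|+|g|\leq R\}$, then integrate and send $R,N\to\infty$ and $\varepsilon\to 0$ via monotone convergence. I expect this measurable selection and truncation step to be the main obstacle.

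Parts (\ref{finitedistance}) and (\ref{integralformulation}) follow from (\ref{absctslambda}). For (\ref{finitedistance}), I would take $\lambda = \tilde\rho + |\bsigma|$ and write $\tilde\rho = h\lambda$; the hypotheses $\rho_1,\rho_2\ll\tilde\rho$ force $d\rho_i/d\lambda = 0$ on $\{h=0\}$, and since positive homogeneity gives $\theta(0,0)=0$, finiteness of $\mathcal{B}_\mathcal{G} = \int_X \alpha(d\rho_1/d\lambda,d\rho_2/d\lambda,d\bsigma/d\lambda)\,d\lambda$ forces $d\bsigma/d\lambda = 0$ on $\{h=0\}$ as well; on $\{h>0\}$, using $\theta(f_1 h,f_2 h) = h\,\theta(f_1,f_2)$ one rewrites the integral as $\int_X \|d\bsigma/d\tilde\rho\|^2/\theta(f_1,f_2)\,d\tilde\rho$, whose finiteness forces $d\bsigma/d\tilde\rho = 0$ wherever $\theta(f_1,f_2)=0$, i.e.\ $\bsigma \ll \theta(f_1,f_2)\tilde\rho$. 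For (\ref{integralformulation}), since $\theta(f_1,f_2)\tilde\rho \ll \tilde\rho$ one may apply (\ref{absctslambda}) with $\lambda = \tilde\rho$, and writing $d\bsigma/d\tilde\rho = v\,\theta(f_1,f_2)$ the elementary identity $\alpha(f_1,f_2,v\,\theta(f_1,f_2)) = \|v\|^2\theta(f_1,f_2)$, valid $\tilde\rho$-almost everywhere by the definition (\ref{definitionofpsi}) of $\alpha$, yields the stated formula.
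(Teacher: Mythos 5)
Your proposal is essentially correct and follows the same overall scheme as the paper (convexity/lsc as a supremum of narrowly continuous affine maps; Lusin plus Tietze for part (\ref{CtoLinfty}); the conjugacy $\alpha=\beta^*$ from Lemma \ref{Klem} as the engine for part (\ref{absctslambda}); parts (\ref{finitedistance}) and (\ref{integralformulation}) deduced from (\ref{absctslambda})), but it diverges on the two key steps. For (\ref{absctslambda}), the paper does not perform the supremum--integral interchange by hand: it invokes Rockafellar's theorem on conjugates of convex integral functionals \cite[Theorem 2]{rockafellar1968integrals}, which disposes of the measurable-selection and truncation issues in one stroke; your hand-rolled selection argument is more self-contained but is exactly where care is needed, and as written it has a quantifier slip: the same $R$ cannot bound both the densities and the norm of the near-maximizer in $K$ (e.g.\ where $f_1=f_2=0$ and $g$ is small but nonzero, $\alpha=+\infty$ and reaching level $N-\varepsilon$ requires $\lVert c\rVert \approx N/\lVert g\rVert$, which is unbounded on $\{|f_1|+|f_2|+|g|\le R\}$). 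This is fixable by decoupling the selection radius from the density bound (select in $K$ first, truncate the selection afterwards, using $(0,0,0)\in K$), but it should be fixed. For (\ref{finitedistance}), the paper first proves $\bsigma\ll\tilde\rho$ by a direct duality/contradiction argument with explicit test functions $a=b=-\tfrac{n^2}{8}\mathbf{1}_B$, $c=n\mathbf{1}_B\,\bsigma(B)/|\bsigma(B)|$ (via part (\ref{CtoLinfty})), and only then applies (\ref{absctslambda}) with $\lambda=\tilde\rho$; your route of taking $\lambda=\tilde\rho+|\bsigma|$ and reading off both absolute continuities from finiteness of $\int\alpha$, using $1$-homogeneity of $\theta$ and $\theta(0,0)=0$, is a clean and valid alternative that avoids the separate contradiction step. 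Parts (i) (projection onto $K\cap\overline{B_R}$ instead of the paper's homeomorphism with a cube) and (iv) are the same argument in substance.
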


\begin{proof}
    
The fact that $\mathcal{B}$ is convex and lower semicontinuous with respect to narrow convergence is an immediate consequence of the fact that it is a supremum of functions that are affine and continuous in the narrow topology.

We now prove part  (\ref{CtoLinfty}). Let $(a,b,c)\in L^{\infty}(X;K)$. Fix $\epsilon>0.$ Then, by Lusin's Theorem, 
 there exists  a compact set $A_{\epsilon}$ such that   $(\rho_1 + \rho_2 + |\bsigma|)(A_{\epsilon}^{c})< \epsilon$, and 
  $a|_{A_{\epsilon}},b|_{A_{\epsilon}},$ and $c|_{A_{\epsilon}}$ are continuous.
There exists $R>0$ so that $(a,b,c)\in L^{\infty}(X;K \cap B_{R}),$ and there exists  a homeomorphism $h: K \cap B_{R} \to [-1,1]^{2+d}.$ Then $h\circ (a,b,c) \in L^{\infty}(X;[-1,1]^{2+d}),$ and $h\circ (a,b,c)|_{A_{\epsilon}}$ is a continuous function. 
 By the Tietze Extension Theorem,
 there exists  $f\in C(X;[-1,1]^{2+d})$ so that $f = h\circ (a,b,c)$ on $A_{\epsilon}.$ Thus, $h^{-1}\circ f = (a,b,c)$ on $A_{\epsilon},$ and $h^{-1}\circ f \in C(X; K \cap B_{R}).$ Hence, there exist $(\tilde{a}, \tilde{b}, \tilde{c}) :=   h^{-1}\circ f \in C(X; K \cap B_{R})$ so that 
 $\tilde{a}|_{A_{\epsilon}} = a, \hspace{0.5mm} \tilde{b}|_{A_{\epsilon}} = b, \hspace{0.5mm} \tilde{c}|_{A_{\epsilon}} = c  $, and $\|\tilde{a}\|_{\infty}, \| \tilde{b}\|_{\infty}, \| \tilde{c}\|_{\infty} \leq R$. 
 Then, \begin{align*}
             \left|\int  (a-\tilde{a})d\rho_1  + \int (b-\tilde{b})d\rho_2 + \int (c-\tilde{c}) \cdot d \bsigma \right| 
         \leq \int  |a-\tilde{a}| + |b-\tilde{b}| + |c-\tilde{c}| d(\rho_1 + \rho_2 + |\bsigma|) \leq 6R\epsilon.
         \end{align*}
 Since $\epsilon$ is arbitrary, we obtain  \begin{equation*}
     \mathcal{B}_\mathcal{G}(\rho_{1},\rho_{2},\bsigma) = \sup \left\{ \int a(x)d\rho_{1}(x) + \int b(x)d\rho_{2}(x) + \int c(x) \cdot  d\bsigma(x) : (a,b,c)\in L^{\infty}(X;K) \right\}.
 \end{equation*}

We now prove part (\ref{absctslambda}).
Assume $\rho_{1},\rho_{2},\bsigma \ll \lambda,$ and recall the definitions of the functions $\alpha$ and $\beta$ from equations (\ref{definitionofpsi}-\ref{definitionofvarphi}). Then, since $\alpha$ and $\beta$ are convex conjugate functions, and $\alpha$ is a lower semicontinuous, proper, convex function, by \cite[Theorem 2]{rockafellar1968integrals}, \begin{align*}
        &\sup \left\{ \int  a d\rho_{1}  + \int  bd\rho_{2}+ \int  c\cdot d\bsigma : (a,b,c)\in L^{\infty}(X,K) \right\}\\
       &\quad  = \sup \left\{ \int a\frac{d\rho_{1}}{d\lambda}d\lambda + \int  b\frac{d\rho_{2}}{d\lambda}d\lambda + \int c \cdot \frac{d\bsigma}{d\lambda}d\lambda : (a,b,c)\in L^{\infty}_\lambda(X,K) \right\}\\
        &\quad=  \sup \left\{ \int \left[  a\frac{d\rho_{1}}{d\lambda} +  b\frac{d\rho_{2}}{d\lambda} +  c\cdot \frac{d\bsigma}{d\lambda} - \chi_K(a,b,c) \right] d\lambda  : (a,b,c)\in L^{\infty}_\lambda(X,\R^{2+d})\right\}\\
     &\quad   =\sup \left\{ \int \left[ a\frac{d\rho_{1}}{d\lambda} +  b\frac{d\rho_{2}}{d\lambda} +  c \cdot \frac{d\bsigma}{d\lambda}- \beta(a,b,c) \right] d\lambda : (a,b,c)\in L^{\infty}_\lambda(X,\R^{2+d})  \right\}\\
     &\quad   = \int \alpha \left( \frac{d\rho_{1}}{d\lambda},\frac{d\rho_{2}}{d\lambda},\frac{d\bsigma}{d\lambda} \right) d\lambda. 
    \end{align*}

Next, we prove part (\ref{finitedistance}). 
 First, we will show that $\bsigma \ll \tilde{\rho}.$ 
Assume, for the sake of contradiction, that  there is a set $B$ such that $|\bsigma|(B)>0$ and $\tilde{\rho} (B)=0.$  Since Lemma \ref{Klem} ensures that 
\[ \{ a+\frac{\|c\|^{2}}{8}\leq 0 \} \cap \{ b+\frac{\|c\|^{2}}{8}\leq 0 \} \subseteq K,\] define $a(x) = b(x) = -\frac{n^{2}}{8}\mathbf{1}_{B}(x)$ and $c(x) = n \mathbf{1}_{B}(x) \bsigma(B)/ |\bsigma(B)|$. Then, $\mathcal{B}_\mathcal{G}(\rho_{1},\rho_{2},\bsigma)\geq n\bsigma(B)$ for all $n \in \mathbb{N}$. Letting $n \rightarrow +\infty$ and we obtain $\mathcal{B}_\mathcal{G}(\rho_{1},\rho_{2},\bsigma)=+\infty,$ which contradicts our assumption. Hence, $\bsigma \ll \tilde{\rho},$ and there exists a Radon-Nikodym derivative $\frac{d\bsigma}{d\tilde{\rho}}$ of $\bsigma$ with respect to $\tilde{\rho}.$

Now, we will show $\bsigma \ll \theta(\frac{d\rho_{1}}{d\tilde{\rho}},\frac{d\rho_{2}}{d\tilde{\rho}}) \tilde{\rho}$.
Assume there is a set $C$ such that $|\bsigma|(C)>0$ and $\int_{C}\theta(\frac{d\rho_{1}}{d\tilde{\rho}},\frac{d\rho_{2}}{d\tilde{\rho}})d\tilde{\rho}=0.$ Thus, $\theta(\frac{d\rho_{1}}{d\tilde{\rho}},\frac{d\rho_{2}}{d\tilde{\rho}})=0 \hspace{1mm} \tilde{\rho}$ a.e.  on C.  By part (\ref{absctslambda}), for $\lambda = \tilde{\rho},$ we obtain \begin{equation*}
      \int_{C} \alpha \left( \frac{d\rho_{1}}{d\tilde{\rho}},\frac{d\rho_{2}}{d\tilde{\rho}},\frac{d\bsigma}{d\tilde{\rho}} \right) d\tilde{\rho} \leq  \mathcal{B}_\mathcal{G}(\rho_{1},\rho_{2},\bsigma) < + \infty.
\end{equation*} Since $\theta(\frac{d\rho_{1}}{d\tilde{\rho}},\frac{d\rho_{2}}{d\tilde{\rho}})=0 \hspace{1mm} \tilde{\rho}$ a.e. on C, and $\int_{C} \alpha \left( \frac{d\rho_{1}}{d\tilde{\rho}},\frac{d\rho_{2}}{d\tilde{\rho}},\frac{d\bsigma}{d\tilde{\rho}} \right) d\tilde{\rho} < + \infty,$ from the definition of function $\alpha$ in equation (\ref{definitionofpsi}), we have $\frac{d\bsigma}{d\tilde{\rho}} = 0 \hspace{1mm} \tilde{\rho}$-a.e. on C. This is a contradiction, since $|\bsigma|(C)>0.$
Hence, $\bsigma \ll \theta(\frac{d\rho_{1}}{d\tilde{\rho}},\frac{d\rho_{2}}{d\tilde{\rho}})\tilde{\rho}.$

Finally, part (\ref{integralformulation}) is an immediate consequence of part (\ref{absctslambda}) for $\lambda = \tilde{\rho}$.
      
\end{proof}

We conclude  by combining the previous compactness and lower semicontinuity properties to prove  that, whenever $W_{\Omega \times \G}(\bmu,\bnu)<+\infty$, a minimizer of the dynamic distance  exists. 

\begin{thm}[Existence of minimizers] \label{existenceofminimizers}  Suppose $\G$ is symmetric and connected,  $\theta$ satisfies Assumption \ref{interpolationassumption}, and $\Omega \subseteq \Rd$ is closed.
 Then, for any $\bmu,\bnu \in \P(\Omega \times \G)$ with $W_{\Omega \times \G}(\bmu,\bnu)<+\infty$, there exists $(\brho,\bu,\bv) \in \mathcal{C}(\bmu,\bnu)$ that achieves the infimum in the definition of the dynamic vector valued distance, equation (\ref{dynamic}).
\end{thm}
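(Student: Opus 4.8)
The plan is to run the direct method in the calculus of variations, combining the compactness of sublevel sets of the action from Proposition~\ref{compactnesspropertyfordirectmethod} with the lower semicontinuity of the momentum functional $\mathcal{B}_\mathcal{G}$ from Proposition~\ref{SantambrogioGraphProp5.18}. Since $W_{\Omega\times\G}(\bmu,\bnu)<+\infty$, the constraint set $\mathcal{C}(\bmu,\bnu)$ is nonempty; fix a minimizing sequence $(\brho^k,\bu^k,\bv^k)\in\mathcal{C}(\bmu,\bnu)$ with $\int_0^1\|(\bu^k_t,\bv^k_t)\|^2_{\brho^k_t}\,dt\to W^2_{\Omega\times\G}(\bmu,\bnu)$, so that hypothesis~(\ref{objfnbound}) holds. (One may assume $\bmu\in\P_2(\Omega\times\G)$: by the moment estimate in the proof of Proposition~\ref{compactnesspropertyfordirectmethod} along any finite-action curve, together with the symmetry of $W_{\Omega\times\G}$ from Proposition~\ref{almostdynamicmetric}, the hypothesis $W_{\Omega\times\G}(\bmu,\bnu)<+\infty$ forces $\bmu$ and $\bnu$ to have finite second moments simultaneously, which is anyway the situation relevant to Theorem~\ref{thm:Main}.) Proposition~\ref{compactnesspropertyfordirectmethod} then provides, along a subsequence, a curve $\brho\in C([0,1];\P(\Omega\times\G))$ and measures $\bm\in\M_s([0,1]\times\Omega)^n$, $\bsigma\in\M_s([0,1]\times\Omega)^{n\times n}$ satisfying the momentum continuity equation~(\ref{mctyeqn}), together with the convergences~(\ref{rhoconv})--(\ref{sigmaconv}); as established in its proof, $\rho^k_i\otimes dt\to\rho_i\otimes dt$ narrowly on $[0,1]\times\Rd$ and $\rho^k_{i,t}\to\rho_{i,t}$ narrowly for \emph{every} $t\in[0,1]$, so in particular $\brho_0=\bmu$ and $\brho_1=\bnu$.

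The heart of the argument is to write the action, for each $k$, as a sum of $\mathcal{B}_\mathcal{G}$--functionals on $X:=[0,1]\times\Omega$ and then pass to the limit. Set $\tilde\rho^k:=\bar\rho^k\otimes dt$; by positive $1$-homogeneity and normalization of $\theta$ one has $\theta(t,t)=t$, hence $\theta\big(\tfrac{d\rho^k_i}{d\tilde\rho^k},\tfrac{d\rho^k_i}{d\tilde\rho^k}\big)\tilde\rho^k=\rho^k_i\otimes dt$, and Proposition~\ref{SantambrogioGraphProp5.18}(\ref{integralformulation}) applied with $\lambda=\tilde\rho=\tilde\rho^k$ yields
\begin{align*}
\int_0^1\|(\bu^k_t,\bv^k_t)\|^2_{\brho^k_t}\,dt=\sum_{i=1}^n\mathcal{B}_\mathcal{G}\big(\rho^k_i\otimes dt,\rho^k_i\otimes dt,m^k_i\big)+\frac12\sum_{\substack{i,j=1\\ q_{ij}>0}}^n\frac{1}{q_{ij}}\,\mathcal{B}_\mathcal{G}\big(\rho^k_i\otimes dt,\rho^k_j\otimes dt,\sigma^k_{ij}\big),
\end{align*}
where $m^k_i$ and $\sigma^k_{ij}$ are precisely the sequences of signed measures appearing in~(\ref{mconv})--(\ref{sigmaconv}) (pairs with $q_{ij}=0$ may be dropped, since then $\sigma^k_{ij}\equiv0$). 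Because $\mathcal{B}_\mathcal{G}$ is lower semicontinuous with respect to narrow convergence of its three arguments, and along the subsequence each triple $(\rho^k_i\otimes dt,\rho^k_j\otimes dt,\sigma^k_{ij})$ converges narrowly to $(\rho_i\otimes dt,\rho_j\otimes dt,\sigma_{ij})$, superadditivity of $\liminf$ over the finitely many nonnegative summands gives
\begin{align*}
&\sum_{i=1}^n\mathcal{B}_\mathcal{G}\big(\rho_i\otimes dt,\rho_i\otimes dt,m_i\big)+\frac12\sum_{\substack{i,j=1\\ q_{ij}>0}}^n\frac{1}{q_{ij}}\,\mathcal{B}_\mathcal{G}\big(\rho_i\otimes dt,\rho_j\otimes dt,\sigma_{ij}\big)\\
&\qquad\le\ \liminf_{k\to\infty}\int_0^1\|(\bu^k_t,\bv^k_t)\|^2_{\brho^k_t}\,dt=W^2_{\Omega\times\G}(\bmu,\bnu)<\infty .
\end{align*}

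It remains to recover admissible velocity fields and verify the constraint. Finiteness of the last display makes each $\mathcal{B}_\mathcal{G}$--term finite, so Proposition~\ref{SantambrogioGraphProp5.18}(\ref{finitedistance}), applied again with $\tilde\rho=\bar\rho\otimes dt$ and using $\theta(t,t)=t$, gives $m_i\ll\rho_i\otimes dt$ and $\sigma_{ij}\ll\theta\big(\tfrac{d\rho_{i,t}}{d\bar\rho_t},\tfrac{d\rho_{j,t}}{d\bar\rho_t}\big)\bar\rho_t\otimes dt$. I then define $u_i$ to be the jointly Borel Radon--Nikodym density of $m_i$ with respect to $\rho_i\otimes dt$, extended by $0$ off $\Omega$, and put $u_{i,t}:=u_i(\cdot,t)$; similarly I define the scalar fields $v_{ij}$ (with $v_{ij}:=0$ when $q_{ij}=0$) through $\sigma_{ij}=q_{ij}\,v_{ij}\,\theta\big(\tfrac{d\rho_{i,t}}{d\bar\rho_t},\tfrac{d\rho_{j,t}}{d\bar\rho_t}\big)d\bar\rho_t\,dt$. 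These fields are jointly measurable; the $L^1$ integrability requirements of Definition~\ref{def:ContEqMultiSpecies} follow from Cauchy--Schwarz together with $\theta(s,t)\le(s+t)/2$ (Remark~\ref{interpfunorder}) and $\bar\rho_t(\Omega)=1$; substituting $dm_i=u_{i,t}\,d\rho_{i,t}\,dt$ and the expression for $\sigma_{ij}$ into~(\ref{mctyeqn}), and using the symmetry of $\theta$ and of $[q_{ij}]$, turns it into the weak vector valued continuity equation~(\ref{dualvvcty2}) on $[0,1]$; and $\brho_0=\bmu$, $\brho_1=\bnu$ were noted above. Hence $(\brho,\bu,\bv)\in\mathcal{C}(\bmu,\bnu)$. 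Finally, the identity of the second paragraph holds verbatim for the limit triple $(\brho,\bu,\bv)$ --- again by Proposition~\ref{SantambrogioGraphProp5.18}(\ref{integralformulation}), now that the absolute continuity of $\bm,\bsigma$ is in hand --- so combining it with the displayed inequality yields $\int_0^1\|(\bu_t,\bv_t)\|^2_{\brho_t}\,dt\le W^2_{\Omega\times\G}(\bmu,\bnu)$, hence equality by the definition of the infimum in~(\ref{dynamic}); thus $(\brho,\bu,\bv)$ is a minimizer.

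I expect the main obstacle to be the bookkeeping that ties the three terms of the action to $\mathcal{B}_\mathcal{G}$ --- choosing the common reference measure $\bar\rho_t\otimes dt$, exploiting $\theta(t,t)=t$ for the transport term, and handling the weights $q_{ij}$ (including vanishing ones) --- followed by the use of finiteness of the limiting functional together with parts~(\ref{finitedistance})--(\ref{integralformulation}) of Proposition~\ref{SantambrogioGraphProp5.18} to extract \emph{bona fide} measurable velocity fields whose associated triple solves the continuity equation in the precise distributional sense of Definition~\ref{def:ContEqMultiSpecies}, rather than merely the momentum form~(\ref{mctyeqn}).
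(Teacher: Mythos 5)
Your proof is correct and follows essentially the same route as the paper: a minimizing sequence, the compactness of Proposition \ref{compactnesspropertyfordirectmethod}, lower semicontinuity of the momentum functionals under narrow convergence, and then parts (\ref{finitedistance})--(\ref{integralformulation}) of Proposition \ref{SantambrogioGraphProp5.18} to recover measurable velocity fields, verify $(\brho,\bu,\bv)\in\mathcal{C}(\bmu,\bnu)$, and conclude optimality. The only (harmless) variations are that you handle the spatial kinetic term with $\mathcal{B}_\mathcal{G}$ evaluated at equal first two arguments (using $\theta(t,t)=t$) where the paper instead invokes the classical Benamou--Brenier functional $\mathcal{B}$, and your explicit $1/q_{ij}$ weighting of the mutation terms is in fact slightly more careful with the edge-weight constants than the paper's own bookkeeping.
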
 

\begin{proof}

    Choose $\{(\brho^k,\bu^k,\bv^k)\}_{k \in \mathbb{N}} \subseteq \C(\bmu,\bnu)$ satisfying \begin{equation*}
        \lim_{k \to \infty} \left( \int_0^1 \| (\bu_{t}^{k},\bv_{t}^{k}) \|_{\brho_{t}^{k}}^{2} dt \right)^{\frac{1}{2}} = W_{\Omega \times \G}(\bmu,\bnu) <+\infty.
    \end{equation*} 
    By the compactness Proposition \ref{compactnesspropertyfordirectmethod}, there exists $(\brho,\bm,\bsigma)$ satisfying a momentum formulation of the vector valued continuity equation, equation (\ref{mctyeqn}), so that, up to a subsequence,          \begin{equation*}
            (d\brho_{t}^{k}dt,\bu_{t}^{k}\odot d\brho_{t}^{k}dt,\bv_{t}^{k}\odot {\bf \Theta}_{t}^{k} d\bar{\brho}_{t}^{k}dt) \to (d\brho_tdt,\bm,\bsigma), \hspace{2mm} \mbox{as} \hspace{1mm} k \to \infty, 
        \end{equation*} where we abbreviate \begin{equation*}
            {\bf \Theta}_{t}^{k} := \left[ \theta \left( \frac{d\rho_{i,t}^{k}}{d\bar{\rho}_{t}^{k}},\frac{d\rho_{j,t}^{k}}{d\bar{\rho}_{t}^{k}} \right)q_{ij} \right]_{ij} .
        \end{equation*} 

Next, we recall the definition of the momentum formulation of the kinetic energy for the classical 2-Wasserstein metric.
 For  $K_{2} := \{ (a,b) \in \mathbb{R}^{1+d}: a+\frac{\|b\|^2}{2} \leq 0 \}$  and a closed subset $X \subseteq \R^m$, $m \in \mathbb{N}$, define 
 $  \mathcal{B}: \mathcal{M}(X) \times \mathcal{M}_s(X)^d  \to \mathbb{R} \cup \{+\infty\}$ by
 \[\mathcal{B}(\rho,\bsigma) := \sup \left\{ \int_{X} a(x)d\rho(x) + \int_{X} b(x) \cdot d\bsigma(x) : (a,b)\in C_{b}(X;K_{2}) \right\} . \]
Then, recalling  \cite[Proposition 5.18]{santambrogio2015optimal} which characterizes $\mathcal{B}$ and Proposition \ref{SantambrogioGraphProp5.18} which characterizes the graph analogue $\mathcal{B}_\G$,
       by definition of the vector valued energy,  \begin{align*}  
 \int_0^1 \| (\bu_{t}^{k},\bv_{t}^{k}) \|_{\brho_{t}^{k}}^{2} dt &= {\int_{0}^{1} \sum_{i=1}^n \int_\Omega | u_{i,t}^{k}|^2   d \rho_{i,t}^{k} dt}  + {\int_{0}^{1} \frac12 \sum_{i,j=1}^n \int_\Omega |v_{ij,t}^{k} |^2 { \Theta}_{ij,t}^{k} d\bar{\rho}_{t}^{k} dt.} \\
 &=\sum_{i=1}^{n} \mathcal{B}(d\rho_{i,t}^{k} dt,u_{i,t}^{k}d\rho_{i,t}^{k}dt) + \frac12 \sum_{i,j=1}^{n}\mathcal{B}_{\G}(d\rho_{i,t}^{k} dt,d\rho_{j,t}^{k} dt,v_{ij,t}^{k}{  \Theta}_{ij,t}^{k}d\bar{\rho}_{t}^{k} dt).
 \end{align*}
 By the lower semicontinuity of the functions $\mathcal{B}$ and $\mathcal{B}_\mathcal{G}$ with respect to narrow convergence,  \begin{align*}
    & W_{\Omega \times \G}^{2}(\bmu,\bnu) =  \liminf_{k \to \infty} \int_0^1 \| (\bu_{t}^{k},\bv_{t}^{k}) \|_{\brho_{t}^{k}}^{2} dt\\
     &\quad  = \liminf_{k \to \infty} \left( \sum_{i=1}^{n} \mathcal{B}(d\rho_{i,t}^{k} dt,u_{i,t}^{k}d\rho_{i,t}^{k}dt)+ \frac{1}{2}\sum_{i,j=1}^{n}\mathcal{B}_{\G}(d\rho_{i,t}^{k} dt,d\rho_{j,t}^{k} dt,v_{ij,t}^{k}{  \Theta}_{ij,t}^{k}d\bar{\rho}_{t}^{k} dt \right) \\
   &\quad  \geq \liminf_{k \to \infty} \left( \sum_{i=1}^{n} \mathcal{B}(d\rho_{i,t}^{k} dt,u_{i,t}^{k}d\rho_{i,t}^{k}dt) \right) +
     \liminf_{k \to \infty} \left( \frac{1}{2} \sum_{i,j=1}^{n}\mathcal{B}_{\G}(d\rho_{i,t}^{k} dt,d\rho_{j,t}^{k} dt,v_{ij,t}^{k}{  \Theta}_{ij,t}^{k}d\bar{\rho}_{t}^{k} dt) \right)  \\ 
    &\quad  \geq \sum_{i=1}^{n} \mathcal{B}(d\rho_{i,t} dt,m_{i}) + \frac{1}{2}\sum_{i,j=1}^{n}  \mathcal{B}_{\G}(d\rho_{i,t}dt,d \rho_{j,t} dt, \sigma_{ij}).
     \end{align*} 
 
   Thus, $\sum_{i=1}^{n} \mathcal{B}(d\rho_{i,t} dt,m_{i}) + \frac{1}{2} \sum_{i,j=1}^{n} \mathcal{B}_{\G}(d\rho_{i,t}dt,d\rho_{j,t}dt,\sigma_{ij})$ is   finite, and it follows by \cite[Proposition 5.18]{santambrogio2015optimal} and Proposition \ref{SantambrogioGraphProp5.18} that there exist $\bu$ and $\bv$ so that \begin{align*}
         &d m_{i} = u_{i} d \rho_{i,t}dt,  \quad d \sigma_{ij} = v_{ij} {  \Theta}_{ij} d\bar{\rho}_{t}dt, \\
         &\mathcal{B}(d\rho_{i,t} dt,m_{i}) = \int_{0}^{1}  \int_\Omega | u_{i,t}|^2   d \rho_{i,t} dt, \quad 
         \mathcal{B}_{\G}(d\rho_{i,t}dt,d\rho_{j,t}dt,\sigma_{ij}) = \int_{0}^{1} \int_\Omega |v_{ij,t} |^2 { \Theta}_{ij} d\bar{\rho}_{t} dt.
     \end{align*} 
     and $(\brho, \bu,\bv) \in \C(\bmu,\bnu)$.

     Finally, since \begin{equation*}
         \sum_{i=1}^{n} \mathcal{B}(d\rho_{i,t} dt,m_{i}) + \frac{1}{2}\sum_{i,j=1}^{n} \mathcal{B}_{\G}(d\rho_{i,t}dt,d\rho_{j,t}dt,\sigma_{ij}) = \int_0^1 \|(\bu_t,\bv_t)\|_{\brho_t}^2 dt , \end{equation*}  
     we see that $(\brho, \bu,\bv)$ attains the infimum in the definition of the dynamic distance (\ref{dynamic}).

\end{proof}

\subsection{Lifted space and static (semi) metrics} \label{subsection:Liftedspaceandstaticsemimetrics}

In the classical 2-Wasserstein setting, the dynamic formulation of the optimal transport distance has an equivalent static formulation, in terms of a Kantorovich problem. On one hand, it is known that  a classical Kantorovich formulation does not exist for the 2-Wasserstein metric $W_\G$ on graphs: as shown by Maas, for any 2-Wasserstein metric on a graph defined via a Kantorovich formulation with respect to a ground distance, the only geodesics on $\P(G)$ are constant curves, which stands in contrast to the many nonconstant geodesics that exist for $W_{\G}$  \cite[Remark 2.1]{maas2011gradient}. Since our dynamic distance reduces to $W_\G$ in the case the spatial domain is a singleton, $\Omega = \{ x_0\}$,  a classical Kantorovich formulation for (\ref{dynamic}) cannot hold. In spite of this, due to the importance of the static formulation of the classical 2-Wasserstein metric in classification tasks, we seek a static analogue that is, in some sense, equivalent. We consider two candidates: a metric defined via canonical lifts and a static semimetric. The semimetric is inspired by the static definition Hellinger-Kantorovich distance by Liero et al. \cite{liero2016optimal}. The  metric via canonical lifts generalizes the static distance introduced in the graph setting by Maas and Erbar \cite{erbar2012ricci}. Our main Theorem \ref{thm:Main} shows that   both dominate the dynamic metric \ref{dynamic}, and as a Corollary \ref{thm:EquivalenceTopologies}, we conclude that all three quantities are bi-H\"older equivalent.

In order to define both   static formulations of vector valued optimal transport, we begin by observing that our vector valued measures $\P(\Omega \times \G)$ may be viewed as projections of   probability measures   on a  \emph{lifted space}, $\P(\Rd \times \Delta^{n-1})$, which inherits the same graph geometry encoded in our dynamic metric. The projection operator $\PP$ defined in equation (\ref{projectiondef}) of the introduction may be expressed in terms of test functions $\eta \in C_b(\Rd)$ as
\begin{align*}
  \int_\Rd \eta(x) d (\PP \lambda)_j(x) &= \int_{\Rd \times \Delta^{n-1}  } p_j(r) \eta(x) d \lambda(x,r) 
  \\ & =\begin{cases} \int_{\Rd \times \Delta^{n-1}  } r_j \eta(x) d \lambda(x,r)  &  \text{ for } j=1, \dots, n-1, \\  
  \int_{\Rd \times \Delta^{n-1} } (1-\sum_{j=1}^{n-1}r_j)  \eta(x) d \lambda(x,r) &\text{ for }j=n,\end{cases}
\end{align*}
 where  $\bp: \Delta^{n-1} \to \P(\G)$, $\bp(r) = [p_j(r)]_{j=1}^n$ is the isometry from equation (\ref{Idef}).

Since  $(\Delta^{n-1},d_{\Delta^{n-1}})$ has finite diameter and a topology equivalent to the usual Euclidean topology (see Lemma \ref{lem:EquivalenceTopologies}), $\bp$ is bounded and continuous, and $\PP$ is continuous with respect to the narrow topologies on its domain and range.
The fact that $\PP$ is   surjective   follows from the existence of a \emph{canonical lift} $\lambda_\bmu \in \P(\Rd \times \Delta^{n-1})$ of $\bmu \in \P(\Rd \times \G)$  satisfying   $\PP \lambda_{\bmu} = \bmu$; see equation (\ref{eqn:Introcanonical}).

 \begin{rem}[Disintegration of lifts] \label{propertiesofprojectionofliftedmeasures} 
%

We will often use the fact that, if $\PP \lambda = \bmu$, then
\begin{align} \label{densitywrtbarmu1}
\pi_{\Rd} \# \lambda = \bar{\mu},
\end{align} 
where $\bar{\mu} = \sum_{i=1}^n \mu_i$, and $\pi_\Rd: \Rd \times \Delta^{n-1} \to \Rd$ denotes projection onto the first component. 

Likewise, let $\{\lambda_{x} \}_{x \in \Rd} \subseteq \P(\Delta^{n-1})$ be the disintegration of $\lambda$ with respect to $\pi_{\Rd} \# \lambda$; that is
\[ \int_{\Rd \times \Delta^{n-1}} \eta(x,r) d \lambda(x,r) = \int_\Rd \left( \int_{\Delta^{n-1}} \eta(x,r) d \lambda_{x}(r) \right) d (\pi_\Rd\#\lambda)(x) , \quad \forall \eta \in C_b(\Rd \times \Delta^{n-1}) \ . \]
Then $d \lambda_{x}(r)$ represents the distribution  over $\Delta^{n-1}$ at a fixed spatial location $x$ and
\begin{align} \label{densitywrtbarmu}  \int_{\Delta^{n-1} } p_j(r) d \lambda_{x}(r) = \frac{d \mu_j}{d \bar{\mu}}(x) \text{ for } j=1, \dots, n, 
\end{align}
 represents the proportion of mass of coordinate $j$ at location $x$. 
\end{rem}


By considering canonical lifts of probability measures on $\Rd \times \G$, we can now  use the 2-Wasserstein distance on $\P_2(\Rd \times \Delta^{n-1})$  to define a static metric $W_{2, \mathcal{W}}$ on $\mathcal{P}_2(\Rd \times \mathcal{G})$, as in equation \ref{staticsingletypedistance}. This metric is well-defined since  for any lifting $\lambda$ of $\mu \in \P_2(\Rd \times \G)$, we have $\lambda \in \P_2(\Rd \times \Delta^{n-1})$. To see this, note that,
by Remark \ref{propertiesofprojectionofliftedmeasures}, since $\PP \lambda = \bmu,$ we have $\pi_{\Rd} \# \lambda = \bar{\mu}.$ Thus, for any reference point $(x_{0},r_{0})\in \Rd \times \Delta^{n-1},$ we obtain \begin{align}
    M_{2}(\lambda) &= \int_{\Rd \times \Delta^{n-1}} d_{\Rd}^{2}(x,x_{0}) + W_{\G}^{2}(r,r_{0}) d\lambda(x,r) \nonumber \\
    &= \int_{\Rd} d_{\Rd}^{2}(x,x_{0}) d\bar{\mu}(x) + \int_{\Delta^{n-1}} W_{\G}^{2}(r,r_{0}) d(\pi_{\Delta^{n-1} } \#  \lambda)(r)  = M_{2}(\bar{\mu}) + C_{\Delta^{n-1}} < +\infty, \label{secondmomentupstairs}
\end{align} since  $ \bar{\mu} \in \P_{2}(\Rd)$ and $(\Delta^{n-1},d_{\Delta^{n-1}})$ has finite diameter.

We choose the notation $W_{2, \mathcal{W}}$, since  this metric reduces to a Kantorovich-type metric on $\P(\G)$ studied by Erbar and Maas when   $\supp \mu_i = \supp \nu_i = \{x_0\}\subseteq \Rd$ for all $i =1, \dots, n$. Erbar and Maas's version of this metric is defined in terms of the ground distance on the graph $\G$ given by
\[ d_{\mathcal{W}}: \G \times \G \to [0,+\infty) , \quad
 d_{\mathcal{W}}(i, j) := W_\G(\delta_i, \delta_j) . \]
 The following lemma shows the connection between our metric  $W_{2, \mathcal{W}}$ and that studied by Erbar and Mass, and  as well as  showing that   $W_{2, \mathcal{W}}$ is a special case of the static vvOT distances studied by Bacon  \cite{bacon2020multi}; see equation (\ref{BaconKantorovich}).
\begin{prop}[Kantorovich formulation of $W_{2, \mathcal{W}}$] \label{W2WKantorovich}
For any $\bmu, \bnu \in \P_2(\Rd \times \G)$,
\begin{align*} W^2_{2, \mathcal{W}}(\bmu, \bnu) &=
 \min_{\substack{\boldsymbol{\Gamma} \in \P((\Rd \times \G) \times (\Rd \times \G)) \\ \pi_1 \# \boldsymbol{\Gamma} = \bmu , \pi_2 \# \boldsymbol{\Gamma} = \bnu} } \iint_{(\Rd \times \G) \times (\Rd \times \G)} d_\Rd^2(x, y) + d^2_{\mathcal{W}}(i,j)  \ d \boldsymbol{\Gamma}((x,i),(y,j)) \\
&=   \min_{\boldsymbol{\Gamma} \in \Pi(\bmu,\bnu)}   \sum_{i,j=1}^{n} \iint_{\Rd \times \Rd} d_\Rd^2(x, y) + W_\G(\delta_i,\delta_j) d\Gamma_{ij}(x,y)  \end{align*}
\end{prop}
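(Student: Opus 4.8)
The plan is to unwind the definition $W_{2,\mathcal{W}}(\bmu,\bnu) = W_{\Rd \times \Delta^{n-1}}(\lambda_\bmu,\lambda_\bnu)$ and show that the Kantorovich problem on the lifted space $\Rd \times \Delta^{n-1}$ is equivalent to the two stated Kantorovich problems. The key structural fact is that the canonical lifts $\lambda_\bmu = \sum_{j=1}^n \mu_j \otimes \delta_{e_j}$ and $\lambda_\bnu = \sum_{j=1}^n \nu_j \otimes \delta_{e_j}$ are concentrated on the finite set of corners $\{e_1,\dots,e_n\}$ of the simplex. Consequently, any transport plan $\Gamma \in \P((\Rd\times\Delta^{n-1})\times(\Rd\times\Delta^{n-1}))$ with marginals $\lambda_\bmu,\lambda_\bnu$ must itself be concentrated on $(\Rd\times\{e_1,\dots,e_n\})\times(\Rd\times\{e_1,\dots,e_n\})$, so it decomposes as $\Gamma = \sum_{i,j=1}^n \Gamma_{ij}\otimes \delta_{(e_i,e_j)}$ for some nonnegative measures $\Gamma_{ij}\in\M(\Rd\times\Rd)$. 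The marginal constraints $\pi_1\#\Gamma = \lambda_\bmu$ and $\pi_2\#\Gamma=\lambda_\bnu$ translate, after testing against functions of the form $\eta(x)\mathbbm{1}_{\{r=e_i\}}$, exactly into $\sum_j \Gamma_{ij}$ having first marginal $\mu_i$ and $\sum_i \Gamma_{ij}$ having second marginal $\nu_j$; that is, $\boldsymbol{\Gamma}=[\Gamma_{ij}]\in\Pi(\bmu,\bnu)$ in Bacon's sense. Under this identification the cost integrates to
\[
\int d^2_{\Rd\times\Delta^{n-1}}((x,r),(y,s))\,d\Gamma = \sum_{i,j=1}^n \iint_{\Rd\times\Rd}\bigl(d_\Rd^2(x,y) + d_{\Delta^{n-1}}^2(e_i,e_j)\bigr)\,d\Gamma_{ij}(x,y),
\]
and $d_{\Delta^{n-1}}^2(e_i,e_j) = W_\G^2(\bp(e_i),\bp(e_j)) = W_\G^2(\delta_i,\delta_j) = d_{\mathcal{W}}^2(i,j)$ by the definition of $d_{\Delta^{n-1}}$ and the fact that $\bp(e_i)=\delta_i$. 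This gives both displayed identities simultaneously, once one observes that plans in $\Pi(\bmu,\bnu)$ are in bijection with plans $\boldsymbol{\Gamma}\in\P((\Rd\times\G)\times(\Rd\times\G))$ having marginals $\bmu,\bnu$ (the latter is just a repackaging: a probability measure on $(\Rd\times\G)\times(\Rd\times\G)$ is a matrix $[\Gamma_{ij}]_{i,j=1}^n$ of finite measures on $\Rd\times\Rd$ summing to total mass one, and the marginal conditions are precisely Bacon's).

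The steps in order: (1) Show that any $\Gamma$ feasible for $W_{\Rd\times\Delta^{n-1}}(\lambda_\bmu,\lambda_\bnu)$ is concentrated on the corners, hence has the form $\sum_{ij}\Gamma_{ij}\otimes\delta_{(e_i,e_j)}$; this follows because the marginals are concentrated there and $\{x : x\notin\text{corners}\}$ has measure zero under both marginals, so its preimages under the two coordinate projections have $\Gamma$-measure zero. (2) Verify the translation of marginal constraints into the constraints defining $\Pi(\bmu,\bnu)$, by testing against product functions and using a monotone class / density argument. (3) Compute the cost as above and use $d_{\Delta^{n-1}}(e_i,e_j)=W_\G(\delta_i,\delta_j)$. (4) Conclude that the infimum over $\Gamma$ equals the infimum over $\boldsymbol{\Gamma}\in\Pi(\bmu,\bnu)$, and that the minimum is attained — existence of an optimal $\Gamma$ on the lifted space is standard since $\Rd\times\Delta^{n-1}$ is a complete separable metric space, the cost is continuous, and $\lambda_\bmu,\lambda_\bnu\in\P_2$ by \eqref{secondmomentupstairs}; the attained minimizer then yields an attained minimizer $\boldsymbol{\Gamma}$ on the downstairs side. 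One should also record that $W^2_{2,\mathcal{W}}$ is finite, again via \eqref{secondmomentupstairs}.

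The main obstacle — though it is more bookkeeping than a genuine difficulty — is Step (2): carefully justifying that the matrix-of-measures decomposition of $\Gamma$ interacts correctly with the marginal constraints, in particular that no mass of $\Gamma$ "leaks" onto the non-corner part of the simplex in either slot, and that the resulting $\Gamma_{ij}$ are honestly nonnegative Borel measures on $\Rd\times\Rd$ with the claimed partial marginals. This requires being slightly careful about disintegration of $\lambda_\bmu$ (which is $\bar\mu$ on the $\Rd$ factor, with the conditional law at $x$ supported on corners, cf. Remark \ref{propertiesofprojectionofliftedmeasures}) and choosing test functions that separate the corners. Everything else — the cost computation, the identification $d_{\Delta^{n-1}}(e_i,e_j)=W_\G(\delta_i,\delta_j)$, and existence of minimizers — is routine given the results already established in the excerpt.
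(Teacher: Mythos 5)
Your proposal is correct and follows essentially the same route as the paper: both proofs observe that the canonical lifts force any feasible plan to concentrate on $(\Rd\times\{e_1,\dots,e_n\})\times(\Rd\times\{e_1,\dots,e_n\})$, identify $d_{\Delta^{n-1}}(e_i,e_j)=W_\G(\delta_i,\delta_j)=d_{\mathcal{W}}(i,j)$, and then set up the bijection between such plans and plans on $(\Rd\times\G)\times(\Rd\times\G)$ (equivalently Bacon's $\Pi(\bmu,\bnu)$); your matrix-of-measures decomposition $\Gamma=\sum_{i,j}\Gamma_{ij}\otimes\delta_{(e_i,e_j)}$ is just a more explicit phrasing of the paper's pushforward identification $\boldsymbol{\Gamma}=(\mathbf{f}\times\mathbf{f})\#\Gamma$.
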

\begin{proof} By definition, 
\begin{align*}
W^2_{2, \mathcal{W}}(\bmu, \bnu) &= W^2_{\Rd \times \Delta^{n-1}}(\lambda_{\bmu}, \lambda_{\bnu})  \\
&= W^2_{\Rd \times \Delta^{n-1}} \left(  \sum_{j=1}^{n} \mu_j(x) \otimes \delta_{e_{j}}(r) ,  \sum_{j=1}^{n} \nu_j(x) \otimes \delta_{e_{j}}(r)   \right) \\
&  =\min_{\substack{ \Gamma \in \P(({\Rd \times \Delta^{n-1}}) \times ({\Rd \times \Delta^{n-1}})) \\  \pi_1 \# \Gamma =  \sum_{j=1}^{n} \mu_j(x) \otimes \delta_{e_{j}}(r) \\ \pi_2 \# \Gamma =  \sum_{j=1}^{n} \nu_j(x) \otimes \delta_{e_{j}}(r) }}   \iint_{({\Rd \times \Delta^{n-1}}) \times ({\Rd \times \Delta^{n-1}})}   d^2_{\Rd}(x, \tilde{x}) + d^2_{\Delta^{n-1}}(r, \tilde{r})  \ d\Gamma ((x,r),(\tilde{x},\tilde{r})) .
\end{align*}
Since the marginals of $\Gamma$ are supported on $S:=\Rd \times \{e_j\}_{j=1}^n \subseteq \Rd \times \Delta^{n-1}$, 
\begin{align*}
\Gamma((S \times S)^c) &= \Gamma((S^c \times (\Rd \times \Delta^{n-1}) ) \cup((\Rd \times \Delta^{n-1}) \times S^c))  \\
&\leq \Gamma(S^c\times (\Rd \times \Delta^{n-1})) + \Gamma((\Rd \times \Delta^{n-1}) \times S^c) = \pi_1\# \Gamma(S^c) + \pi_2 \# \Gamma(S^c) = 0 ,
\end{align*}
so $\Gamma$ is supported on $S \times S $. Furthermore, when $r, \tilde{r} \in \{ e_j\}_{j=1}^n$, 
\[ d^2_{\Delta^{n-1}}(r, \tilde{r}) =  W^2_{\G}(\bp(r), \bp(\tilde{r})) = d^2_{\mathcal{W}}(i, j) , \ \text{ for } \bp(r) = \delta_i, \   \bp(\tilde{r}) = \delta_j. \] 
The map $(\id \times \bp) : \Rd \times \{e_j\}_{j=1}^n \to \Rd \times \{  \delta_g  : g \in \G \}$ is a bijection, and the map $\mathbf{d}:\{  \delta_g  : g \in \G \} \to \G: \delta_g \mapsto g$ is a bijection. Thus, defining $\mathbf{f}:= (\id \times (\mathbf{d} \circ \bp))$, we see that there is a one to one correspondence between $\Gamma \in \P((\Rd \times \Delta^{n-1}) \times (\Rd \times \Delta^{n-1}))$ supported on $S \times S$ and $\boldsymbol{\Gamma} := (\mathbf{f} \times \mathbf{f}) \# \Gamma \in \P((\Rd \times \G) \times (\Rd \times \G))$. This gives the result.
\end{proof}

In conjunction with the static metric $W_{2,\mathcal{W}}$, we also study the static formulation $ D_{\Rd\times \mathcal{G}}$, as defined in equation (\ref{semimetricdef}). We now prove Proposition \ref{semimetricprop}, which shows that $ D_{\Rd\times \mathcal{G}}$  is, indeed, a semimetric. 

\begin{proof}[Proof of Proposition \ref{semimetricprop}]
Non-negativity and symmetry of $D_{\Rd \times \G}$ follow from the analogous properties of  $W_{\Rd \times \Delta^{n-1}}$. Finiteness follows from inequality (\ref{secondmomentupstairs}).

We conclude by showing that $D_{\Rd \times \G}(\bmu, \bnu)$ vanishes only when $\bmu = \bnu$. If $\bmu = \bnu$, it is clear that it vanishes. Now, assume $D_{\Rd \times \G}(\bmu,\bnu)=0.$ Then, there exist two sequences of measures $\lambda_{1}^{k},\lambda_{2}^{k}\in \P(\Rd \times \Delta^{n-1})$ such that $\PP \lambda_{1}^{k} = \bmu, \PP \lambda_{2}^{k} = \bnu,$ for all $k,$ and 
\begin{align} \label{ktoinftyzero}
\lim_{k\to \infty} W_{\Rd \times \Delta^{n-1}}(\lambda_{1}^{k},\lambda_{2}^{k}) = 0.
\end{align} By  equation (\ref{secondmomentupstairs}), we obtain  $
    \sup_{k} M_{2}(\lambda_{1}^{k}) < +\infty$ and $\sup_{k} M_{2}(\lambda_{2}^{k}) < +\infty.$
 Thus $\{ \lambda_{1}^{k} \}_{k\in \mathbb{N}}$ and $\{ \lambda_{2}^{k} \}_{k\in \mathbb{N}}$ are relatively   compact with respect to the narrow topology induced by $d_{\Rd \times \Delta^{n-1}}.$ Let $\lambda_1$ and $\lambda_2$ denote two limit points. By the joint lower semicontinuity of the $W_{\Rd \times \Delta^{n-1}}$ metric with respect to narrow convergence \cite[Proposition 7.1.3]{ambrosiogiglisavare}
\begin{align} \label{lscsecondpart} \lim_{k \to +\infty}W_{\Rd \times \Delta^{n-1}}(\lambda_{1}^{k},\lambda_{2}^{k}) \geq W_{\R^d \times \Delta^{n-1}}(\lambda_{1},\lambda_{2})  . 
\end{align}
Combining (\ref{ktoinftyzero}) and (\ref{lscsecondpart}), we see that $\lambda_1 = \lambda_2$. Finally, by the continuity of $\PP$ with respect to narrow convergence, we obtain $\PP \lambda_1 = \lim_{k \to +\infty} \PP \lambda_1^k = \bmu$ and $\PP \lambda_2 = \lim_{k \to +\infty} \PP \lambda_2^k = \bnu$, so $\bmu = \bnu$.
  
\end{proof}

\section{Comparison of distances}
\label{sec:Comparison}

In this section, we study the relation between the different notions of distance over the space $\mathcal{P}_2(\Omega \times \G)$  introduced in Section \ref{sec:Distances}. Our main result, Theorem \ref{thm:Main}, establishes a one-sided ordering between these metrics.

 Since the second inequality in Theorem \ref{thm:Main} follows immediately from our definition of $D_{\Rd \times \G}$, the key challenge is to prove the first inequality.  Given arbitrary $\lambda_0, \lambda_1 \in \mathcal{P}_2({\Rd \times \Delta^{n-1}})$ with $\mathfrak{P} \lambda_0 = \bmu$ and $\mathfrak{P} \lambda_1 = \bnu$, we achieve this by  showing that $W_{\Rd \times \Delta^{n-1}}(\lambda_0, \lambda_1)$  admits a dynamic formulation in terms of a least action principle along solutions of the continuity equation on $\Rd \times (\Delta^{n-1})^\circ$ connecting $\lambda_0$ and $\lambda_1$; see Definition \ref{def:ContEqN}. This reformulation will put us in a good position to compare $W_{\Rd \times \Delta^{n-1}}(\lambda_0, \lambda_1)$ with $  W_{\Omega \times \G}(\bmu,\bnu)$ because, as we will show in Section \ref{sec:UpstairsDownstairs}, solutions to the continuity equation on ${\Rd \times (\Delta^{n-1})^\circ}$ project down to  solutions to the vector valued continuity equation \eqref{vvcty}, where the latter has smaller kinetic energy than the former. 

Unfortunately, the equivalence between static and dynamic Wasserstein distances over probability measures on complete Riemannian manifolds (see, e.g.,  \cite{lisini2007characterization}) does not apply to the space of probability measures on ${\Rd \times \Delta^{n-1}}$ for two reasons. First, the Riemannian metric that is induced on ${\Rd \times (\Delta^{n-1}})^\circ$  by the graph structure  degenerates as we approach the boundary of $\Delta^{n-1}$. Second, we cannot overcome this obstacle by simply  restricting ourselves to closed subsets of  $\Rd \times (\Delta^{n-1})^\circ$, since the Riemannian metric on these subsets is not complete. In particular, it is possible to find points in $(\Delta^{n-1})^\circ$ for which the constant speed geodesic between them touches the boundary; see Remark \ref{geodesicintersectingtheboundary}.

In order to circumvent the issues described above, we prove that, for any  constant speed geodesic $\{ \lambda_t \}_{t \in [0,1]}$ connecting   measures $\lambda_0$ and $\lambda_1$, there exists an approximation $\{ \lambda_t^a \}_{t \in [0,1]}$, for $a>0$ small, that is uniformly bounded away from the boundary of $\Delta^{n-1}$ and for which the metric slope is controlled by the metric slope of $\lambda_t$, up to a small correction that vanishes as $a \to 0$. 
A key feature of this approximation is that, for each $i \in \G$, the total mass assigned by the projection $\mathfrak{P}(\lambda_t^a)$ to component $i$ is constant in the regularization parameter $a>0$, for both $t=0$ and $t=1$. This final property plays a crucial role in the proof of Theorem \ref{thm:Main};  see Remark \ref{rem:TypeTotal}.
 
The remainder of this section is organized as follows. In subsection \ref{sec:ConstructionRegularized}, we construct the approximate geodesics $\{ \lambda_t^a \}_{t \in [0,1]}$ that avoid the boundary. In section \ref{sec:UpstairsDownstairs}, we show that solutions of the continuity equation on ${\Rd \times (\Delta^{n-1})^\circ}$ project down to  solutions to the vector valued continuity equation \eqref{vvcty} with smaller kinetic energy. In section \ref{sec:ProofMainTheorem}, we combine these ingredients to prove Theorem \ref{thm:Main}. In section \ref{sec:TopoEquiv}, we prove that in the case $\bmu, \bnu \in \P(\Omega \times \G)$ for $\Omega$ compact and convex, all (semi)-metrics that we have considered $d_{BL}$, $W_{\Omega \times \G}$, $D_{\Rd \times \G}$, and $W_{2, \mathcal{W}}$ are bi-H\"older equivalent, hence induce equivalent topologies. Furthermore, under Assumption \ref{Assumption:Uniqueness}, we prove that the linearized vector valued OT metric defined in equation \eqref{def:LOT} induces the same topology as all the aforementioned metrics; see Proposition \ref{prop:LOT}.    Finally, in section  \ref{sec:Examples}, we present examples   of both equality and strict inequality in our main theorem, which show that our estimates are sharp. As a consequence, we also see that the semimetric $D_{\Rd \times \G}$ fails to satisfy the triangle inequality.

\subsection{Boundary avoidant approximation of geodesics on lifted space}
\label{sec:ConstructionRegularized}

In this section, we construct regularized versions of geodesics in  $\P({\Rd \times \Delta^{n-1}})$ that   avoid the boundary of the simplex; see equation \ref{strictinteriordef} for the definition of $\Delta^{n-1}_{c}$ for $c>0$. 
\begin{prop}
\label{prop:RegularMeasures}
Suppose $\G$ is symmetric and connected, $\theta$ satisfies Assumption \ref{interpolationassumption}, and $\Omega \subseteq \Rd$ is closed on convex. Consider $\bmu_0 , \bmu_1 \in \P(\Omega \times \G)$ satisfying 
\begin{align} \label{m0m1forcurveperturb}  m_{i,0}:= \mu_{i,0} (\Omega) >0 , \  m_{i,1}:= \mu_{i,1} (\Omega) > 0 \ , \quad \forall i = 1, \dots, n . \end{align}
 Suppose $\lambda_0, \lambda_1 \in \P({\Rd \times \Delta^{n-1}})$ satisfy $\mathfrak{P}(\lambda_0) = \bmu_0$ and $\mathfrak{P}(\lambda_1) = \bmu_1$. Then,  for every   $a \in (0,1)$, there is a solution $ (\lambda^{a}, \bw^{a})$ of the continuity equation on $\Rd \times (\Delta^{n-1})^\circ$ and a constant  $C= C_{ {m}_0,  {m}_1,\G}$  depending   on  $\min_i  m_{i,0} >0$, $ \min_i m_{i,1}>0$, and   the graph $\G$ for which the following hold:
\begin{enumerate}[(i)]
    \item for all $t \in [0,1]$,   $\supp(\lambda_{t}^{a}) \subseteq \Omega \times \Delta^{n-1}_{a C}$; \label{4p2ininterior}
    \item  $\mathfrak{P}(\lambda_0^{a})_i(\Rd ) = m_{i,0}$ and $\mathfrak{P}(\lambda_1^{a})_i(\Rd ) =  m_{i,1}$ for all $i= 1, \dots, n$;\label{balancedmass}

    \item $  \int_0^1 \int_{\Rd \times \Delta^{n-1}} \lVert \bw^{a}(x,r,t) \rVert^2_{{\rm Tan}_{(x,r)}(\Rd \times \Delta^{n-1})} d \lambda_t^{a}(x,r) dt  \leq  W_{\Rd \times \Delta^{n-1}}^2(\lambda_0, \lambda_1) + a C$;   \label{thirdlisinipart}
    \item $ \lim_{a \rightarrow 0} W_{\Rd \times \Delta^{n-1}}^2(\lambda_0, \lambda_0^{a}) =0 $ and $ \lim_{a \rightarrow 0}  W_{\Rd \times \Delta^{n-1}}^2(\lambda_1, \lambda_1^{a}) =0 .$ \label{Wconvergenceatozero}
\end{enumerate}

\end{prop}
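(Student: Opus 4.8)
The plan is to take an arbitrary constant speed geodesic $\{\lambda_t\}_{t\in[0,1]}$ in $\P(\Rd\times\Delta^{n-1})$ from $\lambda_0$ to $\lambda_1$ (which exists by completeness and separability of the ground space, cf.\ Lemma~\ref{lem:EquivalenceTopologies} and the discussion of the product space), and use Lisini's characterization of absolutely continuous curves \cite{lisini2007characterization} to obtain a Borel velocity field $\bw_t$ with $(\lambda,\bw)$ solving the continuity equation on $\Rd\times(\Delta^{n-1})^\circ$ (in the sense of Definition~\ref{def:ContEqN}), satisfying $\int_{\Rd\times\Delta^{n-1}}\|\bw_t\|^2_{\Tan}\,d\lambda_t = |\lambda'|^2(t) = W^2_{\Rd\times\Delta^{n-1}}(\lambda_0,\lambda_1)$ for a.e.\ $t$. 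The one subtlety here is that the geodesic may pass through the boundary of the simplex (Remark~\ref{geodesicintersectingtheboundary}), where the Riemannian metric degenerates; so Lisini's theorem must be applied either on the interior after the regularization below, or one argues that the boundary set is $\lambda_t$-negligible for a.e.\ $t$ along a geodesic of finite energy. I would sidestep this by first disintegrating: since $\mathfrak P(\lambda_0)=\bmu_0$, $\mathfrak P(\lambda_1)=\bmu_1$, the spatial marginals are $\pi_\Rd\#\lambda_0 = \bar\mu_0$, $\pi_\Rd\#\lambda_1=\bar\mu_1$ (Remark~\ref{propertiesofprojectionofliftedmeasures}).

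The main construction mimics Lemma~\ref{simplexgeodesicapproximationlem}(iii) and Corollary~\ref{cor:IneqConvex}, lifted to the space of measures. Fix interior points $s_0, s_1 \in (\Delta^{n-1})^\circ$ chosen so that $\bp(s_0), \bp(s_1)$ have mass profiles matching $\bmu_0, \bmu_1$ in the following averaged sense — actually, the key is the mass-balance property (ii), so I would instead mollify \emph{along the simplex fibers}: for $a\in(0,1)$ set
\[
\lambda^a_t := (1-a)\,(\mathrm{id}_\Rd\times\gamma^{s_0,s_1}_t)_{\#}\,?\ \text{— }
\]
more carefully: write $\lambda_t$ via disintegration over $x$ as $\lambda_t = \int \delta_x\otimes (\lambda_t)_x\,d(\pi_\Rd\#\lambda_t)(x)$, and for each $x$ replace the fiber measure $(\lambda_t)_x\in\P(\Delta^{n-1})$ by $(1-a)(\lambda_t)_x + a\,\big((1-t)\delta_{s_0(x)} + t\,\delta_{s_1(x)}\big)$, where $s_0(x), s_1(x)\in(\Delta^{n-1})^\circ$ are chosen (measurably in $x$, using a measurable selection) so that property (ii) holds exactly — i.e.\ so that $\int p_i(s_0(x))\,d\bar\mu_0(x) = m_{i,0}$ and likewise at $t=1$. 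Concretely one can take $s_0(x) = \bp^{-1}\big([\tfrac{d\mu_{i,0}}{d\bar\mu_0}(x)]_i\big)$ pushed slightly into the interior, i.e.\ $s_0(x) = (1-\delta)\bp^{-1}(\cdots) + \delta\, e$ with $e$ the barycenter and $\delta=\delta(a)$ small, which preserves the projected masses to leading order; a final affine correction restores them exactly. Then define $\bw^a$ on the interior by the convex-combination formula of Lemma~\ref{lem:MollificationSimplex} applied fiberwise — the spatial component $w_1^a$ stays essentially $w_1$ reweighted, and the simplex component $w_2^a$ is the mollified graph velocity — and check $(\lambda^a,\bw^a)$ solves the continuity equation on $\Rd\times(\Delta^{n-1})^\circ$ by testing against $\eta\in C^\infty_c$.

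The four conclusions then follow: (i) since each fiber measure now puts mass $\geq a c$ near every vertex (with $c$ from Corollary~\ref{cor:IneqConvex}), $\supp\lambda^a_t\subseteq\Omega\times\Delta^{n-1}_{aC}$ with $C=C_{m_0,m_1,\G}$ depending on $\min_i m_{i,0},\min_i m_{i,1}$, and $\G$; (ii) holds by the choice of $s_0(x),s_1(x)$; (iii) follows from the pointwise kinetic-energy bound $\|\bw^a_t\|^2_{\Tan}\,d\lambda^a_t \le (1-a)\|\bw_t\|^2_{\Tan}\,d\lambda_t + a\,(\text{bounded correction})$ — this is the measure-valued analogue of \eqref{eqn:LengthsConvexComb} and \eqref{eq:AuxRegularized1}, integrated in $t$, giving $W^2_{\Rd\times\Delta^{n-1}}(\lambda_0,\lambda_1) + aC$; and (iv) follows because $\lambda^a_0 = (1-a)\lambda_0 + a\,(\mathrm{id}\times s_0)_\#\bar\mu_0 \to \lambda_0$ narrowly with uniformly bounded second moments (the extra term is supported on the bounded set $\Omega\times\Delta^{n-1}$), hence $W_{\Rd\times\Delta^{n-1}}(\lambda_0,\lambda^a_0)\to 0$ by lower semicontinuity and narrow convergence plus moment control, and likewise at $t=1$.

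\textbf{Main obstacle.} The hardest point is making the fiberwise mollification genuinely produce a \emph{measurable} velocity field $\bw^a$ satisfying the continuity equation on the whole space — one must lift the graph-level construction of Lemma~\ref{lem:MollificationSimplex} (which is pointwise in the simplex variable) to a construction that is jointly measurable in $(x,r,t)$ and integrable against $\lambda^a$, and verify the distributional identity \eqref{ctyupstairs} survives the convex combination. I expect this to require a disintegration argument combined with the measurable-selection statement in Lemma~\ref{simplexgeodesicapproximationlem}(ii), together with a Fubini argument to transfer the graph continuity equation in the $r$-variable and the classical continuity equation in the $x$-variable into the single equation on the product. The degeneracy of the Riemannian metric at $\partial\Delta^{n-1}$ is handled precisely because $\lambda^a_t$ is supported in $\Delta^{n-1}_{aC}$, where the metric is uniformly nondegenerate, so all quantities are finite and the continuity equation makes classical sense.
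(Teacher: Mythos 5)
Your overall flavor (mix toward interior data determined by the mass vectors $m_0,m_1$, exploit convexity of the action, invoke Lisini) is in the right direction, but the core construction has a genuine gap: you regularize by taking convex combinations of the \emph{time-slice measures} fiberwise, i.e.\ you replace $(\lambda_t)_x$ by $(1-a)(\lambda_t)_x + a\big((1-t)\delta_{s_0(x)}+t\,\delta_{s_1(x)}\big)$. A mixture of measures does not move the support: if $(\lambda_t)_x$ charges a point of $\partial\Delta^{n-1}$, so does the mixture, so your claim (i) that $\supp\lambda^a_t\subseteq\Omega\times\Delta^{n-1}_{aC}$ is false — having \emph{some} mass near the vertices is not the same as having \emph{no} mass near the boundary, and (i) is exactly what makes the interior continuity equation and the nondegenerate Riemannian structure usable. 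Moreover, the added component is a teleportation-type interpolation: at fixed $x$ the curve $t\mapsto(1-t)\delta_{s_0(x)}+t\,\delta_{s_1(x)}$ in $\P(\Delta^{n-1})$ satisfies $W(\cdot_t,\cdot_s)\sim\sqrt{|t-s|}\,d_{\Delta^{n-1}}(s_0(x),s_1(x))$, so it is not an absolutely continuous curve with square-integrable metric derivative and cannot be represented by the sourceless continuity equation of Definition \ref{def:ContEqN} with an $L^2$ velocity. Hence the convexity argument you invoke (the measure-level analogue of \eqref{eqn:LengthsConvexComb}) has nothing to apply to, and (iii) is unproved. The graph-level trick of Corollary \ref{cor:IneqConvex} works because there the "mixture" is a convex combination of points of $\P(\G)$ (a finite-dimensional simplex) realized with finite action via the graph Poisson equation; one level up, in $\P(\Delta^{n-1})$, the analogous mixture of measures is not of this type.

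The fix is to perturb at the level of the \emph{ground space}, not the time slices: take an optimal plan $\Gamma$ for \eqref{eqn:Kantorovich}, the measurable geodesic selection of Lemma \ref{simplexgeodesicapproximationlem}, and set $\lambda^a_t:=g^a_t\#\Gamma$ where $g^a_t((x,r),(\tilde x,\tilde r))=\big((1-t)x+t\tilde x,\ (1-a)\gamma_{r,\tilde r}(t)+a((1-t)s_0+ts_1)\big)$ with the \emph{constant} choices $s_0=\bp^{-1}(m_0)$, $s_1=\bp^{-1}(m_1)$. Then (i) is immediate from \eqref{eq:AwayBoundary}, and (ii) follows by linearity of $r\mapsto\bp(r)$ without any measurable-selection-plus-affine-correction device (your $x$-dependent $s_0(x)$ is unnecessary and would complicate measurability). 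For (iii) you still cannot quote Lisini directly, because $\Rd\times(\Delta^{n-1})^\circ$ is not complete; the missing ingredient is to extend the Riemannian metric past $\partial\Delta^{n-1}$ (e.g.\ by interpolating with the Euclidean metric via a cutoff equal to one on $\Delta^{n-1}_{aC}$), observe that $t\mapsto\lambda^a_t$ is Lipschitz for the resulting complete Wasserstein structure with constant $\big(W^2_{\Rd\times\Delta^{n-1}}(\lambda_0,\lambda_1)+aC\big)^{1/2}$ thanks to \eqref{eq:AuxRegularized1}, and only then apply Lisini to produce $\bw^a$; since $\lambda^a_t$ lives in $\Rd\times\Delta^{n-1}_{aC}$, where the extended and original metrics agree, this yields a solution in the sense of Definition \ref{def:ContEqN} with the stated energy bound. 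Your argument for (iv) via narrow convergence plus moment control is workable, but it is simpler to couple $\lambda_0$ and $\lambda^a_0$ through $\Gamma$ and use the finite diameter of $(\Delta^{n-1},d_{\Delta^{n-1}})$ with dominated convergence.
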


\begin{proof} 
 Let $\Gamma$ be an optimal transport plan from $\lambda_0$ to $\lambda_1$, i.e., a minimizer of \eqref{eqn:Kantorovich} with $\lambda=\lambda_0 $ and $\tilde{\lambda} = \lambda_1$. By Lemma \ref{simplexgeodesicapproximationlem}, for any $r, \tilde{r} \in \Delta^{n-1}$, there exists a constant speed geodesic $\gamma_{r, \tilde{r}}:[0,1] \mapsto \Delta^{n-1}$ from $r$ to $\tilde{r}$, and the map $(r, \tilde{r}) \mapsto \gamma_{r, \tilde{r}}$ can be chosen to be Borel measurable. Thus,   $t \mapsto ((1-t)x + t \tilde{x}, \gamma_{r, \tilde{r}}(t))$ is a constant speed geodesic in $\Rd \times \Delta^{n-1}$ and
 the map 
 \[ g:(\Rd \times \Delta^{n-1}) \times (\Rd \times \Delta^{n-1}) \to AC([0,1]; \Rd \times \Delta^{n-1}): ((x,r), (\tilde{x},\tilde{r})) \mapsto ((1-t)x + t \tilde{x}, \gamma_{r, \tilde r }(t)) \]
  can be chosen to be measurable. As a consequence, $\lambda_t := g_t \# \Gamma$ is a constant speed geodesic from $\lambda_0$ to $\lambda_1$ with respect to $W_{\Rd \times \Delta^{n-1}}$ and  $\supp(\lambda_{t}^{a}) \subseteq \Omega \times \Delta^{n-1} $ for all $t\in [0,1]$ \cite[Theorem 9.13, Theorem 10.6]{ambrosio2021lectures}.
%
%

By Lemma \ref{simplexgeodesicapproximationlem} with $s_0 = \bp^{-1}(m_0)$, $s_1 = \bp^{-1}(m_1)$, there exists an approximation 
 \[ \gamma_{r, \tilde{r}}^a := (1-a) \gamma_{r,\tilde{r},t} + a((1-t) s_0 + t s_1)
\in AC([0,1];\Delta^{n-1}) \] of the geodesic $\gamma_{r, \tilde{r}}$ for which the map 
\[ g^{a}:(\Rd \times \Delta^{n-1}) \times (\Rd \times \Delta^{n-1}) \to AC([0,1]; \Rd \times \Delta^{n-1}): ((x,r), (\tilde{x},\tilde{r})) \mapsto ((1-t)x + t \tilde{x}, \gamma^{a}_{r, \tilde r }(t)) , \]
is likewise measurable with range contained in $\Rd \times \Delta_{aC}^{n-1}$ for $C =C_{m_0, m_1, \G}$ as above. 
Using this, we in turn define the measures 
\begin{equation}
\lambda_{t}^{a} :=g^{a}_{t} \# \Gamma, \quad t \in [0,1].
\end{equation}
which satisfy $\supp(\lambda_t^{a}) \subseteq \Omega \times \Delta_{aC}^{n-1}$, so part (\ref{4p2ininterior}) holds. 

Next, observe that, for $i =1, \dots, n-1$, the definition of $\gamma^a_{r_0,r_1}$ in equation (\ref{gammaa}) ensures
\begin{align*}
\mathfrak{P}(\lambda_t^{a})_i(\Rd) &=  \int_{\Rd \times \Delta^{n-1}} r_i d\lambda_t^{a}(x,r)\\
 & =\int_{\Rd \times \Delta^{n-1}} \int_{\Rd \times \Delta^{n-1}} \pi_{i}(\gamma_{r, \tilde{r}}^{a}(t)) d\Gamma((x,r),(\tilde x , \tilde r)) 
\\& =(1-a) \int_{\Rd \times \Delta^{n-1}} \int_{\Rd \times \Delta^{n-1}}  \pi_{i}(\gamma_{r,\tilde{r},t})d\Gamma((x,r),(\tilde x , \tilde r))   + a ((1-t)m_{i,0}+tm_{i,1}) . 
\end{align*}
Noting that, for  $t =0$, $ \pi_i(\gamma_{r,\tilde{r},t}) = r_i$ and for $t = 1$, $ \pi_i(\gamma_{r,\tilde{r},t}) = \tilde{r}_i$, we obtain 
\[ \mathfrak{P}(\lambda_0^{a})_i(\Rd)  = (1-a)\mathfrak{P}(\lambda_0)_i(\Rd)  +a m_{i,0}  =   (1-a)m_{i,0}  +a m_{i,0} = m_{i,0}  . \]
 and, likewise, $ \mathfrak{P}(\lambda_1^{a})_i(\Rd)  = m_{i,1}$. Thus, part (\ref{balancedmass}) holds.



%

We now provide an Eulerian description of the evolution $t   \mapsto \lambda_t^{a}$ by proving the existence of a velocity field $\bw^a$ for which $(\lambda^a, \bw^a)$ solves the continuity equation on $\Rd \times (\Delta^{n-1})^\circ$, as in Definition \ref{def:ContEqN}. For this, we use a result due to Lisini \cite{lisini2007characterization} characterizing absolutely continuous curves in the Wasserstein space of probability measures over a suitable \textit{complete} Riemannian manifold. However, we first need to adapt our setting slightly because the manifold is not necessarily complete.
We use a simple partition of unity to construct a smooth extension  of the Riemannian metric $\langle \cdot , \cdot  \rangle_{{\rm Tan}_{(x,r)}(\Rd \times \Delta^{n-1})}$ defined over the set $\Rd \times \Delta^{n-1}_{aC} $ to all of $\R^d \times \R^{n-1}$.

Let $\zeta_a: \R^{n-1} \rightarrow [0,1] $ be a smooth cutoff function that equals one in the set $\Delta_{a C}^{n-1} $ and   zero outside  $(\Delta^{n-1})^\circ$. For each $(x,r) \in \R^d \times \R^{n-1}$,   define the Riemannian metric on the tangent space ${\rm Tan}_{(x,r)} (\R^d \times \R^{n-1}) \cong \R^d \times \R^{n-1}$ by
\[ \langle \cdot , \cdot \rangle_{\Tan^a_{(x,r)}(\R^d \times \R^{n-1})} = \begin{cases} \zeta_a(r) \langle \cdot, \cdot \rangle_{{\rm Tan}_{(x,r)}(\Rd \times \Delta^{n-1})} + (1-\zeta_a(r)) \langle \cdot, \cdot \rangle_{\Rd\times\R^{n-1}}, &\text{ if } r \in (\Delta^{n-1})^\circ , \\ \langle \cdot, \cdot \rangle_{\Rd\times\R^{n-1}} &\text{ if } r \in \R^{n-1} \setminus((\Delta^{n-1})^\circ). \end{cases} \]
 Endowed with this metric, it is straightforward to see, by the Hopf-Rinow theorem and the equivalence of the induced topology with the usual Euclidean topology, that $\R^{d} \times \R^{n-1}$ is a complete smooth Riemmanian manifold; see, e.g., Proposition \ref{basicfactWG}(\ref{lem:ComparisonMetricsDiscrete}).

Denote by $d_a$ the geodesic distance on $\R^d \times \R^{n-1}$ induced by the above metric and let $W_a$ be the corresponding $2$-Wasserstein distance on $\P(\R^d \times \R^{n-1})$.  By \cite[equation (2.4), Corollary 3.8, Proposition 3.18]{burtscher2015length} and the fact that $g^a((x,r),(\tilde{x},\tilde{r})) \in AC([0,1]; \Rd \times \Delta^{n-1})$, for almost every $t \in [0,1]$,
\begin{align*} |{{g}^{a}}((x,r), (\tilde x , \tilde r))'|_{(\Rd \times \Delta^{n-1})}(t) &= \left\| \frac{d}{dt} {g}^{a}_t((x,r), (\tilde x , \tilde r)) \right\|_{{\rm Tan}_{ {{g}^{a}}((x,r), (\tilde x , \tilde r)}( \Rd \times \Delta^{n-1})} \\
&\geq \left\| \frac{d}{dt} {g}^{a}_t((x,r), (\tilde x , \tilde r)) \right\|_{{\rm Tan}^a_{ {{g}^{a}}((x,r), (\tilde x , \tilde r)} (\Rd \times \R^{n-1})} \\
&=  |{{g}^{a}}((x,r), (\tilde x , \tilde r))'|_{d_a}(t) . \end{align*}

%
Thus, 
\begin{align*}
d_a^2\left(g_s^{a}((x,r), (\tilde x , \tilde r)), g_t^{a}((x,r), (\tilde x , \tilde r)) \right) &\leq \left( \int_s^t |{{g}^{a}}((x,r), (\tilde x , \tilde r))'|_{d_a}(\tau) d \tau \right)^2 \\
&\leq |t-s| \int_s^t |{{g}^{a}}((x,r), (\tilde x , \tilde r))'|^2_{(\Rd \times \Delta^{n-1})}(\tau)  d \tau \\
&= |t-s| \int_s^t |x- \tilde{x}|^2 + |({\gamma}^a_{r, \tilde{r}})' |^2_{\Delta^{n-1}}(\tau) d \tau \\
&\leq |t-s|^2\left( |x-\tilde{x}|^2 +  (1-a) d_{\Delta^{n-1}}^2(r ,  \tilde{r})    + aC \right) ,
\end{align*}
where 
 the last inequality follows from inequality (\ref{eq:AuxRegularized1}). Therefore,
\begin{align*}
W_{a}^2(\lambda_s^a, \lambda_t^a ) & \leq \int_{\R^d \times \R^{n-1}}  \int_{\R^d \times \R^{n-1}}  d_a^2((x,r), (\tilde x , \tilde r)) d(g_s^{a},g_t^{a})\#\Gamma)((x,r),(\tilde x , \tilde r))
\\& =\int_{\R^d \times \R^{n-1}} \int_{\R^d \times \R^{n-1}} d_a^2\left(g_s^{a}((x,r), (\tilde x , \tilde r)), g_t^{a}((x,r), (\tilde x , \tilde r)) \right) d\Gamma((x,r), (\tilde x , \tilde r)) 
\\& = |t-s|^2 \int_{\R^d \times \R^{n-1}} \int_{\R^d \times \R^{n-1}}  |x-\tilde{x}|^2 +  (1-a) d_{\Delta^{n-1}}^2(r ,  \tilde{r})    + aC   \ d\Gamma((x,r), (\tilde x , \tilde r))
\\& \leq |t-s|^2(W_{\Rd \times \Delta^{n-1}}^2(\lambda_0, \lambda_1)+ aC ).
\end{align*}

  The above inequality implies that $t   \mapsto \lambda_t^a$ is  Lipschitz continuous  with respect to the Wasserstein metric $W_a$. Thus, we may apply   \cite[Theorem 7 and Remark 8]{lisini2007characterization} to conclude that there exists a time dependent vector field $\{ \bw_t^a\}_{t \in [0,1]}$ for which $ (\lambda^a,  \bw^a) $ solves the continuity equation in $\Rd \times \R^{n-1}$, with respect to the Riemannian structure defined above,  and  
\begin{align*}
   \int_0^1 \int_{\Rd \times \R^{n-1}} \lVert \bw^{a}(x,r,t) \rVert^2_{{\rm Tan}^a_{(x,r)}(\Rd \times \R^{n-1})} d \lambda_t^{a}(x,r) dt  & \leq  W_{\Rd \times \Delta^{n-1}}^2(\lambda_0, \lambda_1)+ aC.
\end{align*}
Since $\lambda_{t}^a$ is supported in $\Rd \times \Delta_{aC}^{n-1}$ for every $t \in [0,1]$, and since the metrics $\langle \cdot , \cdot \rangle_{\Tan^a_{(x,r)}(\Rd \times \R^{n-1})}$  and $\langle \cdot , \cdot \rangle_{\Tan_{(x,r)}(\Rd \times \Delta^{n-1})}$ coincide for $(x,r) \in \Rd \times \Delta_{aC}^{n-1}$, we may interpret the continuity equation as in Definition \ref{def:ContEqN} and in the above inequality the norm  can be simply taken to be $ \lVert\cdot  \rVert^2_{{\rm Tan}_{(x,r)}(\Rd \times \Delta^{n-1})}$. This shows part (\ref{thirdlisinipart}).



To prove part (\ref{Wconvergenceatozero}), note that
\begin{align*}
W_{\Rd \times \Delta^{n-1}}^2(\lambda_0, \lambda_0^{a}) & \leq \int_{\Rd \times \Delta^{n-1}} \int_{\Rd \times \Delta^{n-1}}d_{\Rd \times \Delta^{n-1}}^2((x,r),g^{a}_0((x,r),(\tilde x , \tilde r)) )d\Gamma((x,r),(\tilde x, \tilde r)) 
\\ & = \int_{\Rd \times \Delta^{n-1}} \int_{\Rd \times \Delta^{n-1}}d_{\Delta^{n-1}}^2(r,\gamma^a_{r, \tilde{r},0} )d\Gamma((x,r),(\tilde x, \tilde r)).
\end{align*}
By Proposition \ref{basicfactWG}, we know that $(\Delta^{n-1}, d_{\Delta^{n-1}})$ has finite diameter. Thus \eqref{eq:AwayBoundary} and the dominated convergence theorem ensures $\lambda_0^{a} \xrightarrow{a \to 0} \lambda_0$. The convergence of $\lambda_1^{a} \xrightarrow{a \to 0} \lambda_1$ is deduced analogously.

\end{proof}

%





\subsection{From continuity equation on lifted space to vector valued continuity equation}
\label{sec:UpstairsDownstairs}
A key ingredient in the proof of our main Theorem \ref{thm:Main} is the following proposition, which states that, given any solution of the continuity equation on $\R^d \times (\Delta^{n-1})^\circ$, the projection operator $\PP$ induces a solution to the vector valued continuity equation with smaller kinetic energy. See section \ref{Deltametricsection} for the definitions of the matrices $B(p)$ and $\Xi$.

\begin{prop}  \label{coveringspaceinduced}
Let $(\lambda, {\mathbf w})$ solve the continuity equation on $\Rd \times (\Delta^{n-1})^\circ$, for the time interval $[0,T]$, in the sense of Definition \ref{def:ContEqN}, and suppose that $\supp \lambda_t \subseteq \Omega \times \Delta^{n-1}$ for all $t \in [0,T]$. (For simplicity of notation, we suppress time dependance of all quantities throughout this proposition.)

Then there exists a measurable function $\psi: \Rd \times (\Delta^{n-1})^\circ \to \R^{n-1}$ so that
\begin{align} \label{psiDeltaequation} B(\mathbf{p}(r))\Xi \psi(x,r) = \Xi w_{2}(x,r) \text{ for } \lambda\text{-a.e } (x,r)\in \Omega \times \Delta^{n-1}. \end{align}
Likewise, there exist  measurable functions $\bu, \bv$, defined for $\pi_\Rd \# \lambda$-a.e. $x \in \Rd$ by
\begin{align*}
\bu(x) &= [u_j(x)]_{j=1}^n , \quad u_j(x) := \frac{\int_{\Delta^{n-1}} p_j(r) w_{1}(x,r) d\lambda_{x}(r)}{\int_{\Delta^{n-1}} p_{j}(r) d\lambda_{x}(r)} \\
 \bv(x) &= [v_{ij}(x)]_{i,j = 1}^n, \quad v_{ij}(x) = \frac{\int_{\Delta^{n-1}} \left(\nabla_{\G} \Xi \psi (x,r)\right)_{ij} \theta(p_{i}(r),p_{j}(r)) d\lambda_{x}(r)}{\theta \left( \int_{\Delta^{n-1}} p_{i}(r) d\lambda_{x}(r),\int_{\Delta^{n-1}} p_{j}(r) d\lambda_{x}(r) \right)}
 \end{align*}
 so that, defining $\brho = \PP \lambda \in C(0,T;\P(\Omega \times \G))$, we have that $(\brho,\bu,\bv)$ is a solution of the vector valued continuity equation on the time interval $[0,T]$, in the sense of Definition \ref{def:ContEqMultiSpecies}. Furthermore,
 \begin{equation} \label{inequalityupstairsdownstairs}
    \int_{\Rd \times \Delta^{n-1}} \| \bw(x,r) \|_{{\rm Tan}_{(x,r)} (\Rd \times \Delta^{n-1})}^2 \  d \lambda(x,r) \geq \| (\bu ,\bv )\|^2_{\brho}.
\end{equation}
  \end{prop}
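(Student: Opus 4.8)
The plan is to verify that the stated $(\brho, \bu, \bv)$ solves the vector valued continuity equation and then establish the energy inequality \eqref{inequalityupstairsdownstairs} by two applications of Jensen's inequality, one for the spatial part and one for the graph part.

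\textbf{Step 1: Construction of $\psi$.} First I would observe that, by Definition \ref{def:ContEqN}, $\bw = (w_1, w_2)$ with $w_2(x,r) \in \R^{n-1} \cong {\rm Tan}_r \Delta^{n-1}$, and the relevant tangent-space norm involves $\la w_2, w_2 \ra_{{\rm Tan}_r \Delta^{n-1}} = \la w_2, \Xi^t B(\bp(r))^\dagger \Xi w_2 \ra_{\R^{n-1}}$ by \eqref{metricDeltatoEuclidean}. Since $\Xi w_2(x,r) \in {\rm Ran}\, B(\bp(r))$ and $B(\bp(r))^\dagger$ is a bijection onto $\R^n / {\rm Ker}\, B(\bp(r))$, I can define $\psi(x,r) := \Xi^{-1} B(\bp(r))^\dagger \Xi w_2(x,r)$ (choosing the representative with zero coordinate sum), which is measurable in $(x,r)$ because $r \mapsto B(\bp(r))^\dagger$ is continuous on $(\Delta^{n-1})^\circ$ and $\bw$ is measurable; this gives \eqref{psiDeltaequation}. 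The point of rewriting $w_2$ via a potential $\Xi\psi$ is that $\nabla_\G \Xi \psi$ is then a genuine discrete gradient, so the graph-divergence structure in Definition \ref{def:ContEqMultiSpecies}(iv) is available.

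\textbf{Step 2: Verifying the vv continuity equation.} Next I would test the continuity equation \eqref{ctyupstairs} (equivalently the distributional form \eqref{dualCTYN}) against functions of the special form $\eta(x,r,t) = \phi(x,t) p_j(r)$, where $\phi \in C^\infty_c(\Rd \times [0,T])$ and $p_j$ is the $j$-th barycentric coordinate (which is smooth and bounded on $\Delta^{n-1}$, hence extends to an admissible test function after a cutoff — I would need a brief remark that since $\lambda_t$ is supported in $\Omega \times \Delta^{n-1}$ the unboundedness of the domain and the boundary of $\Delta^{n-1}$ cause no trouble, using $p_j$ multiplied by a spatial cutoff). Since $\nabla_{\Rd \times \Delta^{n-1}} \eta = (p_j(r)\nabla_x \phi,\ \phi(x,t)\nabla_{\Delta^{n-1}} p_j(r))$, the left side becomes $\frac{d}{dt}\int \phi\, p_j\, d\lambda_t = \frac{d}{dt}\int \phi \, d(\PP\lambda_t)_j = \frac{d}{dt}\int \phi\, d\rho_{j,t}$ by the very definition of $\PP$. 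On the right side, the $w_1$-term disintegrates against $\lambda_{t,x}$ and, using the definition of $u_j$ and \eqref{densitywrtbarmu}, collapses to $\int \nabla\phi \cdot u_{j,t}\, d\rho_{j,t}$. The $w_2$-term, after substituting $w_2 = \Xi^{-1}\nabla_{\R^{n-1}}(\text{stuff})$ and using \eqref{gradientdeltaformula} together with \eqref{psiDeltaequation}, becomes an integral of $\la \nabla_{\Delta^{n-1}} p_j, \nabla_{\Delta^{n-1}}(\Xi\psi\text{-potential})\ra$; rewriting the Riemannian inner product on $\Delta^{n-1}$ back in terms of the graph inner product \eqref{innerproductgraphdef} and recognizing $\sum_i \nabla_\G \phi$-type telescoping, this reduces exactly to the graph-divergence term $-\tfrac12\sum_i \phi\,\theta(p_i,p_j)(\cdot) q_{ij}$ appearing in \eqref{dualvvcty2}, with the definition of $v_{ij}$ chosen precisely so that the disintegrated $\theta$-weighted average matches. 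Checking that the integrability conditions (ii), (iii) of Definition \ref{def:ContEqMultiSpecies} hold will follow from the finiteness of the left-hand side of \eqref{inequalityupstairsdownstairs} via Cauchy--Schwarz, once Step 3 is in place.

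\textbf{Step 3: The energy inequality.} For the spatial part: for $\pi_\Rd\#\lambda$-a.e.\ $x$, $u_j(x)$ is the $\lambda_x$-average of $w_1(x,\cdot)$ weighted by the probability measure $p_j(r)\,d\lambda_x(r)/\int p_j\,d\lambda_x$, so Jensen gives $|u_j(x)|^2 \int p_j\,d\lambda_x \le \int p_j(r)|w_1(x,r)|^2 d\lambda_x(r)$; integrating against $\bar\mu = \pi_\Rd\#\lambda$ and summing over $j$ (using $\sum_j p_j(r) = 1$) yields $\sum_j \int |u_j|^2 d\rho_j \le \int |w_1(x,r)|^2 d\lambda(x,r)$. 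For the graph part: since $w_2(x,r) = \Xi^{-1}B(\bp(r))\Xi\psi(x,r)$ up to the identification, we have $\|w_2(x,r)\|^2_{{\rm Tan}_r\Delta^{n-1}} = \la \nabla_\G\Xi\psi(x,r), \nabla_\G\Xi\psi(x,r)\ra_{{\rm Tan}_{\bp(r)}\P(\G)} = \tfrac12\sum_{i,j}(\nabla_\G\Xi\psi)_{ij}^2\,\theta(p_i,p_j)q_{ij}$; meanwhile $v_{ij}(x)$ is the average of $(\nabla_\G\Xi\psi)_{ij}$ weighted by $\theta(p_i(r),p_j(r))d\lambda_x(r)$, normalized by $\theta(\int p_i\,d\lambda_x,\int p_j\,d\lambda_x)$. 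Here is where concavity of $\theta$ (A7) enters: by Jensen applied to the concave $\theta$, $\theta(\int p_i\,d\lambda_x, \int p_j\,d\lambda_x) \ge \int \theta(p_i(r),p_j(r))\,d\lambda_x(r)$, and then Jensen applied to the convex map $(m,w)\mapsto |w|^2/m$ (equivalently, Cauchy--Schwarz) gives $v_{ij}(x)^2 \,\theta(\textstyle\int p_i\,d\lambda_x,\int p_j\,d\lambda_x) \le \int (\nabla_\G\Xi\psi)_{ij}^2\,\theta(p_i(r),p_j(r))\,d\lambda_x(r)$. Multiplying by $q_{ij}$, summing over $i,j$, integrating against $\bar\mu$, and recalling $\theta(\rho_i,\rho_j) = \theta(\tfrac{d\rho_i}{d\bar\rho},\tfrac{d\rho_j}{d\bar\rho})d\bar\rho$ with $\tfrac{d\rho_j}{d\bar\mu}(x) = \int p_j\,d\lambda_x$ from \eqref{densitywrtbarmu}, the graph energy $\tfrac12\sum_{i,j}\int v_{ij}^2\,\theta(\rho_i,\rho_j)q_{ij}$ is bounded by $\int \|w_2(x,r)\|^2_{{\rm Tan}_r\Delta^{n-1}}\,d\lambda(x,r)$. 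Adding the two estimates gives \eqref{inequalityupstairsdownstairs}.

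\textbf{Main obstacle.} I expect the principal difficulty to lie in Step 2 — carefully pushing the distributional continuity equation through the projection, in particular handling the $w_2$ (graph-direction) term: one must correctly translate between the Euclidean gradient $\nabla_{\R^{n-1}}$, the Riemannian gradient $\nabla_{\Delta^{n-1}}$ via \eqref{gradientdeltaformula}, the graph gradient/divergence $\nabla_\G$, and the $\dagger$-inverse of the weighted Laplacian, and verify that testing against $\phi(x)p_j(r)$ produces exactly the $\theta$-weighted graph-divergence term of Definition \ref{def:ContEqMultiSpecies}(iv) — rather than, say, a term with the wrong symmetrization or a spurious boundary contribution. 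A secondary technical point is justifying the use of $p_j(r)$ (times a spatial cutoff) as an admissible test function, which requires the hypothesis $\supp\lambda_t\subseteq\Omega\times\Delta^{n-1}$ to control behavior near $\partial\Delta^{n-1}$ and at spatial infinity, plus a standard approximation/truncation argument. The Jensen estimates in Step 3 are comparatively routine once the right convex/concave functions are identified, with concavity of $\theta$ being the crucial structural input.
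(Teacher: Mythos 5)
Your proposal follows essentially the same route as the paper: define $\psi$ via $\Xi\psi = B(\bp(r))^\dagger \Xi w_2$ on the interior, test the lifted continuity equation against $\eta(x)p_j(r)$ and translate the $\Delta^{n-1}$-Riemannian term back into the graph inner product to recover the $\theta$-weighted mutation term, and then close the energy estimate by Jensen/Cauchy--Schwarz in each variable. Your two-step graph estimate (Cauchy--Schwarz plus concavity of $\theta$ to compare $\theta(\int p_i\,d\lambda_x,\int p_j\,d\lambda_x)$ with $\int\theta(p_i,p_j)\,d\lambda_x$) is the same content as the paper's single application of Jensen to the jointly convex function $\alpha$ of Lemma \ref{Klem}, so that part is fine.

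One small gap to repair: you propose to deduce the integrability conditions (ii)--(iii) of Definition \ref{def:ContEqMultiSpecies} from finiteness of the left-hand side of \eqref{inequalityupstairsdownstairs}, but Definition \ref{def:ContEqN} only assumes $\int_0^T\int \lVert \bw\rVert_{\Tan_{(x,r)}}\,d\lambda_t\,dt<+\infty$ (an $L^1$ bound), not finiteness of the quadratic energy; when that energy is infinite, \eqref{inequalityupstairsdownstairs} is vacuous but the conclusion that $(\brho,\bu,\bv)$ solves the vector valued continuity equation still has to be proved. The fix is what the paper does: bound $\int_0^T\int|u_{j,t}|\,d\rho_{j,t}\,dt$ by $\int_0^T\int p_j(r)|w_1|\,d\lambda_t\,dt$ and $\int_0^T\int|v_{ij,t}|\,\theta\bigl(\tfrac{d\rho_{i,t}}{d\bar\rho_t},\tfrac{d\rho_{j,t}}{d\bar\rho_t}\bigr)d\bar\rho_t\,dt$ by $\int_0^T\int|(\nabla_\G\Xi\psi)_{ij}|\,\theta(p_i(r),p_j(r))\,d\lambda_t\,dt$, both controlled directly by the $L^1$ hypothesis of Definition \ref{def:ContEqN}(ii) (the latter after relating $\lVert w_2\rVert_{\Tan_r\Delta^{n-1}}$ to $\langle\nabla_\G\Xi\psi,\nabla_\G\Xi\psi\rangle_{\Tan_{\bp(r)}\P(\G)}$), with no $L^2$ assumption needed.
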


\begin{proof}
First, since Definition \ref{def:ContEqN}(\ref{continuityNinterior}) ensures  $\supp (\lambda) \subseteq \Rd \times (\Delta^{n-1})^\circ$, we have $\bp(r) \in (\P(\G))^\circ$ for $\lambda$-a.e. $(x,r) \in \Omega \times \Delta^{n-1}$. Thus, as shown in section \ref{Deltametricsection}, there exists $\psi$ measurable satisfying equation (\ref{psiDeltaequation}).

In a similar way,  $\supp \lambda \subseteq \Rd \times (\Delta^{n-1})^\circ$ ensures $ \int_{\Delta^{n-1}} p_j(r) d \lambda_x(r) >0$ for $\pi_{\Rd}\# \lambda$-a.e. $x \in \Rd$ and all $j = 1, \dots, n$, so the denominators of $u_i$ and $v_{ij}$ are strictly positive $\pi_{\Rd}\# \lambda$-almost everywhere. The bound on $w_1$ from  Definition \ref{def:ContEqN} (\ref{continuityNgrowth}) ensures that the numerator of $u_j(x)$ is well-defined for $\pi_{\Rd}\# \lambda$-a.e. $x \in \Rd$. Likewise, the bound on $w_2$ from Definition \ref{def:ContEqN} (\ref{continuityNgrowth}) ensures, for $\pi_{\Rd}\# \lambda$-a.e. $x \in \Rd$,
\begin{align*} +\infty &> \int_{\Delta^{n-1}} \|w_2(x,r) \|_{{\rm Tan}_{r}(\Delta^{n-1})} d \lambda_x(r) = \int_{\Delta^{n-1}} \langle w_2(x,r), \Xi^t B(\bp(r))^\dagger \Xi w_2(x,r) \rangle  d \lambda_x(r)   \\
&= \int_{\Delta^{n-1}} \langle w_2(x,r), \Xi^t \Xi \psi(x,r) \rangle  d \lambda_x(r)  = \int_{\Delta^{n-1}} \langle B(\bp(r)) \Xi \psi(x,r),   \Xi \psi(x,r) \rangle  d \lambda_x(r) \\
&
= \int_{\Delta^{n-1}} \langle  \nabla_\G \Xi \psi(x,r),  \nabla_\G \Xi \psi(x,r) \rangle_{{\rm Tan}_{\bp(r)}(\P(\G))}  d \lambda_x(r) ,
\end{align*}
where in the last line we use the definition of $B$ in equation (\ref{Bpdef}), integration by parts on the graph, and the definition of the Riemannian metric on the graph in equation (\ref{innerproductgraphdef}). This ensures the numerator of $v_{ij}(x)$ is well-defined for $\pi_{\Rd} \# \lambda$-a.e. $x \in \Rd$.

Next, we show that $(\brho, \bu, \bv)$ is a solution of the vector valued continuity equation. Since $t \mapsto \lambda_t$ is narrowly continuous in $\P(\Rd \times \Delta^{n-1})$,   $t \mapsto \PP \lambda_t = \brho_t$ is narrowly continuous. Likewise, by equations (\ref{densitywrtbarmu1}-\ref{densitywrtbarmu}) and Definition \ref{def:ContEqN} (\ref{continuityNgrowth})
\begin{align*}
\int_0^T \int_\Rd |u_{i,t}(x) | d \rho_{i,t}(x) dt  & = \int_0^T \int_\Rd |u_{i,t}(x) | \frac{ d \rho_{i,t}}{d \bar{\rho}_t} (x) d \bar{\rho}_t(x) dt 
\\ &\leq \int_0^T \int_{\Rd \times \Delta^{n-1}} p_j(r) |w_{1}(x,r,t)| d \lambda_t(x,r) dt<+\infty,
\end{align*}
and
\begin{align*}
\int_0^T \int_\Rd |v_{ij,t}(x)| & \theta \left( \frac{ d \rho_{i,t}}{d \bar{\rho}_t }(x),  \frac{ d \rho_{j,t}}{d \bar{\rho}_t}(x) \right)  d \bar{\rho}_t(x) dt 
\\ &\leq \int_0^T \int_{\Rd \times \Delta^{n-1}} \left| \left(\nabla_{\G} \Xi \psi (x,r)\right)_{ij} \right| \theta(p_{i}(r),p_{j}(r)) d \lambda_t(x,r) dt <+\infty.
\end{align*}
Given $\eta \in C^\infty_c(\Rd)$, via the definition of the continuity equation on the lifted space, equation    (\ref{dualCTYN}),
\begin{align*}
\frac{d}{dt}  \int_\Rd \eta(x) d \rho_{j}(x) 
&  = \frac{d}{dt}    \int_{\Rd \times \Delta^{n-1}} p_j(r) \eta(x) d \lambda(x,r)    \\
&\quad =   \int_{\Rd \times \Delta^{n-1}} \la \nabla_{\Rd \times \Delta^{n-1}}( p_j(r) \eta (x)) , \bw(x,r) \ra_{{\rm Tan}_{(x,r)}(\Rd \times \Delta^{n-1})} d \lambda(x,r)    \\
&\quad = \underbrace{ \int_{\Rd \times \Delta^{n-1}}   p_j(r) \nabla_x \eta (x) \cdot  w_1(x,r)   d \lambda(x,r)}_{\text{(I)}} \\
&\qquad \qquad + \underbrace{ \int_{\Rd \times \Delta^{n-1}} \eta (x) \la  \nabla_{ \Delta^{n-1}} p_j(r),w_2(x,r) \ra_{{\rm Tan}_r \Delta^{n-1}}   d \lambda(x,r)  }_{\text{(II)}}.
\end{align*}
We begin by considering the first term. Using the above definition of $\bu$,  
 \begin{align*}
\text{(I)} &=   \int_{\Rd } \nabla_x \eta(x) \cdot \left(  \int_{\Delta^{n-1}}  p_j(r)    w_1(x,r)  d \lambda_x(r) \right) d \pi_{\Rd} \#\lambda(x)\\
  &=   \int_{\Rd }\nabla_x \eta(x) \cdot u_{j}(x)  \left(   \int_{\Delta^{n-1}} p_j(r)d\lambda_{x}(r) \right) d \bar{\rho}(x) 
    =  \int_{\Rd }  \nabla_x \eta(x) \cdot     u_j(x)   d \rho_j(x)  .
\end{align*}
As for the second term, let $f_j \in \R^{n}$ denote the standard basis vector, so that $ \nabla_{\R^{n-1}} p_j(r) =  \Xi^t f_j  $. By equations (\ref{metricDeltatoEuclidean}-\ref{gradientdeltaformula}),   the definition of $B$ in equation (\ref{Bpdef}), integration by parts on the graph, and the definition of the Riemannian metric on the graph in equation (\ref{innerproductgraphdef}),
\begin{align*}
 \la  \nabla_{ \Delta^{n-1}} p_j(r),w_2(x,r) \ra_{{\rm Tan}_r \Delta^{n-1}}  &=  \la   w_{2}(x,r),\nabla_{\R^{n-1}} p_j(r) \ra_{\R^{n-1}}  = \la   \Xi^{-1} \Xi w_{2}(x,r),  \Xi^t f_j \ra_{\R^{n-1}} \\
 &=  \la B(\mathbf{p}(r)) \Xi \psi(x,r),  f_j \ra_\Rn  = \la\nabla_\G \Xi \psi(x,r) , \nabla_\G f_j \ra_{{\rm Tan}_{\bp(r)} \P(\G)} \\
 &= \frac12 \sum_{i =1}^n \left( (\nabla_\G \Xi \psi(x,r))_{i,j} - (\nabla_\G \Xi \psi(x,r))_{j, i} \right) \theta(p_i(r),p_j(r)) q_{ij} .
\end{align*}
Thus,
\begin{align*}
\text{(II)} 
&= \frac12 \int_{\Rd  \times \Delta^{n-1}} \sum_{i =1}^n \left( (\nabla_\G \Xi \psi(x,r))_{i,j} - (\nabla_\G \Xi \psi(x,r))_{j, i} \right) \theta(p_i(r),p_j(r)) q_{ij}    \eta(x)  d \lambda(x,r)\\ 
&=   \int_{\Rd }   \int_{ \Delta^{n-1}} \sum_{i =1}^n   (\nabla_\G \Xi \psi(x,r))_{i,j}   \theta(p_i(r),p_j(r)) q_{ij}  d \lambda_x(r)  \eta(x)   d (\pi_\Rd \#\lambda)(x)\\ 
&=   \int_{\Rd }    \sum_{i =1}^n  v_{ij}(x) \theta \left( \frac{d \rho_i}{d\bar{\rho}}(x), \frac{d \rho_j}{d\bar{\rho}}(x) \right)q_{ij}  \eta(x)  d \bar{\rho}(x).
\end{align*}
It follows that $(\brho, \bu, \bv)$ is a distributional solution of the vector valued continuity equation.
 
It remains to prove inequality (\ref{inequalityupstairsdownstairs}).   
For $i =1, \dots, n$, H\"older's inequality ensures
 \begin{align*}
     \left| \int_{\Delta^{n-1}} p_i(r) w_{1}(x,r) d\lambda_{x}(r) \right|^{2} \leq \int_{\Delta^{n-1}} p_i(r) d\lambda_{x}(r) \int_{\Delta^{n-1}} p_i(r) |w_1(x,r)|^2 d\lambda_{x}(r). 
\end{align*}
Thus,
 \begin{equation*}
     |u_i(x)|^{2} \frac{d\rho_{i}}{d\bar{\rho}}(x) \leq   \int_{\Delta^{n-1}} p_i(r) |w_1(x,r)|^2 d\lambda_{x}(r). 
\end{equation*} 
Summing over $i$ and integrating with respect to $d \bar{\rho}(x)$, we conclude that 
\begin{align} \label{spatialpartkebound}
\sum_{i=1}^n \int_\Omega | u_i|^2   d \rho_i \leq \int_{\Rd \times \Delta^{n-1}} |w_1(x,r)|^2 d \lambda(x,r) . 
\end{align}
 On the other hand,  by Jensen's inequality applied to the convex function $\alpha$ defined in Lemma \ref{Klem},
\begin{align*}
    \frac{1}{2} &\int_{\Rd} \sum_{i,j=1}^{n} |v_{ij} |^{2}\theta \left( \frac{d\rho_{i}}{d\bar{\rho}},\frac{d\rho_{j}}{d\bar{\rho}}\right) q_{ij}d\bar{\rho} \\
        &=\frac{1}{2} \int_{\Rd} \sum_{i,j=1}^{n} \frac{  \left| \int_{\Delta^{n-1}}\left( \nabla_{\G} \Xi \psi(x,r)\right)_{ij} \theta(p_i(r),p_j(r)) d\lambda_{x}(r)\right|^{2}}{\theta \left(  \int_{\Delta^{n-1}} p_i(r) d \lambda_x(r) ,\int_{\Delta^{n-1}} p_i(r) d \lambda_x(r) \right)} q_{ij}d\bar{\rho}(x)\\
    &\leq \frac{1}{2} \int_{\Rd} \sum_{i,j=1}^{n} \int_{\Delta^{n-1}} \frac{ \left| \left(\nabla_{\G} \Xi \psi(x,r)\right)_{ij}\theta(p_i(r),p_j(r)) \right|^{2}}{\theta(p_i(r),p_j(r))} q_{ij} d\lambda_{x}(r) d\bar{\rho}(x)\end{align*}
    This is then equal to
    \begin{align*}
        &  \frac{1}{2} \int_{\Rd \times \Delta^{n-1}} \sum_{i,j=1}^{n}     \left| \left(\nabla_{\G} \Xi \psi(x,r)\right)_{ij}\right|^{2}\theta(p_i(r),p_j(r))   q_{ij} d\lambda(x,r)\\
        &\quad = \frac{1}{2} \int_{\Rd \times \Delta^{n-1}}  \la \nabla_{\G} \Xi \psi(x,r) , \nabla_{\G} \Xi \psi(x,r) \ra_{{\rm Tan}_{\bp(r)} \P(\G)}  d\lambda(x,r)\\
                &\quad = \frac{1}{2} \int_{\Rd \times \Delta^{n-1}}  \la B(\bp(r)) \Xi \psi(x,r) ,  \Xi \psi(x,r) \ra_{\Rn}   d\lambda(x,r)\\
                                &\quad = \frac{1}{2} \int_{\Rd \times \Delta^{n-1}}  \la  \Xi w_2(x,r) ,  B(\bp(r))^\dagger \Xi w_2(x,r)\ra_{\R^{n}}   d\lambda(x,r)\\
    &\quad= \int_{\Rd \times \Delta^{n-1}} \| w_{2}(x,r) \|_{{\rm Tan}_{r}\Delta^{n-1}}^{2} d\lambda(x,r).
\end{align*}    Combining the above with inequality (\ref{spatialpartkebound}) gives the result.

\end{proof}

%

\subsection{Proof of main theorem}
\label{sec:ProofMainTheorem}

We now turn to the proof of our main result, Theorem \ref{thm:Main}, which relates the three different metrics over vector valued measures $\bmu, \bnu \in \P_2(\Omega \times \G)$ induced by the graph geometry on $\G$.

\begin{proof}[Proof of Theorem \ref{thm:Main}]

The inequality $D_{\Omega \times \G}(\bmu,\bnu)  \leq   W_{2, \mathcal{W}}(\bmu,\bnu)$ is immediate from the fact that the canonical lifts of the vector valued measures $\bmu$ and $\bnu$ are particular choices of measures in $\P(\Rd \times \Delta^{n-1})$ that project down to  $\bmu$ and $\bnu$. To prove the other inequality, we consider two cases.

\textbf{Case 1:} First assume that $\bmu$ and $\bnu$  satisfy $\mu_{i}(\Omega), \nu_{i}(\Omega) >0$ for all $i =1 , \dots, n$. Let $\lambda_0,$ $\lambda_1 \in \P(\Rd \times \Delta^{n-1})$ satisfy $\mathfrak{P}(\lambda_0) =\bmu $ and $\mathfrak{P}(\lambda_1) =\bnu $. For $a\in (0,1)$ sufficiently small, consider the solution $(\lambda^a,\bw^a)$ of the continuity equation on $\Rd \times (\Delta^{n-1})^\circ$ from Proposition \ref{prop:RegularMeasures}. For $\brho^{a}= \mathfrak{P}(\lambda^{a}) \in C([0,T];\P(\Omega \times \G))$, let $(\bu^{a}, \bv^{a})$ be the spatial and graph velocities from Proposition \ref{coveringspaceinduced}. It then follows   that, for all $a\in (0,1)$, if  $\bmu^a = \mathfrak{P}(\lambda_0^a) = \brho^a_0$ and $\bnu^a = \mathfrak{P}(\lambda_1^a) = \brho^a_1$
\begin{align}
\begin{split}
    W_{\Omega \times \G}^2 (\bmu^{a} , \bnu^{a}) \leq \int_0^1 \lVert (\bu_t^a, \bv_t^a) \rVert_{\brho_t^a}^2 dt &\leq  \int_0^1 \int_{\Rd \times \Delta^{n-1}} \lVert \bw_t^{a} \rVert_{{\rm Tan} (\Rd \times \Delta^{n-1})}^2 d \lambda_t^{a} dt \\
    &  \leq  W_{\Rd \times \Delta^{n-1}}^2(\lambda_0, \lambda_1) + a C,
    \label{eq:AuxThm41}
    \end{split}
\end{align}
where and $C= C_{ {m}_0,  {m}_1,\G}$  only depends on $\min_i  \mu_{i}(\Rd) >0$, $ \min_i \nu_{i}(\Rd)>0$ and on the graph $\G$.

From  Proposition \ref{prop:RegularMeasures}(\ref{balancedmass}), considering $\bmu, \bnu \in \P(\Omega \times \G) \subseteq \P(\Rd \times \G)$, we know that
\begin{equation}  
\mu_i(\Rd ) =  \mu ^{a}_{i}(\Rd ) = m_{i,0}>0 \text{ and } \nu_i(\Rd) = \nu^a_i(\Rd) = m_{i,1}>0\text{ for all } i = 1 , \dots, n.
\label{eq:AuxSameMassType}
\end{equation}
Hence, for each $i$, the Benamou-Brenier formula for the classical $W_2$ metric on $\P_2(\Rd)$ ensures that there exist weak solutions of the continuity equation, flowing from $\mu_i/m_{i,0}$ to $\mu_i^a/m_{i,0}$ and from $\nu_i/m_{i,1}$ to $\nu_i^a/m_{i,1}$, with kinetic energy equal to the square of the classical Wasserstein distance between the probability measures \cite[Theorem 4.1.3]{figalli2023invitation}. These weak solutions of the continuity equation for each coordinate $i = 1, \dots, n$ induce  weak solutions of the vector valued continuity equation with no mutation between coordinates, i.e., $\bv \equiv 0$, for which we may control the action as follows:
\begin{align} \label{WOmegaRHS} W^2_{\Omega \times \G} (\bmu, \bmu^{a} ) \leq \sum_{i =1}^n m_{i,0} W_\Rd^2\left(\frac{\mu_i}{m_{i,0}},\frac{ \mu_i^{a}}{m_{i,0}} \right) \text{ and }
 W^2_{\Omega \times \G} (\bnu, \bnu^a ) \leq \sum_{i=1}^n m_{i,1}W_\Rd^2 \left( \frac{\nu_i }{m_{i,1}}, \frac{\nu_i^a}{m_{i,1}} \right).
 \end{align}

By Proposition \ref{prop:RegularMeasures}(\ref{Wconvergenceatozero}),    $\lambda_0^{a} \xrightarrow{a \to 0} \lambda_0$ in $W_{\Rd \times \Delta^{n-1}}$, hence narrowly and in second moments \cite[Remark 7.1.11]{ambrosiogiglisavare}. In turn, from the definition of the projection map $\mathfrak{P}$ and the fact that $\Delta^{n-1}$ has finite diameter, it is immediate that $\mu_{i}^{a}/m_{i,0}$ converges narrowly and in second moments toward $\mu_i/m_{i,0}$ and, similarly, $\nu_{i}^a/m_{i,1}$ to $\nu_i/m_{i,1}$.  Inequality (\ref{WOmegaRHS}) then ensures that
\[ \lim_{a \rightarrow 0} W_{\Omega \times \G}(\bmu,\bmu^{a}) =0 , \quad  \lim_{a \rightarrow 0} W_{\Omega \times \G}(\bnu,\bnu^{a}) =0. \]
From this, the triangle inequality, and \eqref{eq:AuxThm41} it follows that 
\[ W_{\Omega \times \G}(\bmu, \bnu) \leq W_{\Rd \times \Delta^{n-1}}(\lambda_0, \lambda_1). \]
Since $\lambda_0$ and $\lambda_1$ were arbitrary measures satsifying $\PP \lambda_0  = \bmu$,  $\PP \lambda_1 = \bnu$, we obtain $W_{\Omega \times \G}(\bmu,\bnu) \leq D_{\Rd \times \G}(\bmu,\bnu)$.

\ 

\textbf{Case 2:} Consider $\bmu, \bnu \in \P_2(\Omega \times \G)$  and let $\lambda_0, \lambda_1 \in \mathcal{P}_2(\Rd \times \Delta^{n-1})$ be such that $\mathfrak{P}(\lambda_0) = \bmu$ and $\mathfrak{P}(\lambda_1) = \bnu$, so in particular $\lambda_0$ and $\lambda_1$ are supported on $\Omega \times \Delta^{n-1}$. We introduce suitable approximations $\bmu^a, \bnu^a$ of $\bmu, \bnu$ with the property that $\bmu^a$ and $\bnu^a$ assign strictly positive mass to all coordinates $i =1, \dots, n$. We can then apply Case 1 to deduce the result.

For $a \in (0,1)$, consider the map $S^{a}: (\Rd \times \Delta^{n-1} )\to (\Rd \times \Delta^{n-1} )$   given by
\[  S^{a}(x, r) :=  ( x, r^a), \text{ where } r^a_i = (1-a)r_i + \frac{a}{n} \text{ for all } i = 1, \dots, n-1 .\]
 Define the measures
\[   \lambda^a_0:= S^a \#  \lambda_0, \quad  \lambda^a_1:= S^a \#\lambda_1,\]
which are likewise supported on $\Omega \times \Delta^{n-1}$ and define
\[ \bmu^a := \mathfrak{P}(\lambda_0^a) , \quad \bnu^a := \mathfrak{P}(\lambda_1^a) \in \P(\Omega \times \G) . \]

Let $\Gamma$ be a transport plan between $\lambda_0$ and $\lambda_1$ achieving the minimum in (\ref{eqn:Kantorovich}) and let 
\[ \Gamma^{a} := (S^a \times S^a) \# \Gamma, \]
so  $\pi^1 \# \Gamma^{a} = \lambda_0^{a}$ and $\pi^2 \# \Gamma^a = \lambda_1^{a}$. 
By construction, 
\begin{equation}
    \label{eq:SameMass}
\bar{\mu} = \pi_\Rd \# \lambda_0 = \pi_\Rd \# \lambda_0^a = \bar{\mu}^a  
\text{ and } \bar{\nu} = \pi_\Rd \# \lambda_1 = \pi_\Rd \# \lambda_1^a = \bar{\nu}^a,  \end{equation}
and, for any $\eta \in C_b(\Rd)$, 
\begin{align*}
 \int_\Rd \eta(x) d \mu_i^a(x) &= \int_{\Rd \times \Delta^{n-1}} \eta(x) p_i(r) d \lambda_0^a(x,r) \\
 &=\begin{cases}  \int_{\Rd \times \Delta^{n-1}} \eta(x) \left( (1-a) r_i + \frac{a}{n} \right) d \lambda_0(x,r) &\text{ for } i = 1 , \dots, n-1 , \\ 
 \int_{\Rd \times \Delta^{n-1}} \eta(x) \left( (1 -a) \left[1-\sum_{i=1}^{n-1}   r_i   \right] + \frac{a}{n}    \right) d \lambda_0(x,r) &\text{ for } i = n .
 \end{cases} 
 \end{align*}
 Thus, for all $i =1, \dots, n$,   the dominated convergence theorem ensures
 \begin{align} \label{narrowconvergenceasatozero} 
  \mu_i^a \xrightarrow{a \to 0} \mu_i \text{ and } \nu_i^a \xrightarrow{a \to 0} \nu_i \text{ narrowly }
  \end{align}
   and
\begin{align*}
\mu^a_i(\Rd) &=   (1-a) \mu_i(\Rd) + \frac{a}{n}  >0  \ , \quad \nu^a_i(\Rd)  =   (1-a) \nu_i(\Rd) + \frac{a}{n}  >0   .
\end{align*}

We now estimate $W_{\Omega \times \G} (\bmu, \bmu^{a} )$ by considering a candidate solution of the vector valued continuity equation from $\bmu$ to $\bmu^a$. Recall the measurable map $(r,\tilde{r})\mapsto \gamma_{r, \tilde{r}}$  from Lemma \ref{simplexgeodesicapproximationlem}, which sends pairs of points in $\Delta^{n-1}$ to a geodesic between them.
For $\bar{\mu}$-a.e. $x \in \Rd$ and $i=1, \dots, n$, let 
\[ r_0(x) = \bp^{-1} \left(\left[\frac{d \mu_i}{d \bar{\mu}}(x) \right]_{i=1}^n \right) \text{ and } r_0^1(x) = \bp^{-1} \left(\left[\frac{d \mu^a_i}{d \bar{\mu}}(x) \right]_{i=1}^n \right) , \]
and let $\beff_{t}(x) = \bp \left( \gamma_{r_0(x),r_0^a(x)}(t) \right)$. By definition, for $\bar{\mu}$-a.e. x, $t \mapsto \beff_t(x)$ is the  $W_\G$ geodesic from $ [\frac{d \mu_i}{d \bar{\mu}}(x)  ]_{i=1}^n \in \P(\G)$  to $ [\frac{d \mu_i^a}{d \bar{\mu}}(x)  ]_{i=1}^n \in \P(\G)$. Let $\bv_t(x) = [v_{ij,t}(x)]_{i,j=1}^n$ denote the   velocity field for which $(\beff(x),\bv(x))$ solves the graph continuity equation on $[0,1]$ for $\bar{\mu}$-a.e. $x \in \Rd$ (see Definition \ref{graphctydef}) and 
\[ \| \bv_t(x) \|_{{\rm Tan}_{\beff_t(x)}\P(\G)} = W_\G \left( \frac{ d \mu_i}{d \bar{\mu}}(x), \frac{ d \mu_i^a}{d \bar{\mu}}(x) \right)   \text{ for all } t \in [0,1], \text{ $\bar{\mu}$-a.e.} \, x\in \R^d  . \]
Such a $\bv$ exists by Proposition \ref{basicfactWG} (\ref{WGgeodesics}).
  The fact that $(\beff(x),\bv(x))$ satisfies the graph continuity equation and $\boldsymbol{f}_t(x)$ is measurable in $(x,t)$ ensures that, up to redefining $\bv_t(x)$ where $\check{\beff}_t(x)$ vanishes, we also have that $\bv_t(x)$ is measurable in $(x,t)$. Define $\brho \in C([0,1]; \P(\Omega \times \G))$ by $d\brho_t(x) = \beff_t(x) d \bar{\mu}(x)$, so $d\bar{\rho}_t = d \bar{\mu}$, $\ \forall t \in [0,1]$. Then, by Jensen's inequality and Remark \ref{interpfunorder},
\begin{align*}
& \frac12 \sum_{i,j = 1}^n q_{ij} \left( \int_0^1 \int_\Rd |v_{ij,t}(x)| \theta \left( \frac{ d \rho_{i,t}}{d \bar{\rho}_t }(x),  \frac{ d \rho_{j,t}}{d \bar{\rho}_t}(x) \right)  d \bar{\rho}_t(x) dt  \right)^2 \\
&\quad \leq \frac12 \sum_{i,j = 1}^n q_{ij} \int_0^1 \int_\Rd |v_{ij,t}(x)|^2 \theta \left( \frac{ d \rho_{i,t}}{d \bar{\rho}_t }(x),  \frac{ d \rho_{j,t}}{d \bar{\rho}_t}(x) \right)  d \bar{\rho}_t(x) dt  \\
&\quad =  \int_0^1 \int_\Rd \| \bv_t(x) \|^2_{{\rm Tan}_{\beff_t(x)}\P(\G)} d \bar{\mu}(x) dt \\
&\quad =   \int_\Rd W^2_\G \left( \frac{ d \mu_i}{d \bar{\mu}}(x), \frac{ d \mu_i^a}{d \bar{\mu}}(x) \right)  d \bar{\mu}(x).
 \end{align*}
 Thus, defining $\bu \equiv 0$ (i.e., there is no spatial transport), we see that $(\brho, \bu, \bv)$ is a solution of the vector valued continuity equation, in the sense of Definition \ref{def:ContEqMultiSpecies},   from $\brho_0 = \bmu$ to $\brho_1 = \bmu^a$ and
\begin{align*}
 W^2_{\Omega \times \G} (\bmu, \bmu^{a} ) \leq \int_0^1 \|(\bu_t,\bv_t)\|_{\brho_t}^2 dt & = \int_0^1 \int_\Rd \| \bv_t(x) \|^2_{{\rm Tan}_{\beff_t(x)}\P(\G)} d \bar{\mu}(x) dt 
 \\ & =
 \int_\Rd W^2_\G \left( \frac{ d \mu_i}{d \bar{\mu}}(x), \frac{ d \mu_i^a}{d \bar{\mu}}(x) \right)  d \bar{\mu}(x)  .
 \end{align*}
 
 By definition, for each $i =1, \dots, n$, $0 \leq  \frac{ d \mu_i^a}{d \bar{\mu}}(x) \leq 1$ $\bar{\mu}$-a.e., so $\left\{  \frac{ d \mu_i^a}{d \bar{\mu}}(x) \right\}_{a >0}$ is bounded in $L^2(\bar{\mu})$. Thus, up to a subsequence, as $a \to 0$, it converges in $L^1(\bar{\mu})$ and $\bar{\mu}$-almost everywhere.  By equation (\ref{narrowconvergenceasatozero}) and uniqueness of limits, we conclude that $ \frac{ d \mu_i^a}{d \bar{\mu}}(x)  \to  \frac{ d \mu_i}{d \bar{\mu}}(x) $ $\bar{\mu}$-almost everywhere. Therefore, by the fact that $W_\G$ has finite diameter and it induces an equivalent topology to the usual Euclidean distance (see Proposition \ref{basicfactWG}), the dominated convergence theorem applied to the above inequality ensures
 \begin{equation}
   \lim_{a \rightarrow 0} W_{\Omega \times \G}(\bmu, \bmu^a)=0, \quad \lim_{a \rightarrow 0} W_{\Omega \times \G}(\bnu, \bnu^a)=0. 
   \label{eqn:SameMass2}
\end{equation}

On the other hand, 
\begin{align*}
 W_{\Rd \times \Delta^{n-1}}^2(\lambda_0^a, \lambda_1^a) &\leq \int_{\Rd \times \Delta^{n-1}} d_{\Rd \times \Delta^{n-1}}^2((x,r),(\tilde x, \tilde r)) d\Gamma^a((x,r),(\tilde x , \tilde r)) , \\
 &=   \int d_{\Rd \times \Delta^{n-1}}^2(S^a(x,r),S^a(\tilde x, \tilde r)) d\Gamma((x,r),(\tilde x , \tilde r)). \end{align*}
Using  {Case 1} for the measures $\bmu^a, \bnu^a$, and the dominated convergence theorem, we deduce
\begin{align*}
\limsup_{a \rightarrow 0} W_{\Omega \times \G}^2(\bmu^a, \bnu^a) \leq    \limsup_{a \rightarrow 0} W_{\Rd \times \Delta^{n-1}}^2(\lambda_0^a, \lambda_1^a) & \leq   \int d_{\Rd \times \Delta^{n-1}}^2((x,r), (\tilde x , \tilde r)) d \Gamma((x,r),(\tilde x , \tilde r)) 
\\ & = W_{\R^d \times \Delta^{n-1}}^2(\lambda_0, \lambda_1).
\end{align*}
From the above, the triangle inequality, and \eqref{eqn:SameMass2} it follows that
\[  W_{\Omega \times \G}(\bmu, \bnu) \leq  W_{\R^d \times \Delta^{n-1}}(\lambda_0, \lambda_1).   \]
Finally, since the above is true for all $\lambda_0, \lambda_1$ whose projections are $\bmu, \bnu$, respectively, we deduce that $W_{\Omega \times \G}(\bmu,\bnu) \leq D_{\Rd \times \G}(\bmu,\bnu)$ for all $\bmu,\bnu \in \P_2(\Omega \times \G)$.
\end{proof}

\begin{rem}
 \label{rem:TypeTotal} 
 Note that, in the first step in the previous proof, Proposition \ref{prop:RegularMeasures} (\ref{balancedmass}) allowed us  to reduce the convergence of $\bmu^a$ toward $\bmu$ in $W_{\Omega \times \G}$ to proving the convergence of $\mu_i^a$ to $\mu_i$ in the classical Wasserstein metric on $\P_2(\Omega)$. The latter convergence, in turn, was a direct consequence of Proposition \ref{prop:RegularMeasures} (\ref{Wconvergenceatozero}).   
\end{rem}

\subsection{Proof of bi-H\"older equivalence}
\label{sec:TopoEquiv}

As a consequence of our main result, Theorem \ref{thm:Main}, which shows $W_{\Omega \times \G} \leq D_{\Rd \times \G} \leq W_{2, \mathcal{W}}$, we are able to obtain Corollary \ref{thm:EquivalenceTopologies}, which relates all four (semi)-metrics on $\P_2(\Omega\times \G)$ and, in the case that $\Omega \subseteq \Rd$ is convex and compact,  shows that all  are bi-H\"older equivalent to the bounded Lipschitz metric $d_{BL}$, thus induce the same topologies.

\begin{proof}[Proof of Corollary \ref{thm:EquivalenceTopologies}]
Inequality (\ref{inequalityallmetrics}) is an immediate consequence of  Proposition \ref{BLlem} and Theorem \ref{thm:Main}, so it remains to prove the second inequality.    Suppose $\Omega$ is bounded.
For every $\lambda_1, \lambda_2 \in \mathcal{P}(\Omega \times \Delta^{n-1}) \subseteq \mathcal{P}(\Rd \times \Delta^{n-1}) $ we have
\begin{align} \label{W1toW2} W_{\Rd \times \Delta^{n-1} }(\lambda_1, \lambda_2) \leq C_{\Omega, \Delta^{n-1}} \sqrt{W_{1, \Rd \times \Delta^{n-1}}(\lambda_1, \lambda_2)}, 
\end{align}
   where $W_{1, \Rd \times \Delta^{n-1}}$ is the 1-Wasserstein distance over $\mathcal{P}_1(\Rd \times \Delta^{n-1})$, defined in terms of the distance function $d_{\Rd \times \Delta^{n-1}}$. By Kantorovich-Rubinstein duality, we have
   \[   W_{1, \Rd \times \Delta^{n-1}}(\lambda_1, \lambda_2) = \sup_{f \text{ s.t. }\mathrm{Lip}_{\Rd \times \Delta^{n-1}}(f) \leq 1 }  \int f d (\lambda_1- \lambda_2) ,  \]
where we use $\mathrm{Lip}_{\Rd \times \Delta^{n-1}}(f)$ to denote the Lipschitz semi-norm of $f: \Rd \times \Delta^{n-1} \rightarrow \R$ with respect to the metric $d_{\Rd \times \Delta^{n-1}}$. Furthermore, note that modifying $f$ by addition of a constant does not change the value of the supremum in the above equation. Thus, the supremum can be taken to be over all $f$ with $\mathrm{Lip}_{\Rd \times \Delta^{n-1}}(f) \leq 1 $ and $\lVert f  \rVert_\infty \leq C_{\Omega, \Delta^{n-1}}$. For any such $f$, the maps $x \in \Omega \mapsto   f(x, e_i)$ satisfy $\|f(\cdot, e_i) \|_{W^{1,\infty}_2(\Rd)} \leq (1+ C_{\Omega,\Delta^{n-1}}^2)^{1/2}$, where $e_i$ denote the vertices of the simples for $i=1, \dots, n$. Thus, if $\lambda_\bmu$ and $\lambda_\bnu$ are the canonical liftings of $\bmu, \bnu \in \P(\Omega \times \G)$,
\begin{align*}
W_{1, \Rd \times \Delta^{n-1}}  (\lambda_{\bmu}, \lambda_{\bnu})  &= \sup_{f \text{ s.t. }\mathrm{Lip}_{\Rd \times \Delta^{n-1}}(f) \leq 1 } \int f d (\lambda_\bmu - \lambda_\bnu) \\
& = \sup_{f \text{ s.t. }\mathrm{Lip}_{\Rd \times \Delta^{n-1}}(f) \leq 1 } \sum_{i =1 }^n    \int_{\Omega} f(x, e_i) d (\mu_i-  \nu_i)(x)  \\
&\leq (1+ C_{\Omega,\Delta^{n-1}}^2)^{1/2} \sum_{i=1}^n  \|\mu_i-\nu_i\|_{BL} \\
&\leq \sqrt{n}  (1+ C_{\Omega,\Delta^{n-1}}^2)^{1/2}  d_{BL}(\bmu,\bnu) \end{align*}
where the bounded Lipschitz distance is as in equation (\ref{BLdef}). Since $W_{2, \mathcal{W}}(\bmu, \bnu) = W_{\Rd \times \Delta^{n-1}} (\lambda_\bmu, \lambda_{\bnu})$, combining this with inequality (\ref{W1toW2}) gives the result.
\end{proof}

As an immediate consequence, we are able to likewise complete the proof of our main theorem for the dynamic metric, Theorem \ref{maindynamictheorem}.

\begin{proof}[Proof of Theorem \ref{maindynamictheorem}]
By Corollary \ref{thm:EquivalenceTopologies},  for any  $\bmu,\bnu \in \P_2(\Omega \times \G)$, $W_{\Omega \times \G}(\bmu,\bnu)<+\infty$. Proposition \ref{almostdynamicmetric} then ensures that $W_{\Omega \times \G}$ is a metric, and Theorem \ref{existenceofminimizers} ensures minimizers exist.

\end{proof}

We conclude this section by discussing the implications of the bi-H\"older equivalence from the perspective of linearization. Indeed, we prove Proposition \ref{prop:LOT}, which states that, under Assumption \ref{Assumption:Uniqueness}, the topology induced by the linearized vector valued OT distance $d_{\mathrm{LOT}}$ defined in \eqref{def:LOT} is equivalent to the topologies induced by   the metrics $W_{\Omega \times \G}, D_{\R^d \times \G}, W_{2, \mathcal{W}}$.

\begin{proof}[Proof of Proposition \ref{prop:LOT}]
By Proposition \ref{basicfactWG}(\ref{lem:ComparisonMetricsDiscrete}), convergence in $ d_{\mathrm{LOT}}$ is equivalent to convergence in the metric 
\[ \tilde{d}(\bmu , \bnu):=  \left( \int_{\R^d \times \Delta^{n-1}} d_{\R^d \times \Delta^{n-1}}(T_{\bmu}(x,r), T_{\bnu}(x,r))^2 d\lambda_{\mathrm{ref}}(x,r) \right)^{1/2}.  \]
On the other hand, because $(T_{\bmu} \times  T_{\bnu}) \# \lambda_{\mathrm{ref}}$ 
is a coupling between $\lambda_{\bmu}$ and $\lambda_{\bnu}$, it follows that
\[ W_{2, \mathcal{W}}(\bmu, \bnu)=   W_{\R^d \times \Delta^{n-1}}(\lambda_{\bmu}, \lambda_{\bnu}) \leq \tilde{d}(\bmu, \bnu).   \]
From the above two facts it follows that convergence in $d_{\text{LOT}}$ implies convergence in $W_{2,\mathcal{W}}$. 

Conversely, suppose that 
\begin{equation}
0 = \lim_{n \rightarrow \infty} W_{2, \mathcal{W}}(\bmu_n , \bmu)  =  \lim_{n \rightarrow \infty} W_{\R^d \times \Delta^{n-1} } (\lambda_{n}, \lambda),
\label{eqn:AUxdLOT}
\end{equation}
where we use $\lambda_n$ and $\lambda$ for the canonical lifts of $\bmu_n$ and $\bmu$, respectively. Let $T_n$  be the optimal transport map between $\lambda_{\text{ref}}$ and $\lambda_n$, and let $T$ be the optimal transport map between $\lambda_{\text{ref}}$ and $\lambda$, following Assumption \ref{Assumption:Uniqueness}. We need to show that
\[ \lim_{n \rightarrow \infty} \int \lVert T_n(x,r) - T(x,r) \rVert^2 d\lambda_{\text{ref}}(x,r) = 0,\] 
i.e., that $T_n$ converges toward $T$ in $L^2(\R^{d + n-1} :\R^{d + n-1}; \lambda_{\text{ref}} )$, which in the remainder of this proof we abbreviate as $L^2$. Now, since 
\[   \int \lVert T_n(x,r)  \rVert^2d \lambda_{\text{ref}}(x,r) = \int \lVert (\tilde x , \tilde r)  \rVert^2d \lambda_{n}(\tilde x,\tilde r)       , \quad  \int \lVert T(x,r)  \rVert^2d \lambda_{\text{ref}}(x,r) = \int \lVert (\tilde x , \tilde r)  \rVert^2d \lambda(\tilde x,\tilde r),   \]
and given equality \eqref{eqn:AUxdLOT}, it follows that
\begin{equation}
 \lim_{n \rightarrow \infty} \int \lVert T_{n}(x,r) \rVert^2 d\lambda_{\text{ref}}(x,r)  = \int \lVert T(x,r) \rVert^2 d\lambda_{\text{ref}}(x,r).
 \label{eqn:AUxdLOT2}
\end{equation}
Due to this, it will suffice to prove that $T_n$ converges weakly in $L^2$ toward $T$. In turn, note that equation \eqref{eqn:AUxdLOT2} implies that $\{  T_n\}_{n \in \NN}$ is weakly precompact in $L^2$ and thus, without the loss of generality, can be assumed to converge weakly in $L^2$ toward some $\xi \in L^2$. Our goal then reduces to showing that this $\xi$ is equal to $T$. 

To see this, we start by observing that, by \cite[Proposition 7.1.3]{ambrosiogiglisavare}, $\Gamma_n:=   (\text{Id} \times T_n)_{\sharp } \lambda_{\text{ref}}$ converges weakly, up to subsequence, toward an optimal transport plan $\Gamma$ between $\lambda_{\text{ref}}$ and $\lambda$. Since this plan is, by Assumption \ref{Assumption:Uniqueness}, unique and takes the form $\Gamma= (\text{Id} \times T)_{\sharp } \lambda_{\text{ref}}$, we must have
\begin{align*}
\lim_{n \rightarrow \infty} \int \langle T_n(x,r) , \phi(x,r) \rangle d \lambda_{\text{ref}}(x,r) &= \lim_{n \rightarrow \infty} \int \langle (\tilde x , \tilde r) , \phi(x,r) \rangle d\pi_n((x,r), (\tilde x , \tilde r) )
\\&  = \int \langle (\tilde x , \tilde r) , \phi(x,r) \rangle d\pi((x,r), (\tilde x , \tilde r) )
\\& = \int \langle T(x,r) , \phi(x,r) \rangle d \lambda_{\text{ref}}(x,r),
\end{align*}
for all $\phi$ bounded and continuous. Due to the weak convergence in $L^2$ of $T_n$ toward $\xi$ and uniqueness of limits, we conclude $\xi = T$. 
\end{proof}



    

\color{black}

\subsection{Examples of equality and inequality.}
\label{sec:Examples}

We conclude with a series of examples in the case of a two node graph that will both illustrate  the sharpness of   inequality (\ref{inequalityallmetrics}) in Corollary \ref{thm:EquivalenceTopologies}, relating all four (semi)-metrics, and show  that $D_{\Rd \times \G}$ fails the triangle inequality.  Our examples rely on the following lemma, which provides  a candidate solution of the vector valued continuity equation, in the case of a two node graph.

\begin{lem}[A solution of the vector valued continuity equation on two node graph] \label{singleparticlemulti}
Suppose   $\theta$ satisfies Assumption \ref{interpolationassumption} and $q_{ij} \equiv q>0$. Let $\bu(x,t) = [u_1(x,t) ,u_2(x,t)]^t$ and  $\bv(x,t) = [ v_{ij}(x,t)]_{i,j = 1}^2$  for $v_{12}(x,t) = -v_{21}(x,t)= v(x,t)$, where all functions are  piecewise continuous on $\R \times [0,1]$.  Suppose that
\begin{align*} \brho(t) &=  \begin{bmatrix}
          r(t) \delta_{x_1(t)} \\
 	  (1-r(t)) \delta_{x_2(t)} \\
         \end{bmatrix}    ,  \begin{bmatrix}
         \dot{x}_1(t) \\
 	   \dot{x}_2(t)  \\
         \end{bmatrix} =  \begin{bmatrix}
         u_1(x_1(t), t)  \\
 	   u_2(x_2(t),t)  \\
         \end{bmatrix}    ,  \ \dot{r}(t) = \begin{cases} q  v(x_1(t), t) \theta(r(t),1-r(t)) &\text{if } x_1(t) = x_2(t) ,\\ 0 & \text{otherwise.} \end{cases}   \end{align*}
         where the ODEs are satisfied  in $\mathcal{D}(0,T)'  $.
      Finally, suppose either that $\theta$ vanishes at the boundary, that is, $\theta(s,0)= 0$ for all $s \geq0$, or suppose  that $x_1(t) \equiv x_2(t)$ for all $t \in [0,T]$.

Then  $(\brho, \bu,\bv)$ solves the vector valued continuity equation, in the sense of Definition \ref{def:ContEqMultiSpecies}, and
\[     \| (\bu(t),\bv(t)) )\|_{\brho(t)}^2  = |\dot{x}_1(t)|^2 r(t) + |\dot{x}_2(t)|^2(1-r(t)) + \frac{ |\dot{r}(t)|^2}{q  \theta(r(t), 1-r(t))} , \text{ for all } t \in (0,1). \]
\end{lem}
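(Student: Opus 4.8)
The plan is to verify the two claims in turn: first that the triple $(\brho,\bu,\bv)$ satisfies the weak formulation of the vector valued continuity equation in Definition \ref{def:ContEqMultiSpecies}, and second that the action functional evaluates to the stated expression. For the first part, I would fix $i \in \{1,2\}$ and $\eta \in C^\infty_c(\Rd \times [0,1])$ and compute $\frac{d}{dt}\int_\Rd \eta(x,t)\,d\rho_{i,t}(x) = \frac{d}{dt}\big( r(t)\eta(x_1(t),t)\big)$ (and analogously for $i=2$ with $1-r(t)$) using the chain rule, which is justified distributionally because $x_1, x_2, r$ solve their ODEs in $\mathcal D(0,T)'$ and are (piecewise) absolutely continuous. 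Expanding, $\frac{d}{dt}\big(r(t)\eta(x_1(t),t)\big) = \dot r(t)\eta(x_1(t),t) + r(t)\partial_t\eta(x_1(t),t) + r(t)\nabla\eta(x_1(t),t)\cdot \dot x_1(t)$. Substituting $\dot x_1(t) = u_1(x_1(t),t)$ identifies the spatial transport term, and substituting $\dot r(t)$ — which is $q\,v(x_1(t),t)\theta(r(t),1-r(t))$ when $x_1(t)=x_2(t)$ and $0$ otherwise — against the mutation term in \eqref{dualvvcty2}. Here one uses that $\bar\rho_t = r(t)\delta_{x_1(t)} + (1-r(t))\delta_{x_2(t)}$, so $\frac{d\rho_{1,t}}{d\bar\rho_t} = r(t)$ and $\frac{d\rho_{2,t}}{d\bar\rho_t} = 1-r(t)$ \emph{at the atoms}, provided $x_1(t)\ne x_2(t)$; when $x_1(t)=x_2(t)=:x(t)$ the two atoms merge and $\frac{d\rho_{1,t}}{d\bar\rho_t}(x(t)) = r(t)$, $\frac{d\rho_{2,t}}{d\bar\rho_t}(x(t)) = 1-r(t)$, so $\theta\big(\tfrac{d\rho_{1,t}}{d\bar\rho_t},\tfrac{d\rho_{2,t}}{d\bar\rho_t}\big)$ evaluated at $x(t)$ equals $\theta(r(t),1-r(t))$. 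The mutation term $\tfrac12\sum_j \int \eta\,\theta(\cdots)(v_{ij}-v_{ji})q_{ij}\,d\bar\rho_t$ then reduces, for the two-node graph with $v_{12}=-v_{21}=v$, to $\eta(x(t),t)\,\theta(r(t),1-r(t))\,v(x(t),t)\,q$ exactly when $x_1(t)=x_2(t)$ (cf. Remark \ref{twonoderemark}), matching $\dot r(t)$, and it vanishes when $x_1(t)\ne x_2(t)$ because then the product $\eta(x_1,t)\cdot[\text{stuff at }x_1]$ and $\eta(x_2,t)\cdot[\text{stuff at }x_2]$ cancel against each other in the two-component sum; this is precisely where the hypothesis ``$\theta$ vanishes at the boundary or $x_1\equiv x_2$'' is needed, since if $x_1(t)\ne x_2(t)$ then at the atom $x_1(t)$ we have $\frac{d\rho_{2,t}}{d\bar\rho_t}(x_1(t)) = 0$, so $\theta(r(t),0) = 0$ kills the spurious mutation contribution.

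Having established that $(\brho,\bu,\bv)$ is a weak solution, the integrability conditions (ii)–(iii) of Definition \ref{def:ContEqMultiSpecies} follow immediately from piecewise continuity of $\bu,\bv$ on the compact set $\R\times[0,1]$ together with the fact that each $\rho_{i,t}$ is a bounded measure, and continuity of $t\mapsto \brho_t$ in the narrow topology follows from continuity of $t\mapsto x_i(t)$ and $t\mapsto r(t)$.

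For the action computation, I would simply plug the explicit measures into the definition $\|(\bu,\bv)\|_{\brho}^2 = \sum_i \int_\Omega |u_i|^2\,d\rho_i + \tfrac12\sum_{i,j}\int_\Omega |v_{ij}|^2 \theta\big(\tfrac{d\rho_i}{d\bar\rho},\tfrac{d\rho_j}{d\bar\rho}\big)q_{ij}\,d\bar\rho$. The spatial part gives $|u_1(x_1(t),t)|^2 r(t) + |u_2(x_2(t),t)|^2(1-r(t)) = |\dot x_1(t)|^2 r(t) + |\dot x_2(t)|^2(1-r(t))$. For the graph part, when $x_1(t)\ne x_2(t)$ the interpolation weight $\theta$ vanishes at each atom (as above), so this term is zero and indeed $\dot r(t) = 0$, making the claimed formula $\tfrac{|\dot r(t)|^2}{q\theta(r(t),1-r(t))}$ consistent (interpreted as $0$, using the convention in Lemma \ref{Klem} that $\tfrac{\|x\|^2}{\theta} = 0$ when $\|x\|=\theta=0$); when $x_1(t)=x_2(t)=x(t)$, the graph part is $\tfrac12\big(|v(x(t),t)|^2 + |{-v(x(t),t)}|^2\big)\theta(r(t),1-r(t))\,q = |v(x(t),t)|^2\theta(r(t),1-r(t))q$, and substituting $v(x(t),t) = \dot r(t)/\big(q\,\theta(r(t),1-r(t))\big)$ yields $\tfrac{|\dot r(t)|^2}{q\,\theta(r(t),1-r(t))}$, as claimed.

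The main obstacle I anticipate is the careful bookkeeping of the Radon--Nikodym derivatives $\frac{d\rho_{i,t}}{d\bar\rho_t}$ when the two atoms collide or separate, i.e., making rigorous the case analysis on $\{t : x_1(t) = x_2(t)\}$ versus its complement and checking that the distributional identity holds across the (possibly non-isolated) transition times. This should be handled by noting that the set where $x_1 = x_2$ is closed, working with the piecewise-continuous structure, and observing that the ODE for $r$ is precisely designed so that $r$ is constant off this set; then one integrates the weak formulation over time and checks the two regimes separately, using the convention from Lemma \ref{Klem} to make sense of the degenerate term. A minor additional point is to confirm that no atom-splitting creates a jump in $\brho_t$ that would violate narrow continuity — but since $x_1, x_2, r$ are all continuous in $t$, $\brho_t$ is narrowly continuous regardless.
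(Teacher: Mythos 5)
Your proposal is correct and follows essentially the same route as the paper: compute $\frac{d}{dt}\bigl(m_i(t)\,\eta(x_i(t))\bigr)$ by the product rule, match the transport and mutation terms using the ODEs for $x_i$ and $r$, drop the indicator $1_{x_1=x_2}$ either because $\theta(\cdot,0)=0$ or because $x_1\equiv x_2$, and then plug into the two-node form of the action from Remark \ref{twonoderemark}. One small correction that does not affect your argument: when $x_1(t)\neq x_2(t)$ the Radon--Nikodym derivatives at the atoms are $\frac{d\rho_1}{d\bar{\rho}}(x_1(t))=1$, $\frac{d\rho_2}{d\bar{\rho}}(x_1(t))=0$ (and symmetrically at $x_2(t)$), not $r(t)$ and $1-r(t)$; your conclusion stands because you only use that one argument of $\theta$ vanishes there.
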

\begin{proof}
  For fixed  $\eta \in C_{c}^{\infty}(\mathbb{R}^{d}),$ we prove that     equation (\ref{dualvvcty}) holds  in $\mathcal{D}(0,T)' $. Let $m_1(t) = r(t)$ and $m_2(t) = 1-r(t)$. Then, for $j \neq i$, \begin{align*}
     &\frac{d}{dt} \int_{\mathbb{R}} \eta(x)d\rho_{i,t}(x) = \frac{d}{dt}(m_i(t)\eta(x_{i}(t))) = m_i(t)\nabla \eta(x_{i}(t)) \cdot \dot{x}_{i}(t) + \dot{m}_i(t)\eta(x_{i}(t)) \\     
    &= \int_{\mathbb{R}^{d}} \nabla \eta(x) \cdot u_{i}(x,t)d\rho_{i,t}(x) +  1_{x_1(t) = x_2(t)}(t) \int_{\mathbb{R}^{d}} \eta(x)\theta \left( \frac{d\rho_{i}}{d\bar{\rho}}(x),\frac{d\rho_{j}}{d\bar{\rho}}(x) \right)  v_{ij}(x,t) q d\bar{\rho}(x) .
    \end{align*}
If $\theta$ vanishes at the boundary, then when $x_1(t) \neq x_2(t)$, $\theta \left( \frac{d\rho_{i}}{d\bar{\rho}}(x),\frac{d\rho_{j}}{d\bar{\rho}}(x) \right) = 0$ for $\bar{\rho}$-a.e. x. Thus, we may remove the characteristic function $1_{x_1(t) = x_2(t)}(t)$ from the second term and the equality remains unchanged. On the other hand, if $\theta$ does not vanish at the boundary, then $1_{x_1(t) = x_2(t)}(t) \equiv 1$, and we may again remove it. 
By Remark \ref{twonoderemark}, this shows that $(\brho, \bu,\bv)$ solves the vector valued continuity equation and the energy has the given form.  

\end{proof}
 
We next recall the explicit formula for the distance on the simplex $\Delta^{n-1}$ in the two node case, $\Delta^{1} = [0,1]$, which is an immediate consequence of Maas's   formula for the graph Wasserstein distance in the two node case \cite[Theorem 2.4]{maas2011gradient}.  We defer the proof to appendix \ref{realappendix}.
\begin{prop} \label{explicitdistance2node}
Suppose $\G$ is a two node graph with $q_{ij} \equiv 1$ and   $\theta$ satisfies   Assumption  \ref{interpolationassumption}. Then, for any $0 \leq a_0 \leq a_1 \leq 1$, 
\[d_{[0,1]}(a_0,a_1)   = \frac{1}{\sqrt{q}}  \int_{a_0}^{a_1} \frac{1}{\sqrt{ \theta(a,1-a)}} da. \]
\end{prop}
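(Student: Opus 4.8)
The plan is to reduce the statement to Maas's explicit description of $W_{\mathcal{G}}$ on a two-node graph and then unwind the definition of $d_{[0,1]}$. Recall that for a two-node graph with $q_{12}=q_{21}=q$, Maas \cite[Theorem 2.4]{maas2011gradient} shows that the graph Wasserstein distance between $p=(p_1,p_2)$ and $\tilde p=(\tilde p_1,\tilde p_2)$ in $\mathcal{P}(\mathcal{G})$ has the closed form
\[
W_{\mathcal{G}}(p,\tilde p) \;=\; \frac{1}{\sqrt q}\int_{p_1\wedge \tilde p_1}^{p_1\vee \tilde p_1}\frac{da}{\sqrt{\theta(a,1-a)}}\,,
\]
which follows because in the two-node case the only nontrivial edge is $\{1,2\}$, so the graph continuity equation reduces to a one-dimensional ODE $\dot p_1 = -q\,\theta(p_1,1-p_1)\,v$ and the action $\int_0^1 \|v_t\|^2_{\mathrm{Tan}}\,dt = \tfrac q2\int_0^1 v_t^2\big(\theta(p_{1,t},p_{2,t})+\theta(p_{2,t},p_{1,t})\big)\,dt = q\int_0^1 v_t^2\,\theta(p_{1,t},1-p_{1,t})\,dt$ is minimized, by the usual reparametrization/Cauchy–Schwarz argument, precisely by the constant-speed parametrization of the scalar curve, yielding the displayed integral. (I would cite Maas directly rather than reprove this, since the excerpt tells us to assume earlier results and points to the appendix for the proof.)

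With this in hand the argument is a one-line translation through the bijection $\mathbf p$ from \eqref{Idef}. For $n=2$ we have $\Delta^{n-1}=\Delta^1=[0,1]$, and $\mathbf p:[0,1]\to\mathcal{P}(\mathcal{G})$ sends $a\mapsto (a,1-a)$; by definition $d_{[0,1]}(a_0,a_1) = W_{\mathcal{G}}(\mathbf p(a_0),\mathbf p(a_1)) = W_{\mathcal{G}}((a_0,1-a_0),(a_1,1-a_1))$. Substituting $p_1=a_0$, $\tilde p_1=a_1$ into Maas's formula, and using the hypothesis $0\le a_0\le a_1\le 1$ so that $a_0\wedge a_1=a_0$ and $a_0\vee a_1=a_1$, gives immediately
\[
d_{[0,1]}(a_0,a_1) \;=\; \frac{1}{\sqrt q}\int_{a_0}^{a_1}\frac{da}{\sqrt{\theta(a,1-a)}}\,,
\]
which is exactly the claimed identity. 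One should note that the integrand is well defined: by Assumption \ref{interpolationassumption}\eqref{thetapositivity}, $\theta(a,1-a)>0$ on $(0,1)$, and the only possible singularities at the endpoints $a=0,1$ are integrable — indeed finiteness of the integral is guaranteed a posteriori by Proposition \ref{basicfactWG}\eqref{lem:ComparisonMetricsDiscrete}, which tells us $W_{\mathcal{G}}$ (hence $d_{[0,1]}$) is everywhere finite.

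The only genuine content here is Maas's two-node formula itself, so the "main obstacle" is really just invoking it correctly, being careful that the normalization conventions for $q$ and for the inner product $\langle\cdot,\cdot\rangle_{\mathrm{Tan}_p\mathcal{P}(\mathcal{G})}$ in \eqref{innerproductgraphdef} (which carries the factor $\tfrac12$ and the weight $q_{ij}$) match those under which Maas states his theorem; a quick check of the action computation above confirms the factor of $q$ (not $q/2$ or $2q$) is correct, since the sum over the ordered pair $(1,2)$ and $(2,1)$ in $\tfrac12\sum_{i,j}$ restores a single factor of $q$. Beyond that bookkeeping, the proof is immediate, which is presumably why it is deferred to the appendix.
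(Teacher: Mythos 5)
Your proposal is correct and follows essentially the same route as the paper: identify $d_{[0,1]}(a_0,a_1)$ with $W_\G\bigl((a_0,1-a_0),(a_1,1-a_1)\bigr)$ via the isometry $\bp$ and invoke Maas's two-node formula \cite[Theorem 2.4]{maas2011gradient}. The only difference is bookkeeping: the paper quotes Maas's formula in its original variables, $\frac{1}{\sqrt{2q}}\int_{1-2a_1}^{1-2a_0}\theta(1-r,1+r)^{-1/2}\,dr$, and then performs the change of variables together with the positive homogeneity (A\ref{thetapositivehomogenity}) of $\theta$ to reach the stated form, whereas you quote the already-converted formula and justify it by a short (correct) re-derivation of the two-node action, which amounts to the same thing.
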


With these facts in hand, we now turn to our main example. 

\begin{prop}[Examples of inequality and equality] \label{examplesprop} Consider a two node graph $\G$ with  $q_{ij}\equiv 1$ and suppose $\theta$ satisfies Assumption  \ref{interpolationassumption}. In addition, suppose that $\theta$ vanishes at the boundary, that is,  $\theta(a,0) = 0$ for all $a \geq 0$. For $a \in (0,1)$ and $b \in [1/2,1]$ consider the   vector valued measures in $\P(\R \times \G)$,
\begin{align*}
\bmu^1 =  \begin{bmatrix}
          (1/2) \delta_0 \\
 	 (1/2) \delta_0 \\
         \end{bmatrix} \ , \quad \bmu^2 =  \begin{bmatrix}
          (1/2) \delta_{-a} \\
 	 (1/2) \delta_{a} \\
         \end{bmatrix}  \ , \quad \bmu^3 =  \begin{bmatrix}
          b \delta_{0} \\
 	 (1-b) \delta_{0} \\
         \end{bmatrix}.
\end{align*}
Then,
\begin{align*} W_{\R \times \G}(\bmu^1, \bmu^2) = a \ , \quad  W_{\R \times \G}(\bmu^1, \bmu^3)  =   d_{[0,1]}\left(1/2,b \right) , \quad W_{\R \times \G}(\bmu^2, \bmu^3) \leq  a+  d_{[0,1]}\left(1/2 ,b \right)   \ , \\
D_{\R \times \G}(\bmu^1, \bmu^2) = a \ , \quad  D_{\R \times \G}(\bmu^1, \bmu^3)  =d_{[0,1]}\left(1/2 ,b \right) , \quad D_{\R \times \G}(\bmu^2, \bmu^3)= \sqrt{a^2 + \frac12 d_{[0,1]}^2 ( 0, 2b-1  ) } . 
\end{align*}

In addition, for $b$ sufficiently close to $\frac12$ and $a$ sufficiently close to 0,
\begin{align} W_{\R \times \G}(\bmu^2, \bmu^3)  < D_{\R \times \G}(\bmu^2, \bmu^3) \label{strictinequalityman}\end{align}
and
\begin{align} \label{notriangleineq} D_{\R \times \G}(\bmu^2, \bmu^3) > D_{\R \times \G}(\bmu^1, \bmu^2) + D_{\R \times \G}(\bmu^1, \bmu^3) .
\end{align}
\end{prop}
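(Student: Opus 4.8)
The strategy is to compute all six (semi)distances by exploiting the two-node structure, the explicit simplex distance from Proposition \ref{explicitdistance2node}, and the single-particle solution from Lemma \ref{singleparticlemulti}, and then verify the strict inequalities \eqref{strictinequalityman} and \eqref{notriangleineq} by a limiting/Taylor analysis as $a \to 0$ and $b \to \tfrac12$.

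\textbf{Step 1: the dynamic distances.} For $W_{\R \times \G}(\bmu^1,\bmu^2)$: both measures have equal total mass $1/2$ in each coordinate and are supported on single points, so by Lemma \ref{singleparticlemulti} with $r(t) \equiv 1/2$, $\bv \equiv 0$, and straight-line spatial transport $x_1(t) = -at$, $x_2(t) = at$, one gets action $a^2$, hence $W_{\R\times\G}(\bmu^1,\bmu^2) \le a$. The matching lower bound follows from Proposition \ref{BLlem}(\ref{BLbarrhovsdynamic}): $\bar\mu^1 = \delta_0$, $\bar\mu^2 = \tfrac12\delta_{-a} + \tfrac12 \delta_a$, and $\|\bar\mu^1 - \bar\mu^2\|_{BL} = a$ (test against $\eta(x) = -|x|$, suitably truncated/mollified; more carefully, $W_1(\bar\mu^1,\bar\mu^2) = a$ and $d_{BL} \le W_1$, but one wants the reverse — so instead observe that any admissible curve must move the barycenter mass distance $\ge a$ on average, i.e. use $W_1(\bar\mu^1,\bar\mu^2) \le W_{\R\times\G}(\bmu^1,\bmu^2)$ which follows from Lemma \ref{energyestimate}(\ref{rhobarequation}) exactly as in the proof of Proposition \ref{BLlem}(\ref{BLbarrhovsdynamic}) but with the $BL$ norm upgraded to $W_1$ since $\bar\rho$ solves a genuine continuity equation). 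For $W_{\R\times\G}(\bmu^1,\bmu^3)$: here $\bar\mu^1 = \bar\mu^3 = \delta_0$, so no spatial transport is needed; apply Lemma \ref{singleparticlemulti} with $x_1 \equiv x_2 \equiv 0$, $r(0) = 1/2$, $r(1) = b$, and $\bv$ the optimal graph velocity. The action is $\int_0^1 \frac{|\dot r(t)|^2}{\theta(r(t),1-r(t))}dt$, minimized at $d_{[0,1]}(1/2,b)^2$ by Proposition \ref{explicitdistance2node}; the lower bound is $W_\G$ restricted to a point mass, i.e. $W_{\R\times\G}(\bmu^1,\bmu^3) \ge d_{[0,1]}(1/2,b)$ follows because any admissible curve projects (Proposition \ref{coveringspaceinduced} is overkill here; directly, the mutation term alone contributes) to a graph continuity equation between $[\tfrac12,\tfrac12]$ and $[b,1-b]$. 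The bound $W_{\R\times\G}(\bmu^2,\bmu^3) \le a + d_{[0,1]}(1/2,b)$ is the triangle inequality.

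\textbf{Step 2: the static semidistances.} For $D_{\R\times\G}(\bmu^1,\bmu^2)$ and $D_{\R\times\G}(\bmu^1,\bmu^3)$: the upper bounds $D \le W_{2,\mathcal W}$ come from canonical lifts, and $W_{2,\mathcal W}$ equals the two stated values by direct computation in $\P_2(\R \times \Delta^1)$ (in the first case the optimal plan keeps the simplex coordinate fixed at $e_1$ resp. $e_2$; in the second, $W_{\R\times\Delta^1}$ between $\tfrac12\delta_{(0,e_1)} + \tfrac12\delta_{(0,e_2)}$ and $b\delta_{(0,e_1)} + (1-b)\delta_{(0,e_2)}$ is $d_{[0,1]}(1/2,b)$ since all mass sits at $x=0$ and one transports $(b-\tfrac12)$ units of simplex-mass from $e_2$ to $e_1$ — wait, one must check the optimal matching carefully, but it reduces to $W_\G$ on a singleton). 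The lower bounds $D \ge W_{\R\times\G}$ are Theorem \ref{thm:Main}, and since $W_{\R\times\G}$ already equals these values, $D$ is pinned down. The crucial and most delicate computation is $D_{\R\times\G}(\bmu^2,\bmu^3) = \sqrt{a^2 + \tfrac12 d_{[0,1]}^2(0,2b-1)}$. For this one must: (i) exhibit lifts $\lambda_1$ of $\bmu^2$ and $\lambda_2$ of $\bmu^3$ and a transport plan achieving the claimed value — the natural choice is $\lambda_1 = \tfrac12\delta_{(-a,e_1)} + \tfrac12\delta_{(a,e_2)}$ and for $\lambda_2$ a lift of $\bmu^3$ concentrated at $x = 0$ but spread on the simplex so that the half-mass coming from $(-a,e_1)$ transports to a simplex point $r^*$ and the half from $(a,e_2)$ to $1 - r^*$, with $\mathfrak P \lambda_2 = \bmu^3$ forcing $\tfrac12 r^* + \tfrac12(1-r^*)\cdot 0 + \ldots$; solving the constraint gives $r^* $ related to $2b-1$, and the simplex-transport cost is $d_{[0,1]}(r^*, \cdot)$; optimizing over the spread gives the factor $\tfrac12$ and the argument $2b-1$; (ii) prove optimality — no lift can do better. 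Optimality is the main obstacle: one must argue that, because $\bar\mu^2$ and $\bar\mu^3$ have disjoint-ish supports split symmetrically, any lift-and-plan must pay $a^2$ in the spatial coordinate plus at least $\tfrac12 d_{[0,1]}^2(0,2b-1)$ in the simplex coordinate; this can be done by projecting the spatial and simplex parts separately (the ground metric on $\R\times\Delta^1$ splits as $d_\R^2 \oplus d_{\Delta^1}^2$), bounding the $\R$-part below by $W_2(\bar\mu^2,\bar\mu^3)^2 = a^2$, and the $\Delta^1$-part below by a convexity argument using that the total simplex-mass transported off the corners is constrained by $\mathfrak P \lambda_i = \bmu^{2/3}$ together with the symmetry $\mu^3_1 - \tfrac12 = -(\mu^3_2 - \tfrac12) = b - \tfrac12$. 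I would also double-check the numerical factor $\tfrac12$ via the $b \to 1$ limit as a sanity check.

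\textbf{Step 3: the strict inequalities.} Given the formulas, \eqref{strictinequalityman} amounts to $a + d_{[0,1]}(1/2,b) < \sqrt{a^2 + \tfrac12 d_{[0,1]}^2(0,2b-1)}$ — but this is false as stated if read naively, so actually one uses $W_{\R\times\G}(\bmu^2,\bmu^3) \le a + d_{[0,1]}(1/2,b)$ together with the claim that this upper bound is itself strictly below $D_{\R\times\G}(\bmu^2,\bmu^3)$. Set $\delta(b) := d_{[0,1]}(1/2,b)$ and $\Delta(b) := d_{[0,1]}(0,2b-1)$. As $b \to \tfrac12^+$, $\delta(b) \sim \frac{b - 1/2}{\sqrt{\theta(1/2,1/2)}} = \sqrt 2(b-\tfrac12)$ is \emph{first order} in $(b-\tfrac12)$ because the integrand $1/\sqrt{\theta(a,1-a)}$ is finite and smooth near $a = 1/2$, whereas $\Delta(b) = d_{[0,1]}(0,2b-1)$ is governed by the integral $\int_0^{2b-1} \frac{da}{\sqrt{\theta(a,1-a)}}$ near $a = 0$, and if $\theta$ vanishes at the boundary with, say, $\theta(a,1-a) \sim c a^\kappa$ as $a \to 0$ for some $\kappa \in (0,2)$ (true for the geometric mean, $\kappa = 1$, and arrangeable for other admissible $\theta$), then $\Delta(b) \sim C (2b-1)^{1 - \kappa/2}$, which is of \emph{lower order} than $(b-\tfrac12)$. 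Hence for $b$ close to $\tfrac12$ the simplex term $\tfrac12\Delta(b)^2$ dominates, and choosing $a$ small (e.g. $a = \Delta(b)$) one verifies $\sqrt{a^2 + \tfrac12\Delta(b)^2} > a + \delta(b)$ since the left side is $\asymp \Delta(b)$ while the right side is $\asymp \Delta(b) + \sqrt 2(b-\tfrac12) $ — hmm, these are comparable; the correct comparison is $\sqrt{1 + 1/2}\,\Delta(b) > \Delta(b) + o(\Delta(b))$, i.e. $\sqrt{3/2} > 1$, which holds with room to spare once $a$ is chosen $\asymp \Delta(b)$ and $b$ close enough to $\tfrac12$ that $\delta(b) = o(\Delta(b))$. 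The hard part is thus choosing an admissible $\theta$ (vanishing at the boundary, satisfying Assumption \ref{interpolationassumption}) with the right boundary exponent $\kappa < 1$ so that $\Delta(b)/(b-\tfrac12) \to \infty$; the geometric mean $\theta(s,t) = \sqrt{st}$ already gives $\kappa = 1$, borderline, so I would instead take something like $\theta(s,t) = (s^{1/3}t^{1/3})\cdot(\tfrac{s+t}{2})^{1/3}$ or verify that even $\kappa = 1$ suffices by a sharper constant comparison. Finally \eqref{notriangleineq}, $D_{\R\times\G}(\bmu^2,\bmu^3) > D_{\R\times\G}(\bmu^1,\bmu^2) + D_{\R\times\G}(\bmu^1,\bmu^3) = a + \delta(b)$, is \emph{exactly} the same inequality $\sqrt{a^2 + \tfrac12\Delta(b)^2} > a + \delta(b)$, so it follows from the same estimate; and since $W_{\R\times\G}(\bmu^2,\bmu^3) \le a + \delta(b) < D_{\R\times\G}(\bmu^2,\bmu^3)$, \eqref{strictinequalityman} comes for free. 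I expect Step 2's optimality argument for $D_{\R\times\G}(\bmu^2,\bmu^3)$ and the bookkeeping of boundary exponents in Step 3 to be the two places demanding real care.
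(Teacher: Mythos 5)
Your Step 1 and your treatment of $D_{\R\times\G}(\bmu^1,\bmu^2)$ are essentially sound and close to the paper (the triangle-inequality shortcut for $W_{\R\times\G}(\bmu^2,\bmu^3)$ is a legitimate simplification of the paper's explicit two-phase construction, and your ``upgrade'' of the bounded-Lipschitz lower bound to a $W_1$-type bound is what the proof of Proposition \ref{BLlem} actually delivers). The first genuine error is your evaluation of $D_{\R\times\G}(\bmu^1,\bmu^3)$ via canonical lifts: when all mass sits at $x=0$, $W_{2,\mathcal{W}}$ does \emph{not} reduce to the dynamic graph distance between the mass vectors; it is the Kantorovich cost with ground distance $d_{\mathcal{W}}(1,2)=d_{[0,1]}(0,1)$ between the two corners, so $W^2_{2,\mathcal{W}}(\bmu^1,\bmu^3)=(b-\tfrac12)\,d^2_{[0,1]}(0,1)$, which for $b$ near $\tfrac12$ is of order $b-\tfrac12$ while $d^2_{[0,1]}(1/2,b)$ is of order $(b-\tfrac12)^2$; these are genuinely different, and the strictness of the third inequality in Proposition \ref{sharpnesscor} rests precisely on the gap $D_{\R\times\G}(\bmu^1,\bmu^3)<W_{2,\mathcal{W}}(\bmu^1,\bmu^3)$. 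The correct upper bound uses \emph{non-canonical} lifts concentrated at interior simplex points, $\delta_0\otimes\delta_{1/2}$ and $\delta_0\otimes\delta_{b}$, giving $D_{\R\times\G}(\bmu^1,\bmu^3)\le d_{[0,1]}(1/2,b)$, which together with $D\ge W_{\R\times\G}$ (Theorem \ref{thm:Main}) pins the value. This is not cosmetic: \eqref{notriangleineq} requires this sharp upper bound, and with only the canonical-lift bound (already for the geometric mean, where $d_{[0,1]}(0,2b-1)\asymp\sqrt{b-\tfrac12}$ as well, the constants fail) the inequality $\sqrt{a^2+\tfrac12 d^2_{[0,1]}(0,2b-1)}>a+D_{\R\times\G}(\bmu^1,\bmu^3)$ cannot be deduced. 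Second, the exact value of $D_{\R\times\G}(\bmu^2,\bmu^3)$, which both \eqref{strictinequalityman} and \eqref{notriangleineq} need from below, is left as a plan at exactly the point you flag as ``the main obstacle,'' and your symmetric ansatz ($r^*$ and $1-r^*$) neither satisfies the mean constraint $\int r\,d\tilde\omega=b$ nor is it the optimizer: the paper disintegrates over the two atoms of the (unique) lift of $\bmu^2$, uses convexity of $d^2_{[0,1]}$ along linear interpolations plus Jensen, and then minimizes $\tfrac12 d^2_{[0,1]}(0,2b-b_1)+\tfrac12 d^2_{[0,1]}(1,b_1)$ over $b_1\in[2b-1,1]$; both terms are decreasing, so the minimizer is the \emph{asymmetric} $\tilde\omega=\tfrac12\delta_{2b-1}+\tfrac12\delta_1$.

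Step 3 also does not prove what is stated. The proposition asserts \eqref{strictinequalityman}--\eqref{notriangleineq} for \emph{every} $\theta$ satisfying Assumption \ref{interpolationassumption} and vanishing at the boundary, whereas your argument assumes a power-law rate $\theta(a,1-a)\sim c\,a^{\kappa}$ and even contemplates replacing $\theta$; moreover the worry that $\kappa=1$ (geometric mean) is borderline is unfounded, since any $\kappa\in(0,2)$ already gives $d_{[0,1]}(1/2,b)=o\bigl(d_{[0,1]}(0,2b-1)\bigr)$. In fact no rate is needed: because $\theta(a,1-a)\to 0$ as $a\to 0^+$ while $\theta(a,1-a)\to\tfrac12$ as $a\to\tfrac12$, the averages of $\theta(a,1-a)^{-1/2}$ over $[0,c]$ diverge while those over $[\tfrac12,\tfrac{1+c}{2}]$ stay near $\sqrt2$, so $d_{[0,1]}(0,2b-1)>2\,d_{[0,1]}(1/2,b)$ for $b$ close to $\tfrac12$; then both strict inequalities reduce to $2a\,d_{[0,1]}(1/2,b)+d^2_{[0,1]}(1/2,b)<\tfrac12 d^2_{[0,1]}(0,2b-1)$, which holds for all $a$ below an explicit threshold depending on $b$. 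This is the paper's route and it covers the general boundary-vanishing $\theta$; your choice $a\asymp d_{[0,1]}(0,2b-1)$ is fine in spirit, but as written the proof of the two headline inequalities does not close because of the two gaps above. Finally, the lower bound $W_{\R\times\G}(\bmu^1,\bmu^3)\ge d_{[0,1]}(1/2,b)$, which you only gesture at, does require the full argument (existence of a minimizer, antisymmetrization of $\bv$, Jensen with the function $\alpha$, a cutoff to pass to the total masses, and the metric-derivative estimate on $\P(\G)$), as carried out in the paper.
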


\begin{proof}
First, we consider the distance between $\bmu^1$ and $\bmu^2$. Note that, by Lemma \ref{singleparticlemulti}, 
\[ \brho_t =  \begin{bmatrix}
          (1/2) \delta_{-ta} \\
 	 (1/2) \delta_{ta} \\
         \end{bmatrix} \]
is a solution of the vector valued continuity equation for $u_1 \equiv -a$, $u_2 \equiv a$, and $v \equiv 0$, with $\|(\bu_t,\bv_t)\|_{\brho_t} \equiv |a|$. Integrating in time, we see that $W_{\R \times \G}(\bmu^1, \bmu^2) \leq  |a|$. On the other hand, by Lemma \ref{BLlem} we obtain $W_{\R \times \G}(\bmu^1,\bmu^2) \geq \| \bar{\mu}^1- \bar{\mu}^2\|_{BL} = |a|$. Thus, $W_{\R \times \G}(\bmu^1, \bmu^2) =  |a|$.

Next, we consider $D_{\R \times \G}(\bmu^1, \bmu^2)$. Note that $\lambda =  \frac12 \delta_{(-a,1)} + \frac12 \delta_{(a,0)}$ is the only $\omega \in \P(\R \times [0,1])$ with $\PP \omega = \bmu^2$. Likewise, note that  $\PP \sigma = \bmu^1$ only if $\sigma$ is supported on $x =0$. Therefore,
\begin{align*}
D^2_{\R \times \G}(\bmu^1, \bmu^2) &= \inf \left\{W_{{\R \times [0,1]}}^2(\sigma, \omega) : \PP \sigma = \bmu^1, \PP \omega = \bmu^2 \right\}  \\
&= \inf \left\{W_{{\R \times [0,1]}}^2(\sigma, \lambda) : \PP \sigma = \bmu^1  \right\}  \\
&= \inf_{\gamma \in \P((\R \times[0,1])^2)} \left\{ \int  |x_1 - x_2|^2 + d^2_{[0,1]}(r_1, r_2)  \ d \gamma  :  \PP (\pi_1 \# \gamma) = \bmu^1, \pi_2 \# \gamma = \lambda  \right\}  \\
&= \inf_{\gamma \in \P((\R \times[0,1]))^2} \left\{ \int |a|^2 + d^2_{[0,1]}(r_1, r_2)  \ d \gamma  : \PP (\pi_1 \# \gamma) = \bmu^1, \pi_2 \# \gamma = \lambda  \right\}  \\
&\geq |a|^2 .
\end{align*}
Conversely, since $\sigma = \frac12 \delta_{(0,1)} + \frac12 \delta_{(0,0)}$ satisfies $\PP \sigma = \bmu^1$ and $W_{\R \times [0,1]}(\sigma, \lambda) = |a|$, we see that $D_{\R \times \G}(\bmu^1, \bmu^2)  = |a|$.
         
 Next, we compute the distance between $\bmu^1$ and $\bmu^3$. By \cite[Proposition 2.7]{maas2011gradient}, there exists a  continuously differentiable solution $r: [0,1] \to [0,1]$ to
\[ \begin{cases} \dot{r}(t) = \sqrt{  q}d_{[0,1]}(1/2,b) \sqrt{ \theta(r(t), 1-r(t)) },  \\
r(0) = 1/2 , \end{cases}\]
that satisfies $r(1) = b$.
Then, by Lemma \ref{singleparticlemulti}, 
\[ \brho(t) =  \begin{bmatrix}
         r(t)  \delta_{0} \\
 	(1-r(t)) \delta_{0}\\
         \end{bmatrix} \]
is a solution of the vector valued continuity equation for $u_1 \equiv 0$, $u_2 \equiv 0$, and 
\[ v(x,t) = \frac{d_{[0,1]}(1/2,b)}{\sqrt{q\theta(r(t),1-r(t))}} , \]
with   $\|(\bu(t),\bv(t))\|_{\brho(t)} \equiv   d_{[0,1]}(1/2,b) $. Integrating in time, we see that $W_{\R \times \G}(\bmu^1, \bmu^3) \leq  d_{[0,1]}(1/2,b).$ 

To see the opposite inequality,  consider $(\brho,\bu,\bv) \in \mathcal{C}(\bmu^1, \bmu^3)$ that is a minimizer in the definition of $W_{\R \times \G}(\bmu^1, \bmu^3)$, as is guaranteed to exist by Theorem \ref{existenceofminimizers}. By Remark \ref{gradsofpot}, since we are interested in minimizers of the action, we may assume without loss of generality that $v_{ij} = -v_{ji}$. Thus, by Remark \ref{twonoderemark} and Jensen's inequality for the convex function $\alpha$ from equation (\ref{definitionofpsi}), for almost every $t \in [0,1]$,
 \begin{align} \label{actionexample}
    \|(\bu, \bv)\|_{\brho}^2 &\geq   \int_\R v_{12}^{2} \theta \left( \frac{d\rho_1}{d \bar{\rho}} ,\frac{d\rho_2}{d\bar{\rho}} \right) q  d\bar{\rho}  = q \int_\R \alpha  \left( v_{12}\theta \left(\frac{d\rho_{1}}{d\bar{\rho}}, \frac{d\rho_{2}}{d\bar{\rho}} \right), \frac{d\rho_{1}}{d\bar{\rho}}, \frac{d\rho_{2}}{d\bar{\rho}} \right)  d\bar{\rho}  \\
    &\geq q \alpha \left( \int_{\R}v_{12}\theta \left(\frac{d\rho_{1}}{d\bar{\rho}}, \frac{d\rho_{2}}{d\bar{\rho}} \right)  d\bar{\rho}, \int_{\R} d\rho_{1}, \int_{\R} d\rho_{2} \right) . \nonumber
\end{align} 
Define $p_i(t) = \int_{\R} d\rho_{i,t}$ for $i=1,2$. Fix a smooth, decreasing cutoff function $\eta:[0,+\infty) \to [0,1]$ such that $\eta \equiv 1$ on $[0,1]$, $\eta \equiv 0$ on $[2,+\infty)$ and $\|\eta'\|_\infty \leq 2$, and consider its dilations $\eta_R:\R \to \R$ given by $\eta_R(x) = \eta(|x|^2/R^2)$. Then $\eta_R \in C^\infty_c(\R)$ for all $ R>0$, so Remarks \ref{dualvvcty} and \ref{twonoderemark} ensure that, for all $t_0, t_1 \in [0,1]$,
\begin{align*}
\int_{t_0}^{t_1} \int_\R \eta_R d \rho_{1,t} dt = \int_{t_0}^{t_1} \int_\R \nabla \eta_R \cdot u_{1,t} d \rho_{1,t} dt +  \int_{t_0}^{t_1} \int_{\R} \eta_R \theta \left( \frac{ d \rho_{1,t}}{d \bar{\rho}_t },  \frac{ d \rho_{2,t}}{d\bar{\rho}_t}\right)  v_{12,t}  q d\bar{\rho}_t  dt .
\end{align*}
The fact that $\eta_R \nearrow 1$ pointwise as $R \to +\infty$ combined with  the integrability hypotheses required in our notion of solution to the vector valued continuity equation, Definition \ref{def:ContEqMultiSpecies}, ensures that, sending $R \to +\infty$, we obtain, for all $t_0, t_1 \in [0,1]$,
\begin{align*}
\int_{t_0}^{t_1} \int_\R d \rho_{1,t} dt =    \int_{t_0}^{t_1} \int_{\R}   \theta \left( \frac{ d \rho_{1,t}}{d \bar{\rho}_t },  \frac{ d \rho_{2,t}}{d\bar{\rho}_t}\right)  v_{12,t}  q d\bar{\rho}_t  dt .
\end{align*}
Thus, $t \mapsto p_1(t)$ is absolutely continuous and 
\[ \frac{d}{dt} p_1(t) = q \int_\R \theta \left( \frac{ d \rho_{1,t}}{d \bar{\rho}_t },  \frac{ d \rho_{2,t}}{d\bar{\rho}_t}\right)  v_{12,t}   d\bar{\rho}_t =: q\sigma(t)   . \]
An analogous cutoff argument shows $p_2(t) = 1-p(t)$ and $\frac{d}{dt} p_2 = -q\sigma$. Hence,    \cite[Proposition 3.3]{erbar2012ricci} ensures that $p(t)$   is absolutely continuous with respect   to $W_\G$, with metric derivative bounded by 
\[ |p'|^2_{W_\G}(t) \leq q \alpha \left( \sigma(t), p_1(t), p_2(t) \right) , \text{ for a.e. } t \in [0,1]. \]
Therefore, since $p_1(0)= 1/2$ and $p_1(1) = b$, using the definition of the metric derivative and the bound from inequality (\ref{actionexample}) we have
 \begin{align*}
    d_{[0,1]} \left(\frac{1}{2},b \right) = W_\G \left( \begin{bmatrix}1/2 \\ 1/2 \end{bmatrix},  \begin{bmatrix}b \\ 1-b \end{bmatrix} \right) \leq \int_0^1  |p'|_{W_\G}(t) dt  & \leq   \int_0^1 \sqrt{q \alpha \left( \sigma(t), p_1(t), p_2(t) \right)} dt  \\ & \leq  \left( \int_0^1    \|(\bu, \bv)\|_{\brho}^2 dt \right)^{1/2},
\end{align*} 
where the right hand side coincides with $W_{\Rd \times [0,1]}(\bmu^1,\bmu^3)$ given that $(\brho,\bu,\bv) \in \mathcal{C}(\bmu^1, \bmu^3)$ was a minimizer.
Therefore, we conclude that  $W_{\R \times \mathcal{G}}(\bmu^1, \bmu^3) \geq  d_{[0,1]}(1/2,b)$.

Next, we consider $D_{\R \times \G}(\bmu^1, \bmu^3)$. By definition, $\PP \sigma = \bmu^1$ and $\PP \omega = \bmu^3$ only if $d \sigma(x,r) = d \delta_0(x) \otimes d \tilde{\sigma}(r)$ and $d \omega(x,r) = d \delta_0(x) \otimes d \tilde{\omega}(r)$ for $\tilde{\sigma}, \tilde{\omega} \in \P([0,1])$ with $\int r d \tilde{\sigma}(r) = \frac12$ and $\int r d \tilde{\omega}(r) = b$. Furthermore, note that, since $W_\G^2$ is jointly convex along liner interpolations \cite[Proposition 2.11]{erbar2012ricci} and the isometry $\bp: [0,1] \to \P(\G)$ is linear, $d_{[0,1]}^2$ is jointly convex along linear interpolations. Thus,
\begin{align*}
&D^2_{\Omega \times \G}(\bmu^1, \bmu^3)\\
 &\quad = \inf \left\{W_{{\R \times [0,1]}}^2(\sigma, \omega) : \PP \sigma = \bmu^1, \PP \omega = \bmu^3 \right\}  \\
&\quad= \inf_{\gamma \in \P((\R \times[0,1])^2)} \left\{ \int  |x_1 - x_2|^2 + d^2_{[0,1]}(r_1, r_2)  \ d \gamma  : \PP (\pi_1 \# \gamma) = \bmu^1, \PP(\pi_2 \# \gamma) = \bmu^2  \right\}  \\
&\quad= \inf_{\gamma \in \P((\R \times[0,1])^2)} \left\{ \int  0+ d^2_{[0,1]}(r_1, r_2)  \ d \gamma  : \PP (\pi_1 \# \gamma) = \bmu^1, \PP(\pi_2 \# \gamma) = \bmu^2  \right\}  \\
&\quad= \inf_{\tilde{\gamma} \in \P( [0,1]^2), \ \tilde{\omega}, \tilde{\sigma} \in \P([0,1])} \left\{ \int   d^2_{[0,1]}(r_1, r_2)   d \tilde{\gamma}  :  \pi_1 \# \tilde{\gamma} = \tilde{\sigma}, \pi_2 \# \tilde{\gamma} = \tilde{\omega} , \int r d \tilde{\sigma}(r) = \frac12,   \int r d \tilde{\omega}(r) = b \right\}  \\
&\quad \geq d^2_{[0,1]}(1/2,b) ,
\end{align*}
where the last inequality follows from Jensen's inequality. Finally, we see that, for $\tilde{\omega} = \delta_b$ and $\tilde{\sigma} =\delta_{1/2}$, the infimum attains $d_{[0,1]}^2(1/2,b)$. Thus $D_{\Omega \times \G}(\bmu^1, \bmu^3) = d_{[0,1]}(1/2,b)$.
         
It remains to consider the distance and dissimilarity between $\bmu^2$ and $\bmu^3$. In order to show the bound on the dynamic distance, it suffices to find a path $(x_1(t),x_2(t),r(t))$ so that  the empirical measure $\brho(t)$ from Lemma \ref{singleparticlemulti} achieves action $ \left( a +  d_{[0,1]}\left(\frac12 ,b \right) \right)^2$. Fix $t_0 \in (0,1)$. Consider the paths
\begin{align}
x_1(t) &= \begin{cases} (t-t_0) a/ t_0 &\text{ for $t \in [0,t_0]$} , \\
0 &\text{ for $t \in [t_0, 1]$} \end{cases} \quad \ 
x_2(t) = \begin{cases} (t_0-t) a/ t_0&\text{ for $t \in [0,t_0]$} , \\
0 &\text{ for $t \in [t_0, 1]$} \end{cases} \\
r(t) &= \begin{cases} \frac{1}{2} &\text{ for $t \in [0,t_0]$} , \\
\tilde{r} \left( \frac{t-t_0}{1-t_0} \right)  &\text{ for $t \in [t_0, 1]$}\end{cases}
\end{align}
where $\tilde{r}:[0,1] \to [0,1]$ is the continuously differentiable solution  to
\[ \begin{cases} \dot{\tilde{r}}(t) = \sqrt{q} d_{[0,1]}(1/2,b) \sqrt{ \theta(\tilde{r}(t), 1-\tilde{r}(t)) } &\text{ for } t \in [0,1],  \\
\tilde{r}(0) = 1/2 , \end{cases}\]
which, by \cite[Proposition 2.7]{maas2011gradient}, exists and is the geodesic from $r(0) = 1/2$ to $r(1) = b$.
Via Lemma \ref{singleparticlemulti}, with
\[ u_1(x,t) = \frac{a}{t_0} 1_{[0,t_0]}(t) , \ \  u_2(x,t) = -\frac{a}{t_0} 1_{[0,t_0]}(t)  , \ \ v(x,t) = \frac{d_{[0,1]}(1/2,b)}{\sqrt{q\theta(r(t), 1-r(t))}} \frac{1}{1-t_0} 1_{[t_0,1]}(t) , \]
these paths induce a measure $\brho(t)$ from $\bmu_1$ to $\bmu_3$ with
\[ \int_0^1  \| (\bu(t),\bv(t)) )\|_{\brho(t)}^2 dt =  \frac{a^2}{t_0} + \frac{d^2_{[0,1]}(1/2,b)}{1-t_0} .\]
Finally, optimizing over $t_0 \in (0,1)$, we see that the action is minimized at  $t_0 = a/(a+  d_{[0,1]}(1/2,b))$, and for this value of $t_0$, 
\[\int_0^1  \| (\bu(t),\bv(t)) )\|_{\brho(t)}^2 dt = \left(a +    d_{[0,1]}(1/2,b) \right)^2 . \]
Thus, $W_{\R \times \G}(\bmu^2, \bmu^3) \leq  a+   d_{[0,1]}\left(1/2 ,b \right) $.
         
Finally, we compute $D_{\R \times \G}(\bmu^2, \bmu^3) $. As before, $\lambda =  \frac12 \delta_{(-a,1)} + \frac12 \delta_{(a,0)}$ is the only $\lambda \in \P({\R \times [0,1]})$ so that $\PP \lambda = \bmu^2$. Similarly, $\PP \omega = \bmu^3$ only if  $d \omega(x,r) = d \delta_0(x) \otimes d \tilde{\omega}(r)$ for $ \tilde{\omega} \in \P([0,1])$ with $\int r d \tilde{\omega}(r) = b$. Thus,
\begin{align*}
D_{\R \times \G}(\bmu^2, \bmu^3) &= \inf \left\{W_{{\R \times [0,1]}}(\lambda, \omega) : \PP \omega = \bmu^3 \right\}   \\
&= \inf_{\gamma \in \P((\R \times [0,1])^2)} \left\{ \left( \int |x_1 - x_2|^2 + d^2_{[0,1]}(r_1, r_2)  \ d \gamma \right)^{1/2} :  \pi_1 \# \gamma = \lambda, \PP(\pi_2 \# \gamma) = \bmu^2  \right\} \nonumber  \\
&=    \inf_{\gamma \in \P((\R \times [0,1])^2)} \left\{ a^2+ \int   d^2_{[0,1]}(r_1, r_2)  \ d \gamma  :  \pi_1 \# \gamma = \lambda, \PP(\pi_2 \# \gamma) = \bmu^2  \right\} \nonumber \\
&= a^2 + \inf_{\tilde{\omega} \in \P(\R \times [0,1])} \left \{ W^2_{[0,1]} \left( \frac12 \delta_0 + \frac12 \delta_1, \tilde{\omega} \right) : \int r d \tilde{\omega}(r) = b \right\} . \nonumber
\end{align*}  
       
For any transport plan $\tilde{\gamma}$ between $\frac12 \delta_0 + \frac12 \delta_1$ an $\tilde{\omega}$, disintegrating with respect to its first marginal gives
\[ d \tilde{\gamma}(r_1, r_2) = d \tilde{\gamma}_{r_1}(r_2) d (\pi_1 \# \tilde{\gamma})(r_1) = \frac12 d \tilde{\gamma}_0(r_2)  \otimes \delta_0(r_1) + \frac12 d \tilde{\gamma}_1(r_2)  \otimes \delta_1(r_1) . \] Then, using the convexity of $d_{[0,1]}^2$ along linear interpolations and Jensen's inequality, we obtain
\begin{align*} 
&\inf_{\tilde{\omega} \in \P([0,1])} \left \{ W^2_{[0,1]} \left( \frac12 \delta_0 + \frac12 \delta_1, \tilde{\omega} \right) : \int r d \tilde{\omega}(r) = b \right\}  \nonumber \\
&\quad = \inf_{\tilde{\gamma} \in \P(([0,1])^2)} \left \{ \int d_{[0,1]}^2(r_1,r_2) d \tilde{\gamma}(r_1,r_2) : \pi_1 \# \tilde{\gamma} = \frac12 \delta_0 + \frac12 \delta_1 , \ \int r_2 d \tilde{\gamma}(r_1,r_2) = b \right\} \nonumber \\
&\quad = \inf_{\tilde{\gamma}_0, \tilde{\gamma}_1 \in \P([0,1])} \left \{ \frac12 \int d_{[0,1]}^2(0,r_2) d \tilde{\gamma}_0(r_2) +\frac12 \int d_{[0,1]}^2(1,r_2) d \tilde{\gamma}_1(r_2):   \frac12 \int r_2 d \tilde{\gamma}_0(r_2)+ \frac12 \int r_2 d \tilde{\gamma}_1(r_2) = b \right\} \nonumber \\
&\quad \geq \inf_{\tilde{\gamma}_0 , \tilde{\gamma}_1 \in \P([0,1])} \left \{ \frac12  d_{[0,1]}^2\left(0, \int r_2 d \tilde{\gamma}_0(r_2)\right) +\frac12  d_{[0,1]}^2 \left(1,  \int r_2 d \tilde{\gamma}_1(r_2) \right):   \frac12 \int r_2 d \tilde{\gamma}_0(r_2)+ \frac12 \int r_2 d \tilde{\gamma}_1(r_2) = b \right\} \nonumber \\
&\quad = \inf_{b_0, b_1 \in [0,1]} \left \{ \frac12  d_{[0,1]}^2\left(0, b_0 \right) +\frac12  d_{[0,1]}^2 \left(1, b_1 \right):   \frac12 b_0 + \frac12 b_1 = b \right\} \nonumber \\
&\quad = \inf_{ b_1 \in [2b-1,1]} \left \{ \frac12  d_{[0,1]}^2\left(0, 2b-b_1 \right) +\frac12  d_{[0,1]}^2 \left(1, b_1 \right)  \right\} \nonumber \\
&\quad =  \frac12  d_{[0,1]}^2\left(0, 2b-1 \right) 
,
\end{align*}
where the last equality follows by the fact that the first and second terms on the second to last line are both decreasing in $b_1$. Furthermore, taking $\tilde{\omega} = \frac12 \delta_{2b-1} + \frac12 \delta_1$ in the original infimum shows that, in fact, equality holds throughout.  Thus, $D_{\R \times \G}(\bmu^2, \bmu^3) =  \sqrt{a^2 + \frac12 d_{[0,1]}^2 ( 0, 2b-1  ) }$.
%
%
%
 
We now show that the dynamic distance is strictly smaller than the static distance when $b$ is sufficiently close to $\frac12$ and $a$ is sufficiently small. By the continuity of $\theta$, assumption (A\ref{thetaregularity}), the normalization of $\theta$, assumption (A\ref{thetanormalization}), and the fact that $\theta$ vanishes at the boundary, that is,  $\theta(a,0) = 0$ for all $a \geq 0$, we have $\lim_{a \to 0^+} \theta(a,1-a) = 0$ and $\lim_{a \to 1/2} \theta(a,1-a) = 1/2$. Thus, the Lebesgue differentiation theorem ensures that, for $c>0$ sufficiently small,
\begin{align*}
\frac{1}{c} \int_0^{c} \frac{1}{\sqrt{ \theta(a,1-a)}}   da  > \frac{2}{c} \int_{1/2}^{(c+1)/2} \frac{1}{\sqrt{ \theta(a,1-a)}}   da .
\end{align*}
Combining this with the explicit formula for the distance $d_{[0,1]}$ from Proposition \ref{explicitdistance2node}, for $c>0$ sufficiently small, $d_{[0,1]}(0,c) > 2d_{[0,1]}(1/2,(c+1)/2)$ and thus 
\begin{align} \label{ceqn} d^2_{[0,1]}(0,c) > 4d^2_{[0,1]}(1/2,(c+1)/2). \end{align}
Therefore,  
\begin{align} \label{dynamicneqstaticeqn}
 & W^2_{\Omega \times \G}(\bmu^2, \bmu^3)  < D^2_{\Omega \times \G}(\bmu^2, \bmu^3)   \\
  &\quad \impliedby a^2 +2ad_{[0,1]}(1/2,b)   +d^2_{[0,1]}(1/2,b)  < a^2 + \frac12 d_{[0,1]}^2 ( 0, 2b-1  )  \nonumber \\
 &\quad \iff 2a + d_{[0,1]}(1/2,b)  <  \frac{  d_{[0,1]}^2 ( 0, 2b-1  ) }{2d_{[0,1]}(1/2,b)} \nonumber \\
 &\quad \iff a  < \frac12 \left( \frac{  d_{[0,1]}^2 ( 0, 2b-1  )}{2d_{[0,1]}(1/2,b)} - d_{[0,1]}(1/2,b) \right) = \frac12 \left( \frac{  d_{[0,1]}^2 ( 0, 2b-1  ) - 2d^2_{[0,1]}(1/2,b)}{2d_{[0,1]}(1/2,b)}  \right) ,\nonumber
\end{align}
where equation (\ref{ceqn}) ensures that  the right hand side is strictly positive for $c=2b-1$ and $b>1/2$ sufficiently close to $1/2$.

Finally, we show this condition on $a$ is also sufficient for the triangle inequality to be violated. Indeed,
\begin{align*}
&D_{\Omega \times \G}(\bmu^2, \bmu^3) > D_{\Omega \times \G}(\bmu^1, \bmu^2) + D_{\Omega \times \G}(\bmu^1, \bmu^3) \\
  &\quad \iff D^2_{\Omega \times \G}(\bmu^2, \bmu^3) > D^2_{\Omega \times \G}(\bmu^1, \bmu^2) + 2 D_{\Omega \times \G}(\bmu^1, \bmu^2)D_{\Omega \times \G}(\bmu^1, \bmu^3) +D^2_{\Omega \times \G}(\bmu^1, \bmu^3) \\
  &\quad \iff a^2 + \frac12 d_{[0,1]}^2 ( 0, 2b-1  ) > a^2 + 2a d_{[0,1]}(1/2,b) + d^2_{[0,1]}(1/2,b),
\end{align*}
which coincides with the condition in (\ref{dynamicneqstaticeqn}).

\end{proof}

We conclude by applying Proposition \ref{examplesprop} to prove Proposition \ref{sharpnesscor}, on the sharpness of the inequalities relating the four (semi)-metrics.

\begin{proof}[Proof of Proposition \ref{sharpnesscor}]
Part (\ref{triangleinequalitycor}) is an immediate consequence of Proposition \ref{examplesprop}, inequality (\ref{notriangleineq}). Thus, it remains to show part (\ref{sharppart}). Consider the first inequality in (\ref{inequalityallmetrics}). Define $\bmu = [\delta_0,0]$ and $\bnu  = [ 0,\delta_0 ]$ and suppose $q_{12} = q_{21} \equiv q \geq 1/2$ and $q_{11}=q_{22} = 0$. Then, $d_{BL}(\bmu,\bnu ) = \sqrt{2}$, so $\min\{ 1, Q^{-1/2}\}  d_{BL} (\bmu,\bnu) = \sqrt{2/q}$. Furthermore, if    $\theta(s,t) = (s+t)/2$, by Lemma \ref{singleparticlemulti},  $\brho(t) := [(1-t)\delta_{0},t \delta_0]$ is solution of the vector valued continuity equation for $u_1 \equiv 0$, $u_2 \equiv 0$, and $v \equiv -2/q$, with $\|(\bu_t,\bv_t)\|^2_{\brho_t} \equiv 2/q$, so $W_{\R \times \G}(\bmu,\bnu^a) \leq \sqrt{2/q}$. This shows equality holds in the first inequality in (\ref{inequalityallmetrics}). On the other hand, for the measures $\bmu^1$ and $\bmu^2$ from Proposition \ref{examplesprop}, we showed in the Proposition that $W_{\R \times \G}(\bmu^1,\bmu^2) = a$, while $d_{BL}(\bmu^1,\bmu^2) = \frac{a}{\sqrt{2}}$, so strict inequality can hold in the first inequality in (\ref{inequalityallmetrics}).

The sharpness of the constant in the second inequality is an immediate consequence of Proposition \ref{examplesprop}, in which it is shown that $W_{\R \times \G}(\bu^1, \bmu^2) = a = D_{\R \times \G}(\bmu^1, \bmu^2)$. The fact that strict inequality can hold follows from inequality (\ref{strictinequalityman}).

For the third inequality, the sharpness of the constant can be seen by considering 
\[ \bmu^a = \left[\frac{1}{2} \delta_{-a}, \frac{1}{2} \delta_{a} \right]^t \text{ for }a \in (0,1) \] 
and noting that the canonical lifting $\lambda_{\bmu^a} =  \frac12 \delta_{(-a,1)} + \frac12 \delta_{(a,0)}$ is the only probability measure $\lambda \in \P({\R \times [0,1]})$ satsifying $\PP \lambda = \bmu^a$, so $D_{\R \times \G}(\bmu^a, \bmu^{\tilde{a}}) = W_{2, \mathcal{W}}(\bmu^a, \bmu^{\tilde{a}})$ for all $a , \tilde{a} \in [0,1]$. The fact that strict inequality can hold follows from considering the measures $\bmu^1$ and $\bmu^3$ as in Proposition \ref{examplesprop}. The canonical liftings are $\lambda_{\bmu^1}  = \frac{1}{2} \delta_{0,0}+\frac{1}{2} \delta_{0,1}$ and $\lambda_{\bmu^3} = (1-b) \delta_{0,0} +b \delta_{0,1} $, so
\begin{align*}
W_{2, \mathcal{W}}(\bmu^1, \bmu^3) = W_{\R \times [0,1]} \left( \frac{1}{2} \delta_{0,0}+\frac{1}{2} \delta_{0,1},(1-b) \delta_{0,0} +b \delta_{0,1} \right) = \left(b-\frac{1}{2} \right)d_{[0,1]}(0,1) .
\end{align*}
On the other hand, as shown in Proposition \ref{examplesprop}, for $b \in [1/2,1]$,
\begin{align*}
D_{\R \times \G}(\bmu^1, \bmu^3) = d_{[0,1]}(1/2,b)  .
\end{align*}
In particular, taking $\theta$ to be the geometric interpolation function, $\theta(s,t) = \sqrt{st}$, and $q_{ij} \equiv 1$, a direct computation in terms of the distance function given in Proposition \ref{explicitdistance2node} shows that, for $b = 3/4$, $D_{\R \times \G}(\bmu^1, \bmu^3)= \pi/6 < \pi/4 = W_{2, \mathcal{W}}(\bmu^1, \bmu^3) $.

\end{proof}

\subsection*{Acknowledgements} 

The work of K. Craig and \DJ. Nikoli\'c has been supported by NSF DMS grant 2145900. The work of N. Garc\'ia Trillos has been supported by NSF DMS grant 2236447. The authors would like to thank Matthias Wink for many helpful discussions, and both Matthias Erbar and Jan Maas for helpful email correspondence.


\bibliographystyle{abbrv}
\bibliography{Blob.bib}

\appendix

\section{Appendix} \label{realappendix}

We begin with the proof of Lemma \ref{Klem}.

\begin{proof}[Proof of Lemma \ref{Klem}]
 We begin by showing part (i). First, note that, by definition, $\beta \geq 0$, thanks to (A\ref{thetapositivity}). From  (A\ref{thetapositivehomogenity}), we obtain $\beta(a,b,c)\in \{0, +\infty \}.$  Since $\beta$ is a supremum of affine, continuous functions, it is convex and lower-semicontinuous. Let  $K:= {\rm dom}(\beta) \subseteq \R^{d+2}$. Then $K$ must be closed and convex.
Furthermore, by Remark \ref{interpfunorder},  \begin{equation*}
        a t + b s + \frac{\|c \|^2}{4}\theta(t,s) \leq  (a+ \frac{\|c \|^2}{8}) t + (b+ \frac{\|c \|^2}{8})s.
    \end{equation*} Hence, we have $\{ a+\frac{\|c\|^{2}}{8}\leq 0 \} \cap \{ b+\frac{\|c\|^{2}}{8}\leq 0 \} \subseteq K.$ 

    Now, we turn to the proof of part (ii). By a direct computation, 
    \begin{align*}
    \alpha^{*}(a,b,c) &= \sup_{t,s \geq 0, x\in \Rd} \{ at + bs + c\cdot x - \alpha(t,s,x)\}\\
    &= \max \left( \sup_{t,s \geq 0,x\in \Rd,\theta(t,s)\neq 0} \{ at + bs + c\cdot x - \frac{\|x\|^2}{\theta(t,s)}\}, \sup_{t,s \geq 0,\theta(t,s)=0}  \{ at + bs \}  \right) \\
    &= \max \left( \sup_{t,s \geq 0,\theta(t,s)\neq 0} \{ at + bs + \frac{\|c\|^2}{4}{\theta(t,s)}\}, \sup_{t,s \geq 0,\theta(t,s)=0}  \{ at + bs \}  \right) \\
    &= \beta(a,b,c).
\end{align*}

Since $\alpha$ is a proper, lower semicontinuous and convex function \cite[Lemma 2.7]{erbar2012ricci}, by the Fenchel-Moreau theorem, we obtain $\alpha=\alpha^{**}.$ Hence, $\alpha = \beta^{*}.$

\end{proof}

We now turn to the proof of Proposition \ref{basicfactWG}.

\begin{proof}[Proof of Proposition \ref{basicfactWG}]
We begin with  part (\ref{lem:ComparisonMetricsDiscrete}). 
The lower bound follows from   \cite[Proposition 2.12]{erbar2012ricci} and the equivalence of $\ell^1$ and $\ell^2$ norms on $\R^n$:
\[ W_\G(p_0, p_1) \geq \frac{1}{\sqrt{2}} d_{TV}(p_0,p_1) =  \frac{1}{\sqrt{2}} \|p_0 - p_1\|_{\ell^1(\R^n)} \geq \frac{1}{\sqrt{2}} |p_0 - p_1|. \]

To see the upper bound, first define \[ C_\theta := \int_0^1 \frac{1}{\sqrt{\theta(1-a,1+a)}} dr    \text{ and }k :=\min_{ij: q_{ij}>0} q_{ij}. \]
Note that assumption (A\ref{thetapositivehomogenity}) ensures 
 $\theta(1-a,1+a) \geq \theta (1-a,1-a) = (1-a) \theta(1,1) $, so $C_\theta < +\infty$.

Let $W_{2,g}$ denote the 2-Wasserstein distance on $\P(\G)$ defined in terms of the Kantorovich formulation, where the ground distance $d_\G$ on $\G$ is given by  the minimal number of edges required to connect any two nodes, (i.e., nodes $i$ and $j$ are connected by an edge whenever $q_{ij}>0$) \cite[Section 2.3]{erbar2012ricci}.
 Then \cite[Proposition 2.14]{erbar2012ricci} ensures that
\begin{equation}
W_\G(p_0, p_1) \leq \frac{\sqrt{2}C_{\theta}}{\sqrt{k}} W_{2,g}(p_{0},p_{1}), \quad \forall p_0, p_1 \in \mathcal{P}(\G),
\label{eq:WGVsW2}
\end{equation}
We may control $W_{2,g}$ in terms of the 1-Wasserstein metric and the diameter of the graph,
\[ W_{2,g}(p_0, p_1) \leq  \sqrt{\max_{i,j \in \G} d_\G(i,j)} \sqrt{W_{1,g}(p_0,p_1)}.    \]
For $\phi: \G \to \R$, let $\|\phi\|_{{\rm Lip}_\G}$ denote its Lipchitz constant with respect to $d_\G$. Then  Kantorovich-Rubinstein duality, the fact that $\sum_i p_{i,0} = \sum_i p_{i,1}$, and the Cauchy-Schwarz inequality give
\[ W_{1,g}(p_0,p_1) = \max_{\substack{ \phi \in \R^n \\ \|\phi\|_{{\rm Lip}_\G} \leq 1 }} \sum_i \phi_i  (p_{i,1}  - p_{i,0}) = \max_{\substack{ \phi \in \R^n \\ \|\phi\|_{{\rm Lip}_\G} \leq 1 \\  \phi_1 = 0   }} \sum_i \phi_i  (p_{i,1}  - p_{i,0})\leq \max_{\substack{ \phi \in \R^n \\ \|\phi\|_{{\rm Lip}_\G} \leq  1  \\ \phi_1 = 0   }} |\phi| | p_0 - p_1 |  \leq \tilde{C}_{\G} | p_0 - p_1 |  ,\]
for a constant $\tilde{C}_\G$ that depends on the edge weights $q_{ij}$. Combining the three previous inequalities gives the upper bound in the inequality. This completes the proof of part (\ref{lem:ComparisonMetricsDiscrete}).

We now turn to the proof of part (\ref{WGgeodesics}). By \cite[Theorem 3.2]{erbar2012ricci}, there exists $p:[0,1] \to \P(\G)$ continuous and  $m:[0,1] \to \R^{n\times n}$ locally integrable on $(0,1)$ so that $W_\G(p_t,p_s) = |t-s| W_\G(p_0, p_1)$ for all $s,t \in [0,1]$, and we also have 
\begin{align*} \partial_t p +  \nabla_\G \cdot  m  = 0,  \text{  in the sense of distributions}\\
 \frac12 \sum_{i,j=1}^n \alpha(m_{ij,t},p_{i,t},p_{j,t}) q_{ij} = W_\G^2(p_0,p_1) \text{ for all $t \in [0,1]$}, 
 \end{align*}
where $\alpha$ is defined as in equation (\ref{definitionofpsi}). Since the right hand side of the above equation is finite, we see that $\alpha(m_{ij,t},p_{i,t},p_{j,t}) <+\infty$ for all $i,j = 1, \dots, n$ and $t \in [0,T]$, so   $\check{p}_{ij,t}=0$ only if  $m_{ij,t} = 0$. Therefore, for all $i,j = 1, \dots, n$ and $t \in [0,T]$, we may define
\[ v_{ij,t} := \begin{cases} m_{ij,t} / \check{p}_{ij,t} &\text{ if }\check{p}_{ij,t}\neq 0 \\ 0 &\text{ otherwise,} \end{cases} \]
and we then have
\[ m_{ij,t} = v_{ij,t} \check{p}_{ij,t}  . \]
Then $(p, v)$ is a solution of the graph continuity equation, in the sense of Definition \ref{graphctydef}, and furthermore,
\[ \| v_t \|_{{\rm Tan}_{p_t}(\P(\G))} = \frac12 \sum_{i,j=1}^n \alpha(m_{ij,t},p_{i,t},p_{j,t}) q_{ij} = W_\G^2(p_0,p_1) \text{ for all $t \in [0,T]$}. \]


Part (\ref{riemannianpart}) follows from \cite[Theorem 2.4]{erbar2012ricci} and the fact that
\[\{ \nabla_\G \psi  : \psi \in \R^n   \}  = \left\{ \nabla_\G \psi  : \psi \in \R^n , \ \sum_{i=1}^n \psi_i = 0 \right\} . \]

\end{proof}

Next, we prove Lemma \ref{lem:MollificationSimplex}.

\begin{proof}[Proof of Lemma \ref{lem:MollificationSimplex}]
For $a \in (0,1)$ and $t \in [0,1]$, let
\[  v_{ij,t}^{a} := \frac{(1-a) v_{ij,t} \theta(p_{i,t}  , p_{j,t}) + a u_{ij,t} \theta(z_{i,t} , z_{j,t})}{  \theta(     p_{i,t}^{a}, p_{j,t}^{a}     ) }, \]
which, by (A\ref{thetapositivity}),   is well defined as $p_{i,t}^{a} \in (\P(\G))^\circ$. A direct computation shows that 
\[ \nabla_\G \cdot( \check{p}^{a} v^{a}  ) = (1-a) \nabla_\G \cdot ( \check{p}  v ) + a\nabla_\G \cdot (\check {z}   u ),   \]
so that $ (p^{a}, v^{a})$ is a solution to the graph continuity equation.

 To see the inequality for the norm of the tangent vectors, recall the definition of $\alpha: \R \times \R \times \R^d \rightarrow \R \cup \{+ \infty \}$ from equation (\ref{definitionofpsi})   and observe that 
\begin{align*}
( v^a_{ij,t})^2 \theta(p_{i,t}^a, p_{j,t}^a) &= \alpha(p_{i,t}^a, p_{j,t}^a, (1-a) v_{ij,t} \theta(p_{i,t}  , p_{j,t}) + a u_{ij,t} \theta(z_{i,t} , z_{j,t})) \\
&\leq (1-a) \alpha(p_{i,t}, p_{j,t},  v_{ij,t} \theta(p_{i,t}  , p_{j,t}) )+ a \alpha(z_{i,t}, z_{j,t},  u_{ij,t} \theta(z_{i,t} , z_{j,t}))   \\
 & =   (1-a)  v_{ij,t}^2 \theta(p_{i,t}, p_{j,t})  + a  {u}_{ij,t}^2 \theta(z_{i,t}, z_{j,t}),
\end{align*}
from where it is now easy to deduce \eqref{eqn:LengthsConvexComb}, using the definition of the norm on the tangent space, equation (\ref{innerproductgraphdef}).
\end{proof}

We now prove Corollary \ref{cor:IneqConvex}.

\begin{proof}[Proof of Corollary \ref{cor:IneqConvex}]
We first identify $u$  for which $ (z, u)$ solves the graph continuity equation. Let $\Delta_\G$ denote the  {graph} Laplacian,
\[\Delta_\G \varphi_i  :=  {\rm div}_\G   \nabla_\G \varphi_i = \sum_{j \in \G}  ( \varphi_i  - \varphi_j)q_{ij}, \quad i =1, \dots, n.  \]
Since   $\G$ is connected, the null space of $\Delta_G$ is the span of $\varphi_0\equiv 1$ (see, e.g. \cite[Lemma 1.7]{Chung1996},  which under minor modifications can be adapted to the weighted graph case). This, combined with the fact that $m_1 -m_0$ is orthogonal to $\varphi_0$, implies that there exists a unique solution $\varphi: \G \rightarrow \R$ with $\sum_i \varphi_i = 0$ to the graph-Poisson equation
\begin{equation*}
  \Delta_{\G} \varphi =  {m}_1 -  {m}_0.
\end{equation*}
Using $\varphi$, we define the discrete vector field
\[ u_{ij,t}  :=  - \frac{\nabla_\G \varphi_{ij} }{\theta(z_{i,t} , z_{j,t})}. \]
Then, we have 
\[ - \nabla_\G \cdot ( \check{z}_t u_t) =  \nabla_\G \cdot ( \nabla _\G \varphi ) = \Delta_\G \varphi =  {m}_1 - {m}_0 = \partial_t z_t,  \]
and we conclude that $ (z, u)$ solves the graph continuity equation on $[0,1]$. 

We now claim that
 \begin{align} \|u_{t} \|_{\Tan_{z_{t }} \P(\G)} \leq \sqrt{ \tilde{C}_{m_0, m_1}/\lambda_G}  \ | m_1-m_0|  \label{claimyclaim},
 \end{align}
 for $\tilde{C}_{ {m}_0,  {m}_1}$ depending only on $\min_i  m_{i,0} >0$ and $ \min_i m_{i,1}>0$ and   $\lambda_G$   the first non-trivial eigenvalue of $\Delta_G$. First, note that, if $m_0 = m_1$, then $\varphi = 0$ and the result holds. Suppose now that $m_0 \neq m_1$, so $\varphi \neq 0$. By  (A\ref{thetanormalization})-(A\ref{thetamonotonicity}), we see that
\begin{align}
\begin{split}
 \|u_{t} \|_{\Tan_{z_{t }} \P(\G)}^2  &= \frac{1}{2} \sum_{i,j} | u_{ij,t} |^2 \theta(z_{i,t} , z_{j,t}   ) q_{ij}   =  \frac{1}{2} \sum_{i,j} \frac{ | \nabla_\G \varphi_{ij,t} |^2}{ \theta(z_{i,t} , z_{j,t}   )} q_{ij}     \leq \frac{\tilde{C}_{ {m}_0,  {m}_1}   }{2}  \sum_{i,j}   | \nabla_\G \varphi_{ij} |^2 q_{ij}, 
 \\ &= \tilde{C}_{ {m}_0,  {m}_1}   \sum_i \varphi_i \Delta_\G \varphi_i. 
 \end{split}
 \label{eq:IneqAux22}
 \end{align}
By the discrete Poincaré inequality, which follows from a spectral decomposition of $\Delta_G$, 
\[  \lVert \varphi \rVert^2  \leq \frac{1}{\lambda_\G}  \sum_i \varphi_i \Delta_\G \varphi_i, \]
 Combining this with inequality (\ref{eq:IneqAux22}) and the Cauchy-Schwarz inequality, we obtain
\[   \sum_i \varphi_i \Delta_\G \varphi_i =  \sum_i \varphi_i (m_{i,1} - m_{i,0}) \leq  |\varphi | | m_1-m_0 | \leq \lambda_\G^{-1/2}  \sqrt{ \sum_i \varphi_i \Delta_\G \varphi_i} \  | m_1-m_0 |.\]
 Dividing through by $ \sqrt{ \sum_i \varphi_i \Delta_\G \varphi_i} $ and substituting in  \eqref{eq:IneqAux22}, we deduce (\ref{claimyclaim}).
The result now follows from Lemma \ref{lem:MollificationSimplex}.

\end{proof}

Next, we show Lemma \ref{simplexgeodesicapproximationlem}.

\begin{proof}[Proof of Lemma \ref{simplexgeodesicapproximationlem}]
Part (\ref{existenceofconstantspeedgeodesicDelta})  is an immediate consequence of the corresponding fact for $(\P(G), W_\G)$, Proposition \ref{basicfactWG} (\ref{WGgeodesics}), and the fact that $(\Delta^{n-1}, d_{\Delta^{n-1}})$ is isometric to    $(\P(G), W_\G)$.
Part (\ref{measurabilityofgeodesicmap}) follows as in \cite[footnote 1, p94]{ambrosio2021lectures} and  \cite[Lemma 6.7.1, Theorem 6.9.7]{bogachev2007measure}.

We now consider perturbations of geodesics that stay away from the boundary of the simplex. Since  $p:=\bp(\gamma_{r_0,r_1})$ is a constant speed geodesic on $(\P(\G), W_\G)$ from $p_0 := \bp(r_0)$ to $p_1 := \bp(r_1)$,  by Proposition \ref{basicfactWG} (\ref{WGgeodesics}), there exists a velocity field $v$ so that $(p,v)$ solves the graph continuity equation on the time interval $[0,1]$ and 
\[  \| v_t \|_{\Tan_{p_t}\P(\G)} = W_\G(p_0,p_1) \text{ for a.e. } t \in [0,1] . \] Thus, for $a  \in (0,1)$ and $m_0:=\bp(s_0), m_1:=\bp(s_1) \in \P(\G)^\circ$, Corollary \ref{cor:IneqConvex} ensures there exists a velocity field $v^a$ so that $(p^a, v^a)$ is a solution to the graph continuity equation on the time interval $[0,1]$ satisfying, for all $i = 1, \dots, n$,
\begin{align*}
&p^a_{i,t} \geq aC_{m_0,m_1, \G}  \text{ and } \lim_{a \to 0} p^a_{i,t} \to p_{i,t}  \quad \text{ for all } t \in [0,1], \\
&\|v_{t}^{a}\|_{\Tan_{p_{t }^{a}} \P(\G)}^2  \leq (1-a)   \|v_{t}\|_{\Tan_{p_{t}} \P(\G)}^2    + a C_{ {m}_0,  {m}_1, \G} = (1-a) d^2_{\Delta^{n-1}}(r_0,r_1) + a C_{ {m}_0,  {m}_1, \G} , \text{ for } a.e. t \in [0,1].
\end{align*}
Thus, by \cite[Proposition 3.3]{erbar2012ricci}, $p^a \in AC([0,1]; \P(\G))$ and 
\[ |(\dot{p}^a)'|^2_{\Delta^{n-1}}(t)  \leq (1-a) d^2_{\Delta^{n-1}}(r_0,r_1) + a C_{ {m}_0,  {m}_1, \G} . \]
Define 
\begin{align*}
 \gamma_{r_0,r_1}^a &:= \bp^{-1} (p^a)  =(1-a) \bp^{-1}(p_t) + a \bp^{-1}((1-t) m_0 + t m_1) = (1-a) \gamma_{r_0,r_1,t} + a((1-t) s_0 + t s_1) . 
 \end{align*}Then properties (\ref{eq:AwayBoundary}-\ref{eq:AuxRegularized1}) follow from the corresponding properties for $p^a$.

\end{proof}

We turn to the proof of the time rescaling and gluing Lemma \ref{timerescaling}.

\begin{proof}[Proof of Lemma \ref{timerescaling}]
First we show part (i).
We may assume $\tau^{-1}$ is smooth and differentiable, and $(\tau^{-1})'>0.$
We choose $\tilde{\varphi}\in C_c(\Omega\times (0,T))$ and set $\varphi(x,s):= \tilde{\varphi}(x,\tau^{-1}(s)) \in C_c(\Omega\times (s_{0},s_{1})).$ Thus, \begin{align*} 
& \int_{s_0}^{s_1} \int_{\Omega} (\tau^{-1})'(s)\partial_t \tilde{\varphi}(x,\tau^{-1}(s)) d \rho_{i,s}(x) ds  \\
 &\quad= - \int_{s_0}^{s_1} \int_{\Omega}  \nabla \tilde{\varphi}(x,\tau^{-1}(s)) \cdot \bu_{i,s}(x)   d\rho_{i,s}  (x ) ds  \nonumber \\
&\quad \quad\quad  - \frac12 \sum_{j = 1}^n \int_{s_0}^{s_1} \int_{\Omega} \tilde{\varphi}(x,\tau^{-1}(s)) \theta \left( \frac{ d \rho_{i,s}}{d \bar{\rho}_s }(x),  \frac{ d \rho_{j,s}}{d \bar{\rho}_s}(x) \right) (\bv_{ij,s}(x) - \bv_{ji,s}(x)) q_{ij}  d \bar{\rho}_s(x) ds \nonumber \\
&\quad= \int_{s_0}^{s_1} (\tau^{-1})'(s) \left( - \int_{\Omega}  \nabla \tilde{\varphi}(x,\tau^{-1}(s)) \cdot \frac{\bu_{i,s}(x)}{(\tau^{-1})'(s)}   d\rho_{i,s}  (x) \right.  \nonumber \\
&\quad \quad\quad \left. - \frac12 \sum_{j = 1}^n  \int_{\Omega} \tilde{\varphi}(x,\tau^{-1}(s)) \theta \left( \frac{ d \rho_{i,s}}{d \bar{\rho}_s }(x),  \frac{ d \rho_{j,s}}{d \bar{\rho}_s}(x) \right) \left(\frac{\bv_{ij,s}(x)}{(\tau^{-1})'(s)} - \frac{\bv_{ji,s}(x)}{(\tau^{-1})'(s)} \right) q_{ij}  d \bar{\rho}_s(x) \right) ds \nonumber \\
&\quad= \int_0^{T}  \left( - \int_{\Omega}  \nabla \tilde{\varphi}(x,t) \cdot (\tau'(t)\bu_{i,\tau(t)}(x))   d\rho_{i,\tau(t)}  (x) \right.  \nonumber \\
&\quad \quad\quad \left. - \frac12 \sum_{j = 1}^n  \int_{\Omega} \tilde{\varphi}(x,t) \theta \left( \frac{ d \rho_{i,\tau(t)}}{d \bar{\rho}_{\tau(t)} }(x),  \frac{ d \rho_{j,\tau(t)}}{d \bar{\rho}_{\tau(t)}}(x) \right) \left(\tau'(t)\bv_{ij,\tau(t)}(x) - \tau'(t)\bv_{ji,\tau(t)}(x) \right) q_{ij}  d \bar{\rho}_{\tau(t)}(x) \right) dt \nonumber \\
&\quad= \int_0^{T}  \left( - \int_{\Omega}  \nabla \tilde{\varphi}(x,t) \cdot \tilde{\bu}_{i,t}(x)   d\tilde{\rho}_{i,t}  (x) \right.  \nonumber \\
&\quad \quad\quad \left. - \frac12 \sum_{j = 1}^n  \int_{\Omega} \tilde{\varphi}(x,t) \theta \left( \frac{ d \tilde{\rho}_{i,t}}{d \bar{\tilde{\rho}}_{t} }(x),  \frac{ d \tilde{\rho}_{j,t}}{d \bar{\tilde{\rho}}_t}(x) \right) \left(\tilde{\bv}_{ij,t}(x) - \tilde{\bv}_{ji,t}(x) \right) q_{ij}  d \bar{\tilde{\rho}}_t(x) \right) dt \nonumber \\
&=\int_0^{T} \int_{\Omega} \partial_t \tilde{\varphi}(x,t) d \tilde{\rho}_{i,t}(x) dt.
\end{align*}
Next, we show part (ii).
    We first prove that $(\brho,\bu,\bv)$ satisfies the vector valued  continuity equation (\ref{vvcty}) on $[0,1]$  in the sense of distributions.
First,  note that \begin{align*}
     \int_0^1 \int_{\Omega} \partial_t \eta(x,t) d \rho_{i,t}(x) dt     
     &= \int_0^\frac{1}{2} \int_{\Omega} \partial_t \eta(x,t) d \rho_{i,2t}^{1}(x) dt + \int_\frac{1}{2}^1 \int_{\Omega} \partial_t \eta(x,t) d\rho_{i,2t-1}^{2}(x) dt\\
     &= \int_0^1 \int_{\Omega} 2\partial_s \eta \left(x,\frac{s}{2}\right) d \rho_{i,s}^{1}(x)\frac{ds}{2} + \int_0^1 \int_{\Omega} 2\partial_s \eta \left(x,\frac{1+s}{2} \right) d\rho_{i,s}^{2}(x) \frac{ds}{2}.\\
\end{align*}
Likewise,
 \begin{align*}&-\int_0^\frac{1}{2} \int_{\Omega}  \nabla \eta(\cdot,t) \cdot u_{i,t}   d\rho_{i,t}   dt   -  \sum_{j = 1}^n \int_0^\frac{1}{2} \int_{\Omega} \eta(\cdot,t) \theta \left( \frac{ d \rho_{i,t}}{d \bar{\rho}_t } ,  \frac{ d \rho_{j,t}}{d \bar{\rho}_t}  \right) v_{ij,t} d \bar{\rho}_t  dt \\
&\quad - \int_\frac{1}{2}^1 \int_{\Omega}  \nabla \eta(\cdot,t) \cdot u_{i,t}   d\rho_{i,t}   dt    -  \sum_{j = 1}^n \int_\frac{1}{2}^1 \int_{\Omega} \eta(\cdot,t) \theta \left( \frac{ d \rho_{i,t}}{d \bar{\rho}_t },  \frac{ d \rho_{j,t}}{d \bar{\rho}_t} \right) v_{ij,t}  d \bar{\rho}_t dt \\
&\quad= - \int_0^1 \int_{\Omega}  \nabla \eta \left(\cdot,\frac{s}{2}\right) \cdot 2u_{i,s}^{1}    d\rho_{i,s}^{1}    \frac{ds}{2}   -  \sum_{j = 1}^n \int_0^1 \int_{\Omega} \eta(\cdot,t) \theta \left( \frac{ d \rho_{i,t}^{1}}{d \bar{\rho}_t^1 } ,  \frac{ d\rho_{j,s}^{1}}{d \bar{\rho}_s^1}  \right) 2v_{ij,s}^1  d \bar{\rho}_s^1  \frac{ds}{2} \\
&\quad \quad - \int_0^1 \int_{\Omega}  \nabla \eta \left(\cdot,\frac{1+s}{2}\right) \cdot 2u_{i,s}^2  d\rho_{i,s}^2    \frac{ds}{2}    -  \sum_{j = 1}^n \int_0^1 \int_{\Omega} \eta \left(\cdot,\frac{1+s}{2} \right) \theta \left( \frac{ d\rho_{i,s}^{2}}{d \bar{\rho}_s^2} , \frac{ d\rho_{j,s}^{2}}{d \bar{\rho}_s^2}  \right) 2v_{ij,s}^{2} d \bar{\rho}_s^2 \frac{ds}{2}.
\end{align*} The last equality follows from the definition, and part (i) of this lemma.

\end{proof}

We now prove Lemma \ref{energyestimate}]. 
\begin{proof}[Proof of Lemma \ref{energyestimate}]
    For any $0 \leq s_0 \leq s_1 \leq 1$, by Lemma \ref{timerescaling}, $(\brho_{(1-t)s_0 + t s_1}, (s_1-s_0) \bu_{(1-t)s_0 + t s_1}, (s_1-s_0) \bv_{(1-t)s_0 + t s_1}) \in \mathcal{C}(\brho_{s_0}, \brho_{s_1})$. Let $\tau(t) = (1-t) s_0 + t s_1$. Fix $\eta \in C^\infty_c(\Rd)$. Thus, applying the weak formulation of the continuity equation (\ref{dualvvcty}) and the fact $v_{ij}=-v_{ji}$, followed by a change of variables, we estimate
\begin{align*}
& \int_{\Rd} \eta(x) d \rho_{i,s_1}(x) -\int_{\Rd} \eta(x) d \rho_{i,s_0}(x)  \\
   &\quad  = (s_1-s_0) \int_{0}^{1} \int_{\Rd}  \nabla \eta(x) \cdot u_{i,\tau(t)}(x)   d\rho_{i,\tau(t)} (x)  dt  \\
   &\quad \quad  + (s_1-s_0)   \sum_{j = 1}^n \int_{0}^{1} \int_{\Rd} \eta(x) \theta \left( \frac{ d\rho_{i,\tau(t)}}{d \bar{\rho}_{\tau(t)} }(x),  \frac{ \rho_{j,\tau(t)}}{d \bar{\rho}_{\tau(t)}}(x) \right) v_{ij,\tau(t)}(x)  q_{ij}  d \bar{\rho}_{\tau(t)}(x)  dt \nonumber  \\
   &\leq  \int_{s_0}^{s_1} \int_\Rd |\nabla \eta| |u_{i,\tau}| d \rho_{i, \tau} d\tau    +   \sum_{j=1}^n   \int_{s_0}^{s_1} \int_\Rd |\eta| \theta \left(\frac{d \rho_{i,\tau}}{ d \bar{\rho}_{\tau} },  \frac{d \rho_{j,\tau}}{ d \bar{\rho}_{\tau} }  \right)   |v_{ij,t}| q_{ij}  d \bar{\rho}_\tau   d\tau.
\end{align*} 

Regarding   part (ii), we obtain \begin{align*}
\frac{d}{dt} \int_\Rd \eta (x) d \bar{\rho}_t (x) &= \sum_{i=1}^{n} \frac{d}{dt} \int_\Rd \eta(x) d \rho_{i,t} \\&= \sum_{i=1}^n \int_\Rd \nabla \eta(x) \cdot u_{i,t}(x) d \rho_{i,t}  \\
&\quad +\frac{1}{2} \int_\Rd \sum_{i=1}^n \sum_{j=1}^n \eta(x) \theta \left( \frac{d \rho_{i,t}}{d \bar{\rho}_t}(x)  , \frac{ d \rho_{j,t}}{d \bar{\rho}_t}(x) \right) (v_{ij,t}(x) - v_{ji,t}(x)) q_{ij} d \bar{\rho}_t \\
&= \sum_{i=1}^n \int_\Rd \nabla \eta(x) \cdot u_{i,t}(x) d \rho_{i,t} .
\end{align*}

\end{proof}

Finally we proof Proposition \ref{explicitdistance2node}.
\begin{proof}[Proof of Proposition \ref{explicitdistance2node}]
In the two node case $\G = \{ g_1, g_2\}$, \cite[Theorem 2.4]{maas2011gradient} guarantees that, for any $0 \leq a_0 \leq a_1 \leq 1$, 
\begin{align*} d_{[0,1]}(a_0,a_1) &= W_\G \left( \begin{bmatrix} a_{0} \\ 1 - a_{0} \end{bmatrix},  \begin{bmatrix} a_{1} \\ 1 - a_{1} \end{bmatrix} \right) = \frac{1}{\sqrt{2q}} \int_{1-2a_1}^{1-2a_0} \frac{1}{ \sqrt{\theta(1-r, 1+r)}} dr \\
&= \frac{1}{\sqrt{2q}} \int_{a_0}^{a_1} \frac{2da}{\sqrt{ \theta(2a,2(1-a))}}   = \frac{1}{\sqrt{q}}  \int_{a_0}^{a_1} \frac{da}{\sqrt{ \theta(a,1-a)}} ,
\end{align*}
where the last equality follows from the homogeneity of $\theta$, assumption (A\ref{thetamonotonicity}). 
\end{proof}

\end{document}